\documentclass[11pt]{article}

 \usepackage{amsmath,amssymb,amscd,amsthm,esint}
 
\usepackage{graphics,amsmath,amssymb,amsthm,mathrsfs}

\usepackage{graphics,amsmath,amssymb,amsthm,mathrsfs,amsfonts}

\usepackage{sidecap}
\usepackage{float}
\usepackage{extarrows}
\usepackage{booktabs}
\usepackage{verbatim}
\usepackage{hyperref}
\usepackage[usenames,dvipsnames]{xcolor}

\setlength{\paperwidth}{8.5in} \setlength{\paperheight}{11.0in}
\setlength{\textwidth}{6.5in} \setlength{\textheight}{9.0in}
\setlength{\oddsidemargin}{0in} \setlength{\evensidemargin}{0in}
\setlength{\topmargin}{0in} \setlength{\headsep}{0.0in}
\setlength{\headheight}{0.0in} \setlength{\marginparwidth}{0in}
\setlength{\marginparsep}{0in}

\newtheorem{thm}{Theorem}[section]
\newtheorem{lemma}[thm]{Lemma}

\theoremstyle{definition}
\newtheorem{remark}[thm]{Remark}

\def\XXint#1#2#3{{\setbox0=\hbox{$#1{#2#3}{\int}$}
         \vcenter{\hbox{$#2#3$}}\kern-.5\wd0}}

\def\R{\mathbb{R}}

\def\e{\varepsilon}

\def\loc{\text{loc}}

\numberwithin{equation}{section}

\begin{document}

\title{Homogenization of Boundary Value Problems \\ in
Perforated Lipschitz Domains}

\author{
Zhongwei Shen\thanks{Supported in part by NSF grant DMS-1856235.}}
\date{}
\maketitle

\begin{abstract}

This paper is concerned with  boundary regularity estimates in the homogenization of elliptic equations with rapidly oscillating and high-contrast 
coefficients. We establish uniform  nontangential-maximal-function estimates for the Dirichlet, regularity, and Neumann problems with $L^2$ boundary data
in a periodically perforated Lipschitz domain.

\medskip

\noindent{\it Keywords}: Homogenization; Perforated Domain; Boundary Estimate.

\medskip

\noindent {\it MR (2020) Subject Classification}: 35B27; 35J25.

\end{abstract}


\section{\bf Introduction}\label{section-1}

This paper is concerned with boundary regularity estimates in the homogenization of elliptic equations with rapidly oscillating and
high-contrast coefficients  in perforated Lipschitz domains.
Let $A=A(y)$ be a real-valued $d\times d$ matrix 
satisfying the ellipticity condition,  
\begin{equation}\label{ellipticity}
\mu |\xi|^2 \le  (A \xi)\cdot  \xi   \quad \text{ and } \quad \| A\|_\infty \le \mu^{-1},
\end{equation}
for any $\xi \in \mathbb{R}^d$ and some $\mu>0$, and  the periodicity condition,
\begin{equation}\label{periodicity}
A(y +z)=A (y) \quad \text{ for any } y\in \mathbb{R}^d \text{ and } z\in \mathbb{Z}^d.
\end{equation}
Let $\omega$ be a connected and unbounded open set in $\mathbb{R}^d$. Assume that $\omega$ is 1-periodic; i.e.,
its characteristic function  is periodic with respect to $\mathbb{Z}^d$.
We also assume that 
each of connected components of
$
\mathbb{R}^d\setminus \omega 
$
is the closure of  a bounded open set $F_k$ with Lipschitz boundary,
and  that
\begin{equation}\label{dis}
\min_{k\neq \ell} \text{dist} (F_k, F_\ell ) >0.
\end{equation}
Let $\Omega$ be a bounded Lipschitz domain in $\mathbb{R}^d$ and
\begin{equation}\label{O-e}
\Omega^\e =\Omega \cap \e \omega=\Omega\setminus \overline{\e F},
\end{equation}
where $0<\e\le 1$ and $F=\cup_k F_k$. We will assume that 
\begin{equation}\label{F-k}
 \text{\rm dist} (\partial\Omega, \e F)\ge  \kappa \e
\end{equation}
for some $\kappa\in (0, 1)$.
As a result, $\partial \Omega^\e=\partial \Omega \cup \Gamma^\e$, where 
$\Gamma^\e =\Omega \cap \partial (\e \omega)$, and
dist$(\partial \Omega, \Gamma^\e)\ge \kappa \e$.
For $0\le \delta\le 1$, define
\begin{equation}\label{Lambda-e}
\Lambda_{\delta} (x)
=\left\{
\aligned
& 1  & \quad & \text{ if } x\in \overline{\omega},\\
& \delta & \quad & \text{ if } x\in  F.
\endaligned
\right.
\end{equation}
We are interested in the regularity estimates, which are uniform in $0<\e\le 1$ and $0\le \delta\le 1$, 
for the elliptic operator,
\begin{equation}\label{op-L}
\mathcal{L}_{\e, \delta}
=-\text{\rm div} 
\big( A^\e_{\delta}  (x) \nabla \big)
\end{equation}
in  $\Omega$,
where
\begin{equation}\label{A-e}
A^\e_{\delta}  (x)
=\big[ \Lambda_{ \delta}  (x/\e) \big]^2 A(x/\e).
\end{equation}
The operator $\mathcal{L}_{\e, \delta}$ arises naturally  in the modeling of acoustic propagation in porous media, periodic electromagnetic structures, and
soft inclusions in composite materials \cite{OSY-1992, JKO-1993}.

In the case $\delta=1$,
the regularity estimates for $\mathcal{L}_{\e}  =-\text{\rm div}(A(x/\e)\nabla)$
have been studied extensively in recent years.
Using a compactness method,
the interior Lipschitz estimate and  the boundary Lipschitz estimate for the Dirichlet problem in
a $C^{1, \alpha}$ domain were established by M. Avellaneda and F. Lin  in a seminal work \cite{AL-1987}.
The boundary Lipschitz estimate for the Neumann problem in a $C^{1, \alpha}$ domain 
was obtained in \cite{KLS-2013-N}.
In \cite{KS-2011-H, KS-2011-L}, C. Kenig  and the present author investigated  the  $L^p$ Dirichlet, regularity, 
and  Neumann problems  in Lipschitz domains.
We obtain  nontangentail-maximal-function estimates for the sharp ranges of $p$'s in the scalar case \cite{KS-2011-H},
 and for $p$ close to $2$ in the case of elliptic systems \cite{KS-2011-L}.
The results  in \cite{KS-2011-H, KS-2011-L} extend an earlier work of B. Dahlberg (unpublished) on the $L^p$ Dirichlet problem in the scalar case.
They also extend the classical work of B. Dahlberg, E. Fabes, D. Jerison, C. Kenig,  J. Pipher, G. Verchota, and many others 
 on the $L^p$ boundary value problems
for elliptic equations and systems in Lipschitz domains to the periodic homogenization setting.
We refer the reader to \cite{Kenig-book} for references on boundary value problems in nonsmooth domains,
to  \cite{Shen-book} for further references on periodic homogenization, and to \cite{Armstrong-book}
for related work on large-scale regularity  in stochastic homogenization.

In this paper we will be concerned with the case $0\le \delta<1$ in perforated domains,
where $\delta^2$ represents the conductivity ratio of the disconnected matrix block subset $\Omega \cap \e F $ to the connected subregion $\Omega^\e$.
Notice that the operator $\mathcal{L}_{\e, \delta}$  is not elliptic uniformly in $\delta$.
In the case where $A=I$,  $\Omega$ is   $C^{1, \alpha}$ and $\omega$  sufficiently smooth, using the compactness method in \cite{AL-1987}, 
the $W^{1, p}$ and Lipschitz estimates were  obtained  by L.-M. Yeh \cite{Yeh-2010, Yeh-2011, Yeh-2015, Yeh-2016}.
Also see earlier work in \cite{Schweizer-2000, Masmoudi-2004} for uniform estimates in  the case $\delta=0$.
In \cite{Chase-Russell-2017, Chase-Russell-2018}, B. Russell established the  large-scale interior Lipschitz estimates
for $0\le \delta< 1$, using an approximation method
originated in \cite{Armstrong-2016}.
In the stochastic setting with $\delta=0$, S. Armstrong and P. Dario \cite{Armstrong-2018} obtained 
 large-scale regularity results for the random conductance model on a supercritical percolation.
In this paper we shall be  mainly interested in the nontangential-maximal-function estimates, which are  uniform in  $\e$ and $\delta$,
for the  Dirichlet, regularity, and Neumann problems,
under the assumption that both $\Omega$ and $\omega$ are domains with Lipschitz boundary.

More precisely, we consider  the Dirichlet problem,
\begin{equation}\label{DP}
\left\{
\aligned
\text{div} \big(A^\e _{\delta}  (x)  \nabla u_{\e, \delta} \big) & =0 &\quad & \text{ in } \Omega,\\
u_{\e, \delta} & =f &\quad & \text{ on } \partial\Omega,
\endaligned
\right.
\end{equation}
and the Neumann problem,
\begin{equation}\label{NP}
\left\{
\aligned
\text{div} \big(A^\e _{ \delta}   (x) \nabla u_{\e, \delta} \big) & =0 &\quad & \text{ in } \Omega,\\
\frac{\partial  u_{\e, \delta}}{\partial \nu_\e} & =g &\quad & \text{ on } \partial\Omega,
\endaligned
\right.
\end{equation}
where $\frac{\partial u_{\e, \delta}}{\partial \nu_\e}  =n(x) \cdot A(x/\e)\nabla u_{\e, \delta}$ and
$n$ denotes the outward unit normal to $\partial\Omega$.
Our main results in this paper give nontangential-maximal-function estimates, which  are uniform in both $\e\in (0, 1]$ and $\delta\in [0, 1]$,
 for (\ref{DP}) and (\ref{NP}).
 These  estimates 
 are new even for the case  where $A=I$
  and $\omega$, $\Omega$ are smooth.

For a function $u\in L^2(\Omega)$,
let $N(u)$ denote the (generalized) nontangential maximal function of $u$, defined by
\begin{equation}\label{max}
N(u) (x)
=\sup\left\{
\left(\fint_{B(y, d(y)/4)} |u|^2\right)^{1/2}:\  y\in \Omega \text{ and } 
|y-x|<C_0 \, d(y)\right\}
\end{equation}
for $x\in \partial\Omega$, where
$d(y)=\text{\rm dist} (y, \partial\Omega)$ and $C_0=C_0(\Omega) >1$ is sufficiently large.
We impose a H\"older continuity condition on $A=A(y)$ in $\omega$,
\begin{equation}\label{smoothness}
 \|A\|_{C^{0, \sigma}  (\omega)}\le M
 \quad \text{  for some $\sigma \in (0,1)$ and $M>0$.}
\end{equation}
No smoothness condition is needed for $A$ in $F$.

\begin{thm}\label{main-thm-1}
 Let $0<\e\le 1$ and $0\le \delta \le 1$.
Assume that $A$ satisfies conditions \eqref{ellipticity}, \eqref{periodicity},  \eqref{smoothness},  and is symmetric.
Let $\omega$ be a connected, unbounded and  1-periodic open set with Lipschitz boundary and
satisfying \eqref{dis}.
Let $\Omega$ be a bounded Lipschitz domain satisfying \eqref{F-k}.
Then, for any $f\in L^2(\partial\Omega)$, the unique solution $u_{\e, \delta}$ of \eqref{DP}
with $N(u_{\e, \delta}) \in L^2(\partial\Omega)$ satisfies the estimate,
\begin{equation}\label{D-estimate}
\| N(u_{\e, \delta}) \|_{L^2(\partial\Omega)}
\le C \, \| f\|_{L^2(\partial\Omega)},
\end{equation}
where  $C$ 
depend only on $d$, $\mu$, $(M, \sigma)$ in (\ref{smoothness}), $\omega$, $\kappa$, and the Lipschitz character of $\Omega$.
\end{thm}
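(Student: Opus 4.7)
The plan is to adapt the framework for $L^2$ boundary value problems in Lipschitz domains from \cite{KS-2011-H} to the present high-contrast perforated setting. The strategy is to decouple scales: inside a boundary layer of thickness comparable to $\e$ the operator $\mathcal{L}_{\e,\delta}$ is uniformly elliptic thanks to \eqref{F-k}, so the classical Dahlberg--Jerison--Kenig $L^2$ theory applies after $\e$-rescaling; at scales $r\gg\e$, one approximates $u_{\e,\delta}$ by a solution of the effective homogenized equation on the original (non-perforated) Lipschitz domain $\Omega$ and applies Dahlberg's $L^2$ Dirichlet theory there. A real-variable / good-$\lambda$ argument then patches the two regimes into the global estimate \eqref{D-estimate}.

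The first main building block is a large-scale boundary approximation: for every surface ball $\partial\Omega\cap B(x_0,r)$ with $r\ge \e$ and every solution $u_{\e,\delta}$ of the Dirichlet problem in $\Omega^\e\cap B(x_0,Cr)$ with data $f$, I would construct $u_0$ with $\mathcal{L}_0 u_0=0$ in $\Omega\cap B(x_0,Cr)$ and $u_0=f$ on $\partial\Omega\cap B(x_0,Cr)$, and prove an $L^2$ rate of the form
\[
\| u_{\e,\delta}-u_0 \|_{L^2(\Omega\cap B(x_0,r))}
\le C\,r\,(\e/r)^{\alpha}\,\bigl(\text{natural norm of }u_{\e,\delta}\bigr).
\]
Here $\mathcal{L}_0=-\operatorname{div}(\widehat{A}\nabla\cdot)$ is the effective operator associated with the cell problem on $\omega$, uniformly elliptic and independent of $\delta$. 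The interior analogue is available from Russell \cite{Chase-Russell-2017,Chase-Russell-2018}; the boundary counterpart requires a Dirichlet boundary corrector adapted to the perforated geometry, which is admissible near $\partial\Omega$ precisely because the holes are separated from $\partial\Omega$ by \eqref{F-k}.

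Given such an approximation, the next step is a boundary reverse H\"older inequality for the truncated nontangential maximal function $N_r(u_{\e,\delta})$, with some exponent $p<2$ on the right-hand side plus a $C\|f\|_{L^2}$ contribution. This is obtained by writing $u_{\e,\delta}=u_0+(u_{\e,\delta}-u_0)$, applying Dahlberg's $L^2$ estimate to the constant-coefficient problem for $u_0$, and controlling the remainder via the approximation above together with Caccioppoli and the small-scale estimates in the boundary layer. The self-improvement scheme of \cite{KS-2011-H} then upgrades this reverse H\"older inequality to the global $L^2$ bound \eqref{D-estimate}.

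The principal obstacle is uniformity as $\delta\to 0$. The weighted energy $\int A^\e_\delta|\nabla u_{\e,\delta}|^2$ degenerates on the hole set $\Omega\cap \e F$, so one cannot expect uniform $H^1$-control of the two-scale remainder $u_{\e,\delta}-u_0-\e\chi(\cdot/\e)\nabla u_0$ across all of $\Omega$. The fix is to ask only for smallness in the connected region $\Omega^\e$, and to control $u_{\e,\delta}$ inside each inclusion $\e F_k$ by its $L^2$ trace on $\partial(\e F_k)$, which is in turn controlled by the surrounding values in $\Omega^\e$. Making this coupling quantitative and uniform in $\delta\in[0,1]$, together with a uniform Meyers-type gradient estimate on $\omega$ that survives the $\delta\to 0$ limit, is where the bulk of the technical work will lie.
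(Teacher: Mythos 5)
Your overall philosophy (uniform ellipticity in the $\kappa\e$-boundary layer from \eqref{F-k}, homogenized approximation above scale $\e$, control of the inclusions by their boundary traces) is consistent with the paper, but the central mechanism you propose does not close, and it is not the route the paper takes. Your ``first main building block'' asks for an approximation of $u_{\e,\delta}$ by a homogenized solution $u_0$ with the \emph{same} $L^2$ Dirichlet data $f$ on a surface ball, with a quantitative rate $Cr(\e/r)^{\alpha}$. No such rate is available for data that is merely $L^2$: the two-scale expansion error estimates (Theorem \ref{thm-3.3}, Lemma \ref{lemma-5.2}) require $\nabla_{\tan}f\in L^2$ on the relevant piece of boundary, or zero Dirichlet data there, because the error is driven by $\|\nabla v_\delta\|_{L^2(\Sigma_{c\e})}+\e\|\nabla^2 v_\delta\|_{L^2(\Omega\setminus\Sigma_{c\e})}$, which for $L^2$ data is not even finite in general. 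Consequently the subsequent reverse H\"older inequality for $N_r(u_{\e,\delta})$ ``with exponent $p<2$ plus a $C\|f\|_{L^2}$ term'' cannot be produced this way, and even granting it, a bound of the form $\|N(u)\|_{L^2}\le C\|N(u)\|_{L^p}+C\|f\|_{L^2}$ with $p<2$ does not self-improve to \eqref{D-estimate} without an a priori handle on $\|N(u)\|_{L^p}$.

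The paper avoids this by dualizing: it proves the pointwise bound $N(u_{\e,\delta})\le C\,[\mathcal{M}_{\partial\Omega}(|f|^{p_0})]^{1/p_0}$ for some $p_0<2$ via the representation $u_{\e,\delta}(x_0)=-\int_{\partial\Omega}\partial_{\nu_\e(y)}G_{\e,\delta}(x_0,y)f(y)\,d\sigma(y)$. The approximation and Rellich-type machinery is applied to $y\mapsto G_{\e,\delta}(x_0,y)$, which \emph{does} vanish on the boundary away from the pole, so the rate estimates apply; the needed kernel bounds are the $L^p$ ($p>2$) estimates of $\nabla_yG_{\e,\delta}(x_0,\cdot)$ on surface balls (Theorem \ref{thm-8.10}, which combines the small-scale estimates for the H\"older-continuous symmetric coefficients near $\partial\Omega$ with the boundary-layer estimate \eqref{7.3-10} and the reverse H\"older Lemma \ref{lemma-8.1}) together with the decay $|G_{\e,\delta}(x,y)|\le C[d(x)]^\sigma[d(y)]^\sigma|x-y|^{2-d-2\sigma}$ from the boundary H\"older estimates of Sections 5--6. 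This dualization is the missing idea in your proposal. Two smaller points: the effective matrix is $\widehat{A_\delta}$, which depends on $\delta$ (though $|\widehat{A_\delta}-\widehat{A_0}|\le C\delta^2$ and the ellipticity is uniform), and the case $\delta=0$ requires a separate limiting argument from $\delta>0$, which you do not address.
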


\begin{thm}\label{main-thm-2}
 Let $0<\e\le 1$ and $0\le \delta \le 1$.
Let $A$, $\omega$ and $\Omega$ satisfy  the same conditions  as in Theorem \ref{main-thm-1}.
Suppose $f\in H^1(\partial\Omega)$.
Then the unique weak solution $u_{\e, \delta}$ in $H^1(\Omega)$  of \eqref{DP} 
satisfies the estimate,
\begin{equation}\label{R-estimate}
\| N ( \nabla u_{\e, \delta})  \|_{L^2(\partial\Omega)}
\le C\,  \| f\|_{H^1 (\partial\Omega)},
\end{equation}
where $C$ depends only on $d$, $\mu$, $(M,\sigma )$, $\omega$, $\kappa$, and the Lipschitz character of $\Omega$.
\end{thm}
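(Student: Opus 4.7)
The plan is to follow the Kenig--Shen program for the $L^2$ regularity problem in \cite{KS-2011-H}, with two additional inputs that are needed for the high-contrast perforated setting: a Rellich-type boundary identity that exploits the collar hypothesis \eqref{F-k}, and Russell's large-scale interior Lipschitz estimate \cite{Chase-Russell-2017, Chase-Russell-2018}, both of which have constants uniform in $\e$ and $\delta$. The overall strategy has three steps: (i) a Rellich inequality giving $\|\nabla u_{\e,\delta}\|_{L^2(\partial\Omega)} \le C\,\|f\|_{H^1(\partial\Omega)}$; (ii) passage from this boundary gradient bound to the nontangential-maximal estimate \eqref{R-estimate} via large-scale regularity and a Whitney-type decomposition; (iii) a density and approximation argument to treat general $f \in H^1(\partial\Omega)$.

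For step (i), hypothesis \eqref{F-k} is decisive: in the collar $\Omega_{\kappa\e/2} = \{x \in \Omega: \mathrm{dist}(x,\partial\Omega) < \kappa\e/2\}$ the coefficient reduces to $A^\e_\delta(x) = A(x/\e)$, which is uniformly elliptic and $C^{0,\sigma}$, with no dependence on $\delta$. I would choose a globally smooth vector field $h$ on $\Omega$ with $h\cdot n \ge c_0 > 0$ on $\partial\Omega$ and $\mathrm{supp}(h) \subset \Omega_{\kappa\e/2}$, then multiply $\mathcal{L}_{\e,\delta} u_{\e,\delta} = 0$ by $h\cdot\nabla u_{\e,\delta}$ and integrate by parts. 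Since $h \equiv 0$ where the coefficient degenerates, the classical Rellich identity of Jerison--Kenig--Verchota type for the uniformly elliptic operator $-\mathrm{div}(A(x/\e)\nabla\cdot)$ applies verbatim, yielding
\begin{equation*}
\int_{\partial\Omega} |\nabla u_{\e,\delta}|^2\, d\sigma \le C\!\int_{\partial\Omega} |\nabla_{\tan} f|^2\, d\sigma + C\!\int_{\Omega_{\kappa\e/2}} |\nabla u_{\e,\delta}|^2\, dx.
\end{equation*}
Using symmetry of $A$ and the corresponding Rellich identity for the conormal derivative $\partial u_{\e,\delta}/\partial\nu_\e$, tangential and normal derivatives on $\partial\Omega$ are comparable. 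The energy bulk term is controlled by Caccioppoli on the collar together with $\|N(u_{\e,\delta})\|_{L^2(\partial\Omega)} \le C\|f\|_{L^2(\partial\Omega)} \le C\|f\|_{H^1(\partial\Omega)}$ from Theorem \ref{main-thm-1}.

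For step (ii), for each $y \in \Omega$ with $d(y) \ge \e$, Russell's large-scale interior Lipschitz estimate gives
\begin{equation*}
\Big(\fint_{B(y,d(y)/4)} |\nabla u_{\e,\delta}|^2\Big)^{1/2} \le \frac{C}{d(y)} \Big(\fint_{B(y,d(y)/2)} |u_{\e,\delta}|^2\Big)^{1/2},
\end{equation*}
while for $y$ with $d(y) < \e$ one works on the collar where the coefficient is just $A(x/\e)$ and applies the boundary Lipschitz estimate of \cite{KS-2011-H} (or an $L^2$ version of it), plus the Rellich bound on $\nabla u_{\e,\delta}|_{\partial\Omega}$ from step (i). Summing contributions over a Whitney decomposition of $\Omega$ and using the $L^2$ Dirichlet estimate of Theorem \ref{main-thm-1}, together with Shen's real-variable localization scheme (as in \cite{Shen-book}), converts these pointwise inequalities into $\|N(\nabla u_{\e,\delta})\|_{L^2(\partial\Omega)} \le C\|f\|_{H^1(\partial\Omega)}$.

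The hard part will be step (i): making the Rellich identity rigorous when $u_{\e,\delta}$ only lies in $H^1(\Omega)$ with a jump-discontinuous coefficient, and securing the uniformity in $\delta \in [0,1]$. Naively, cross-interface terms across $\e\Gamma^\e$ would scale like $\delta^{-2}$ and destroy all uniformity; the hypothesis \eqref{F-k} is what allows a vector field $h$ supported \emph{strictly away} from every $\e F_k$, so no transmission term appears at all and the argument collapses to the $\delta = 1$ case treated in \cite{KS-2011-H}. Making this localization quantitative at the boundary while not losing control of the bulk energy term is the delicate point where Lipschitz character of $\partial\Omega$ and the H\"older continuity \eqref{smoothness} of $A$ both enter.
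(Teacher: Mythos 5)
Your overall architecture (a Rellich/local estimate in the $O(\e)$--collar where \eqref{F-k} makes the coefficient uniformly elliptic and H\"older, plus large-scale regularity in the interior) matches the paper's, but both of your steps have genuine gaps at exactly the points where the paper has to work hardest.

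\textbf{Step (i): the bulk term scales like $\e^{-1}$ and cannot be closed by Caccioppoli.} If the vector field $h$ is supported in $\Sigma_{\kappa\e/2}$ and satisfies $h\cdot n\ge c_0$ on $\partial\Omega$, then necessarily $|\nabla h|\sim \e^{-1}$, so the solid integrals in the Rellich identity produce $\frac{C}{\e}\int_{\Sigma_{C\e}}|\nabla u_{\e,\delta}|^2\,dx$, not $C\int_{\Sigma_{C\e}}|\nabla u_{\e,\delta}|^2\,dx$. This is why the paper formulates the boundary layer estimates \eqref{layer-e} with the prefactor $\e^{-1}$, and why its local Rellich step \eqref{local-R}--\eqref{9.1-3} leaves exactly the term $\e^{-1}\int_{\Sigma_{C\e}}|\nabla u_{\e,\delta}|^2$. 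Your proposal to absorb the bulk term via Caccioppoli and $\|N(u_{\e,\delta})\|_{L^2(\partial\Omega)}\le C\|f\|_{L^2(\partial\Omega)}$ fails by two powers of $\e$: Caccioppoli gives $\int_{\Sigma_{C\e}}|\nabla u|^2\le C\e^{-2}\int_{\Sigma_{C'\e}}|u|^2\le C\e^{-1}\|N(u)\|^2_{L^2(\partial\Omega)}$, hence $\e^{-1}\int_{\Sigma_{C\e}}|\nabla u|^2\lesssim \e^{-2}\|f\|^2_{L^2(\partial\Omega)}$, which blows up as $\e\to 0$. The correct control of this term is Theorem \ref{thm-7.1}, which rests on the $O(\e^{1/2})$ convergence rate in $H^1$ for the two-scale expansion (Theorem \ref{thm-3.3}) together with the extension arguments that make that rate uniform in $\delta\in[0,1]$. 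This is the central new analytic input of the paper and your sketch has no substitute for it.

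\textbf{Step (ii): the interior estimate controls $|\nabla u|$ by an oscillation of $u$, and you have no mechanism to convert that into an $L^2(\partial\Omega)$ bound by $\|f\|_{H^1(\partial\Omega)}$.} The large-scale Lipschitz estimate gives, at a point $z$ with $d(z)\ge\alpha\e$, a bound by $\frac{C}{d(z)}\inf_E\bigl(\fint_{B(z,d(z)/2)}|u_{\e,\delta}-E|^2\bigr)^{1/2}$; summing $\frac{C}{d(z)}\|u\|$ over Whitney cubes with any fixed $E$ is hopeless, and relating the oscillation of $u$ at scale $d(z)$ to $\nabla_{\tan}f$ is precisely the content of the regularity problem. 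The paper's mechanism is the $\e\mathbb{Z}^d$--translation invariance of $\mathcal{L}_{\e,\delta}$: one telescopes $u(x)-E$ into $\e$--increments of the difference quotient $Q_\e(u)$ of \eqref{Q}, observes that $Q_\e(u)$ is again a solution, and applies the localized $L^2$ Dirichlet estimate (Theorem \ref{thm-D}) to $Q_\e(u)$, closing the loop with the boundary layer estimate once more; this yields the pointwise domination \eqref{claim-9}. Your appeal to ``Shen's real-variable localization scheme'' and a Whitney decomposition does not supply this step, and without it the passage from the Rellich bound on $\partial\Omega$ to $N(\nabla u_{\e,\delta})\in L^2(\partial\Omega)$ does not go through.
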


\begin{thm}\label{main-thm-3}
 Let $0<\e\le 1$ and $0\le \delta \le 1$.
Suppose $A$, $\omega$ and $\Omega$ satisfy the same conditions as in Theorem \ref{main-thm-1}.
Then, for any $g\in L^2(\partial\Omega)$ with $\int_{\partial\Omega} g\, d \sigma =0$,
the weak solutions in $H^1(\Omega)$ of \eqref{NP} satisfy the estimate
\begin{equation}\label{N-estimate}
\| N(\nabla u_{\e, \delta} ) \|_{L^2(\partial\Omega)}
\le C\, \| g\|_{L^2 (\partial\Omega)},
\end{equation}
where  $C$ depends only on $d$, $\mu$, $(M, \sigma)$, $\omega$, $\kappa$, and the Lipschitz character of $\Omega$.
\end{thm}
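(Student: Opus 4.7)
The plan is to deduce Theorem \ref{main-thm-3} from Theorem \ref{main-thm-2} by means of a Rellich identity, which is the classical Verchota route for self-adjoint elliptic problems in Lipschitz domains, here adapted to the high-contrast perforated setting. Let $u=u_{\e,\delta}$ denote the solution of \eqref{NP}, normalized by $\int_\Omega u\,dx=0$. Viewing $u$ as the solution of the Dirichlet problem \eqref{DP} with its own trace $f:=u|_{\partial\Omega}$ as boundary data, Theorem \ref{main-thm-2} immediately gives
$$
\|N(\nabla u)\|_{L^2(\partial\Omega)}\le C\,\|f\|_{H^1(\partial\Omega)}.
$$
The energy estimate for $\mathcal{L}_{\e,\delta}$ combined with the trace inequality controls $\|f\|_{L^2(\partial\Omega)}$ by $\|g\|_{L^2(\partial\Omega)}$, so the whole theorem reduces to the tangential-gradient bound
$$
\|\nabla_{\tan} u\|_{L^2(\partial\Omega)}\le C\,\|g\|_{L^2(\partial\Omega)}. \qquad (\star)
$$

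\textbf{Rellich identity.} The key enabling observation is the separation condition \eqref{F-k}: within the $\kappa\e$-collar of $\partial\Omega$ one has $A^\e_\delta(x)=A(x/\e)$, which is uniformly elliptic, symmetric, and H\"older continuous. Choose a smooth vector field $h$ on $\overline{\Omega}$ with $h\cdot n\ge c_0>0$ on $\partial\Omega$. Multiplying the equation $\mathcal{L}_{\e,\delta}u=0$ by $h\cdot\nabla u$, integrating by parts over $\Omega$, and exploiting the symmetry of $A$ produces an identity of the form
$$
\int_{\partial\Omega}(A(\cdot/\e)\nabla u\cdot\nabla u)(h\cdot n)\,d\sigma=2\int_{\partial\Omega}(h\cdot\nabla u)\,\frac{\partial u}{\partial\nu_\e}\,d\sigma+\mathcal{R}_{\e,\delta},
$$
where $\mathcal{R}_{\e,\delta}$ collects a bulk term bounded by $C\int_\Omega A^\e_\delta\nabla u\cdot\nabla u\,dx\le C\|g\|_{L^2(\partial\Omega)}\|u\|_{L^2(\partial\Omega)}$ together with surface jump integrals over $\Gamma^\e$ arising from the discontinuity of $A^\e_\delta$ across the perforation boundary. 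Those jump integrals pair naturally with the transmission conditions $[u]=0$ and $(A(\cdot/\e)\nabla u)\cdot n|_+=\delta^2(A(\cdot/\e)\nabla u)\cdot n|_-$ across $\Gamma^\e$ (which degenerate to a free Neumann condition when $\delta=0$), and for an appropriate orientation of $h$ carry a favorable sign. Decomposing $\nabla u=(\nabla_{\tan}u)+(\partial u/\partial n)\,n$ on $\partial\Omega$ and invoking ellipticity, one obtains $\|\nabla u\|_{L^2(\partial\Omega)}^2\le C(\|g\|_{L^2}^2+\|\nabla_{\tan}u\|_{L^2}\|\nabla u\|_{L^2})+\text{l.o.t.}$ and, via a complementary identity, its converse; the estimate $(\star)$ then follows after absorption.

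\textbf{Main obstacle.} The crux is obtaining the constants in the Rellich identity \emph{uniformly} in both $\e$ and $\delta$. Since the jump of $A^\e_\delta$ across $\Gamma^\e$ has amplitude of order one as $\delta\to 0$, the surface integrals on $\Gamma^\e$ are not a perturbation; they must be shown either to carry a favorable sign (from the symmetry of $A$ together with the transmission conditions) or to be absorbable into the energy via a uniform trace-type estimate on the perforation boundary of the form $\e\int_{\Gamma^\e}|\nabla u|^2\,d\sigma\le C\int_{(\Gamma^\e)_\e}A^\e_\delta\nabla u\cdot\nabla u\,dx$ together with a Caccioppoli inequality. The degenerate case $\delta=0$ is then handled either by passing to the limit from $\delta>0$ in the estimate, or by extending $u$ into each hole $\e F_k\cap\Omega$ as the harmonic extension of its trace on $\partial(\e F_k)$ so that $N(\nabla u)$ is defined on all of $\Omega$ and the Rellich computation degenerates to the pure $\Omega^\e$ version with vanishing contribution from $\Gamma^\e$. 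The promotion from the $L^2(\partial\Omega)$ trace bound on $\nabla u$ to the nontangential-maximal-function bound $\|N(\nabla u)\|_{L^2(\partial\Omega)}$ is automatic from Theorem \ref{main-thm-2}.
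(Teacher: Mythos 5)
Your reduction is sound in outline: bounding $N(\nabla u_{\e,\delta})$ via (the proof of) Theorem \ref{main-thm-2} and then controlling the boundary gradient by $\|g\|_{L^2(\partial\Omega)}$ is exactly the paper's strategy. The gap is in how you propose to prove $(\star)$. A global Rellich identity over all of $\Omega$ does not close here, for two reasons. First, the bulk remainder $\mathcal{R}_{\e,\delta}$ is \emph{not} bounded by $C\int_\Omega A^\e_\delta\nabla u\cdot\nabla u\,dx$: the commutator term in the Rellich computation involves derivatives of the rapidly oscillating coefficient $A(x/\e)$ and scales like $\e^{-1}$ times the energy. This is the classical obstruction to Rellich identities in homogenization, present already for $\delta=1$ and unperforated $\omega$. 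Second, the interface terms on $\Gamma^\e$ are order-one jumps integrated over a surface of total measure $\sim \e^{-1}$; the vector field $h$ is adapted to $\partial\Omega$, so $h\cdot n_{\Gamma^\e}$ has no sign on the perforation boundaries, and the absorption inequality you propose, $\e\int_{\Gamma^\e}|\nabla u|^2\,d\sigma\le C\int A^\e_\delta\nabla u\cdot\nabla u\,dx$, is off by exactly the factor $\e^{-1}$ needed to absorb terms that appear in the identity without the prefactor $\e$. Neither of your two suggested resolutions (favorable sign, or absorption) is substantiated, and neither is true in general.

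The paper's actual route avoids the global identity entirely. By \eqref{F-k}, in the collar $\Sigma_{\kappa\e}$ the operator is the non-degenerate, non-perforated $-\mathrm{div}(A(x/\e)\nabla)$ with H\"older coefficients, so the \emph{classical} local Rellich--Neumann estimate applies at scale $\e$ (this is \eqref{local-N}, obtained from the $\e=1$ case by rescaling). Summing over a covering of $\partial\Omega$ converts the error into the boundary-layer energy $\e^{-1}\int_{\Sigma_{\kappa\e}}|\nabla u_{\e,\delta}|^2\,dx$, and the whole weight of the argument falls on Theorem \ref{thm-7.2}, which bounds this quantity by $C\|g\|^2_{L^2(\partial\Omega)}$ via the two-scale expansion, the convergence-rate estimate of Lemma \ref{thm-3.1}, and the extension operators handling the $\delta$-degeneracy. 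Your proposal contains no substitute for this boundary-layer estimate, which is the genuinely new ingredient; without it, $(\star)$ is not proved.
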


 A few remarks are in order.

\begin{remark}\label{remark-01}

By (\ref{F-k}),  the matrix $A^\e_{\delta}(x)$ is H\"older continuous near $\partial\Omega$.
As a result, the existence  and uniqueness of solutions $u_{\e, \delta}$ with $N(u_{\e, \delta})\in L^2(\partial\Omega)$  in Theorem \ref{main-thm-1}
and with $N(\nabla u_{\e, \delta})\in L^2(\partial\Omega)$ in Theorems \ref{main-thm-2} and \ref{main-thm-3} are more or less well known \cite{Kenig-book}.
The main contribution of this paper is that the constants $C$ in \eqref{D-estimate}, \eqref{R-estimate}, and \eqref{N-estimate}
do not depend on $\e\in (0, 1)$ and $\delta \in [0, 1]$.
\end{remark}

\begin{remark}

The boundary data in Theorem \ref{main-thm-1} is taken in the sense of nontangential convergence.
The boundary data in Theorems \ref{main-thm-2} and \ref{main-thm-3} may also be taken in the sense of nontangential convergence.
In (\ref{D-estimate}), the nontangential maximal function $N(u_{\e, \delta})$ may be replaced by
$\widetilde{N}(u_{\e, \delta})$, defined by
$$
\widetilde{N} (u) (x) =\sup\big\{ |u(y)|: \, y\in \Omega^\e \text{ and } |y-x|< \widetilde{C}_0\,  \text{dist}(y, \partial\Omega)\big\}.
$$
This follows from the proof of Theorem \ref{main-thm-1}.
\end{remark}

\begin{remark}\label{remark-02}

Let $\delta>0$ and $u_{\e, \delta}$ be  a weak solution of $\text{\rm div}(A^\e_{ \delta} \nabla u_{\e, \delta} ) =0$ in $\Omega$.
Then $\text{\rm div}(A(x/\e)\nabla u_{\e, \delta}) =0$ in $\Omega^\e$ and $\Omega \setminus \overline {\Omega^\e} $.
Moreover,
\begin{equation}\label{trace}
\left(\frac{\partial u_{\e, \delta}}{\partial \nu_\e}  \right)_+
=\delta^2
\left(\frac{\partial u_{\e, \delta} }{\partial \nu_\e} \right)_- 
\quad \text{ and } \quad
(u_{\e, \delta} )_+ = (u_{\e, \delta})_-
\quad \text{ on }  \Gamma^ \e= \Omega \cap \partial \Omega^\e,
\end{equation}
where $\pm$ indicates the traces taken from  $\Omega^\e$ and $\Omega \setminus \overline{\Omega^\e} $,
respectively.
In the case $\delta=0$, the Dirichlet problem \eqref{DP} is reduced to the mixed boundary value problem in $\Omega^\e$, 
\begin{equation}\label{DP-0}
\left\{
\aligned
\text{\rm div} (A (x/\e) \nabla u_{\e, 0}) & =0 & \quad &\text{ in } \Omega^\e, \\
n\cdot A(x/\e) \nabla u_{\e, 0} &=0 & \quad & \text{ on } \Gamma^\e,\\
u_{\e, 0} & =f & \quad&  \text{ on } \partial\Omega,
\endaligned
\right.
\end{equation}
while \eqref{NP} is reduced to the Neumann problem in $\Omega^\e$, 
\begin{equation}\label{NP-0}
\left\{
\aligned
\text{\rm div} (A (x/\e) \nabla u_{\e, 0}) & =0 & \quad &\text{ in } \Omega^\e, \\
n\cdot A(x/\e) \nabla u_{\e, 0} &=0 & \quad & \text{ on } \Gamma^\e,\\
n\cdot A(x/\e) \nabla u_{\e, 0}  & =g & \quad&  \text{ on } \partial\Omega.
\endaligned
\right.
\end{equation}
In this case  we shall   extend the  solution $u_{\e, 0}$  in $\Omega^\e $ to $\Omega$
 by solving the Dirichlet problem,
\begin{equation}\label{DP-e}
\text{\rm div} (A(x/\e)\nabla u_{\e,0}  ) =0 \quad \text{ in } 
\e F_k 
\quad \text{ and }\quad
(u_{\e, 0} )_-= (u_{\e, 0})_+ \quad \text{ on } \partial (\e F_k),
\end{equation}
for each $\e F_k$ contained in $\Omega$.
Consequently, \eqref{trace} continues to hold in the case $\delta=0$.
As we pointed out earlier,
the estimates in Theorems \ref{main-thm-1}, \ref{main-thm-2}, and \ref{main-thm-3}
for the boundary value problems \eqref{DP-0} and \eqref{NP-0} are new even in the case 
of Laplace's equation $\Delta u =0$ in $\Omega^\e$.

\end{remark}

We now describe our approaches to Theorems  \ref{main-thm-1}, \ref{main-thm-2}, and \ref{main-thm-3}.
Our main tool is   the Rellich estimates,
\begin{equation}\label{R-e}
\Big\|
\frac{\partial u_{\e, \delta}} {\partial \nu_\e} \Big \|_{L^2(\partial\Omega)}
\approx
\|\nabla_{\tan} u_{\e, \delta} \|_{L^2(\partial\Omega)},
\end{equation}
for  weak solutions of $\mathcal{L}_{\e, \delta} (u_{\e, \delta} ) =0$ in $\Omega$.
To prove (\ref{R-e}),
we first use local Rellich estimates for the elliptic operator 
$\text{\rm div}(A\nabla$) and condition \eqref{F-k}  as well as a covering argument to reduce the problem to the boundary layer estimates,
\begin{equation}\label{layer-e}
\aligned
\frac{1}{\e}
\int_{\Sigma_{\kappa \e}} 
|\nabla u_{\e, \delta}|^2\, dx
 & \le C \int_{\partial\Omega} 
|\nabla_{\tan } u_{\e, \delta} |^2\, d\sigma 
+ C \int_\Omega |\nabla u_{\e, \delta} |^2\, dx,\\
\frac{1}{\e}
\int_{\Sigma_{\kappa \e}}
|\nabla u_{\e, \delta}|^2\, dx
 & \le C \int_{\partial\Omega} 
\Big | \frac{\partial u_{\e, \delta} }{\partial \nu_\e} \Big|^2\, d\sigma 
+ C \int_\Omega |\nabla  u_{\e, \delta} |^2\, dx,
\endaligned
\end{equation}
where $\Sigma_t =\big\{ x\in \Omega: \ \text{\rm dist} (x, \partial\Omega) < t \big\}$.
The  estimates in (\ref{layer-e}) are then proved by establishing a sharp convergence rate in $H^1(\Omega)$
for a two-scale expansion.
We remark that a similar approach has been used in \cite{Shen-2017-APDE, GSS-2017} in  the case $\delta=1$ for elliptic systems of elasticity.
To deal with the fact that  the operator $\mathcal{L}_{\e, \delta}$ is not uniformly elliptic in $\delta$,
techniques of extension are used to treat the block regions  $\e F_k $ with small ellipticity constants.

With the Rellich estimates (\ref{R-e}) at our disposal, we follow an approach used in \cite{KS-2011-H}
for scalar elliptic equations with periodic coefficients.
To prove Theorem \ref{main-thm-1},
we  show that
\begin{equation}\label{N-M}
N(u_{\e, \delta}) \le C \big[ \mathcal{M}_{\partial\Omega} (|f|^{p_0} ) \big] ^{1/p_0} \quad \text{ on } \partial\Omega
\end{equation}
for some $p_0<2$, where $u_{\e, \delta}$ is a solution of (\ref{DP}) and
$\mathcal{M}_{\partial\Omega}$ denotes the Hardy-Littlewood maximal operator on $\partial\Omega$.
This is done by  applying  a localized version of (\ref{R-e})  to the Green function $G_{\e, \delta} (x, y)$ and
using the estimate,
\begin{equation}\label{G-e}
|G_{\e, \delta} (x, y)|
\le \frac{ C [ \text{\rm dist} (x, \partial\Omega)]^\sigma }{|x-y|^{d-2+\sigma}},
\end{equation}
which holds if  either $x, y\in \Omega^\e $ or $x, y\in \Omega$ with  $|x-y|\ge c_d \e$.
Estimate (\ref{G-e}) follows from  the boundary H\"older estimate for the operator  $\mathcal{L}_{\e, \delta}$  in Lipschitz domains, which is proved by
an approximation argument. A great amount of efforts, involving a reverse H\"older argument above the scale $\e$,  is also needed to
lower the exponent in (\ref{N-M}) to some $p_0<2$.

To prove Theorem \ref{main-thm-2}, we exploit the fact that
 the operator
$\mathcal{L}_{\e, \delta}$ is invariant under the translation $x \to x+\e z$, where $z\in \mathbb{Z}^d$. 
As in \cite{KS-2011-H}, this allows us to dominate $|\nabla u_{\e, \delta} (x) |$ for dist$(x, \partial\Omega)\ge C \e$  by
$$
\mathcal{M}_{\partial\Omega}  \big\{ |\nabla_{\tan} f |
+M_{rad} (Q_\e (u_{\e\, \delta})) \big\},
$$
where $M_{rad}$ is a radial maximal operator  and
$$
Q_\e (u)= \e^{-1} \big\{ u(x+ \e e_d) -u(x) \big\}, 
$$
 with
$e_d =(0, \dots, 0, 1)$.
Since $Q_\e (u_{\e, \delta}) $ is  a solution,
the term involving $Q_\e (u_{\e, \delta})$ may be  handled by using  a localized version of Theorem \ref{main-thm-1} and
the boundary layer estimates in (\ref{layer-e}).
Finally, Theorem \ref{main-thm-3} follows from Theorem \ref{main-thm-2} and (\ref{layer-e}).

Our proof of Theorem \ref{main-thm-1} yields the estimate
$\| N(u_{\e, \delta}) \|_{L^p(\partial\Omega)}
\le C_p \| f\|_{L^p(\partial\Omega)}$
for $2-\gamma< p\le \infty$, where $\gamma>0$ depends on $\Omega$, $\omega$, and $A$.
In view of  the results in \cite{KS-2011-H}, it would be interesting to establish
the $L^p$ estimates of $N(\nabla u_{\e, \delta}) $ for $1<p<2$, uniform in $\e >0 $ and $\delta\in [0, 1]$, for the regularity and 
Neumann problems considered in Theorems \ref{main-thm-2} and \ref{main-thm-3}.
Another  interesting problem would be the study  of the uniform boundary regularity estimates for the case   $1< \delta\le \infty$, 
the so-called stiff problem \cite{JKO-1993}.
The interior Lipschitz estimates have been established recently in \cite{Shen-2020}.


\section{\bf Preliminaries}\label{section-2}

Throughout this paper we assume that the matrix $A=A(y)$
satisfies conditions (\ref{ellipticity})-(\ref{periodicity}),  $\omega$ is a connected, unbounded and 1-periodic open set in $\mathbb{R}^d$,
and that 
\begin{equation}\label{F1}
\mathbb{R}^d\setminus \omega =\cup_k \overline{F_k}, 
\end{equation}
where each $\overline{F_k}$ is the closure of  a bounded Lipschitz domain $F_k$ with connected boundary.
We assume that $\overline{F_k} $'s are mutually disjoint and satisfy the condition (\ref{dis}).
This allows us to construct a sequence  of mutually disjoint open sets $\{\widetilde{F}_k \}$ with connected smooth  boundary such that $\overline{F_k} \subset \widetilde{F}_k$,
\begin{equation}\label{dist}
\left\{
\aligned
&  (1/100)\kappa  \le \text{dist}( F_k, \partial \widetilde{F}_k), \\
& (1/100)\kappa  \le \text{dist} (\widetilde{F}_k, \widetilde{F}_\ell ) \text{ for } k\neq \ell.
\endaligned
\right.
\end{equation}
Since diam$(F_k)\le\text{\rm diam}(Y)=\sqrt{d}$, we may assume diam$(\widetilde{F}_k) \le d$.
By the periodicity of $\omega$, 
$F_k$'s are the shifts of a finite number of bounded Lipschitz domains.
As a result,
we  may assume that $\widetilde{F}_k$'s are the shifts of a finite number of  bounded smooth domains.


\subsection{Extension operators}

Since $\widetilde{F}_k \setminus \overline{F_k}$
is a bounded Lipschitz domain,
there exists an extension operator $T_k$  from $\widetilde{F}_k \setminus \overline{F_k} $ to $\widetilde{F}_k$
such that
$$
\aligned
\| T_k (u)\|_{L^p(\widetilde{F}_k)}  & \le C_p\,  \| u\|_{L^p(\widetilde{F}_k \setminus \overline{F_k} )}, \\
\| T_k (u)\|_{W^{1, p} (\widetilde{F}_k)}
& \le C_p\,  \| u \|_{W^{1, p} (\widetilde{F}_k \setminus \overline{F_k} )},
\endaligned
$$
for $1< p< \infty$ \cite{Stein}.
Let 
$$
\widetilde{T}_k (u) = \fint_{\widetilde{F}_k \setminus F_k} u
+ T_k \Big(u-\fint_{\widetilde{F}_k \setminus F_k} u \Big).
$$
Then $\widetilde{T}_k$ is an extension operator from $\widetilde{F}_k \setminus \overline{F_k} $ to $\widetilde{F}_k$,
$$
\| \widetilde{T}_k (u)\|_{L^p(\widetilde{F}_k)}   \le C_p\,  \| u\|_{L^p(\widetilde{F}_k \setminus \overline{F_k} )}, 
$$
and
$$
\aligned
\| \nabla \widetilde{T}_k (u)\|_{L^p (\widetilde{F}_k)}
& \le C_p\,  \| u-\fint_{\widetilde{F}_k \setminus \overline{F_k}} u  \|_{W^{1, p} (\widetilde{F}_k \setminus \overline{F_k} )}
\le C_p \, \| \nabla u \|_{L^p(\widetilde {F}_k \setminus \overline{F_k})},
\endaligned
$$
where we have used a Poincar\'e inequality.
 By dilation  there exist  extension operators $E_{\e, k}$ from $\e \widetilde{F}_k \setminus \e \overline{ F_k}$ to $ \e \widetilde{F}_k$ such that
 \begin{equation}\label{ext-e}
\aligned
\| E_{\e, k}  (u)\|_{L^p(\e \widetilde{F}_k)}  & \le C_p\,  \| u\|_{L^p(\e\widetilde{F}_k \setminus  \e \overline{ F_k} )}, \\
\| \nabla E_{\e, k}  (u)\|_{L^p (\e \widetilde{F}_k)}
& \le C_p\,  \|\nabla u \|_{L^p (\e \widetilde{F}_k \setminus \e \overline{ F_k} )},
\endaligned
\end{equation}
for $1< p< \infty$ and $\e>0$, where $C_p$ depends only on $d$, $p$,  and $\omega$.
As a result, we obtain the following.

\begin{lemma}\label{lemma-ext}
Let $\Omega$ be a bounded Lipschitz domain satisfying \eqref{F-k}. Let  $1< p<\infty$.
Then, for any $u\in W^{1, p} (\Omega^\e )$, there exists $\widetilde{u}\in W^{1, p} (\Omega)$ such that
$\widetilde{u}=u$ in $\Omega^\e$,
\begin{equation}\label{ext-u}
\| \widetilde{u} \|_{L^p(\Omega)}
\le C_p\,  \| u \|_{L^p(\Omega^\e)}
\quad \text{ and } \quad
\| \nabla \widetilde{u} \|_{L^p(\Omega)} \le C _p\,  \| \nabla u \|_{L^p(\Omega^\e)},
\end{equation}
where $C_p$ depends only on $d$, $p$, $\kappa$, and $\omega$.
\end{lemma}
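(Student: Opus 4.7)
The plan is to glue the local extension operators $E_{\e,k}$ constructed in \eqref{ext-e} to build a single global extension $\widetilde{u}$ on $\Omega$. The idea is that each hole $\e F_k$ lying in $\Omega$ comes with a slightly enlarged neighborhood $\e\widetilde{F}_k$; the annular region $\e\widetilde{F}_k\setminus\e\overline{F_k}$ sits inside $\Omega^\e$ where $u$ is already defined, so $E_{\e,k}$ lets us fill in the hole while keeping $W^{1,p}$-control.

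First, I would identify the relevant index set $I_\e=\{k:\e F_k\subset\Omega\}$. By \eqref{F-k}, dist$(\partial\Omega,\e F)\ge\kappa\e$, and by the construction of the $\widetilde{F}_k$'s in \eqref{dist} (they are only mild enlargements of $F_k$, with bounded diameter), each $k\in I_\e$ also satisfies $\e\widetilde{F}_k\subset\Omega$, so that the annulus $\e\widetilde{F}_k\setminus\e\overline{F_k}$ is contained in $\Omega^\e$. Then I set $v_k:=E_{\e,k}\big(u|_{\e\widetilde{F}_k\setminus\e\overline{F_k}}\big)\in W^{1,p}(\e\widetilde{F}_k)$ and define
\[
\widetilde{u}:=u \text{ on } \Omega^\e,\qquad \widetilde{u}:=v_k \text{ on } \e F_k \text{ for each } k\in I_\e.
\]
Because the Stein-type extension satisfies $v_k=u$ on the annular region, the two formulas agree on $\e\widetilde{F}_k\setminus\e\overline{F_k}$, so in particular their traces agree on $\partial(\e F_k)$; a standard gluing lemma then puts $\widetilde{u}$ in $W^{1,p}(\Omega)$.

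For the norm bounds, the disjointness of $\{\e\widetilde{F}_k\}_{k\in I_\e}$ given by the second line of \eqref{dist} lets the local estimates in \eqref{ext-e} add up without overlap:
\[
\|\widetilde{u}\|_{L^p(\Omega)}^p=\|u\|_{L^p(\Omega^\e)}^p+\sum_{k\in I_\e}\|v_k\|_{L^p(\e F_k)}^p\le \|u\|_{L^p(\Omega^\e)}^p+C_p^p\sum_{k\in I_\e}\|u\|_{L^p(\e\widetilde{F}_k\setminus\e\overline{F_k})}^p,
\]
and the last sum is bounded by $\|u\|_{L^p(\Omega^\e)}^p$ since the annuli are pairwise disjoint subsets of $\Omega^\e$. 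The same argument with $\nabla u$ gives the gradient bound.

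The only genuine subtlety is verifying the geometric inclusion $\e\widetilde{F}_k\subset\Omega$ for every $k$ with $\e F_k\subset\Omega$; this is what forces us to use condition \eqref{F-k} together with the fact that, in the construction of the auxiliary domains, we can insist that $\widetilde{F}_k$ lies well within the $\kappa$-neighborhood of $F_k$ (their separation from $\partial\Omega/\e$ being $\ge\kappa$). Once this compatibility is in hand, everything else is routine.
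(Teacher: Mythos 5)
Your proof is correct and is precisely the argument the paper intends: the lemma is stated immediately after the construction of the operators $E_{\e,k}$ with the remark ``As a result, we obtain the following,'' i.e.\ the paper's (implicit) proof is exactly your gluing of the local extensions over the disjoint sets $\e\widetilde{F}_k$, using \eqref{F-k} and \eqref{dist} for the inclusion $\e\widetilde{F}_k\subset\Omega$ and the disjointness of the annuli to sum the bounds in \eqref{ext-e}.
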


The next lemma will be used to treat regions where the ellipticity constant is small.

\begin{lemma}\label{lemma-ext-F}
Suppose $u\in H^1(\e \widetilde{ F}_k)$ and $\text{\rm div} (A(x/\e) \nabla u)=0$ in $\e F_k$.
Then
\begin{equation}\label{ext-F}
\| \nabla u \|_{L^2(\e F_k )} \le C \| \nabla u \|_{L^2(\e \widetilde{F}_k \setminus \e \overline{F_k})} ,
\end{equation}
where $C$ depends only on $d$, $\mu$, and $\omega$.
\end{lemma}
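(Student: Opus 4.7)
The plan is to use the extension operator $E_{\varepsilon,k}$ from \eqref{ext-e} to construct an $H^1$ competitor that agrees with $u$ on the collar $\varepsilon\widetilde{F}_k \setminus \varepsilon\overline{F_k}$, and then to test the weak formulation of the equation in $\varepsilon F_k$ against the difference. The point is that the equation holds only inside $\varepsilon F_k$ (with the small coefficient $\delta^2$ factored out), so we are really proving a Caccioppoli-type inequality that transfers energy from the interior to the surrounding annular shell.

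More precisely, let $\phi = E_{\varepsilon,k}(u|_{\varepsilon\widetilde{F}_k\setminus\varepsilon\overline{F_k}}) \in H^1(\varepsilon\widetilde{F}_k)$. By construction $\phi = u$ on the collar, so the two functions have the same trace on $\partial(\varepsilon F_k)$ taken from either side, and consequently $w := u - \phi$, restricted to $\varepsilon F_k$, lies in $H^1_0(\varepsilon F_k)$. Since $\text{\rm div}(A(x/\varepsilon)\nabla u) = 0$ in $\varepsilon F_k$, $w$ is an admissible test function and
\begin{equation*}
\int_{\varepsilon F_k} A(x/\varepsilon)\nabla u \cdot \nabla u \, dx
= \int_{\varepsilon F_k} A(x/\varepsilon)\nabla u \cdot \nabla \phi \, dx.
\end{equation*}

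Applying the ellipticity lower bound on the left and the bound $\|A\|_\infty \le \mu^{-1}$ together with the Cauchy--Schwarz inequality on the right gives
\begin{equation*}
\mu \|\nabla u\|_{L^2(\varepsilon F_k)}^2
\le \mu^{-1} \|\nabla u\|_{L^2(\varepsilon F_k)} \|\nabla \phi\|_{L^2(\varepsilon F_k)},
\end{equation*}
so $\|\nabla u\|_{L^2(\varepsilon F_k)} \le \mu^{-2} \|\nabla \phi\|_{L^2(\varepsilon\widetilde{F}_k)}$. Invoking the gradient bound for $E_{\varepsilon,k}$ from \eqref{ext-e} (with $p=2$) then yields $\|\nabla \phi\|_{L^2(\varepsilon\widetilde{F}_k)} \le C \|\nabla u\|_{L^2(\varepsilon\widetilde{F}_k \setminus \varepsilon\overline{F_k})}$, which is \eqref{ext-F}.

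There is really no hard step here. The only thing worth checking carefully is that the extension $\phi$ actually matches $u$ on the collar (so that $w$ has vanishing trace on $\partial(\varepsilon F_k)$ and is therefore a legitimate test function); this is precisely the defining property of the operators constructed in \eqref{ext-e}, so one only needs to note that $H^1$ functions have a single trace on the Lipschitz interface $\partial(\varepsilon F_k)$ and that the gradient-form estimate in \eqref{ext-e} avoids any dependence on the mean value of $u$ on the collar, which is what allows the constant in \eqref{ext-F} to be independent of $\varepsilon$.
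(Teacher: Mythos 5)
Your proof is correct and follows essentially the same route as the paper: both construct an $H^1$ competitor via the extension operator of \eqref{ext-e}, observe that the difference lies in $H_0^1(\e F_k)$, and conclude by an energy estimate. The only cosmetic difference is that you test the equation for $u$ directly against $u-\phi$, whereas the paper states the energy estimate for $\widetilde u-u$ and then uses the triangle inequality; the computation is the same.
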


\begin{proof}
By dilation we may assume $\e=1$.
Let $\widetilde{u}\in H^1(\widetilde{F}_k)$ be an extension of $u|_{\widetilde{F}_k \setminus F_k}$ such that
$\|\nabla \widetilde{u}\|_{L^2(F_k)} \le C \| \nabla u \|_{L^2(\widetilde{F}_k \setminus \overline{F_k})}$.
Since $\widetilde{u}-u\in H_0^1(F_k)$ and $\text{\rm div}(A\nabla (\widetilde{u}-u))=\text{\rm div} (A \nabla \widetilde{u}) $ in $F_k$,
by energy estimates,
$$
\| \nabla (\widetilde{u} -u) \|_{L^2(F_k)} \le C \| \nabla \widetilde{u}\|_{L^2(F_k)},
$$
from which the inequality \eqref{ext-F} with $\e=1$  follows.
\end{proof}

We now give the energy estimates for \eqref{DP} and \eqref{NP}.

\begin{lemma}
Let $\Omega$ be a bounded Lipschitz domain satisfying \eqref{F-k}.
Let $u=u_{\e, \delta}$ be a weak solution of \eqref{DP} with $f\in H^{1/2}(\partial\Omega)$.
Then 
\begin{equation}\label{energy-D}
\| u \|_{H^1(\Omega)} \le C \| f\|_{H^{1/2}(\partial\Omega)},
\end{equation}
where $C$ depends only on $d$, $\mu$, $\kappa$, $\omega$, and $\Omega$.
\end{lemma}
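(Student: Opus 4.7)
The plan is to split $u$ into a lifting of $f$ plus a zero-trace part, test the weak formulation against the zero-trace part to get a degenerate energy estimate on the connected region $\Omega^\e$, and then upgrade this to an estimate on all of $\Omega$ by exploiting the fact that $u$ still satisfies a uniformly elliptic equation on each inclusion $\e F_k$ via Lemma \ref{lemma-ext-F}.

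First I would choose $F\in H^1(\Omega)$ with $F=f$ on $\partial\Omega$ and $\|F\|_{H^1(\Omega)}\le C\|f\|_{H^{1/2}(\partial\Omega)}$, and set $v=u-F\in H^1_0(\Omega)$. Testing the weak formulation of \eqref{DP} against $v$ and using $A^\e_\delta \xi\cdot\xi\ge \mu\,[\Lambda_\delta(x/\e)]^2|\xi|^2$ together with $|A^\e_\delta|\le \mu^{-1}[\Lambda_\delta(x/\e)]^2$, I obtain
$$
\mu\int_\Omega [\Lambda_\delta(x/\e)]^2|\nabla v|^2\,dx \le \mu^{-1}\int_\Omega [\Lambda_\delta(x/\e)]^2 |\nabla F||\nabla v|\,dx.
$$
By Cauchy--Schwarz and $\Lambda_\delta\le 1$, this yields $\int_\Omega [\Lambda_\delta(x/\e)]^2|\nabla v|^2\,dx \le C\|\nabla F\|_{L^2(\Omega)}^2$. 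Since $\Lambda_\delta\equiv 1$ on $\e\omega$, this already controls $\|\nabla v\|_{L^2(\Omega^\e)}$, and therefore
$$
\|\nabla u\|_{L^2(\Omega^\e)}\le \|\nabla v\|_{L^2(\Omega^\e)}+\|\nabla F\|_{L^2(\Omega^\e)}\le C\|f\|_{H^{1/2}(\partial\Omega)},
$$
uniformly in $\e$ and $\delta$.

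Next I would transfer the estimate from $\Omega^\e$ to $\e F\cap\Omega$ one inclusion at a time. By \eqref{F-k}, every $k$ with $\e F_k\cap\Omega\ne\emptyset$ satisfies $\e F_k\subset\Omega$, and the collar sets $\widetilde{F}_k$ from \eqref{dist} can be chosen thin enough that $\e\widetilde{F}_k\subset\Omega$ as well. In the case $\delta>0$ the equation $\mathcal{L}_{\e,\delta}u=0$ reads $\text{\rm div}(A(x/\e)\nabla u)=0$ on each $\e F_k$; in the case $\delta=0$ this holds by the very definition of the extension in \eqref{DP-e}. Hence Lemma \ref{lemma-ext-F} applies and gives $\|\nabla u\|_{L^2(\e F_k)}\le C\|\nabla u\|_{L^2(\e\widetilde{F}_k\setminus \e\overline{F_k})}$. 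Summing over $k$ and using the mutual disjointness of the $\widetilde{F}_k$'s (so that the annuli $\e\widetilde{F}_k\setminus \e\overline{F_k}$ lie disjointly inside $\Omega^\e$) yields $\|\nabla u\|_{L^2(\e F\cap\Omega)}\le C\|\nabla u\|_{L^2(\Omega^\e)}\le C\|f\|_{H^{1/2}(\partial\Omega)}$.

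Combining the two bounds gives $\|\nabla u\|_{L^2(\Omega)}\le C\|f\|_{H^{1/2}(\partial\Omega)}$, and then Poincar\'e's inequality applied to $v=u-F\in H^1_0(\Omega)$ promotes this to $\|u\|_{L^2(\Omega)}\le C\|f\|_{H^{1/2}(\partial\Omega)}$, which is \eqref{energy-D}. The main obstacle is precisely the lack of uniform ellipticity of $A^\e_\delta$ as $\delta\to 0$: the bilinear form controls only a weighted $L^2$ norm of the gradient, so direct testing gives no information on $\e F\cap\Omega$. Lemma \ref{lemma-ext-F} is exactly the bridge that transfers $\|\nabla u\|_{L^2}$-control from the connected region $\Omega^\e$ to the degenerate region, and the collar geometry built into \eqref{dist} is what makes summing over the inclusions lose no factor depending on $\e$ or $\delta$.
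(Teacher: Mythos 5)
Your proposal is correct and follows essentially the same route as the paper's proof: lift $f$ to $H^1(\Omega)$, test the weak formulation against the zero-trace difference to get the weighted energy bound $\int_\Omega|\Lambda_\delta^\e\nabla(u-v)|^2\le C\|\nabla v\|_{L^2}^2$, hence control on $\Omega^\e$, then transfer to the inclusions via Lemma \ref{lemma-ext-F} and finish with Poincar\'e's inequality. Your added care in verifying that $\text{\rm div}(A(x/\e)\nabla u)=0$ holds on each $\e F_k$ in both the cases $\delta>0$ and $\delta=0$ (via \eqref{DP-e}) is a detail the paper leaves implicit, but the argument is the same.
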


\begin{proof}
Let $v\in H^1(\Omega)$ be a function such that $v=f$ on $\partial\Omega$ and
$\| v \|_{H^1(\Omega)} \le 2 \| f\|_{H^{1/2}(\partial\Omega)}$. 
Since $u-v \in H_0^1(\Omega)$ and
$\text{\rm div} (A_\delta^\delta \nabla (u-v)) = -\text{\rm div}(A_\delta^\e \nabla v)$ in $\Omega$,
it follows that
\begin{equation}\label{weak-1}
\int_\Omega A_\delta^\e \nabla (u-v) \cdot \nabla \psi \, dx
=-\int_\Omega A_\delta^\e \nabla v \cdot \nabla \psi\, dx
\end{equation}
 for any $\psi \in H_0^1(\Omega)$.
 Let $ \psi =u-v$ in \eqref{weak-1}. By using \eqref{ellipticity} and  the Cauchy inequality,
 $$
 \int_\Omega |\Lambda_\delta^\e \nabla (u-v)|^2\, dx \le C \int_\Omega |\nabla v|^2\, dx,
 $$
where $\Lambda^\e _\delta (x) =\Lambda_\delta (x/\e)$ and $C$ depends only on $\mu$.
It follows that $\| \nabla u \|_{L^2(\Omega^\e)} \le C \| f\|_{H^{1/2}(\partial\Omega)}$.
Using \eqref{F-k} and Lemma \ref{lemma-ext-F}, we obtain 
$$
\| \nabla u \|_{L^2(\Omega)} \le C \| \nabla u\|_{L^2(\Omega^\e)} \le C \| f\|_{H^{1/2}(\partial\Omega)}.
$$
This, together with Poincar'e's inequality $\| u-v \|_{L^2(\Omega)} \le C \| \nabla (u-v)\|_{L^2(\Omega)}$,
gives \eqref{energy-D}.
\end{proof}

\begin{lemma}
Let $\Omega$ be a bounded Lipschitz domain satisfying \eqref{F-k}.
Let $u=u_{\e, \delta}  $ be a weak solution of \eqref{NP} with $g\in H^{-1/2} (\partial\Omega)$ and $\langle  g, 1\rangle =0$.
Then 
\begin{equation}\label{energy-N}
\| \nabla u \|_{L^2(\Omega)} \le C \| g\|_{H^{-1/2}(\partial\Omega)},
\end{equation}
where $C$ depends only on $d$, $\mu$, $\kappa$, $\omega$,  and $\Omega$.
\end{lemma}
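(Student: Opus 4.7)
The strategy is to mimic the proof of the Dirichlet energy estimate \eqref{energy-D}, but the twist is that for the Neumann problem there is no convenient subtraction like $u-v$ that would allow us to test against a fixed comparison function. Instead, one must test against $u$ itself (modulo constants) and carry out the estimate entirely on the connected subregion $\Omega^\e$ where the coefficient matrix is uniformly elliptic, then propagate the bound into the inclusions $\e F_k$ using Lemma \ref{lemma-ext-F}. This is the mechanism that eliminates $\delta$ from the final constant.

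First, I would use Lemma \ref{lemma-ext} to produce an extension $\widetilde u\in H^1(\Omega)$ of $u|_{\Omega^\e}$ satisfying $\|\nabla\widetilde u\|_{L^2(\Omega)}\le C\|\nabla u\|_{L^2(\Omega^\e)}$, and set the constant $c=\fint_\Omega \widetilde u$. Because \eqref{F-k} guarantees that $\partial\Omega\subset\overline{\Omega^\e}$, the trace of $\widetilde u$ on $\partial\Omega$ agrees with the trace of $u$, so $\widetilde u-c$ is an admissible test function with the correct boundary trace. Testing the weak formulation of \eqref{NP} against $u-c$ and using $\langle g,1\rangle=0$ yields
\begin{equation*}
\int_\Omega (\Lambda_\delta^\e)^2 A(x/\e)\nabla u\cdot\nabla u\,dx =\langle g,\widetilde u-c\rangle.
\end{equation*}
Since $\Lambda_\delta^\e\equiv 1$ on $\Omega^\e$, the ellipticity condition \eqref{ellipticity} gives $\mu\|\nabla u\|_{L^2(\Omega^\e)}^2$ as a lower bound for the left-hand side, uniformly in $\delta\in[0,1]$.

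For the right-hand side, I would apply $H^{-1/2}$-$H^{1/2}$ duality followed by the trace theorem and a standard Poincaré inequality on $\Omega$:
\begin{equation*}
|\langle g,\widetilde u-c\rangle|\le \|g\|_{H^{-1/2}(\partial\Omega)}\|\widetilde u-c\|_{H^{1/2}(\partial\Omega)}\le C\|g\|_{H^{-1/2}(\partial\Omega)}\|\nabla\widetilde u\|_{L^2(\Omega)}\le C\|g\|_{H^{-1/2}(\partial\Omega)}\|\nabla u\|_{L^2(\Omega^\e)},
\end{equation*}
where the last inequality uses the extension bound from Lemma \ref{lemma-ext}. Combining the two sides and dividing by $\|\nabla u\|_{L^2(\Omega^\e)}$ yields $\|\nabla u\|_{L^2(\Omega^\e)}\le C\|g\|_{H^{-1/2}(\partial\Omega)}$.

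It remains to convert the bound on $\Omega^\e$ into a bound on all of $\Omega$. For $\delta>0$, the equation $\text{\rm div}(A_\delta^\e\nabla u)=0$ directly implies $\text{\rm div}(A(x/\e)\nabla u)=0$ in each $\e F_k\subset\Omega$; for $\delta=0$ the same conclusion holds by the convention of Remark \ref{remark-02}. Thus Lemma \ref{lemma-ext-F} applies on each $\e\widetilde F_k\subset\Omega$ (the containment is ensured by \eqref{F-k} together with \eqref{dist} after possibly shrinking the dilation factor by a bounded amount), giving $\|\nabla u\|_{L^2(\e F_k)}\le C\|\nabla u\|_{L^2(\e\widetilde F_k\setminus \e\overline{F_k})}$. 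Summing in $k$ and using the disjointness of the $\widetilde F_k$ produces $\|\nabla u\|_{L^2(\Omega)}\le C\|\nabla u\|_{L^2(\Omega^\e)}\le C\|g\|_{H^{-1/2}(\partial\Omega)}$, which is \eqref{energy-N}. The main conceptual obstacle is precisely the lack of uniform ellipticity in $\delta$: one must be disciplined about doing all the coercivity work on $\Omega^\e$ and relegating the inclusions $\e F_k$ to the separate, harmonic-extension step provided by Lemma \ref{lemma-ext-F}.
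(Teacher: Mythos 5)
Your proof is correct and follows essentially the same route as the paper: test the weak formulation with $u$ modulo a constant, use coercivity only on $\Omega^\e$ where $\Lambda^\e_\delta\equiv 1$, absorb via the $H^{-1/2}$--$H^{1/2}$ duality and Poincar\'e, and then pass from $\Omega^\e$ to $\Omega$ with Lemma \ref{lemma-ext-F}. The only (harmless) difference is that you are more explicit than the paper about the mean-zero normalization needed for Poincar\'e and you route the trace bound through the extension operator of Lemma \ref{lemma-ext}, whereas the paper simply invokes Lemma \ref{lemma-ext-F} to control $\|\nabla u\|_{L^2(\Omega)}$ by $\|\nabla u\|_{L^2(\Omega^\e)}$ in the absorption step.
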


\begin{proof} Note that for any $\psi \in H^1(\Omega)$,
\begin{equation}\label{weak-2}
\int_\Omega A_\delta^\e \nabla u \cdot \nabla \psi \, dx = \langle g, \psi \rangle_{H^{-1/2}(\partial\Omega)\times  H^{1/2}(\partial\Omega)}.
\end{equation}
By letting $\psi =u$ in \eqref{weak-1}, we obtain 
\begin{equation}\label{weak-3}
\| \Lambda_\delta^\e \nabla u\|_{L^2(\Omega)}^2 
\le C \| g \|_{H^{-1/2}(\partial\Omega)} \| \nabla u \|_{L^2(\Omega)}.
\end{equation}
By Lemma \ref{lemma-ext-F}, $\| \nabla u\|_{L^2(\Omega)} \le C \| \nabla u\|_{L^2(\Omega^\e)}$.
This, together with \eqref{weak-3},  yields \eqref{energy-N}.
\end{proof}

The next lemma gives  a Caccioppoli  inequality for the operator $\text{\rm div}(A^\e_\delta \nabla )$.

\begin{lemma}\label{lemma-4.1}
Let $u=u_{\e, \delta}\in H^1(\Omega )$ be a weak solution of 
$\text{\rm div}(A_\delta^\e \nabla u)=0$  in a bounded Lipschitz domain $\Omega$.
Then
\begin{equation}\label{4.1-0}
\int_{\Omega} |\Lambda_\delta^\e \nabla (u\varphi) |^2 \, dx
\le C \int_{\Omega} | \Lambda_\delta^\e u |^2 |\nabla \varphi|^2\, dx
\end{equation}
for any $\varphi\in C^1_0(\Omega)$,
where  $C$ depends only on $d$ and $\mu$.
\end{lemma}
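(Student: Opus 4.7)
The plan is to run the classical Caccioppoli test-function argument, while tracking carefully that the weight $(\Lambda_\delta^\e)^2$ appears on both sides of the inequality; the statement is stronger than merely a bound on $\int \varphi^2 |\Lambda_\delta^\e \nabla u|^2$ because it estimates $\nabla(u\varphi)$, so I will need an extra triangle-inequality step at the end.

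First I would test the weak formulation $\int_\Omega A_\delta^\e \nabla u \cdot \nabla \psi\, dx = 0$ against $\psi = u \varphi^2 \in H_0^1(\Omega)$ (admissible since $\varphi \in C_0^1(\Omega)$ and $u\in H^1(\Omega)$). Using $A_\delta^\e = (\Lambda_\delta^\e)^2 A(x/\e)$ and $\nabla \psi = \varphi^2 \nabla u + 2 u \varphi \nabla \varphi$, this rewrites as
\begin{equation*}
\int_\Omega (\Lambda_\delta^\e)^2 \varphi^2 \, A(x/\e) \nabla u \cdot \nabla u \, dx
= -2 \int_\Omega (\Lambda_\delta^\e)^2 u \varphi \, A(x/\e) \nabla u \cdot \nabla \varphi \, dx.
\end{equation*}
By the ellipticity lower bound in (\ref{ellipticity}) the left-hand side dominates $\mu \int_\Omega (\Lambda_\delta^\e)^2 \varphi^2 |\nabla u|^2\, dx$, and by $\|A\|_\infty \le \mu^{-1}$ together with the Cauchy–Schwarz and AM–GM inequalities the right-hand side is bounded by $\tfrac{\mu}{2} \int_\Omega (\Lambda_\delta^\e)^2 \varphi^2 |\nabla u|^2\, dx + C(\mu) \int_\Omega (\Lambda_\delta^\e)^2 u^2 |\nabla \varphi|^2 \, dx$. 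Absorbing the first term on the left yields
\begin{equation*}
\int_\Omega (\Lambda_\delta^\e)^2 \varphi^2 |\nabla u|^2 \, dx \le C \int_\Omega (\Lambda_\delta^\e)^2 u^2 |\nabla \varphi|^2 \, dx.
\end{equation*}

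Finally, I would expand $\nabla(u\varphi) = \varphi \nabla u + u \nabla \varphi$ and use $|a+b|^2 \le 2|a|^2+2|b|^2$ to estimate
\begin{equation*}
\int_\Omega (\Lambda_\delta^\e)^2 |\nabla(u\varphi)|^2 \, dx
\le 2 \int_\Omega (\Lambda_\delta^\e)^2 \varphi^2 |\nabla u|^2 \, dx
+ 2 \int_\Omega (\Lambda_\delta^\e)^2 u^2 |\nabla \varphi|^2 \, dx,
\end{equation*}
which combined with the previous display gives (\ref{4.1-0}). The only real subtlety — and it is a minor one — is that for $\delta = 0$ the operator degenerates on $\e F$, but since the identity is derived purely from the weak formulation and uses only $\int (\Lambda_\delta^\e)^2(\cdots)$ expressions, no extra care beyond what the weak formulation already encodes is required.
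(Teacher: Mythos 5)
Your proof is correct and is exactly the argument the paper intends: the paper's proof of this lemma consists of the single remark that one should use the test function $u\varphi^2$ in the weak formulation, as in the standard Caccioppoli inequality, which is precisely what you carry out (including the correct bookkeeping of the weight $(\Lambda_\delta^\e)^2$ on both sides and the final triangle-inequality step to pass from $\varphi^2|\nabla u|^2$ to $|\nabla(u\varphi)|^2$). Nothing further is needed.
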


\begin{proof}
This may be  proved by using the test function $u\varphi^2$ in the weak formulation of (\ref{4.1}), 
as in the proof of the standard Caccioppoli inequality for $\delta=1$.
\end{proof}


\subsection{Correctors}

For $0\le \delta\le 1$, let
$A_\delta (y)  =\Lambda_{\delta^2} (y) A(y)=[ \Lambda_{\delta} (y) ]^2  A (y)$, where  $\Lambda_\delta$ is defined by \eqref{Lambda-e}.
Observe that the $d\times d$ matrix  $A_\delta(y) $ is 1-periodic.
Let  $\chi_\delta (y) = (\chi_{\delta, 1}  (y), \dots, \chi_{\delta, d}  (y) )$ be the  corrector for the operator 
$-\text{div} ( A_\delta \nabla)$, where, for $1\le j\le d$, the function 
$\chi_{\delta, j}\in H^1_{\loc} (\mathbb{R}^d) $ is a weak solution of
\begin{equation}\label{cor-1}
\left\{
\aligned
& -\text{\rm div} (A_\delta  \nabla  \chi_{\delta, j} )=
\text{\rm div} ( A_\delta  \nabla y_j ) \quad \text{ in } \mathbb{R}^d,\\
& \chi_{\delta, j}  \text{ is 1-periodic},
\endaligned
\right.
\end{equation}
with
\begin{equation}\label{cor-2}
\left\{
\aligned
& \int_{ Y } \chi_{\delta, j} \, dy=0  & \quad & \text{ if } \delta>0,\\
& \int_{ Y \cap \omega} \chi_{\delta, j} \, dy=0  & \quad & \text{ if } \delta=0,\\
\endaligned
\right.
\end{equation}
and  $Y=[0, 1]^d$.
If $\delta>0$, the existence and uniqueness of correctors follow readily  from the Lax-Milgram Theorem 
by using the bilinear form, 
$$
\int_Y  A_\delta  \nabla \phi \cdot \nabla \psi\, dy,
$$
on the Hilbert space $H^1_{\text{per}} (Y)$,
the closure of 1-periodic $C^\infty$ functions in $H^1(Y)$.
In the case $\delta=0$, one uses  the bilinear form
$$
\int_{Y\cap \omega} A\nabla \phi \cdot \nabla \psi \, dy
$$
on the Hilbert space $H^1_{\text{per}} (Y\cap \omega)$,
the closure of 1-periodic $C^\infty$ functions in $H^1(Y\cap \omega)$.
This gives the definition of $\chi_{0, j}$ on $\omega$.
Recall that
  $\mathbb{R}^d \setminus \overline{\omega} =F=\cup_k { F_k} $.
  We extend $\chi_{0, j}$ to each $F_k$ by using the weak solution in $H^1(F_k)$  of
$
-\text{\rm div} (A\nabla u) =\text{\rm div} (A\nabla y_j) \  \text{ in } F_k,
$
with Dirichlet data $u=\chi_{0, j}$ on $\partial F_k$.

\begin{lemma}\label{lemma-2.1}
Let $0\le \delta\le 1$.
Then
\begin{equation}\label{2.1-0}
\int_Y  \Big( |\nabla \chi_\delta|^2 + |\chi_\delta|^2 \Big) \, dy \le C,
\end{equation}
where $C$ depends only on $d$, $\mu$, and $\omega$.
\end{lemma}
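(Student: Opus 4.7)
The plan is to split the bound into its two natural pieces: a gradient bound on the connected phase $Y\cap\omega$, obtained directly by testing the corrector equation against $\chi_{\delta,j}$; and a gradient bound on the inclusion phase $Y\cap F$, obtained by transferring the estimate across $\partial F_k$ using Lemma \ref{lemma-ext-F}. The $L^2$ control of $\chi_\delta$ itself then follows from Poincaré-Wirtinger using the normalizations \eqref{cor-2}.

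For the first step I would take $\psi=\chi_{\delta,j}$ as a test function in the weak formulation of \eqref{cor-1}. For $\delta>0$ this is posed on $H^1_{\text{per}}(Y)$; for $\delta=0$ on $H^1_{\text{per}}(Y\cap\omega)$. Using \eqref{ellipticity}, Cauchy-Schwarz, and the fact that $|\Lambda_\delta|\le 1$, one obtains
\begin{equation*}
\int_{Y\cap\omega}|\nabla\chi_{\delta,j}|^2\,dy+\delta^2\int_{Y\cap F}|\nabla\chi_{\delta,j}|^2\,dy\le C
\end{equation*}
for $\delta>0$, and the same bound without the $F$-term when $\delta=0$. In either case we get uniform control of the gradient on the connected phase.

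Next I want to upgrade this to a gradient bound on $Y\cap F$ that is uniform in $\delta$. The key observation is that $u_j:=\chi_{\delta,j}+y_j$ satisfies $\text{\rm div}(A\nabla u_j)=0$ in each $F_k$: for $\delta>0$ this follows by cancelling the prefactor $\delta^2$ from \eqref{cor-1} on $F_k$, and for $\delta=0$ it is the defining Dirichlet extension. Since $\chi_{\delta,j}\in H^1(\widetilde F_k)$ and $\widetilde F_k\setminus\overline{F_k}\subset\omega$, Lemma \ref{lemma-ext-F} yields
\begin{equation*}
\|\nabla\chi_{\delta,j}\|_{L^2(F_k)}\le\|\nabla u_j\|_{L^2(F_k)}+\|\nabla y_j\|_{L^2(F_k)}\le C\bigl(\|\nabla\chi_{\delta,j}\|_{L^2(\widetilde F_k\setminus\overline{F_k})}+1\bigr).
\end{equation*}
By periodicity only finitely many $F_k$ intersect $Y$ (up to translation), so summing and combining with Step 1 gives $\|\nabla\chi_\delta\|_{L^2(Y)}\le C$.

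Finally, for the $L^2$ estimate on $\chi_\delta$ I would use the normalization \eqref{cor-2}. When $\delta>0$, $\chi_{\delta,j}$ is $Y$-periodic with mean zero on $Y$, and the standard Poincaré-Wirtinger inequality combined with the gradient bound above closes the estimate. When $\delta=0$, Poincaré-Wirtinger on $Y\cap\omega$ (using the connectedness of $\omega$ and the zero-mean condition on $Y\cap\omega$) gives $\|\chi_{0,j}\|_{L^2(Y\cap\omega)}\le C$; for each $F_k$ I would write $\chi_{0,j}=v+w$, where $v$ is the Stein-type extension of $\chi_{0,j}|_{\widetilde F_k\setminus\overline{F_k}}$ used in Section 2.1 (so $\|v\|_{H^1(F_k)}\le C$), and $w\in H^1_0(F_k)$ solves a zero-boundary Dirichlet problem with controlled right-hand side; energy and Poincaré estimates on $F_k$ then bound $\|w\|_{L^2(F_k)}$ and finish the proof.

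The main obstacle is the uniformity in $\delta$, especially the case $\delta=0$ where the bilinear form degenerates completely on $F$; the whole point of introducing the extension operators and Lemma \ref{lemma-ext-F} is to bypass this degeneration by using the intrinsic ellipticity of $A$ on each $F_k$ together with the trace from the connected phase. A minor bookkeeping issue is making sure the $\widetilde F_k$'s intersecting $Y$ (which may stick out of $Y$) are handled via periodic translation before applying the extension inequality.
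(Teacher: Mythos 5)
Your proposal is correct and follows essentially the same route as the paper: energy estimates on the connected phase, the observation that $\chi_{\delta,j}+y_j$ is $A$-harmonic in each $F_k$ combined with Lemma \ref{lemma-ext-F} to transfer the gradient bound into the inclusions, and Poincar\'e with the normalizations \eqref{cor-2} for the $L^2$ bound. The only cosmetic difference is in the $\delta=0$ case on $F_k$, where the paper invokes the $H^1$--$H^{1/2}$ estimate for the Dirichlet problem while you use an extension-plus-energy decomposition; these are equivalent.
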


\begin{proof}
Let $0<\delta\le 1$.
By energy estimates, 
$$
\int_{Y} |\Lambda_\delta  \nabla \chi_\delta |^2\, dy \le C \int_Y |\Lambda_\delta A|^2\, dy \le C.
$$
This gives $\| \nabla \chi_\delta\|_{L^2(Y\cap \omega)} \le C$.
Next, note that
$
\text{\rm div} (A \nabla (\chi_{\delta, j} +y_j)) =0
$
in $ F_k$.
It follows by Lemma \ref{lemma-ext-F} that
$$
\|\nabla \chi_\delta \|_{L^2(F_k)}
\le C + C  \| \nabla \chi_\delta \|_{L^2(Y\cap \omega)} \le C.
$$
Since $Y \cap F_k \neq \emptyset$ only for a finite number of $k$'s,
we obtain  $\|\nabla \chi_\delta \|_{L^2(Y\setminus \omega)} \le C$.
As a result, we have proved that $\|\nabla \chi_\delta\|_{L^2(Y)} \le C$.
In view of (\ref{cor-2}), the estimate $\| \chi_\delta \|_{L^2(Y)} \le C$ follows by Poincar\'e's  inequality.

If $\delta =0$, the energy estimate gives $\|\nabla \chi_0\|_{L^2(Y\cap \omega)} \le C$.
By Poincar\'e's  inequality and (\ref{cor-2}), we obtain 
$\|\chi_0 \|_{H^1(Y\cap \omega)} \le C$.
In view of the definition of $\chi_0$ on $F_k$, we have
$$
\aligned
\|\chi_0\|_{H^1(F_k)}  & \le C +  C \|\chi_0 \|_{H^{1/2} (\partial F_k)}
 \le C + C \| \chi_0\|_{H^1(\widetilde{F}_k\setminus F_k)}\\
& \le  C + C \| \chi_0 \|_{H^1(Y\cap \omega)}
\le C.
\endaligned
$$
It follows that $\|\chi_0\|_{H^1(Y\setminus \omega)} \le C$.
\end{proof}


\subsection{Homogenized operator}

The homogenized matrix for the operator $-\text{\rm div} (A_\delta \nabla )$ is given by
\begin{equation}\label{h-matrix}
\widehat{A_\delta}
=\fint_Y  \Big\{ A_\delta  + A_\delta  \nabla \chi_\delta \Big\} \, dy.
\end{equation}

\begin{lemma}\label{lemma-2.2}
Let $0< \delta\le 1$.
Then
\begin{equation}\label{h-diff}
|\widehat{A_\delta} - \widehat{A_0} |
\le C \delta^2,
\end{equation}
where $C$ depends only on $d$, $\mu$, and $\omega$.
\end{lemma}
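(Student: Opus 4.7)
The plan is to split $(\widehat{A_\delta}-\widehat{A_0})e_j$ into an explicit $O(\delta^2)$ contribution from the inclusion region $F$ plus a correction term controlled by $\|\nabla(\chi_{\delta,j}-\chi_{0,j})\|_{L^2(Y\cap\omega)}$, and then to show that this latter quantity is itself $O(\delta^2)$ via an energy argument that uses the extension operators from Subsection~2.1 to construct an efficient test function.

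Since $A_\delta=A$ on $\omega$, $A_\delta=\delta^2 A$ on $F$, and $A_0\equiv 0$ on $F$, for each $1\le j\le d$ I would write
\[
(\widehat{A_\delta}-\widehat{A_0})e_j
=\underbrace{\delta^2\fint_{Y\cap F}A(\nabla\chi_{\delta,j}+e_j)\,dy}_{I_1(j)}
+\underbrace{\fint_{Y\cap\omega}A\,\nabla(\chi_{\delta,j}-\chi_{0,j})\,dy}_{I_2(j)}.
\]
The first term satisfies $|I_1(j)|\le C\delta^2$ immediately from $\|A\|_\infty\le\mu^{-1}$ together with the bound $\|\nabla\chi_\delta\|_{L^2(Y)}\le C$ provided by Lemma~\ref{lemma-2.1}.

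For $I_2(j)$, set $w_j:=\chi_{\delta,j}-\chi_{0,j}$ and test the weak form of \eqref{cor-1} for $\chi_{\delta,j}$ against an arbitrary $\psi\in H^1_{\mathrm{per}}(Y)$. Since $\psi|_{Y\cap\omega}$ lies in $H^1_{\mathrm{per}}(Y\cap\omega)$, one may subtract the corresponding weak formulation for $\chi_{0,j}$ on $Y\cap\omega$ to obtain the key identity
\[
\int_{Y\cap\omega}A\,\nabla w_j\cdot\nabla\psi\,dy
=-\delta^2\int_{Y\cap F}A(\nabla\chi_{\delta,j}+e_j)\cdot\nabla\psi\,dy
\qquad \text{for every } \psi\in H^1_{\mathrm{per}}(Y).
\]
The idea now is to choose $\psi=\widetilde{w}_j$, where $\widetilde{w}_j\in H^1(Y)$ is a $1$-periodic extension of $w_j|_{Y\cap\omega}$ produced by applying the operator $\widetilde{T}_k$ on each reference $F_k$ (using the same extension on every periodic shift so that the result remains $1$-periodic), and satisfying $\|\nabla\widetilde{w}_j\|_{L^2(Y\cap F)}\le C\|\nabla w_j\|_{L^2(Y\cap\omega)}$. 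Ellipticity of $A$ on $\omega$ together with the Cauchy--Schwarz inequality then give
\[
\mu\,\|\nabla w_j\|_{L^2(Y\cap\omega)}^{2}\le C\delta^{2}\,\|\nabla w_j\|_{L^2(Y\cap\omega)},
\]
so $\|\nabla w_j\|_{L^2(Y\cap\omega)}\le C\delta^{2}$ and hence $|I_2(j)|\le C\delta^{2}$, yielding \eqref{h-diff}.

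The only genuinely delicate point — and the reason the estimate is $\delta^2$ rather than the naive $\delta$ — is the replacement of $w_j$ on $F$ by the extension $\widetilde{w}_j$ of its restriction to $\omega$. Testing with $\psi=w_j$ directly would give only $\|\nabla w_j\|_{L^2(Y\cap\omega)}=O(\delta)$, since $\|\nabla w_j\|_{L^2(Y\cap F)}$ is merely $O(1)$ and the right-hand side would not produce the extra factor of $\delta$. Using the extension converts the right-hand side into a quantity proportional to $\|\nabla w_j\|_{L^2(Y\cap\omega)}$ itself, which is exactly what lets the Young-type inequality absorb one factor and upgrade the rate.
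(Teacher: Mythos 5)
Your proposal is correct and follows essentially the same route as the paper: an explicit $O(\delta^2)$ contribution from $F$ plus a term controlled by $\|\nabla(\chi_\delta-\chi_0)\|_{L^2(Y\cap\omega)}$, which is then shown to be $O(\delta^2)$ by testing the difference equation against a periodic extension of $(\chi_\delta-\chi_0)|_{Y\cap\omega}$ whose gradient on $F$ is controlled by the gradient on $\omega$. Your closing remark correctly identifies the role of the extension (the naive test function $\psi=\chi_\delta-\chi_0$ only yields the rate $\delta$), which is exactly the point of the paper's choice of $\psi$.
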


\begin{proof}
We provide a proof, which  also may be found in \cite{Chase-Russell-2018}, for the reader's convenience.
Note that
$$
\aligned
|\widehat{A_\delta}-\widehat{A_0}|
&\le \int_Y | \Lambda_{\delta^2} -\Lambda_0| |A|\, dy
+ \int_Y | \Lambda_{\delta^2} A \nabla \chi_\delta
-\Lambda_0 A \nabla \chi_0|\, dy\\
&\le C \delta^2 + C \int_{Y\cap \omega}
|\nabla(  \chi_\delta -\chi_0) |\, dy + C \delta^2 \int_{Y\setminus \omega} |\nabla \chi_\delta|\, dy\\
&\le C \delta^2 + C \int_{Y\cap \omega}
|\nabla ( \chi_\delta -\chi_0) |\, dy,
\endaligned
$$
where we have used (\ref{2.1-0}) for the last inequality.
To bound $|\nabla (\chi_\delta -\chi_0)|$, we observe that
$$
-\text{\rm div} (\Lambda_{\delta^2} A \nabla (\chi_\delta -\chi_0))
=\text{\rm div} ( ( \Lambda_{\delta^2} -\Lambda_0) A \nabla  (y+ \chi_0)).
$$
Thus,  for any $\psi\in H^1_{\text{per}} (Y)$,
\begin{equation}\label{ext-99}
\int_Y \Lambda_{\delta^2} A \nabla (\chi_\delta -\chi_0) \cdot \nabla \psi \, dy
=-\delta^2 \int_{Y\setminus  \omega}
A\nabla (y +\chi_0) \cdot \nabla \psi\, dy.
\end{equation}
We now choose $\psi\in H^1_{\text per}(Y)$ such that
$\psi=\chi_\delta -\chi_0$ on $Y\cap \omega$ and
\begin{equation}\label{ext-98}
\|\nabla \psi\|_{L^2(Y)} \le C \|\nabla (\chi_\delta -\chi_0)\|_{L^2(Y\cap \omega)}.
\end{equation}
It follows from (\ref{ext-99}) and (\ref{ext-98}) that
$$
\aligned
\int_{Y\cap \omega}
|\nabla (\chi_\delta -\chi_0)|^2\, dy
&\le C \delta^2 \int_{Y\setminus \omega}
|\nabla (\chi_\delta -\chi_0)| |\nabla \psi|\, dy
+ C \delta^2 \int_{Y\setminus \omega}
|\nabla (y +\chi_0)| |\nabla \psi|\, dy\\
&\le C \delta^2 \|\nabla (\chi_\delta -\chi_0)\|_{L^2(Y\cap \omega)}.
\endaligned
$$
Hence,
$$
\| \nabla (\chi_\delta -\chi_0)\|_{L^1(Y\cap \omega)}\le 
\| \nabla (\chi_\delta -\chi_0)\|_{L^2(Y\cap \omega)}
\le C \delta^2.
$$
\end{proof}

Note that for $\xi\in \mathbb{R}^d$,
\begin{equation}\label{h-matrix-1}
\big(  \widehat{A_\delta} \xi\big) \cdot  \xi
=\int_Y   A_\delta  \nabla \big( (y+ \chi_\delta) \cdot \xi\big)
\cdot   \nabla \big( ( y +\chi_\delta) \cdot \xi \big) \, dy.
\end{equation}
It follows  that 
$\widehat{A_\delta}$ is symmetric if $A$ is symmetric.

\begin{thm}
Let $0\le \delta\le 1$. Then for any $\xi \in \mathbb{R}^d$,
\begin{equation}\label{ellipticity-1}
\mu_0 |\xi|^2 \le \big( \widehat{A_\delta} \xi\big) \cdot  \xi \quad \text{ and } \quad |\widehat{A_\delta}|\le \mu_0^{-1},
\end{equation}
where $\mu_0>0$ depends only on $d$, $\mu $, and $\omega$.
\end{thm}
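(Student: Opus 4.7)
The plan is to prove the two bounds separately, reducing the lower bound to a Poincar\'e-type inequality on the perforated cell that captures the connectedness of $\omega$.

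For the upper bound $|\widehat{A_\delta}| \le \mu_0^{-1}$, I would start from the defining formula \eqref{h-matrix}: writing
\[
\widehat{A_\delta} = \int_Y A_\delta\, dy + \int_Y A_\delta \nabla\chi_\delta\, dy,
\]
applying the pointwise bound $|A_\delta| \le \mu^{-1}$ and Cauchy--Schwarz together with the $L^2(Y)$ estimate $\|\nabla\chi_\delta\|_{L^2(Y)} \le C(d,\mu,\omega)$ from Lemma \ref{lemma-2.1} yields a constant depending only on $d$, $\mu$, and $\omega$, uniformly in $\delta$.

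For the lower bound, I would begin from the quadratic form representation \eqref{h-matrix-1}. Since $\Lambda_{\delta^2} \equiv 1$ on $\omega$ and $\Lambda_{\delta^2} A \geq 0$ on $F$, dropping the nonnegative integrand on $Y\cap F$ and invoking the ellipticity of $A$ gives
\[
(\widehat{A_\delta}\xi)\cdot\xi \;\geq\; \mu\int_{Y\cap\omega}|\xi + \nabla(\chi_\delta\cdot\xi)|^2\, dy.
\]
Since $\chi_\delta\cdot\xi \in H^1_{\text{per}}(Y)$, matters reduce to a $\delta$-independent Poincar\'e-type inequality: there should exist $c_0 = c_0(d,\omega) > 0$ such that
\[
\int_{Y\cap\omega}|\xi + \nabla\phi|^2\, dy \;\geq\; c_0 |\xi|^2 \qquad \text{for all } \xi\in\mathbb{R}^d \text{ and } \phi\in H^1_{\text{per}}(Y).
\]

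I would prove this inequality by contradiction and compactness. A failure produces sequences $\xi_n \to \xi$ with $|\xi|=1$ and periodic $\phi_n$ satisfying $\int_{Y\cap\omega}|\xi_n + \nabla\phi_n|^2 \to 0$. Applying the cell-wise periodic extension operator (a periodic variant of the $\widetilde{T}_k$ of Subsection 2.1, built by patching the extensions on each $\widetilde{F}_k\cap Y$) to $\phi_n|_{Y\cap\omega}$ produces $\tilde\phi_n \in H^1_{\text{per}}(Y)$ agreeing with $\phi_n$ on $Y\cap\omega$ and satisfying $\|\nabla\tilde\phi_n\|_{L^2(Y)} \le C\|\nabla\phi_n\|_{L^2(Y\cap\omega)}$; after the normalization $\int_Y \tilde\phi_n = 0$ they are bounded in $H^1(Y)$, and Rellich--Kondrachov extracts a limit $\tilde\phi \in H^1_{\text{per}}(Y)$ with $\nabla\tilde\phi = -\xi$ a.e.\ on $Y\cap\omega$. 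Extending $\tilde\phi$ by periodicity, $\nabla\tilde\phi \equiv -\xi$ persists on the connected 1-periodic set $\omega$, forcing $\tilde\phi(y) = -\xi\cdot y + c$ throughout $\omega$; combined with periodicity this gives $\xi\cdot z = 0$ for every $z\in\mathbb{Z}^d$, hence $\xi = 0$, contradicting $|\xi|=1$. The main obstacle is exactly this transfer: the perforated cell $Y\cap\omega$ may be disconnected, so the rigidity $\nabla\tilde\phi = -\xi \Rightarrow \tilde\phi$ affine can only be invoked after passing to the globally connected set $\omega$ via the periodic extension. This step is where the hypotheses that $\omega$ is connected and has Lipschitz boundary are used in an essential way.
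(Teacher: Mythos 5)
Your proof is correct, and the lower bound is organized differently from the paper's in a way that is worth noting. The paper first records the crude bound $(\widehat{A_\delta}\xi)\cdot\xi \ge \mu\min(1,\delta^2)|\xi|^2$, which degenerates as $\delta\to 0$, and then invokes Lemma \ref{lemma-2.2} ($|\widehat{A_\delta}-\widehat{A_0}|\le C\delta^2$) to reduce the uniform statement to the single case $\delta=0$; for that case it runs a compactness argument over the class of coefficient matrices $A^\ell$ and unit vectors $\xi^\ell$, using the correctors $\chi_0^\ell$ as the varying functions. You instead drop the nonnegative contribution of $Y\cap F$ in \eqref{h-matrix-1} to get $(\widehat{A_\delta}\xi)\cdot\xi\ge\mu\int_{Y\cap\omega}|\xi+\nabla(\chi_\delta\cdot\xi)|^2$ uniformly in $\delta$, and then prove a single $A$-independent, $\delta$-independent Poincar\'e-type inequality on the perforated cell by compactness over arbitrary periodic test functions. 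This bypasses Lemma \ref{lemma-2.2} entirely, treats all $\delta\in[0,1]$ at once, and makes it transparent that $\mu_0$ depends on $A$ only through $\mu$. Both arguments conclude with the same rigidity step: a periodic $H^1_{\text{loc}}$ function with gradient $-\xi$ on the connected, $1$-periodic set $\omega$ forces $\xi\cdot z=0$ for all $z\in\mathbb{Z}^d$ (the paper phrases this via points $y,\,y+e_j\in\partial Y\cap\omega$). Your reliance on a periodic extension operator from $Y\cap\omega$ to $Y$ is legitimate and is exactly the device the paper itself uses in \eqref{ext-98}; just be careful to perform the extension hole by hole on $\mathbb{R}^d$ (since the $F_k$ may straddle $\partial Y$) before restricting back to the cell. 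The upper bound is handled identically in both proofs via Lemma \ref{lemma-2.1}.
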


\begin{proof}
The second inequality in (\ref{ellipticity-1}) follows readily  from (\ref{2.1-0}).
To see the first, note that 
$$
\aligned
\big(  \widehat{A_\delta} \xi\big) \cdot  \xi
&\ge \mu \min (1, \delta^2)
 \int_Y |\nabla ( (y +\chi_\delta) \cdot \xi) |^2\, dy\\
 & =\mu \min (1, \delta^2 )
 \int_Y  \Big\{ |\nabla (\chi_\delta \cdot \xi)|^2 + |\xi|^2
 +2 (\nabla \chi_\delta )\xi  \cdot \xi \Big\}\, dy\\
 & \ge \mu \min (1 , \delta^2) |\xi|^2.
 \endaligned
 $$
Thus, in view of (\ref{h-diff}),
 it suffices to consider  the case $\delta=0$.
To this end,
suppose there exists a sequence $\{ A^\ell\}$ of 1-periodic matrices 
satisfying (\ref{ellipticity}) and a sequence $\{ \xi^\ell\}\subset \mathbb{R}^d$ with $|\xi^\ell|=1$ 
such that $(\widehat{A^\ell_0} \xi^\ell )\cdot \xi^\ell \to  0$,
as $\ell \to \infty$.
By passing to a subsequence we may assume  $\xi^\ell \to \xi$.
It follows that $(\widehat{A^\ell_0} \xi )\cdot \xi\to  0$,
as $\ell \to \infty$, where $|\xi|=1$.
Let $\chi^\ell_0$ denote the corrector for the matrix $A^\ell_0$.
Then
$$
\int_{Y\cap \omega} |\nabla \big( (\chi^\ell_0 +y)\cdot \xi\big)|^2\, dy \to 0,
$$
as $\ell \to \infty$.
Let $E_\ell$ denote the  average of $(\chi^\ell_0+y)\cdot \xi$ over $Y\cap \omega$.
Then, $(\chi^\ell_0 +y)\cdot \xi -E_\ell $ converges to zero in $H^1(Y\cap\omega)$.
Since the sequence $\{ \chi^\ell_0\cdot \xi -E_\ell \}$ is bounded in $H^1_{\text{per}} (Y\cap \omega)$, by 
passing to a subsequence, 
we may assume it converges weakly  in $H^1_{\text{per}} (Y\cap \omega)$.
This implies that $y\cdot \xi\in H^1_{\text per} (Y\cap \omega)$.
However, since all connected components of $\mathbb{R}^d \setminus \omega$ are bounded,
for each $1\le j\le d$, 
there exists $y\in \partial Y\cap \omega$ such that $y +e_j \in \partial Y \cap \omega$.
It follows that $(y+ e_j)\cdot \xi =y\cdot \xi$.
Consequently,  we obtain $\xi=0$, which contradicts with the fact $|\xi|=1$.
The argument above shows that $( \widehat{A_0}\xi)\cdot \xi \ge \mu_0|\xi|^2$
for some $\mu_0>0$ depending only on $d$, $\mu$, and $\omega$ (not directly on $A$).
\end{proof}


\subsection{Flux correctors}

Let
\begin{equation}\label{B}
B_\delta =A_\delta + A_\delta \nabla \chi_\delta -\widehat{A_\delta}.
\end{equation}
Write $B_\delta = ( b_{\delta, ij} )_{d\times d}$. By the definitions of $\chi_\delta$ and $\widehat{A_\delta}$,
\begin{equation}\label{B-1}
\frac{\partial}{\partial y_i} b_{\delta, ij}=0
\quad \text{ and } \quad
\int_Y b_{\delta, ij} \, dy =0,
\end{equation}
where the repeated index is summed.

\begin{lemma}\label{lemma-2.3}
There exist $\phi_{\delta, kij} \in H^1_{\text{per}} (Y)$, where $1\le i, j, k \le d$,  such that
$\int_Y \phi_{\delta, kij} \, dy=0$, 
\begin{equation}\label{2.3-0}
b_{\delta, ij} =\frac{\partial}{\partial y_k } \phi_{\delta, kij}
\quad \text{ and } \quad
\phi_{\delta, kij} =-\phi_{\delta, ikj}.
\end{equation}
Moreover, for $0\le \delta\le 1$,
\begin{equation}\label{2.3-1}
\int_Y  \Big( |\nabla \phi_{\delta, kij}|^2 + |\phi_{\delta, kij}|^2 \Big)\, dy \le C,
\end{equation}
where $C$ depends only on $d$, $\mu$, and $\omega$.
\end{lemma}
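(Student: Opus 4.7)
The plan is to use the standard construction of flux correctors via auxiliary periodic Poisson problems, relying on the two properties established in (\ref{B-1}): for each fixed $j$, the vector field $(b_{\delta,1j},\dots,b_{\delta,dj})$ is $1$-periodic, mean-zero, and divergence-free in the first index.

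First, for each pair $(i,j)$ I would solve the periodic Poisson problem
\begin{equation*}
\Delta f_{\delta,ij} = b_{\delta,ij} \quad \text{in } Y,\qquad f_{\delta,ij}\in H^1_{\text{per}}(Y),\qquad \int_Y f_{\delta,ij}\,dy=0.
\end{equation*}
Solvability follows from the mean-zero property of $b_{\delta,ij}$ in (\ref{B-1}). Standard elliptic regularity on the torus gives $\|\nabla^2 f_{\delta,ij}\|_{L^2(Y)}\le C\|b_{\delta,ij}\|_{L^2(Y)}$.

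Second, I define
\begin{equation*}
\phi_{\delta,kij} = \partial_{y_k} f_{\delta,ij} - \partial_{y_i} f_{\delta,kj}.
\end{equation*}
The antisymmetry $\phi_{\delta,kij}=-\phi_{\delta,ikj}$ is built into the definition, and $\int_Y \phi_{\delta,kij}\,dy=0$ since each summand is a periodic derivative. To verify $\partial_{y_k}\phi_{\delta,kij}=b_{\delta,ij}$, I compute
\begin{equation*}
\partial_{y_k}\phi_{\delta,kij} = \Delta f_{\delta,ij} - \partial_{y_i}\bigl(\partial_{y_k} f_{\delta,kj}\bigr) = b_{\delta,ij} - \partial_{y_i}\bigl(\partial_{y_k} f_{\delta,kj}\bigr).
\end{equation*}
The auxiliary function $h:=\partial_{y_k} f_{\delta,kj}$ is $1$-periodic and satisfies $\Delta h = \partial_{y_k}\Delta f_{\delta,kj}=\partial_{y_k} b_{\delta,kj}=0$ by the divergence-free property in (\ref{B-1}); hence $h$ is constant, and its integral over $Y$ vanishes, so $h\equiv 0$. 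This gives the desired identity (\ref{2.3-0}).

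Third, I would establish the $L^2$ bound (\ref{2.3-1}) uniformly in $\delta\in[0,1]$. Since $0\le \Lambda_\delta\le 1$ and $\|A\|_\infty\le \mu^{-1}$, one has $\|A_\delta\|_{L^\infty(Y)}\le \mu^{-1}$; combined with Lemma \ref{lemma-2.1} this yields $\|B_\delta\|_{L^2(Y)}\le C$ with $C$ depending only on $d$, $\mu$, and $\omega$. Then $\|\nabla\phi_{\delta,kij}\|_{L^2(Y)}\le C\|\nabla^2 f_{\delta,ij}\|_{L^2(Y)}+C\|\nabla^2 f_{\delta,kj}\|_{L^2(Y)}\le C$, and the bound on $\|\phi_{\delta,kij}\|_{L^2(Y)}$ follows from Poincar\'e's inequality since $\phi_{\delta,kij}$ has mean zero. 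There is no real obstacle here: the construction never touches the degenerate operator $\mathcal{L}_{\e,\delta}$ but only the ordinary Laplacian on the torus, so uniformity in $\delta$ reduces entirely to the uniform $L^2$ bound on $B_\delta$, which is already in hand from Lemma \ref{lemma-2.1} and the uniform $L^\infty$ bound on $A_\delta$.
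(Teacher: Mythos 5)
Your proposal is correct and follows essentially the same route as the paper: solving the periodic Poisson problem $\Delta f_{\delta,ij}=b_{\delta,ij}$, setting $\phi_{\delta,kij}=\partial_{y_k}f_{\delta,ij}-\partial_{y_i}f_{\delta,kj}$, and bounding everything via the $H^2$ estimate together with the uniform $L^2$ bound on $B_\delta$ from Lemma \ref{lemma-2.1}. You also spell out the step the paper leaves implicit, namely that $\partial_{y_k}f_{\delta,kj}$ is a periodic harmonic function with zero mean and hence vanishes identically.
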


\begin{proof}
The proof is similar to the case $\delta=1$ \cite{JKO-1993, KLS-2012}.
Since $\int_Y b_{\delta, ij} \, dy=0$, there exists $f_{ij} \in H^2_{\text{per}} (Y)$ such that
$$
\Delta f_{ij} = b_{\delta, ij} \quad \text{ in } Y \quad 
\text{ and } \quad
\| f_{ij}\|_{H^2_{\text{per}} (Y)}\le C \| b_{\delta, ij} \|_{L^2(Y)}.
$$
Let
$$
\phi_{\delta, kij} =\frac{\partial }{\partial y_k} f_{ij}
-\frac{\partial}{\partial y_i} f_{kj}.
$$
The second equation in (\ref{2.3-0}) is obvious, while the first follows from the first equation in (\ref{B-1}).
Finally, note that if $0\le \delta \le 1$,
$$
\aligned
\| \phi_{\delta, kij} \|_{H^1 (Y)}
 & \le C \big\{ \| f_{ij} \|_{H^2(Y)}
 + \| f_{kj}\|_{H^2(Y)} \big\}\\
 &\le  C \big\{ \| b_{\delta, ij} \|_{L^2(Y)} + \| b_{\delta, kj}\|_{L^2(Y)} \big\}\\
 &\le C,
 \endaligned
 $$
 where we have used (\ref{2.1-0}) for the last inequality.
\end{proof}


\subsection{Small-scale  H\"older estimates in Lipschitz domains}

Let $B_r=B(0, r)$, 
$$
B_r^+= B_r\cap \{ (x^\prime, x_d) \in  \mathbb{R}^d: x_d> \psi (x^\prime)\}, \quad
 B_r^- =B_r\cap \{ (x^\prime, x_d)\in \mathbb{R}^d: x_d<\psi (x^\prime)  \},
 $$
  and
 $\Delta_r = B_r\cap \{ (x^\prime, x_d): x_d=\psi (x^\prime) \}$,
 where $\psi : \mathbb{R}^{d-1} \to \mathbb{R}$ is a Lipschitz function with
 $\psi (0)=0$ and $\|\nabla \psi\|_\infty\le M_1$.
 Consider the weak solution of $\text{\rm div} (\widetilde{A}_\delta \nabla u)=0$ in $B_1$,
 where $\widetilde{A}_\delta (x)=A(x)$ for $x\in B_1^+$ and $\widetilde{A}_\delta(x)=\delta^2 A(x)$ for $x\in B_1^-$.
 In the case $\delta=0$ we also  assume $\text{\rm div}(A\nabla u)=0$ in $B_1^-$.

\begin{lemma}\label{lemma-local-1}
Suppose $A$ satisfies the ellipticity condition \eqref{ellipticity}.
Let $u\in H^1(B_1 )$ be a weak solution of $
\text{\rm div}(\widetilde{A}_\delta \nabla u)=0$ in $B_1$.
Then
\begin{align}
\| u\|_{C^{ \sigma} (B^+_{1/2})}
 & \le C \left\{ 
\left( \fint_{B_1^+} |u|^2 \right)^{1/2}
+ \delta \left( \fint_{B^-_1} |u|^2 \right)^{1/2} \right\}
 \label{6.10-1},\\
\| u\|_{C^{ \sigma} (B^-_{1/2})}
&\le C 
\left( \fint_{B_1} |u|^2 \right)^{1/2} ,
\label{6.10-01}
\end{align}
where $C>0$ and $\sigma\in (0, 1)$ depend only on $d$,  $\mu$, and $M_1$.
 \end{lemma}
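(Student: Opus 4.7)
The plan is to prove (a) by a Campanato-style iteration adapted to the two-phase interface, and then deduce (b) from (a) using classical boundary H\"older regularity for the Dirichlet problem. For (a), interior points of $B_{1/2}^+$ away from the interface are handled by De Giorgi--Nash applied directly to $\text{div}(A\nabla u)=0$, so the issue reduces to a boundary point $x_0\in\overline{\Delta_{1/2}}$. At scale $0<r\le 1/4$, I decompose $u=v_r+w_r$ on $B_r^+(x_0)$, where $v_r$ solves the homogeneous Neumann problem $\text{div}(A\nabla v_r)=0$ in $B_r^+(x_0)$, $v_r=u$ on $\partial B_r^+(x_0)\setminus\Delta_r(x_0)$, and $(n\cdot A\nabla v_r)_+=0$ on $\Delta_r(x_0)$. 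Classical boundary H\"older regularity for the homogeneous Neumann problem in a Lipschitz domain with bounded measurable coefficients (a De Giorgi--Nash-type result) furnishes a Campanato decay $\int_{B_\rho^+(x_0)}|v_r-(v_r)_\rho|^2\le C(\rho/r)^{d+2\sigma'}\int_{B_r^+(x_0)}|v_r-(v_r)_r|^2$ for $\rho\le r/2$ and some $\sigma'\in(0,1)$ depending only on $d$, $\mu$, $M_1$.

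For $w_r:=u-v_r$, the transmission condition $(n\cdot A\nabla u)_+=\delta^2(n\cdot A\nabla u)_-$ on $\Delta_r(x_0)$ becomes $(n\cdot A\nabla w_r)_+=\delta^2(n\cdot A\nabla u)_-$, with $w_r=0$ on the remaining part of $\partial B_r^+(x_0)$. Testing $\text{div}(A\nabla w_r)=0$ against $w_r$, extending $w_r$ across $\Delta_r(x_0)$ to a function $\widetilde w_r$ on $B_r^-(x_0)$ vanishing on $\partial B_r^-(x_0)\setminus\Delta_r(x_0)$ with $\|\nabla\widetilde w_r\|_{L^2(B_r^-)}\le C\|\nabla w_r\|_{L^2(B_r^+)}$, and integrating by parts using $\text{div}(A\nabla u)=0$ in $B_r^-(x_0)$ yield $\|\nabla w_r\|_{L^2(B_r^+)}\le C\delta^2\|\nabla u\|_{L^2(B_r^-)}$. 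The two-phase Caccioppoli inequality that follows from Lemma \ref{lemma-4.1} controls $\delta\|\nabla u\|_{L^2(B_r^-)}$ by $Cr^{-1}\bigl(\|u-c\|_{L^2(B_{2r}^+)}+\delta\|u-c\|_{L^2(B_{2r}^-)}\bigr)$ for any constant $c$, and Poincar\'e's inequality (using that $w_r$ vanishes on a Lipschitz portion of $\partial B_r^+(x_0)$) gives $\|w_r\|_{L^2(B_r^+)}\le C\delta\bigl(\|u-c\|_{L^2(B_{2r}^+)}+\delta\|u-c\|_{L^2(B_{2r}^-)}\bigr)$. Substituting into the decomposition, iterating on dyadic scales with $c$ taken to be the average of $u$ over $B_{2r}^+$, and invoking the Campanato characterization of $C^\sigma$ produces (a).

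For (b), the estimate (a) yields $\|u\|_{C^\sigma(\overline{\Delta_{1/2}})}\le C(\fint_{B_1}|u|^2)^{1/2}$. Since $u$ is a weak solution of the uniformly elliptic equation $\text{div}(A\nabla u)=0$ in $B_1^-$, applying the classical boundary H\"older estimate for the Dirichlet problem in a Lipschitz domain (locally near $\Delta_{1/2}$, with H\"older boundary data supplied by (a)), together with interior De Giorgi--Nash for points of $B_{1/2}^-$ away from $\Delta_{1/2}$, gives (b). The main obstacle is the iteration in (a): the energy estimate for $w_r$ costs a factor $\delta^2$, while the Caccioppoli inequality provides only a single $\delta$, so matching the two factors and choosing $c$ adaptively at each dyadic scale are essential for obtaining an estimate uniform in $\delta$. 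At scales where the source contribution fails to decay geometrically, the argument must absorb it into a baseline derived from the Campanato characterization, which is the delicate point of the proof.
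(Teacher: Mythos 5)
Your overall strategy is the one the paper uses -- compare $u$ with a solution of the $\delta=0$ interface problem, derive the energy estimate $\|\nabla(u-v)\|_{L^2(B_r^+)}\le C\delta^2\|\nabla u\|_{L^2(B_r^-)}$ by testing with a bounded extension of $(u-v)|_{B_r^+}$, recover one power of $\delta/r$ from the two-phase Caccioppoli inequality, and run a Campanato iteration; your deduction of \eqref{6.10-01} from \eqref{6.10-1} via the boundary H\"older estimate for the Dirichlet problem in $B_1^-$ is also exactly the paper's. However, as written your iteration does not close, and the place where it fails is precisely the point you flag as ``delicate'' without resolving. After one step your source term is $C\delta\bigl(\|u-c\|_{L^2(B_{2r}^+)}+\delta\|u-c\|_{L^2(B_{2r}^-)}\bigr)$, and the second summand is not controlled by the $B^+$-only oscillation you propose to iterate; it also does not decay in $r$ on its own, so it cannot be absorbed as a fixed ``baseline'' (a non-decaying source of size $\delta^2 r^{-d/2}\|u-c\|_{L^2(B_{2r}^-)}$ destroys the geometric decay as $r\to0$). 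The fix is to iterate the combined quantity
$$
\Phi(r;u)=\inf_{k}\left\{\left(\fint_{B_r^+}|u-k|^2\right)^{1/2}+\delta\left(\fint_{B_r^-}|u-k|^2\right)^{1/2}\right\},
$$
whose very form explains the $\delta$-weighted $B_1^-$ term on the right of \eqref{6.10-1}. This in turn forces the comparison function to be defined on $B_r^-$ as well (solve the Dirichlet problem there with data $v_+$ on $\Delta_r$), so that the decay of the $B^-$ component of $\Phi(\cdot\,;v)$ is available from the Dirichlet boundary H\"older estimate -- your $v_r$, defined only on $B_r^+$, gives you no handle on that component.

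Two smaller omissions: the absorption of the error term requires $C_\theta\,\delta\le 1/4$, so the perturbation argument only covers $0<\delta<\delta_0$; the range $\delta_0\le\delta\le1$ must be treated separately (it is immediate from De Giorgi--Nash, with constants depending on $\delta_0$). And since $u-k$ is again a solution, the choice of $k$ at each scale is simply the infimum in $\Phi$ rather than anything genuinely ``adaptive.''
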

 
 \begin{proof}
 We first note that   if $0<\delta_0\le \delta\le 1$,
 the estimates (\ref{6.10-1})-(\ref{6.10-01})  follow directly from the De Giorgi - Nash estimates,
 with $C$ and $\sigma$ depending  on $d$, $\mu$, and $\delta_0$.
 Next,   consider the case $\delta=0$.
 Since $\left(\frac{\partial u}{\partial \nu}\right)_+=0$ on $\Delta_1$,
 it follows from the De Giorgi - Nash theory by a reflection argument that  there exists $\rho>0$, depending only on $d$, $\mu$,
  and $M_1$, such that
 \begin{equation}\label{6.10-2}
 \| u\|_{C^{\rho}(B^+_{3/4})}
 \le C \left(\fint_{B_1^+} |u|^2 \right)^{1/2}.
 \end{equation}
 Using Theorem 8.29 in   \cite{Gilbarg-Trudinger}  and (\ref{6.10-2}), we obtain 
 \begin{equation}\label{6.10-3}
 \aligned
  \| u\|_{C^{\rho/2}(B^-_{1/2})}
  & \le  C \| u\|_{C^{\rho } (\Delta _{3/4})}
 + C \left(\fint_{B_1^-} |u|^2 \right)^{1/2}\\
 &\le
  C \left(\fint_{B_1} |u|^2 \right)^{1/2}.
 \endaligned
 \end{equation}
 
 To treat the case $0< \delta< \delta_0$, we use a perturbation argument.
 Define
 $$
 \Phi (r; u)=\inf_k 
 \left\{ 
  \left(\fint_{B^+_r} |u-k|^2\right)^{1/2}
  +  \delta \left(\fint_{B^-_r} |u-k|^2\right)^{1/2}  \right\}.
 $$
 Let $v\in H^1(B_R)$ be a solution of $\text{\rm div} (A\nabla v)=0$ in $B_R^+$ and $B_R^-$
 such that
 $v=u$ on $\partial B_R$,  $\left(\frac{\partial v}{\partial \nu}\right)_+=0$ on $\Delta_R$, and
 $v_+=v_-$ on $\Delta_R$.
 By the estimates for the case $\delta=0$,
 \begin{equation}\label{6.10-5}
 \Phi (r; v) \le C \left(\frac{r}{R}\right)^{\rho/2}  \Phi (R; v)
 \end{equation}
 for $0<r<R$.
 Also, note that for any $\varphi\in H_0^1(B_R)$,
 \begin{equation}\label{6.10-6}
 \aligned
 \int_{B_R^+} A \nabla (u-v) \cdot \nabla \varphi \, dx
  =-\delta^2 \int_{B_R^-} A\nabla u  \cdot \nabla \varphi\, dx.
 \endaligned
 \end{equation}
We now choose $\varphi\in H^1_0(B_R)$ in (\ref{6.10-6})  to be an extension of $(u-v)|_{B_R^+}$
such that 
$$
\|\nabla \varphi \|_{L^2(B_R^-)} \le C \| \nabla (u-v)\|_{L^2(B_R^+)}.
$$
 This gives
 \begin{equation}\label{6.10-7}
 \| \nabla (u-v)\|_{L^2(B_R^+)}
 \le C \delta^2 \| \nabla u\|_{L^2(B_R^-)}.
 \end{equation}
 Since $\varphi= u-v$ on $\partial B_R^-$ and $\text{\rm div} (A\nabla (u-v))=0$ in $B_R^-$,
 by the energy estimate,
 $$
 \|\nabla (u-v)\|_{L^2(B_R^-)}
 \le C \|\nabla \varphi \|_{L^2(B_R^-)}
 \le C \|\nabla (u-v)\|_{L^2(B_R^+)}.
 $$
 As a consequence, we have proved that
 $$
 \aligned
  \left(\fint_{B_R} |\nabla (u-v)|^2 \right)^{1/2}
 & \le 
C \delta^2 \left(
 \fint_{B_R^-} |\nabla u|^2 \right)^{1/2}\\
 & 
  \le \frac{C \delta}{R}
  \left\{ \left(
   \fint_{B^+_{2R}} |u|^2 \right)^{1/2}
   +\delta \left(
   \fint_{B^-_{2R}} |u|^2 \right)^{1/2}\right\},
\endaligned
 $$
 where we have used Caccioppoli's  inequality (\ref{4.1-0})  for the last step.
 By Poincar\'e's  inequality,
 $$
 \aligned
  \left(\fint_{B_R} |u-v|^2 \right)^{1/2}
  \le C \delta 
  \left\{ \left(
   \fint_{B^+_{2R}} |u|^2 \right)^{1/2}
   +\delta \left(
   \fint_{B^-_{2R}} |u|^2 \right)^{1/2}\right\}.
\endaligned
 $$
 It follows that
 $$
 \aligned
 \Phi(\theta R; u) 
 &\le  \Phi(\theta R; v) +  C_\theta
 \left(\fint_{B_{\theta R} } | u-v|^2 \right)^{1/2}\\
 &\le C_0\theta^{\rho/2}  \Phi(R; v)
 +  C_\theta
  \left(\fint_{B_{\theta R} } | u-v|^2 \right)^{1/2}\\
 &\le C_0 \theta^{\rho/2}   \Phi(R; u)
+  C_\theta   \delta  
\left\{ 
 \left(\fint_{B^+_{ 2R} } | u|^2 \right)^{1/2}
 + \delta 
  \left(\fint_{B^-_{ 2R} } | u|^2 \right)^{1/2}\right\}.
 \endaligned
 $$
Since $u-k$ is also a solution, we obtain 
$$
\Phi(\theta R; u) \le \left\{ C_0\theta^{\rho/2}   + C_\theta \delta \right\} \Phi(2R; u).
$$
Choose $\theta\in (0, 1/4)$ so small that $C_0\theta^{\rho/2}  \le (1/4)$.
With $\theta$ chosen, we then choose $\delta_0$ so small that 
$C_\theta  \delta_0 \le (1/4)$.
This yields that if $0<\delta< \delta_0$, then
$$
\Phi(\theta R; u) \le \frac12 \Phi (2R; u).
$$
 It follows that for $0<r<R\le 1$,
 \begin{equation}\label{6.10-8}
 \Phi (r; u) \le C\left(\frac{r}{R}\right)^{\sigma} \Phi (R; u),
 \end{equation}
 where $\sigma$ depends only on $d$, $\mu$ and $M_1$.
 The inequality  (\ref{6.10-8}), together with the interior H\"older estimates for $u$,
 gives (\ref{6.10-1}). 
 As in (\ref{6.10-3}), since $\text{\rm div}(A\nabla u) =0$ in $B_1^-$,
 the estimate (\ref{6.10-01}) follows from (\ref{6.10-1}). 
  \end{proof}

\begin{thm}\label{thm-local-2}
Suppose $A$ satisfies \eqref{ellipticity}.
Let $u=u_{\e, \delta}$  be a weak solution of
$\text{\rm div} (A_\delta^\e \nabla u )=0$ in $B_{2r}=B(x_0, 2r)$,
where $A_\delta^\e (x) = [\Lambda_\delta (x/\e)]^2  A(x/\e)$  and $0<r< (8d)\e$.
Then 
\begin{align}
\|   u\|_{L^\infty(B_r\cap \e \omega)} 
 & \le C \left(\fint_{B_{2r}} |\Lambda_\delta^\e u|^2 \right)^{1/2} \label{LL},\\
 \|   u\|_{L^\infty(B_r\cap \e F )} 
 & \le C \left(\fint_{B_{2r}} | u|^2 \right)^{1/2} \label{LL-F},\\
\| u  \|_{C^{0, \sigma} (B_r\cap \e \omega)}
& \le C r^{-\sigma} 
 \left(\fint_{B_{2r}}
| \Lambda^\e_\delta  u |^2 \right)^{1/2}
\label{6.11-0},\\
\| u  \|_{C^{0, \sigma} (B_r\cap \e F )}
&\le  C r^{-\sigma}
\left(\fint_{B_{2r}}
|u |^2 \right)^{1/2},\label{6.11-01}
\end{align}
where $\Lambda_\delta^\e (x)=\Lambda_\delta (x/\e)$ and
 $C>0$, $\sigma \in (0, 1)$ depend only on $d$, $\mu$, and $\omega$.
\end{thm}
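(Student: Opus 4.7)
The plan is to reduce to the case $\e = 1$ by rescaling $v(y) = u(\e y)$, which satisfies $\text{\rm div}(\widetilde{A}_\delta \nabla v) = 0$ on a ball of radius $2r/\e \le 16d$. Since this rescaled radius is uniformly bounded, the ball meets only finitely many unit cells and hence only a uniformly bounded number of the inclusions $F_k$. Because, as noted in Section \ref{section-2}, the $F_k$'s are the shifts of a finite collection of bounded Lipschitz domains satisfying (\ref{dis}), the boundary $\partial\omega$ inside this rescaled ball consists of finitely many disjoint Lipschitz surfaces of uniform Lipschitz character.

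For each point $y$ in the rescaled $B_r$, I would distinguish three cases and prove pointwise estimates separately. \emph{Case (a):} $y \in \omega$ with $\mathrm{dist}(y, \partial\omega) \ge c_0$. Here $\widetilde{A}_\delta = A$ is uniformly elliptic on a fixed-size neighborhood lying entirely in $\omega$, so the De Giorgi--Nash estimate gives $|u(y)|$ and the local $C^{0,\sigma}$ seminorm of $u$ near $y$ bounded by $(\fint |u|^2)^{1/2}$ over that neighborhood, which lies in $\omega$ so equals the $\Lambda_\delta^\e$-weighted integral. \emph{Case (b):} $y \in F_k$ with $\mathrm{dist}(y, \partial F_k) \ge c_0$. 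Here the weak equation $\text{\rm div}(\delta^2 A \nabla u) = 0$ in $F_k$ reduces (when $\delta > 0$) to $\text{\rm div}(A \nabla u) = 0$ because $\delta^2$ is a constant scalar, while for $\delta = 0$ the same equation is explicitly imposed; De Giorgi--Nash then yields Hölder estimates with constants depending only on $d$ and $\mu$, producing the unweighted $L^2$ bound on the right side of (\ref{LL-F}) and (\ref{6.11-01}). \emph{Case (c):} $y$ is within distance $c_0$ of $\partial\omega$. After a bi-Lipschitz change of variables flattening the relevant component of $\partial F_k$ near $y$, the hypothesis of Lemma \ref{lemma-local-1} is satisfied exactly, so (\ref{6.10-1}) and (\ref{6.10-01}) deliver the pointwise bound on the $\omega$ side (weighted by $\Lambda_\delta^\e$) and on the $F$ side (unweighted) respectively, uniformly in $\delta \in [0,1]$.

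A routine covering argument on $B_r$ then patches these pointwise bounds into the $L^\infty$ estimates (\ref{LL}) and (\ref{LL-F}) and into the $C^{0,\sigma}$ estimates on $B_r \cap \omega$ and $B_r \cap F$. Undoing the rescaling restores the $r^{-\sigma}$ prefactor in (\ref{6.11-0}) and (\ref{6.11-01}) by the scaling of the Hölder seminorm, while the $L^\infty$ estimates are scale-invariant in the claimed form.

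The main obstacle is case (c), and specifically that the argument must be uniform as $\delta \to 0$, where the coefficient matrix degenerates on the $F$ side. This is precisely what Lemma \ref{lemma-local-1} provides, so what remains is to confirm that invoking the lemma near any point of $\partial F_k$ gives constants independent of $k$; this holds because the $F_k$'s are shifts of finitely many fixed Lipschitz domains, so the Lipschitz constant $M_1$ of the local defining graph is uniformly bounded. With that uniformity in hand, the three-case patching produces the single exponent $\sigma$ depending only on $d$, $\mu$, and $\omega$ asserted in the theorem.
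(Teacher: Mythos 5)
Your proposal is correct and follows essentially the same route as the paper: rescale to $\e=1$, split into the interior cases ($y$ well inside $\omega$ or inside some $F_k$, handled by De Giorgi--Nash) and the interface case near $\partial\omega$, which is exactly what Lemma \ref{lemma-local-1} was set up to handle, with uniformity in $k$ coming from the fact that the $F_k$'s are shifts of finitely many Lipschitz domains. The only cosmetic difference is that you organize the cases pointwise and the paper organizes them by the position of the ball $B_{3r/2}$; also note that no genuine flattening of $\partial F_k$ is needed in your case (c), since Lemma \ref{lemma-local-1} is already stated for a Lipschitz graph interface and a rotation/translation suffices.
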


\begin{proof}
 By rescaling we may assume $\e=1$.
There are three cases:  (1) $B_{3r/2} \subset \omega$;
 (2) $B_{3r/2}\subset \mathbb{R}^d \setminus\overline{ \omega}$;  and 
 (3) $B_{3r/2} \cap \partial \omega \neq \emptyset$.
The first two cases follow readily from the interior H\"older estimates for the 
elliptic operator $-\text{div}(A(x)\nabla)$.
 To treat the third case, without loss of generality, we assume that $r$ is small and $x_0 \in \partial\omega$.
 By a change of variables, the desired estimates follow  from Lemma \ref{lemma-local-1}.
\end{proof}


\section{\bf Convergence rates in $H^1$}\label{section-3}

Let $\Omega$ be a bounded Lipschitz domain in $\mathbb{R}^d$.
In this section we establish a sub-optimal convergence rate in $H^1(\Omega)$,
without the condition \eqref{F-k}.
Let  $u_{\e, \delta} \in H^1(\Omega)$ and $v_\delta \in H^1(\Omega) \cap H^2_{\loc} (\Omega)$.
Suppose that
\begin{equation}\label{3-1}
\left\{
\aligned
\text{\rm div} ({A^\e_{ \delta}} \nabla u_{\e, \delta})
& = \text{\rm div} (\widehat{A_\delta} \nabla v_\delta) & \quad & \text{ in } \Omega,\\
u_{\e, \delta}  &= v_\delta & \quad & \text{ on } \partial \Omega,
\endaligned
\right.
\end{equation}
where $\widehat{A_\delta}$ is the homogenized matrix  given by (\ref{h-matrix-1}).
Let
\begin{equation}\label{D-t}
\Sigma_t =\big\{ x\in \Omega: \ \text{dist}(x, \partial\Omega)< t  \big\}.
\end{equation}
Consider the function 
\begin{equation}\label{r-1}
w_{\e, \delta} =u_{\e, \delta} -v_\delta -\e \chi_\delta (x/\e) S_\e (\eta_\e (\nabla v_\delta)),
\end{equation}
where $\chi_\delta$ is the corrector defined by (\ref{cor-1}) and $\eta_\e\in C_0^\infty(\Omega)$  is a cut-off function satisfying 
$0\le \eta_\e \le 1$, $|\nabla \eta_\e|\le C/\e$, and
\begin{equation}\label{cut-off}
\left\{
\aligned
& \eta_\e (x) =1 & \quad & \text{ if } x\in \Omega \setminus \Sigma_{4d \e},\\
& \eta_\e (x) =0 & \quad & \text{ if  } x\in \Sigma_{3d\e}.
\endaligned
\right.
\end{equation}
The smoothing operator $S_\e$ in (\ref{r-1}) is defined by
\begin{equation}\label{S}
S_\e (f) (x) =\int_{\mathbb{R}^d} f(x-y) \varphi_\e (y)\, dy,
\end{equation}
where  $\varphi_\e (y) =\e^{-d} \varphi (y/\e)$ and $\varphi$ is a (fixed) function in $C_0^\infty(B(0, 1/2))$ such that
$\varphi\ge 0$ and $ \int_{\mathbb{R}^d} \varphi\, dx=1$.

\begin{lemma}
Let $S_\e$ be the operator  given by (\ref{S}). 
Let $1\le p<\infty$.
Then
\begin{equation}\label{S1}
\aligned
\| g^\e S_\e (f)\|_{L^p(\mathbb{R}^d)}
& + \e \| g^\e \nabla S_\e (f)\|_{L^p(\mathbb{R}^d)}\\
&\le C \sup_{x\in \mathbb{R}^d} \left(\fint_{B(x, 1/2)}
|g|^p\right)^{1/p}
\| f\|_{L^p(\mathbb{R}^d)},
\endaligned
\end{equation}
where $g^\e (x)=g(x/\e)$, $C$ depends only on $d$ and  $p$, and
\begin{equation}\label{S2}
\| f-S_\e (f)\|_{L^p(\mathbb{R}^d)}
\le \e \| \nabla f\|_{L^p(\mathbb{R}^d)}.
\end{equation}
\end{lemma}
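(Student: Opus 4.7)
The plan is to treat the two estimates in \eqref{S1} and \eqref{S2} independently, starting with the easier \eqref{S2}. Since $\int \varphi_\e = 1$ and $\varphi_\e$ is supported in $B(0,\e/2)$, I would write
\[
f(x) - S_\e(f)(x) = \int \varphi_\e(y)\bigl[f(x)-f(x-y)\bigr]\, dy = \int \varphi_\e(y)\int_0^1 y\cdot \nabla f(x-ty)\, dt\, dy,
\]
and then apply Minkowski's integral inequality in $L^p_x$, using translation-invariance $\|\nabla f(\cdot - ty)\|_{L^p}=\|\nabla f\|_{L^p}$ together with $|y|\le \e/2$ on the support of $\varphi_\e$. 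This yields \eqref{S2} with constant in fact at most $\e/2$.

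For the first half of \eqref{S1}, the core observation is that the oscillating weight $g^\e$ can be absorbed into a single supremum precisely because $\varphi_\e$ lives at scale $\e$. By Jensen's inequality applied to the probability measure $\varphi_\e(y)\, dy$,
\[
|S_\e(f)(x)|^p \le \int \varphi_\e(y)\,|f(x-y)|^p\, dy.
\]
Multiplying by $|g(x/\e)|^p$, integrating in $x$, and applying Fubini with $z=x-y$ reduces matters to bounding $\sup_z \int |g((z+y)/\e)|^p \varphi_\e(y)\, dy$. The crucial step is the change of variables $u=(z+y)/\e$, which turns this inner integral into $\int \varphi(u-z/\e)\,|g(u)|^p\, du \le \|\varphi\|_\infty \int_{B(z/\e,\,1/2)} |g|^p\, du$, a quantity uniformly controlled by $|B(0,1/2)|\cdot \sup_x \fint_{B(x,1/2)}|g|^p$. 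Taking $p$-th roots gives the bound on $\|g^\e S_\e(f)\|_{L^p}$.

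The gradient piece proceeds by the same route. From $\nabla_x[f(x-y)] = -\nabla_y[f(x-y)]$ and integration by parts in $y$,
\[
\e\, \nabla S_\e(f)(x) = \int \e\,(\nabla\varphi_\e)(y)\, f(x-y)\, dy, \qquad \bigl|\e\,\nabla\varphi_\e(y)\bigr| = \e^{-d}|\nabla\varphi|(y/\e) =: \psi_\e(y).
\]
The new kernel $\psi_\e$ is again supported in $B(0,\e/2)$ and has $\e$-independent total mass $c := \|\nabla\varphi\|_{L^1}$. Normalizing to a probability density $c^{-1}\psi_\e$ lets Jensen apply as before, and the same $u=(z+y)/\e$ substitution yields the gradient estimate with constant depending only on $d$, $p$, $\|\varphi\|_\infty$, and $\|\nabla\varphi\|_{L^1}$.

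The only point in the argument requiring care -- and what I would call the main (mild) obstacle -- is the change of variables $u=(z+y)/\e$: it is the mechanism by which the scale-$\e$ oscillation of $g^\e$ is converted into averaging of $g$ over a \emph{unit-scale} ball $B(x,1/2)$, which is exactly the quantity appearing on the right-hand side of \eqref{S1}. Without this rescaling one would get a supremum of averages over balls of radius $\e/2$, missing the claimed bound.
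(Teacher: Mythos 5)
Your argument is correct and is the standard proof of these smoothing estimates; the paper itself does not write one out but simply cites \cite[pp.~37--38]{Shen-book}, where essentially the same computation (Jensen/H\"older on the probability kernel $\varphi_\e$, Fubini, and the rescaling $u=(z+y)/\e$ that converts the $\e$-scale weight $g^\e$ into unit-scale averages of $g$) is carried out. All steps check out, including the integration by parts for the gradient term and the Minkowski argument giving \eqref{S2} with constant $\e/2$.
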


\begin{proof}
See  e.g. \cite[pp.37-38]{Shen-book}.
\end{proof}

\begin{lemma}\label{thm-3.1}
Let $\Omega$ be a bounded Lipschitz domain.
For $0<\e\le 1$ and  $0\le \delta\le 1$, 
let $u_{\e, \delta}$, $v_\delta$, $w_{\e, \delta}$ be given  as above.
Then, for any $\psi \in H_0^1(\Omega)$,
\begin{equation}\label{3.1-0}
\Big|\int_\Omega A^\e_{ \delta} \nabla w_{\e, \delta}
\cdot \nabla \psi \, dx \Big|
\le C \|\nabla \psi\|_{L^2(\Omega)}
\Big\{
\| \nabla v_\delta \|_{L^2 (\Sigma_{4d\e})}
+ \e \|\nabla^2 v_\delta\|_{L^2(\Omega\setminus \Sigma_{3d\e})}\Big\},
\end{equation}
where $C$ depends only on $d$, $\mu$, $\omega$,  and the Lipschitz character of $\Omega$.
\end{lemma}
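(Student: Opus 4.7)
The plan is to compute $A_\delta^\e \nabla w_{\e,\delta}$ directly from \eqref{r-1}, pair with $\nabla\psi$, and use \eqref{3-1} to replace $\int_\Omega A_\delta^\e \nabla u_{\e,\delta}\cdot\nabla\psi\,dx$ by $\int_\Omega \widehat{A_\delta}\nabla v_\delta\cdot\nabla\psi\,dx$. After adding and subtracting $A_\delta^\e S_\e(\eta_\e \nabla v_\delta)$ and $\widehat{A_\delta} S_\e(\eta_\e \nabla v_\delta)$ in a standard way, I would regroup the integrand of $\int_\Omega A_\delta^\e \nabla w_{\e,\delta}\cdot\nabla\psi\,dx$ into three contributions: (i) $(\widehat{A_\delta}-A_\delta^\e)\bigl[\nabla v_\delta-S_\e(\eta_\e \nabla v_\delta)\bigr]\cdot\nabla\psi$; (ii) $-B_\delta^\e S_\e(\eta_\e \nabla v_\delta)\cdot\nabla\psi$, where $B_\delta$ is the flux defined in \eqref{B}; and (iii) $-\e A_\delta^\e \chi_\delta(x/\e)\nabla S_\e(\eta_\e \nabla v_\delta)\cdot\nabla\psi$.

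For group (i), both $|A_\delta^\e|\le\mu^{-1}$ (since $\Lambda_\delta\le1$) and $|\widehat{A_\delta}|\le\mu_0^{-1}$ by \eqref{ellipticity-1}, so Cauchy-Schwarz reduces the task to controlling $\|\nabla v_\delta-S_\e(\eta_\e \nabla v_\delta)\|_{L^2(\Omega)}$. I split this as $(1-\eta_\e)\nabla v_\delta+\bigl(\eta_\e \nabla v_\delta-S_\e(\eta_\e \nabla v_\delta)\bigr)$: the first piece is bounded by $\|\nabla v_\delta\|_{L^2(\Sigma_{4d\e})}$ because $\mathrm{supp}(1-\eta_\e)\subset\Sigma_{4d\e}$, while the second is handled by \eqref{S2} combined with $|\nabla\eta_\e|\le C/\e$ and $\mathrm{supp}\,\eta_\e\subset\Omega\setminus\Sigma_{3d\e}$. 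For group (iii), the smoothing estimate \eqref{S1} applied with $g=\chi_\delta$ (whose $L^2$-averages on unit balls are bounded uniformly in $\delta$ by Lemma \ref{lemma-2.1}) gives $\|\chi_\delta(\cdot/\e)\nabla S_\e(\eta_\e \nabla v_\delta)\|_{L^2}\le C\|\nabla(\eta_\e \nabla v_\delta)\|_{L^2}$, which after multiplication by $\e$ again produces the claimed bound.

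The main work is group (ii). I use Lemma \ref{lemma-2.3} to write $B_{\delta,ij}^\e = \e\,\partial_{x_k}\bigl[\phi_{\delta,kij}(x/\e)\bigr]$ with $\phi_{\delta,kij}=-\phi_{\delta,ikj}$, and I set $f_j:=S_\e(\eta_\e\,\partial_j v_\delta)$. Because $\eta_\e$ vanishes on $\Sigma_{3d\e}$ and $S_\e$ has kernel supported in $B(0,\e/2)$, the function $f_j$ has compact support in $\Omega$, so I can integrate by parts in $x_k$ with no boundary contribution. Using the antisymmetry to pass from $\phi_{\delta,kij}^\e$ to $-\phi_{\delta,ikj}^\e$ before the IBP, the integrand takes the form $\partial_k G_{ki}\,\partial_i\psi -\e\,\phi_{\delta,ikj}^\e(\partial_k f_j)\partial_i\psi$, where $G_{ki}:=\e\,\phi_{\delta,ikj}^\e f_j$ satisfies $G_{ki}=-G_{ik}$. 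Approximating $\psi$ by $\psi_n\in C_c^\infty(\Omega)$ in $H_0^1$ and using that $\sum_{k,i}G_{ki}\partial_k\partial_i\psi_n\equiv 0$ pointwise by antisymmetry, I conclude $\int_\Omega \partial_k G_{ki}\,\partial_i\psi\,dx=0$. For the remaining error, I invoke \eqref{S1} with $g=\phi_{\delta,kij}$ (again $L^2$-bounded by \eqref{2.3-1}) and $\partial_k S_\e=S_\e\partial_k$, then apply the product rule $\partial_k(\eta_\e\partial_j v_\delta)=(\partial_k\eta_\e)\partial_j v_\delta+\eta_\e\partial_k\partial_j v_\delta$; the first piece yields $C\,\e^{-1}\|\nabla v_\delta\|_{L^2(\Sigma_{4d\e})}$ which, multiplied by the prefactor $\e$, gives the $\|\nabla v_\delta\|_{L^2(\Sigma_{4d\e})}$ contribution, while the second yields $C\,\|\nabla^2 v_\delta\|_{L^2(\Omega\setminus\Sigma_{3d\e})}$, contributing $C\e\|\nabla^2 v_\delta\|_{L^2(\Omega\setminus\Sigma_{3d\e})}$ after the prefactor.

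The main obstacle is the antisymmetric-tensor integration by parts in group (ii): the naive manipulation produces a $\partial_k\partial_i\psi$ term which is not available for $\psi\in H_0^1(\Omega)$. The trick is to repackage the contribution as the divergence of an antisymmetric tensor $G_{ki}$ and to verify the vanishing of $\int \partial_k G_{ki}\partial_i\psi\,dx$ by density, rather than formally integrating by parts twice. Once this is done, every remaining estimate is a routine application of Cauchy-Schwarz together with \eqref{S1}-\eqref{S2}.
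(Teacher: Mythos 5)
Your proposal is correct and follows essentially the same route as the paper: the identical three-term decomposition into $I_1$, $I_2$, $I_3$, with $I_1$ handled by \eqref{S2} and the cut-off, $I_3$ by \eqref{S1} and \eqref{2.1-0}, and $I_2$ by the flux corrector of Lemma \ref{lemma-2.3} together with the antisymmetry $\phi_{\delta,kij}=-\phi_{\delta,ikj}$. The only difference is that you spell out, via an antisymmetric tensor and a density argument, why the $\partial_k\partial_i\psi$ term drops out for $\psi\in H_0^1(\Omega)$ — a step the paper performs implicitly.
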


\begin{proof}
The proof is similar to that for the case $\delta=1$ (see e.g. \cite{Shen-book}).
 Let $\psi\in H_0^1(\Omega)$.
Using 
\begin{equation}\label{3.1-1}
\int_\Omega A^\e_\delta \nabla u_{\e, \delta}
\cdot \nabla \psi \, dx
=\int_\Omega \widehat{A_\delta} \nabla v_\delta \cdot \nabla \psi \, dx,
\end{equation}
we obtain
\begin{equation}\label{3.1-2}
\aligned
\int_\Omega A^\e_{ \delta} \nabla w_{\e, \delta}
\cdot \nabla \psi \, dx 
&= \int_\Omega  \left( \widehat{A_\delta} - A^\e_{ \delta} \right) \nabla v_\delta \cdot \nabla \psi\, dx \\
&\qquad -\int_\Omega A^\e_{ \delta}
\nabla \chi_\delta (x/\e) S_\e (\eta_\e (\nabla v_\delta)) \cdot \nabla \psi \, dx\\
&\qquad  -\e \int_\Omega A^\e_{ \delta} 
\chi_\delta (x/\e) \nabla S_\e (\eta_\e (\nabla v_\delta))\cdot \nabla \psi\, dx\\
&=\int_\Omega  \left( \widehat{A_\delta} - A^\e_{ \delta} \right)  \left( \nabla v_\delta
-S_\e (\eta_\e (\nabla v_\delta)\right)  \cdot \nabla \psi\, dx \\
&\qquad -\int_\Omega
\left( A^\e_{ \delta} + A^\e_{ \delta} \nabla \chi_\delta (x/\e) -\widehat{A_\delta} \right)
S_\e (\eta_\e (\nabla v_\delta)) \cdot \nabla \psi\, dx\\
&\qquad  -\e \int_\Omega A^\e_{ \delta} 
\chi_\delta (x/\e) \nabla S_\e (\eta_\e (\nabla v_\delta))\cdot \nabla \psi\, dx\\
&=I_1 +I_2 +I_3.
\endaligned
\end{equation}
By the Cauchy inequality and (\ref{S2}),
\begin{equation}\label{3.1-3}
\aligned
|I_1|
&\le C \|\nabla v_\delta -S_\e (\eta_\e (\nabla v_\delta)) \|_{L^2(\Omega)} \|\nabla \psi\|_{L^2(\Omega)}\\
&\le C \e \| \nabla (\eta_\e (\nabla v_\delta) )\|_{L^2(\Omega)} \|\nabla \psi\|_{L^2(\Omega)}
+ C\| \nabla v_\delta \|_{L^2(\Sigma_{4d\e})} \| \nabla \psi\|_{L^2(\Omega)}\\
&\le C \|\nabla \psi\|_{L^2(\Omega)}
\big\{ \|\nabla v_\delta\|_{L^2(\Sigma_{4d\e})}
+\e \|\nabla^2 v_\delta\|_{L^2(\Omega\setminus \Sigma_{3d\e})} \big\}.
\endaligned
\end{equation}
It follows from  the Cauchy inequality,  (\ref{S1}) and (\ref{2.1-0}) that 
\begin{equation}\label{3.1-4}
\aligned
I_3
&\le C \e \| \chi_\delta (x/\e) \nabla S_\e (\eta_\e (\nabla v_\delta)) \|_{L^2(\Omega)}
\|\nabla \psi\|_{L^2(\Omega)} \\
& \le C \e \| \nabla (\eta_\e (\nabla v_\delta))\|_{L^2(\Omega)} \|\nabla \psi\|_{L^2(\Omega)}\\
&\le 
 C \|\nabla \psi\|_{L^2(\Omega)}
\big\{ \|\nabla v_\delta\|_{L^2(\Sigma_{4d \e})}
+\e \|\nabla^2 v_\delta\|_{L^2(\Omega\setminus \Sigma_{3d \e})} \big\}.
\endaligned
\end{equation}

Finally, to bound$I_2$, we note that
$$
A^\e_{ \delta} +A^\e_{ \delta} \nabla \chi_\delta (x/\e) -\widehat{A_\delta} 
=B_\delta(x/\e),
$$
where the matrix $B_\delta$ is given by (\ref{B}). 
Since supp$(S_\e (\eta_\e (\nabla v_\delta))\subset \Omega\setminus \Sigma_{2d\e}$,
it follows by Lemma \ref{lemma-2.3}  that
$$
\aligned
I_2
&= - \int_\Omega b_{\delta, ij} (x/\e)  S_\e \left( \eta_\e \frac{\partial v_\delta}{\partial x_j}\right)  \frac{\partial \psi}{\partial x_i} \, dx\\
&= - \e \int_\Omega
\frac{\partial}{\partial x_k} \Big( \phi_{\delta, kij} (x/\e)  \Big)
S_\e \left( \eta_\e \frac{\partial v_\delta}{\partial x_j}\right)  \frac{\partial \psi}{\partial x_i} \, dx\\
&=\e \int_\Omega
\phi_{\delta, kij} (x/\e)
\frac{\partial }{\partial x_k} S_\e \left( \eta_\e \frac{\partial v_\delta}{\partial x_j}\right)  \frac{\partial \psi}{\partial x_i }\, dx.
\endaligned
$$
Thus, as in the case of $I_3$,
$$
\aligned
|I_2|
&\le C \e \| \nabla (\eta_\e (\nabla v_\delta))\|_{L^2(\Omega)}
\|\nabla \psi\|_{L^2(\Omega)}\\
&\le C  \|\nabla \psi\|_{L^2(\Omega)}
\big\{ \|\nabla v_\delta\|_{L^2(\Sigma_{4d\e})}
+\e \|\nabla^2 v_\delta\|_{L^2(\Omega\setminus \Sigma_{3d\e})} \big\}.
\endaligned
$$
This, together with (\ref{3.1-3}) and (\ref{3.1-4}), completes the proof.
\end{proof}

\begin{lemma}\label{lemma-3.2}
Let $\Omega$ be a bounded Lipschitz domain with connected boundary.
Suppose $v_\delta\in H^1(\Omega)$ is a weak solution of
$\text{\rm div} (\widehat{A_\delta} \nabla v_\delta)=0$ in $\Omega$
with $v_\delta=f \in H^1(\partial\Omega)$.
Then,  for $0<t<\text{diam}(\Omega)$, 
\begin{equation}\label{3.2-0}
\| \nabla v_\delta\|_{L^2(\Sigma_t)}
+ t \| \nabla^2 v_\delta\|_{L^2(\Omega\setminus \Sigma_t)}
\le C t^{1/2} \| \nabla_{\tan} f\|_{L^2(\partial\Omega)},
\end{equation}
where $C$ depends only on $d$, $\mu$, and the Lipschitz character of $\Omega$.
\end{lemma}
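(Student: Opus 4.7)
The plan is to deduce everything from the $L^2$-solvability of the regularity problem for the constant-coefficient operator $-\text{\rm div}(\widehat{A_\delta}\nabla)$ in $\Omega$, namely
\begin{equation}\label{plan-NTM}
\|N(\nabla v_\delta)\|_{L^2(\partial\Omega)} \le C\,\|\nabla_{\tan} f\|_{L^2(\partial\Omega)},
\end{equation}
with $C$ independent of $\delta$. The $\delta$-uniform ellipticity of $\widehat{A_\delta}$ established in the theorem following \eqref{h-matrix-1} reduces \eqref{plan-NTM} to a classical result about a \emph{fixed} constant-coefficient elliptic operator in a Lipschitz domain: in the symmetric case one passes to Laplace's equation via a linear change of variables $y \mapsto P y$ with $\widehat{A_\delta} = P P^T$, after which the $L^2$ regularity problem is solvable by the classical theory (Verchota / Jerison--Kenig, see \cite{Kenig-book}); the Lipschitz character of $P^{-1}\Omega$, and hence the constant, depends only on $d$, $\mu_0$, and the Lipschitz character of $\Omega$.

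Given \eqref{plan-NTM}, the first term is controlled by a standard Whitney--Fubini decomposition of $\Sigma_t$. Since the coefficients are constant, $\nabla v_\delta$ is itself a solution, so the interior Moser / $L^\infty$ bound yields
$$
|\nabla v_\delta(x)|^2 \le C \fint_{B(x,\, d(x)/4)} |\nabla v_\delta|^2\, dy \le C\, N(\nabla v_\delta)(\widetilde x)^2
$$
for a nearest-point projection $\widetilde x \in \partial\Omega$. Writing points of $\Sigma_t$ approximately as $\widetilde x + s\, n(\widetilde x)$ with $0 < s < t$ and integrating,
$$
\int_{\Sigma_t} |\nabla v_\delta|^2\, dx \le C\int_0^t \int_{\partial\Omega} N(\nabla v_\delta)(\widetilde x)^2\, d\sigma\, ds \le C\, t\, \|\nabla_{\tan} f\|_{L^2(\partial\Omega)}^2,
$$
which produces the desired $t^{1/2}$ bound on $\|\nabla v_\delta\|_{L^2(\Sigma_t)}$.

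For the Hessian term I would use that $\nabla v_\delta$ also satisfies $\text{\rm div}(\widehat{A_\delta} \nabla (\nabla v_\delta)) = 0$, so Caccioppoli's inequality on the Whitney ball $B(x, d(x)/8)$ gives
$$
\int_{B(x,\, d(x)/8)} |\nabla^2 v_\delta|^2\, dy \le \frac{C}{d(x)^2} \int_{B(x,\, d(x)/4)} |\nabla v_\delta|^2\, dy.
$$
Summing over a Whitney cover of $\Omega \setminus \Sigma_t$ and then slicing the weighted integral on the right by the level sets of $d(\cdot)$ yields
$$
\int_{\Omega\setminus \Sigma_t} |\nabla^2 v_\delta|^2\, dx \le C \int_{\Omega\setminus \Sigma_{t/2}} \frac{|\nabla v_\delta|^2}{d(x)^2}\, dx \le C\, \|N(\nabla v_\delta)\|_{L^2(\partial\Omega)}^2 \int_{t/2}^{\infty} \frac{ds}{s^2} \le \frac{C}{t}\, \|\nabla_{\tan} f\|_{L^2(\partial\Omega)}^2,
$$
and multiplying by $t^2$ completes the argument. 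The only delicate point is establishing \eqref{plan-NTM} with a constant uniform in $\delta$; once the $\delta$-independent ellipticity of $\widehat{A_\delta}$ is used to invoke the classical Lipschitz-domain theory, the remainder is a routine Whitney--Fubini computation.
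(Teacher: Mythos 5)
Your proposal is correct and follows essentially the same route as the paper: both reduce the lemma to the classical $L^2$ nontangential-maximal-function estimate $\|N(\nabla v_\delta)\|_{L^2(\partial\Omega)}\le C\|\nabla_{\tan}f\|_{L^2(\partial\Omega)}$ for the constant-coefficient operator $-\text{\rm div}(\widehat{A_\delta}\nabla)$ in a Lipschitz domain (with $C$ uniform in $\delta$ by the $\delta$-independent ellipticity of $\widehat{A_\delta}$), combined with interior estimates. You simply write out the Whitney--Fubini slicing for $\Sigma_t$ and the Caccioppoli argument for the Hessian term that the paper leaves to the reader.
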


\begin{proof}
This follows from the interior estimates and the nontangential-maximal-function estimate
for second-ordr elliptic equations with constant coefficients,
\begin{equation}
\| N (\nabla v_\delta) \|_{L^2(\partial\Omega)} 
\le C \| \nabla_{\tan} f \|_{L^2(\partial\Omega)}.
\end{equation}
See \cite{Kenig-book} for references.
\end{proof}

\begin{thm}\label{thm-3.3}
Let $\Omega$ be a bounded Lipschitz domain with connected boundary.
Assume $1\le \text{diam} (\Omega) \le 10$.
Suppose that $u_{\e, \delta}\in H^1(\Omega)$ is a weak solution 
of $\text{ \rm div} (A^\e_{ \delta} \nabla u_{\e, \delta} ) =0$ in $\Omega$
with $ u_{\e, \delta} =f\in H^1(\partial\Omega)$ on $\partial\Omega$.
Let $v_\delta$ be the solution of $\text{\rm div} ( \widehat{A_\delta} \nabla v_\delta)=0$ in $\Omega$
with $v_\delta =f $ on $\partial\Omega$.
Then,  for $0<\e<1$ and $0\le \delta\le 1$, 
\begin{equation}\label{3.3-0}
\| \Lambda^\e_{ \delta}  \nabla w_{\e, \delta} \|_{L^2(\Omega)}
\le C \e^{1/4} \|\nabla_{\tan} f \|^{1/2} _{L^2(\partial\Omega)}
\Big\{  \|\nabla_{\tan} f \|^{1/2} _{L^2(\partial\Omega)}
+ \|\nabla u_{\e, \delta} \|_{L^2(\Omega)}^{1/2} \Big\},
\end{equation}
where $\Lambda_\delta^\e (x)=\Lambda_\delta (x/\e)$,
 $w_{\e, \delta}$ is given by (\ref{r-1}) and
$C$ depends only on $d$, $\mu$, $\omega$,  and the Lipschitz character of $\Omega$.
\end{thm}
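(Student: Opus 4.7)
The plan is to use $\psi = w_{\e, \delta}$ itself as a test function in the identity from Lemma~\ref{thm-3.1} and then exploit ellipticity. First I would verify that $w_{\e,\delta} \in H_0^1(\Omega)$: on $\partial\Omega$ both $u_{\e,\delta}$ and $v_\delta$ equal $f$, and (as already noted in the proof of Lemma~\ref{thm-3.1}) the support of $S_\e(\eta_\e (\nabla v_\delta))$ lies in $\Omega \setminus \Sigma_{2d\e}$, so the corrector term in \eqref{r-1} vanishes near the boundary.

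Inserting $\psi = w_{\e, \delta}$ into \eqref{3.1-0} and using Lemma~\ref{lemma-3.2} at $t = 4d\e$ and at $t = 3d\e$ (the case $4d\e \ge \text{diam}(\Omega) \ge 1$ is trivial since then $\e$ is bounded below) gives
\[
\| \nabla v_\delta \|_{L^2(\Sigma_{4d\e})} + \e \| \nabla^2 v_\delta \|_{L^2(\Omega\setminus\Sigma_{3d\e})} \le C \e^{1/2} \| \nabla_{\tan} f \|_{L^2(\partial\Omega)}.
\]
On the left-hand side of \eqref{3.1-0}, ellipticity of $A$ yields the pointwise bound $(A^\e_\delta \xi)\cdot \xi \ge \mu |\Lambda^\e_\delta \xi|^2$, so we arrive at
\[
\mu\, \| \Lambda^\e_\delta \nabla w_{\e,\delta} \|_{L^2(\Omega)}^2 \le C \e^{1/2} \| \nabla_{\tan} f \|_{L^2(\partial\Omega)} \| \nabla w_{\e,\delta} \|_{L^2(\Omega)}.
\]

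It remains to control the unweighted norm $\| \nabla w_{\e,\delta}\|_{L^2(\Omega)}$. Differentiating \eqref{r-1} produces three pieces: $\nabla u_{\e,\delta} - \nabla v_\delta$, $(\nabla\chi_\delta)(x/\e)\, S_\e(\eta_\e \nabla v_\delta)$, and $\e\, \chi_\delta(x/\e)\, \nabla S_\e(\eta_\e \nabla v_\delta)$. The last two are controlled using \eqref{S1}, whose hypothesis on $g$ is satisfied because the 1-periodicity of $\chi_\delta$ combined with Lemma~\ref{lemma-2.1} yields $\sup_x \fint_{B(x,1/2)} (|\chi_\delta|^2 + |\nabla \chi_\delta|^2)\, dy \le C$. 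Together with the Dirichlet bound $\| \nabla v_\delta\|_{L^2(\Omega)} \le C \| \nabla_{\tan} f \|_{L^2(\partial\Omega)}$ (Lemma~\ref{lemma-3.2} at $t = \text{diam}(\Omega)$), this gives
\[
\| \nabla w_{\e,\delta} \|_{L^2(\Omega)} \le C \bigl\{ \| \nabla u_{\e,\delta} \|_{L^2(\Omega)} + \| \nabla_{\tan} f\|_{L^2(\partial\Omega)} \bigr\}.
\]
Substituting into the previous display and extracting a square root (using $\sqrt{a+b} \le \sqrt a + \sqrt b$) produces exactly \eqref{3.3-0}.

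The one genuine difficulty is conceptual: since $A^\e_\delta$ is not uniformly elliptic in $\delta$, testing with $w_{\e,\delta}$ yields coercivity only for the weighted norm $\| \Lambda^\e_\delta \nabla w_{\e,\delta}\|_{L^2}$, whereas the upper bound supplied by Lemma~\ref{thm-3.1} unavoidably involves the full gradient $\| \nabla w_{\e,\delta}\|_{L^2}$. This mismatch forbids a direct absorption argument and is responsible for the sub-optimal exponent $\e^{1/4}$, in contrast to the $\e^{1/2}$ rate one would obtain in the uniformly elliptic case $\delta = 1$.
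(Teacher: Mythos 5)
Your proposal is correct and follows essentially the same route as the paper: test the identity of Lemma \ref{thm-3.1} with $\psi=w_{\e,\delta}$, invoke Lemma \ref{lemma-3.2} to get the $\e^{1/2}$ factor, and then bound the unweighted norm $\|\nabla w_{\e,\delta}\|_{L^2(\Omega)}$ by $\|\nabla u_{\e,\delta}\|_{L^2(\Omega)}+\|\nabla_{\tan}f\|_{L^2(\partial\Omega)}$ via \eqref{S1} before taking square roots. The closing remark correctly identifies the weighted/unweighted mismatch as the source of the $\e^{1/4}$ rate.
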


\begin{proof}
Let $\psi = w_{\e, \delta}$ in (\ref{3.1-0}).
Since $A^\e_{ \delta} (x)= [\Lambda_{ \delta} (x/\e)]^2 A(x/\e)$,
it follows from (\ref{3.1-0}) and (\ref{3.2-0}) that 
\begin{equation}\label{3.3-1}
\| \Lambda^\e_{ \delta} \nabla w_{\e, \delta} \|^2_{L^2(\Omega)}
\le C
\e^{1/2} \| \nabla_{\tan} f\|_{L^2 (\partial\Omega)}  \| \nabla w_{\e, \delta} \|_{L^2(\Omega)}.
\end{equation}
Using (\ref{S1}), it is not hard to see that
$$
\aligned
\|\nabla w_{\e, \delta}\|_{L^2(\Omega)}
& \le C \big\{ \|\nabla u_{\e, \delta} \|_{L^2(\Omega)} + \|\nabla v_\delta\|_{L^2(\Omega)} \big\}\\
&\le C \big\{  \|\nabla u_{\e, \delta} \|_{L^2(\Omega)} +  \| \nabla_{\tan} f \|_{L^2(\partial\Omega)} \big\}.
\endaligned
$$
This, together with (\ref{3.3-1}), gives (\ref{3.3-0}).
\end{proof}


\section{\bf Large-scale interior  estimates}

In this section we investigate large-scale  interior estimates for weak solutions of
\begin{equation}\label{4.1}
\text{\rm div} ( A_\delta ^\e \nabla u_{\e, \delta}) =0
\end{equation}
in $\Omega$, where
$
A_\delta^\e (x)=A_\delta (x/\e) =[\Lambda_{\delta} (x/\e)]^2  A(x/\e)
$
 and  $A$ satisfies conditions (\ref{ellipticity})-(\ref{periodicity}). 
 No smoothness condition for $A$ is needed.
 In the case $\delta=0$, it is  assumed that  $u_{\e, 0} \in H^1(\Omega)$ and
$\text{\rm div} (A(x/\e) \nabla u_{\e, 0})=0$ in $\Omega\setminus \e\omega$.
The main  results in this section were already established in \cite{Chase-Russell-2017, Chase-Russell-2018}.
 We  present a unified and simplified proof here for the reader's convenience.
 A different approach, which also works for the case $2< \delta\le \infty$,  may be found in \cite{Shen-2020}.

Observe  that if $u_{\e, \delta}$ is a solution of (\ref{4.1}) and $v(x)=u_{\e, \delta} (rx)$,
then $\text{\rm div} (A^{\e/r}_\delta \nabla v)=0$.
This rescaling property will be used repeatedly.
Let $B_r=B(x_0, r)$ for some fixed $x_0\in \mathbb{R}^d$.
The goal of this section is to prove the following.

\begin{thm}\label{thm-IL}
Suppose $A$ satisfies \eqref{ellipticity} and \eqref{periodicity}.
Let $u_{\e, \delta}\in H^1(B_R)$ be a weak solution of \eqref{4.1}  in $B_R$
for some $R> (10d) \e$.
Then, for $0\le \delta\le 1$ and $\e\le r<R/2$,
\begin{equation}\label{IL-0}
\left(\fint_{B_r} |\nabla u_{\e, \delta}|^2 \right)^{1/2}
\le C \left(\fint_{B_R\cap \e \omega}  |\nabla u_{\e, \delta}|^2 \right)^{1/2},
\end{equation}
where $C$ depends only on $d$, $\mu$, and $\omega$.
\end{thm}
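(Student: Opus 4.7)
The plan is to adapt the approximation-iteration scheme of Armstrong--Smart, as carried out in \cite{Chase-Russell-2017, Chase-Russell-2018}, combining the $H^1$-convergence rate of Theorem \ref{thm-3.3} with the uniform $C^{1,\alpha}$ regularity of the constant-coefficient homogenized operator $\text{div}(\widehat{A_\delta}\nabla)$. By the rescaling invariance noted just before the statement (which sends $A_\delta^\e$ to $A_\delta^{\e/r}$, satisfying the same hypotheses), I reduce to the case $R = 1$, $\e \in (0, 1/(10d))$, and aim to prove
\[
\left(\fint_{B_r}|\nabla u_{\e, \delta}|^2 \right)^{1/2} \le C \left(\fint_{B_1}|\nabla u_{\e, \delta}|^2 \right)^{1/2}
\qquad (\e \le r \le 1/2);
\]
the improvement to $B_1 \cap \e\omega$ on the right-hand side will come at the end from Lemma \ref{lemma-ext-F}, applied componentwise to each $\e F_k$ lying at distance $\ge c\e$ from $\partial B_1$.

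The heart of the argument is a one-step excess decay. For $r \in [\e, 1/4]$, let $v_\delta \in H^1(B_{2r})$ solve $\text{div}(\widehat{A_\delta}\nabla v_\delta) = 0$ with $v_\delta = u_{\e, \delta}$ on $\partial B_{2r}$. After rescaling $B_{2r}$ to the unit ball, Theorem \ref{thm-3.3} together with the $L^2$ bound on $\chi_\delta$ (Lemma \ref{lemma-2.1}) yields a rate of the form
\[
\left\|\Lambda_\delta^\e \nabla \big(u_{\e,\delta} - v_\delta - \e\, \chi_\delta(\cdot/\e)\, S_\e(\eta_\e \nabla v_\delta)\big)\right\|_{L^2(B_{2r})} \le C (\e/r)^{1/4}\, r^{d/2}\, H(3r),
\]
where $H(r) := (\fint_{B_r}|\nabla u_{\e, \delta}|^2)^{1/2}$; Lemma \ref{lemma-ext-F}, used inside each $\e F_k$ intersecting $B_{2r}$, upgrades this to an unweighted bound on $\nabla(u_{\e,\delta} - v_\delta)$ with constants depending only on $d,\mu,\omega$. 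The theorem at the end of Section \ref{section-2} gives uniform ellipticity of $\widehat{A_\delta}$, so $v_\delta$ enjoys interior $C^{1,\alpha}$ estimates, and with the corrected excess
\[
E(r) := \inf_{p\in \R^d,\, c \in \R} r^{-1} \left(\fint_{B_r}\big|u_{\e, \delta} - p_j P_{\e, \delta}^j - c\big|^2 \right)^{1/2}, \qquad P_{\e, \delta}^j(x) := x_j + \e\, \chi_{\delta, j}(x/\e),
\]
a standard comparison then gives
\[
E(\theta r) \le \tfrac{1}{2}\, E(r) + C (\e/r)^{1/4}\, H(r),
\]
provided $\theta \in (0, 1/4)$ is chosen small enough that the $C^{1,\alpha}$ decay absorbs the constant in the excess comparison.

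Iterating from $r_0 = 1/4$ down to the smallest $r_k = \theta^k r_0 \ge \e$, the accumulated error is controlled by the geometric sum $\sum_{k:\, r_k \ge \e}(\e/r_k)^{1/4}$, which is dominated by its last term and hence bounded. This yields $E(r) \le C(E(1/4) + H(1/4))$ uniformly for $r \in [\e, 1/2]$. Caccioppoli (Lemma \ref{lemma-4.1}) and Lemma \ref{lemma-2.1} then let one estimate $H(r)$ by $E(2r)$ plus the slope $|p(2r)|$ plus a corrector contribution, and a short bootstrap closes the argument, giving $H(r) \le C H(1)$, from which \eqref{IL-0} follows. The main obstacle I expect is the non-uniform ellipticity in $\delta$: all $H^1$-type estimates (including Theorem \ref{thm-3.3}) control only $\Lambda_\delta^\e$-weighted gradient norms of the two-scale error, and to remove the weight inside the blocks $\e F_k$ one must repeatedly invoke Lemma \ref{lemma-ext-F}, using that $u_{\e, \delta}$, $v_\delta$, and the correctors $\chi_{\delta, j}$ are all $A$-harmonic in each $\e F_k$ (for $\delta > 0$ because $\text{div}(\delta^2 A \nabla \cdot) = 0 \iff \text{div}(A \nabla \cdot) = 0$; for $\delta = 0$ by construction). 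This is the mechanism that keeps all constants genuinely independent of $\delta$ throughout.
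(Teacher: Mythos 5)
Your overall architecture --- approximate $u_{\e,\delta}$ by an $\widehat{A_\delta}$-harmonic function with error $O((\e/r)^{1/4})$, run a one-step excess decay, iterate, and convert back to the gradient via Caccioppoli and the block estimate --- is exactly the paper's (Lemmas \ref{lemma-4.5}, \ref{lemma-4.3}, \ref{lemma-4.6}, and the abstract iteration Lemma \ref{g-lemma}). But one step as you state it does not work. You claim that Lemma \ref{lemma-ext-F}, applied in each $\e F_k$ meeting $B_{2r}$, upgrades the $\Lambda_\delta^\e$-weighted bound on the two-scale error to an unweighted bound on $\nabla(u_{\e,\delta}-v_\delta)$ inside the blocks. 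Lemma \ref{lemma-ext-F} requires the function in question to satisfy $\text{div}(A(x/\e)\nabla\,\cdot\,)=0$ in $\e F_k$. That holds for $u_{\e,\delta}$ itself (as you correctly note), but \emph{not} for $u_{\e,\delta}-v_\delta$, since $v_\delta$ is $\widehat{A_\delta}$-harmonic rather than $A(x/\e)$-harmonic, and certainly not for the two-scale error $w_{\e,\delta}$. So Theorem \ref{thm-3.3} gives you nothing quantitative inside the blocks when $\delta$ is small, and your corrected excess $E(r)$, which averages over all of $B_r$, cannot be shown small there by this route. The paper avoids the issue by defining the excess $H(r;u)$ only over $B_r\cap\e\omega$ (see \eqref{H}), converting the weighted gradient bound into an $L^2(B_t\cap\e\omega)$ bound on $w_{\e,\delta}$ via the Poincar\'e inequality \eqref{ext-1}, and restoring the full ball only at the very end through \eqref{4.2-2}--\eqref{4.2-3}, which exploit the $A$-harmonicity of $u_{\e,\delta}$ itself in each block. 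You should restrict your excess to $B_r\cap\e\omega$ in the same way; with that change the corrected affine functions you use are an acceptable (if slightly heavier) variant of the paper's plain affine functions.

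A second, smaller point: in the iteration the error at scale $r_k$ is $C(\e/r_k)^{1/4}$ times a quantity comparable to $H(r_k)$, not times a fixed constant, so ``the geometric sum is dominated by its last term'' does not by itself close the loop --- $H(r_k)$ is part of what you are trying to bound, and the slope $|p(r_k)|$ of the optimal affine function can accumulate across scales. This coupling between the excess and the slope is precisely what the paper's Lemma \ref{g-lemma} handles, under the Dini condition \eqref{g-3} with $\beta(t)=t^{1/4}$; your ``short bootstrap'' needs to be that lemma or an equivalent, and it deserves to be spelled out.
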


\begin{lemma}\label{lemma-4.2}
Suppose $A$ satisfies \eqref{ellipticity}.
Let $u_{\e, \delta}$ be a weak solution of  \eqref{4.1} in $B_{2r}$ and $r\ge 4d \e$.
Then, for $0\le \delta\le 1$,
\begin{align}
\int_{B_r}
|\nabla u_{\e, \delta}|^2\, dx 
& \le C\int_{B_{3r/2}\cap \e \omega} |\nabla u_{\e, \delta}|^2 \, dx\label{4.2-3},\\
\int_{B_r }
|\nabla u_{\e, \delta}|^2\, dx 
& \le \frac{C}{r^2} \int_{B_{2r}\cap \e \omega} | u_{\e, \delta}|^2 \, dx\label{4.2-2},\\
\int_{B_r}
|u_{\e, \delta}|^2\, dx 
& \le C\int_{B_{2r}\cap \e \omega} |u_{\e, \delta}|^2 \, dx\label{4.2-1},
\end{align}
where $C$ depends only on $d$, $\mu$, and $\omega$.
\end{lemma}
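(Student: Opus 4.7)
The plan is to prove the three estimates in the order \eqref{4.2-3}, \eqref{4.2-1}, \eqref{4.2-2}, leveraging the extension machinery of Subsection 2.1. The basic geometric observation is that, since $\operatorname{diam}(\widetilde{F}_k)\le d$ and $r\ge 4d\e$, each $\e\widetilde{F}_k$ that meets $B_r$ is contained in $B_{r+d\e}\subset B_{5r/4}$, and by \eqref{dist} these enlarged sets remain mutually disjoint.

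For \eqref{4.2-3}, split $B_r=(B_r\cap\e\omega)\cup\bigcup_k(B_r\cap\e F_k)$. On each $\e F_k$ meeting $B_r$ we have $\operatorname{div}(A(x/\e)\nabla u)=0$ (by the hypothesis when $\delta=0$, and by dividing through by $\delta^2$ otherwise), so Lemma \ref{lemma-ext-F} rescaled by $\e$ gives
\[
\|\nabla u\|_{L^2(\e F_k)}\le C\|\nabla u\|_{L^2(\e\widetilde{F}_k\setminus \e\overline{F_k})}.
\]
Summing over the finitely many relevant $k$ (using disjointness of the $\e\widetilde{F}_k$'s) and adding the trivial contribution from $B_r\cap\e\omega$ yields \eqref{4.2-3}.

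The main work lies in \eqref{4.2-1}. Introduce an auxiliary bounded Lipschitz domain $\widehat{F}_k$ sandwiched between $F_k$ and $\widetilde{F}_k$, with $\operatorname{dist}(\partial\widehat{F}_k,F_k)$ and $\operatorname{dist}(\partial\widehat{F}_k,\partial\widetilde{F}_k)$ both bounded below by a positive constant depending only on $\omega$, which is possible by \eqref{dist}. Build a Calder\'on--Stein extension operator $E'_{\e,k}$ from the inner shell $\e\widehat{F}_k\setminus\e\overline{F_k}$ into $\e\widehat{F}_k$ exactly as in \eqref{ext-e}, and let $v=E'_{\e,k}(u|_{\text{inner shell}})$. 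Since $u-v\in H_0^1(\e F_k)$ with $\operatorname{div}(A\nabla(u-v))=-\operatorname{div}(A\nabla v)$, the standard energy estimate and Poincar\'e's inequality on $\e F_k$ yield
\[
\|u\|_{L^2(\e F_k)}\le C\|u\|_{L^2(\text{inner shell})}+C\e\|\nabla u\|_{L^2(\text{inner shell})}.
\]
To absorb the gradient term, apply Lemma \ref{lemma-4.1} with a cutoff $\varphi$ equal to $1$ on $\e\widehat{F}_k$ and $0$ outside $\e\widetilde{F}_k$, satisfying $|\nabla\varphi|\le C/\e$. Since $\nabla\varphi$ is supported in the outer shell $\e(\widetilde{F}_k\setminus\overline{\widehat{F}_k})\subset \e\omega$, the left-hand side of \eqref{4.1-0} dominates $\|\nabla u\|_{L^2(\text{inner shell})}^2$, giving $\|\nabla u\|_{L^2(\text{inner shell})}\le (C/\e)\|u\|_{L^2(\text{outer shell})}$. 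Combining yields $\|u\|_{L^2(\e F_k)}\le C\|u\|_{L^2(\e\widetilde{F}_k\cap\e\omega)}$, and summing over $k$ completes \eqref{4.2-1}.

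For \eqref{4.2-2} chain the previous two: \eqref{4.2-3} reduces the left-hand side to $C\int_{B_{3r/2}\cap\e\omega}|\nabla u|^2$; Lemma \ref{lemma-4.1} with cutoff $\varphi=1$ on $B_{3r/2}$ supported in $B_{5r/3}$ (and $|\nabla\varphi|\le C/r$) bounds this by $(C/r^2)\int_{B_{5r/3}}|u|^2$, using $(\Lambda_\delta^\e)^2\le 1$; finally, rerunning the argument of \eqref{4.2-1} at the pair $(B_{5r/3},B_{2r})$ in place of $(B_r,B_{2r})$, which is legitimate because $5r/3+d\e\le 2r$, gives $\int_{B_{5r/3}}|u|^2\le C\int_{B_{2r}\cap\e\omega}|u|^2$. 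The principal obstacle throughout is \eqref{4.2-1}: the delicate point is designing the two-layer shell so that the Caccioppoli estimate precisely cancels the $\e\|\nabla u\|$ residue left by the extension step, without producing any unabsorbable interior terms.
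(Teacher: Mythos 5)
Your proof is correct, and (4.2-3) is obtained exactly as in the paper: Lemma \ref{lemma-ext-F} on each $\e F_k$ meeting $B_r$, plus the containment $\e\widetilde F_k\subset B_{r+d\e}$ and disjointness. Where you diverge is in the order and mechanism of the remaining two estimates. The paper proves \eqref{4.2-2} first: it applies the Caccioppoli inequality \eqref{4.1-0} on each $\e\widetilde F_k$ with a cutoff equal to $1$ on $\e F_k$ to get the \emph{weighted} bound $\delta^2\int_{\e F_k}|u|^2\le C\int_{\e\widetilde F_k\setminus \e F_k}|u|^2$, which is exactly what is needed to absorb the $\delta^2|u|^2$ contribution from the holes in the global Caccioppoli estimate; it then deduces \eqref{4.2-1} from plain Poincar\'e on $F_k$ (no equation in $F_k$ needed at that stage) together with the already-established gradient bound. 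You instead prove the \emph{unweighted} estimate \eqref{4.2-1} directly, by comparing $u$ in $\e F_k$ with a Calder\'on--Stein extension of its boundary-shell values, using the equation $\text{\rm div}(A(x/\e)\nabla u)=0$ in $\e F_k$ plus an energy estimate and Poincar\'e, and then killing the residual $\e\|\nabla u\|_{L^2(\text{shell})}$ term with a shell Caccioppoli estimate; \eqref{4.2-2} then follows by chaining. Both routes are sound. The paper's is more economical (no intermediate domain $\widehat F_k$ or second extension operator, and the $\delta^2$-weighted bound \eqref{4.2-00} is reused later, e.g.\ in \eqref{4.2-12}); yours has the mild advantage that \eqref{4.2-1} is established independently of \eqref{4.2-2}, at the cost of invoking the equation inside the holes one more time and constructing the two-layer shell. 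One small presentational point: when you ``divide through by $\delta^2$'' to get the equation in $\e F_k$ for $\delta>0$, you are implicitly using that $\e F_k$ is an open subset of $B_{2r}$ on which $A^\e_\delta=\delta^2A(x/\e)$; this is fine, but worth saying, since for $\delta=0$ the interior equation is a separate standing hypothesis of the section rather than a consequence of \eqref{4.1}.
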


\begin{proof}
To show  (\ref{4.2-3}),
by rescaling, we may assume $\e=1$.
Recall that $\mathbb{R}^d\setminus \omega=\cup_k \overline{F_k}$,
where $\overline{F_k}$'s are connected components of $\mathbb{R}^d\setminus \omega$.
Suppose $F_k \cap B_r \neq \emptyset$.
Since diam$(\widetilde{F}_k)\le d$ and  $r\ge 4d$,  $\widetilde{F}_k \subset B_{3r/2}$. 
Note that  $\text{\rm div} (A\nabla u_{1, \delta}) =0$ in $F_k$. By Lemma \ref{lemma-ext-F},  we obtain 
\begin{equation}\label{4.2-5}
\|\nabla u_{1, \delta}\|_{L^2(F_k)}
\le C \|\nabla u_{1, \delta}\|_{L^2(\widetilde{F}_k\setminus F_k)},
\end{equation}
which yields (\ref{4.2-3}) for $\e=1$,
using the fact  that  $\widetilde{F}_k$'s are disjoint.

To prove (\ref{4.2-2}), we first show that
\begin{equation}\label{4.2-11}
\delta^2 \int_{B_{3r/2}\setminus \e\omega} |u_{\e, \delta}|^2 \, dx 
\le C  \int_{B_{2r}\cap \e\omega} |u_{\e, \delta} |^2\, dx.
\end{equation}
By rescaling we may assume $\e=1$.
Suppose $B_{3r/2}\cap F_k\neq \emptyset$ for some $k$.
It follows from (\ref{4.1-0}) that
$$
\int_{\widetilde{F}_k}
|\Lambda_\delta\nabla (u_{1, \delta} \varphi)|^2\, dx
\le C \int_{\widetilde{F}_k} |\Lambda_\delta u_{1, \delta} |^2 |\nabla \varphi|^2\, dx
$$
for any $\varphi \in C_0^1(\widetilde{F}_k)$.
Since $0\le \delta\le 1$, we obtain
$$
\delta^2 \int_{\widetilde{F}_k}
|\nabla (u_{1, \delta} \varphi)|^2\, dx
\le C \int_{\widetilde{F}_k} | u_{1, \delta} |^2 |\nabla \varphi|^2\, dx.
$$
Choose $\varphi\in C_0^1(\widetilde{F}_k)$ such that $\varphi=1$ on $F_k$.
By Poincar\'e's  inequality, 
\begin{equation}\label{4.2-00}
\delta^2 \int_{F_k} |u_{1,\delta}|^2\, dx 
\le C \int_{\widetilde{F}_k\setminus F_k} |u_{1, \delta}|^2\, dx.
\end{equation}
Hence,
\begin{equation}{\label{4.2-4}}
\delta^2 \int_{B_{3r/2}\setminus \omega} |u_{1, \delta}|^2\, dx
\le  C \int_{B_{3r/2+2 {d} }\cap \omega} |u_{1, \delta}|^2\, dx,
\end{equation}
which gives (\ref{4.2-11}) for $\e=1$, since $3r/2 +2d\le 2r$.
It follows from  (\ref{4.1-0}) and (\ref{4.2-11}) that  
\begin{equation}\label{4.2-12}
\aligned
\int_{B_{5r/4}\cap \e\omega} |\nabla u_{\e, \delta}|^2\, dx
&\le \frac{C}{r^2}
\left\{ \int_{B_{3r/2}\cap \e\omega} |u_{\e, \delta}|^2\, dx
+ \delta^2 \int_{B_{3r/2}\setminus \e\omega} |u_{\e, \delta}|^2 \, dx \right\}\\
&\le \frac{C}{r^2}
\int_{B_{2r}\cap \e\omega} |u_{\e, \delta} |^2\, dx.
\endaligned
\end{equation}

Finally, to prove (\ref{4.2-1}), 
we again assume $\e=1$.
Suppose $F_k \cap B_r\neq \emptyset$.
By Poincar\'e's inequality,
$$
\int_{F_k} |u_{1, \delta}|^2\, dx
\le C \int_{\widetilde{F}_k\setminus F_k} |u_{1, \delta}|^2\, dx
+ C \int_{\widetilde{F}_k} |\nabla u_{1, \delta}|^2\, dx.
$$
It follows that
$$
\aligned
\int_{B_r} |u_{1, \delta}|^2\, dx
&\le C\int_{B_{5r/4}\cap \omega} |u_{1, \delta} |^2\, dx
+ C \int_{B_{5r/4}} |\nabla u_{1, \delta}|^2\, dx\\
& \le C \int_{B_{2r}\cap \omega} |u_{1, \delta}|^2\, dx,
\endaligned
$$
where we have used (\ref{4.2-12}) as well as the fact $r\ge 1$ for the last inequality.
\end{proof}

The next lemma provides an approximation of $u_{\e, \delta}$ by solutions of
elliptic equations with constant coefficients.

\begin{lemma}\label{lemma-4.5}
Suppose $A$ satisfies \eqref{ellipticity} and \eqref{periodicity}.
Let $u_{\e, \delta}\in H^1(B_{2})$ be a weak solution of
$\text{\rm div} (A_\delta^\e \nabla u_{\e, \delta})=0$ in $B_{2}$.
Then there exists $v\in H^1(B_1)$ such that
$\text{\rm div} (\widehat{A_\delta} \nabla v)=0$ in $B_1$ and
\begin{equation}
\left(\fint_{B_1\cap \e\omega} | u_{\e, \delta} -v|^2\right)^{1/2}
\le  C \e^{1/4}
\left(\fint_{B_{2}\cap \e\omega } |u_{\e, \delta} |^2 \right)^{1/2},
\end{equation}
where $0< \e<1$, $0\le \delta\le 1$,  and $C>0$ depends only on $d$, $\mu$, and $\omega$.
\end{lemma}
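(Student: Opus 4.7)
The plan is to construct $v$ by solving the homogenized Dirichlet problem on an intermediate ball $B_{r_0}$ and invoke the $H^1$-convergence estimate of Theorem \ref{thm-3.3}. We may assume $\e<1/(20d)$; for $\e$ in the complementary range the choice $v\equiv 0$ satisfies the bound trivially since $\e^{1/4}$ is then bounded below and $\|u_{\e,\delta}\|_{L^2(B_1\cap\e\omega)}\le\|u_{\e,\delta}\|_{L^2(B_2\cap\e\omega)}$.

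By Fubini applied to $\int_{5/4}^{7/4}\!\int_{\partial B_r}|\nabla u_{\e,\delta}|^2\,d\sigma\,dr=\int_{B_{7/4}\setminus B_{5/4}}|\nabla u_{\e,\delta}|^2\,dx$, we select $r_0\in(5/4,7/4)$ so that $u_{\e,\delta}|_{\partial B_{r_0}}\in H^1(\partial B_{r_0})$ with
\[
\int_{\partial B_{r_0}}|\nabla_{\tan}u_{\e,\delta}|^2\,d\sigma \le C\int_{B_{7/4}\setminus B_{5/4}}|\nabla u_{\e,\delta}|^2\,dx.
\]
Let $v\in H^1(B_{r_0})$ solve $\text{\rm div}(\widehat{A_\delta}\nabla v)=0$ in $B_{r_0}$ with $v=u_{\e,\delta}$ on $\partial B_{r_0}$, and let $w_{\e,\delta}$ be the two-scale expansion \eqref{r-1} on $\Omega=B_{r_0}$. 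Since $B_{r_0}$ has smooth connected boundary and diameter in $[5/2,7/2]$, Theorem \ref{thm-3.3} yields
\[
\|\Lambda^\e_\delta\nabla w_{\e,\delta}\|_{L^2(B_{r_0})} \le C\e^{1/4}\|\nabla_{\tan}u_{\e,\delta}\|_{L^2(\partial B_{r_0})}^{1/2}\bigl\{\|\nabla_{\tan}u_{\e,\delta}\|_{L^2(\partial B_{r_0})}^{1/2}+\|\nabla u_{\e,\delta}\|_{L^2(B_{r_0})}^{1/2}\bigr\}.
\]
A covering argument with Caccioppoli \eqref{4.2-2} (using balls of radius $1/4$, legal since $4d\e<1/4$) yields $\|\nabla u_{\e,\delta}\|_{L^2(B_{7/4})}\le C\|u_{\e,\delta}\|_{L^2(B_2\cap\e\omega)}$, so combined with the Fubini trace bound both factors above are at most $C\|u_{\e,\delta}\|_{L^2(B_2\cap\e\omega)}$, giving
\[
\|\Lambda^\e_\delta\nabla w_{\e,\delta}\|_{L^2(B_{r_0})} \le C\e^{1/4}\|u_{\e,\delta}\|_{L^2(B_2\cap\e\omega)}.
\]

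To convert this into the desired $L^2$ bound, observe that with $\e<1/(20d)$ the cutoff $\eta_\e\equiv 1$ on $B_1$, so on $B_1\cap\e\omega$
\[
u_{\e,\delta}-v = w_{\e,\delta}+\e\chi_\delta(x/\e)S_\e(\eta_\e\nabla v).
\]
The corrector term has $L^2(B_1)$-norm at most $C\e\|\nabla v\|_{L^2(B_{r_0})}$ by \eqref{S1} and Lemma \ref{lemma-2.1}, and $\|\nabla v\|_{L^2(B_{r_0})}\le C\|\nabla u_{\e,\delta}\|_{L^2(B_{r_0})}\le C\|u_{\e,\delta}\|_{L^2(B_2\cap\e\omega)}$ by testing $\text{\rm div}(\widehat{A_\delta}\nabla v)=0$ against $v-u_{\e,\delta}\in H_0^1(B_{r_0})$ followed by Caccioppoli. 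Since $\Lambda^\e_\delta\equiv 1$ on $\e\omega$, the preceding display controls $\|\nabla w_{\e,\delta}\|_{L^2(B_{r_0}\cap\e\omega)}$; and since $w_{\e,\delta}=0$ on $\partial B_{r_0}$ (as $u_{\e,\delta}=v$ there and $\eta_\e$ vanishes on $\Sigma_{3d\e}$), the $\e$-uniform Poincaré inequality on the perforated domain $B_{r_0}\cap\e\omega$ with zero trace on $\partial B_{r_0}$ — a classical consequence of the extension operator of Lemma \ref{lemma-ext} combined with Poincaré on $B_{r_0}$ — yields $\|w_{\e,\delta}\|_{L^2(B_{r_0}\cap\e\omega)}\le C\e^{1/4}\|u_{\e,\delta}\|_{L^2(B_2\cap\e\omega)}$, completing the estimate.

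The main obstacle is this last step: Theorem \ref{thm-3.3} controls only the weighted gradient $\Lambda^\e_\delta\nabla w_{\e,\delta}$, which degenerates on the holes $\e F$ as $\delta\to 0$, so no $L^2$ estimate on all of $B_1$ follows directly. The restriction to $B_1\cap\e\omega$ (where the weight equals $1$) together with an $\e$-uniform Poincaré inequality on the periodic perforated set — the point where the geometry of $\omega$ being connected, unbounded, and $1$-periodic really enters — is what circumvents the degeneracy.
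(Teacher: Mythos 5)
Your argument is correct and is essentially the paper's own proof: select a good radius in $(5/4,7/4)$ (the paper uses $(3/2,7/4)$) by Fubini, solve the homogenized Dirichlet problem there, apply Theorem \ref{thm-3.3} to the two-scale expansion, absorb the corrector term via \eqref{S1}, and close with the $\e$-uniform Poincar\'e inequality on the perforated set built from the extension operators — exactly the inequality \eqref{ext-1} the paper establishes at the end of its proof. The only quibble is numerological: with your threshold $\e<1/(20d)$ the covering balls of radius $1/4$ do not quite satisfy the hypothesis $r\ge 4d\e$ of \eqref{4.2-2} after doubling inside $B_2$, but shrinking the radii and the threshold accordingly fixes this trivially.
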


\begin{proof}
We assume $\e>0$ is sufficiently small; for otherwise the estimate is trivial with $v=0$.
By Lemma \ref{lemma-4.2} (or rather  its proof),
\begin{equation}\label{4.5-10}
\int_{B_{7/4}} |\nabla u_{\e, \delta}|^2 \, dx \le C \int_{B_2\cap \e\omega}| u_{\e, \delta}|^2\, dx.
\end{equation}
It follows that there exists some $t\in (3/2, 7/4)$ such that 
\begin{equation}\label{4.5-11}
\int_{\partial B_t} |\nabla u_{\e, \delta}|^2\, d\sigma
\le C \int_{B_2\cap \e\omega}| u_{\e, \delta}|^2\, dx.
\end{equation}
Let $ v\in H^1(B_t)$ be the weak solution of
$\text{\rm div} (\widehat{A_\delta} \nabla v )=0$ in $B_t$
with $v=u_{\e, \delta}$ on $\partial B_t$.
Let $w_{\e, \delta}$ be defined by (\ref{r-1}), with $v$ in the place of $v_\delta$.
In view of Theorem \ref{thm-3.3}, we have 
\begin{equation}\label{4.5-12}
\aligned
\|\nabla w_{\e, \delta} \|_{L^2(B_t\cap \e\omega)}
&\le 
\|\Lambda^\e_\delta \nabla w_{\e, \delta} \|_{L^2(B_t)}\\
& \le C \e^{1/4}
\|\nabla u_{\e,\delta}\|^{1/2}_{L^2(\partial B_t)}
\Big\{ \|\nabla u_{\e,\delta}\|^{1/2}_{L^2(\partial B_t)}
+ \|\nabla u_{\e, \delta}\|^{1/2}_{L^2(B_t)} \Big\}\\
&\le  C \e^{1/4}
\| u_{\e, \delta}\|_{L^2(B_2\cap \e\omega)},
\endaligned
\end{equation}
where we have used (\ref{4.5-10}) and (\ref{4.5-11}) for the last inequality.
Since $w_{\e, \delta} =0$ on $\partial B_t$, by Poincar\'e's inequality
\begin{equation}\label{ext-1}
\| \phi \|_{L^2(\Omega \cap \e\omega)} \le C \| \nabla \phi \|_{L^2(\Omega\cap \e\omega)}
\end{equation}
for any $\phi \in H^1_0(\Omega)$, 
 and (\ref{4.5-12}),
$$
\|w _{\e, \delta} \|_{L^2(B_t\cap \e\omega)}
\le C \e^{1/4}
\| u_{\e, \delta}\|_{L^2(B_2\cap \e\omega)}.
$$
It follows that
$$
\aligned
\| u_{\e, \delta} - v\|_{L^2(B_1\cap \e \omega)}
 & \le  \| w_{\e, \delta}\|_{L^2(B_1\cap \e \omega) }
+ C \e  \| \nabla v\|_{L^2(B_t)} \\
& \le C \e^{1/4}
\| u_{\e, \delta}\|_{L^2(B_2\cap \e\omega)}.
\endaligned
$$

Finally, to see \eqref{ext-1},
we first extend $\phi $  from $\Omega$ to $\mathbb{R}^d$ by zero.
Let $\widetilde{\phi}=\phi|_{\e \omega}$.
We then extend $\widetilde{\phi}$ from $\e \omega$  to $\mathbb{R}^d$ such that for each $k$,
$$
\|\nabla \widetilde{\phi}\|_{L^2(\e F_k)}
\le C \| \nabla \phi\|_{L^2(\e \widetilde{F}_k \setminus \e F_k)},
$$
where $C$ depends on $\kappa$ and  the Lipschitz character of $F_k$.
It follows that
$
\|\nabla \widetilde{\phi}\|_{L^2(\mathbb{R}^d)}
\le C \|\nabla \phi \|_{L^2(\Omega\cap \e \omega)}.
$
Note that $\widetilde{\phi }$ has compact support.
Hence,
$$
\| \phi \|_{L^2(\Omega\cap \e\omega)}
\le \| \widetilde{\phi }\|_{L^2(\Omega)}
\le C \|\nabla \widetilde{\phi}\|_{L^2(\mathbb{R}^d)}
\le C \|\nabla \phi \|_{L^2(\Omega\cap \e \omega)},
$$
where we have used the Poincar\'e inequality for the second inequality.
\end{proof}

\begin{lemma}\label{lemma-4.3}
Suppose $A$ satisfies \eqref{ellipticity}-\eqref{periodicity}  and $0\le \delta\le 1$.
Let $v\in H^1(B_1)$ be a solution of $\text{\rm div}(\widehat{A_\delta}\nabla v)=0$ in
$B_1$.
Then, for any $0<\theta, \e < 1/(8d)$,
\begin{equation}\label{4.3-0}
\inf_P \frac{1}{\theta}
\left(\fint_{B_\theta} |v- P|^2\right)^{1/2}
\le C_0\theta\, 
\inf_P \left(\fint_{B_1\cap \e\omega} |v-P|^2 \right)^{1/2},
\end{equation}
where $P$ is a linear function and $C_0$ depends only on $d$, $\mu$, and $\omega$.
\end{lemma}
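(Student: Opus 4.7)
The plan is to exploit the $C^\infty$ regularity of solutions of the constant-coefficient equation $\text{\rm div}(\widehat{A_\delta}\nabla v)=0$ in $B_1$; by \eqref{ellipticity-1}, the matrix $\widehat{A_\delta}$ is uniformly elliptic with constants depending only on $d,\mu,\omega$. Since affine functions lie in the kernel of this operator, let $P^\ast$ be an affine function that (up to a factor of two) achieves the infimum on the right-hand side of \eqref{4.3-0}, and set $w=v-P^\ast$. Then $w$ solves the same equation, and the lemma reduces to
\[
\inf_P \frac{1}{\theta}\left(\fint_{B_\theta}|w-P|^2\right)^{1/2} \le C_0\theta\left(\fint_{B_1\cap\e\omega}|w|^2\right)^{1/2}.
\]

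On the left-hand side, I would take $P(x)=w(0)+\nabla w(0)\cdot x$, the first-order Taylor polynomial of $w$ at the origin. Since $\theta<1/(8d)$, Taylor's theorem yields $|w(x)-P(x)|\le \tfrac{1}{2}|x|^2\|\nabla^2 w\|_{L^\infty(B_\theta)}$, so
\[
\frac{1}{\theta}\left(\fint_{B_\theta}|w-P|^2\right)^{1/2} \le C\theta\|\nabla^2 w\|_{L^\infty(B_{1/8})}.
\]
The standard interior $C^2$ estimate for constant-coefficient uniformly elliptic equations then gives $\|\nabla^2 w\|_{L^\infty(B_{1/8})}\le C\left(\fint_{B_{1/4}}|w|^2\right)^{1/2}$, with $C=C(d,\mu,\omega)$.

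It remains to establish the density comparison
\[
\left(\fint_{B_{1/4}}|w|^2\right)^{1/2}\le C\left(\fint_{B_1\cap\e\omega}|w|^2\right)^{1/2}
\]
for any solution $w$ in $B_1$, uniformly in $\e\le 1/(8d)$. For this I would partition $B_{1/2}$ into cubes of side $\e$, use the Lipschitz comparison $|w(x)-w(y)|^2\le C\e^2\|\nabla w\|_{L^\infty(Q)}^2$ for $x,y$ in the same cube $Q$, average over $y\in Q\cap\e\omega$ (which has measure $\ge|Y\cap\omega|\cdot|Q|$), sum over cubes, and invoke the interior gradient estimate $\|\nabla w\|_{L^\infty(B_{1/2})}\le C\|w\|_{L^2(B_1)}$. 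This yields
\[
\int_{B_{1/4}}|w|^2 \le C\int_{B_1\cap\e\omega}|w|^2 + C\e^2\|w\|_{L^2(B_1)}^2.
\]

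The main obstacle is absorbing the remainder $C\e^2\|w\|_{L^2(B_1)}^2$ uniformly in $\e$, since we have no a priori $L^\infty$ control on $w$ up to $\partial B_1$. I would resolve this by a compactness argument. If the density comparison failed with no uniform constant, one could extract a sequence $\{w_n\}$ of solutions (with matrices $\widehat{A_\delta^{(n)}}$ in a compact set of uniformly elliptic constant matrices) and $\e_n\in(0,1/(8d)]$ with $\fint_{B_{1/4}}|w_n|^2=1$ but $\fint_{B_1\cap\e_n\omega}|w_n|^2\to 0$. By interior $C^k$ estimates and Arzel\`a--Ascoli, a subsequence converges locally uniformly in $B_1$ to a nonzero solution $w_\infty$ of a constant-coefficient uniformly elliptic equation. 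Passing to a further subsequence with $\e_n\to\e_\infty\in[0,1/(8d)]$: if $\e_\infty>0$, then $w_\infty$ vanishes on the nonempty open set $B_1\cap\e_\infty\omega$, contradicting unique continuation (solutions of constant-coefficient elliptic equations are analytic); if $\e_\infty=0$, the weak-$\ast$ convergence $\mathbf{1}_{\e_n\omega}\rightharpoonup|Y\cap\omega|/|Y|$ in $L^\infty(B_1)$, combined with the local uniform convergence $w_n\to w_\infty$, forces $\int_{B_r}|w_\infty|^2=0$ for every $r<1$, again a contradiction. This establishes the density comparison and completes the proof.
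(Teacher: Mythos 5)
Your overall skeleton---Taylor expansion at the origin plus the interior $C^2$ estimate, reducing everything to the ``density comparison'' $\fint_{B_{1/4}}|w|^2\le C\fint_{B_1\cap\e\omega}|w|^2$---is exactly the paper's route (the paper simply cites estimate \eqref{4.2-1} of Lemma \ref{lemma-4.2}, applied with $\delta=1$ and the constant matrix $\widehat{A_\delta}$, for that comparison). The gap is in your proof of the comparison itself. The compactness argument is circular: the step ``by interior $C^k$ estimates and Arzel\`a--Ascoli, a subsequence converges locally uniformly in $B_1$'' requires a uniform bound on $\|w_n\|_{L^2(B_r)}$ for $r$ close to $1$, and the only data available are $\fint_{B_{1/4}}|w_n|^2=1$ and $\fint_{B_1\cap\e_n\omega}|w_n|^2\to0$; upgrading the latter to control of $w_n$ on the holes $B_r\cap\e_nF$ is precisely the statement being proved. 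If instead you run the compactness only inside $B_{1/4}$, where the normalization does give uniform interior bounds, you can indeed show the local limit $w_\infty$ vanishes, but this no longer contradicts $\fint_{B_{1/4}}|w_n|^2=1$: local uniform convergence to $0$ in the open ball does not rule out the $L^2$ mass concentrating in holes adjacent to $\partial B_{1/4}$. So neither your direct cube argument (whose remainder $C\e^2\|w\|_{L^2(B_1)}^2$ you correctly flag as unabsorbable) nor the compactness fix closes the proof.

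The way to kill the remainder is the one used in the proof of Lemma \ref{lemma-4.2}: if a hole $\e F_k$ meets $B_{1/2}$, then $\e\widetilde F_k\subset B_{3/4}$ (since $\mathrm{diam}(\e\widetilde F_k)\le d\e<1/8$), and applying the Caccioppoli inequality \eqref{4.1-0} with a cutoff $\varphi\in C_0^1(\e\widetilde F_k)$, $\varphi\equiv1$ on $\e F_k$, followed by Poincar\'e's inequality for $w\varphi\in H_0^1(\e\widetilde F_k)$, yields
$$
\int_{\e F_k}|w|^2\le C\int_{\e\widetilde F_k\setminus\e F_k}|w|^2,
$$
with no gradient term at all (this is \eqref{4.2-00} with $\delta=1$). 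The sets $\e\widetilde F_k\setminus\e \overline{F_k}$ are mutually disjoint and contained in $B_1\cap\e\omega$, so summing over $k$ gives $\int_{B_{1/2}}|w|^2\le C\int_{B_1\cap\e\omega}|w|^2$, which is all your argument needs. With that substitution the rest of your proof goes through.
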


\begin{proof}
It follows from (\ref{4.2-1}) with $\delta=1$ and $A$ being a constant matrix that
$$
\int_{B_{1/2}} |v|^2\, dx \le C \int_{B_1\cap \e\omega} |v|^2\, dx.
$$
Thus, by using the $C^2$ estimates for second-order elliptic equations with 
constant coefficients,
$$
\aligned
\inf_P \frac{1}{\theta}
\left(\fint_{B_\theta} |v- P|^2\right)^{1/2}
&\le \theta \|\nabla^2 v\|_{L^\infty(B_\theta)}
\le  \theta \|\nabla^2 v\|_{L^\infty(B_{1/4})}\\
&\le C \theta \left(\fint_{B_{1/2}} |v|^2\right)^{1/2}\\
&\le C \theta \left(\fint_{B_1\cap \e\omega} |v|^2\right)^{1/2},
\endaligned
$$
from which the estimate (\ref{4.3-0}) follows, as $v-P$ is also a solution.
\end{proof}

For $u\in L^2(B_r)$, define
\begin{equation}\label{H}
H( r; u)=\inf_P \frac{1}{r}
\left(\fint_{B_r\cap \e\omega} | u -P|^2\right)^{1/2},
\end{equation}
where the infimum is taken over all linear functions $P$.

\begin{lemma}\label{lemma-4.6}
Suppose $A$ satisfies \eqref{ellipticity}-\eqref{periodicity}.
Let $u_{\e, \delta} \in H^1(B_{2r})$ be a weak solution of 
$\text{\rm div} (A_\delta^\e \nabla u_{\e, \delta}) =0$ in $B_{2r}$ for some
$r> \e$.
Then there exists $\theta \in (0, 1/4)$, depending only on $d$, $\mu$ and $\omega$, such that
\begin{equation}\label{4.6-0}
H(\theta r; u_{\e, \delta})
\le \frac12
H(r; u_{\e,\delta})
+ C \left(\frac{\e}{r}\right)^{1/4}
\inf_{q\in \mathbb{R}}
\left(\fint_{B_{2r}\cap \e\omega}
|u_{\e, \delta} -q|^2 \right)^{1/2},
\end{equation}
where $C$ depends only on $d$, $\mu$, and $\omega$.
\end{lemma}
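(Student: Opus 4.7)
The strategy is the standard compactness/approximation scheme: approximate $u_{\e,\delta}$ by a solution $v$ of the homogenized equation using Lemma \ref{lemma-4.5}, apply the one-step Campanato decay for $v$ furnished by Lemma \ref{lemma-4.3}, and control the error by the approximation rate $\e^{1/4}$. By the rescaling $x\mapsto rx$ (which sends $\e$ to $\e/r$), I may assume $r=1$, so that the target becomes
\[
H(\theta;u_{\e,\delta}) \le \tfrac12 H(1;u_{\e,\delta}) + C\e^{1/4}\inf_{q\in\R}\Bigl(\fint_{B_2\cap\e\omega}|u_{\e,\delta}-q|^2\Bigr)^{1/2},
\]
with $u=u_{\e,\delta}$ solving $\text{\rm div}(A_\delta^\e\nabla u)=0$ in $B_2$ and $0<\e<1$.

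For an arbitrary constant $q\in\R$, the function $u-q$ solves the same equation, and Lemma \ref{lemma-4.5} produces $v\in H^1(B_1)$ with $\text{\rm div}(\widehat{A_\delta}\nabla v)=0$ in $B_1$ satisfying
\[
\Bigl(\fint_{B_1\cap\e\omega}|u-q-v|^2\Bigr)^{1/2}\le C\e^{1/4}\Bigl(\fint_{B_2\cap\e\omega}|u-q|^2\Bigr)^{1/2}. \tag{$\star$}
\]
Lemma \ref{lemma-4.3} then delivers a linear $P_v$ with
\[
\frac{1}{\theta}\Bigl(\fint_{B_\theta}|v-P_v|^2\Bigr)^{1/2}\le C_0\theta\inf_P\Bigl(\fint_{B_1\cap\e\omega}|v-P|^2\Bigr)^{1/2}.
\]
Plugging the trial linear function $P_v+q$ into the definition of $H(\theta;u)$ and using the triangle inequality splits the bound as
\[
H(\theta;u)\le \frac{1}{\theta}\Bigl(\fint_{B_\theta\cap\e\omega}|u-q-v|^2\Bigr)^{1/2}+\frac{1}{\theta}\Bigl(\fint_{B_\theta\cap\e\omega}|v-P_v|^2\Bigr)^{1/2}.
\]
I plan to enlarge the first average from $B_\theta\cap\e\omega$ to $B_1\cap\e\omega$ using the volume comparison $|B_\theta\cap\e\omega|\gtrsim\theta^d$ (a consequence of the 1-periodicity of $\omega$ once $\theta\gtrsim\e$); invoking $(\star)$ then gives the bound $C\theta^{-d/2-1}\e^{1/4}(\fint_{B_2\cap\e\omega}|u-q|^2)^{1/2}$. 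The second average is bounded above by $\theta^{-1}(\fint_{B_\theta}|v-P_v|^2)^{1/2}$, to which Lemma \ref{lemma-4.3} applies, and a further triangle inequality that compares $v$ with $u-q$ via $(\star)$ converts the right-hand side into $C_0\theta\,H(1;u)+C_0 C\theta\e^{1/4}(\fint_{B_2\cap\e\omega}|u-q|^2)^{1/2}$.

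Fixing $\theta\in(0,1/4)$ so small that $C_0\theta\le 1/2$ absorbs the decay factor into the desired $\tfrac12 H(1;u)$, and then taking the infimum over $q\in\R$, produces \eqref{4.6-0}. The main obstacle I expect is the low-ratio regime where $\e/r$ is not small enough for the constraint $\e<1/(8d)$ of Lemma \ref{lemma-4.3} to hold after rescaling; for $\e<r\le 8d\e$ the factor $(\e/r)^{1/4}$ is bounded below by a positive constant, and the estimate reduces to the trivial bound $H(\theta r;u_{\e,\delta})\le (\theta r)^{-1}(\fint_{B_{\theta r}\cap\e\omega}|u_{\e,\delta}-q|^2)^{1/2}$ combined with the doubling inequality $|B_{2r}\cap\e\omega|\le C|B_{\theta r}\cap\e\omega|$ that is again supplied by the periodicity of $\omega$. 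A subsidiary technicality is verifying that the constants $C$ arising from the domain enlargement and the two applications of the triangle inequality depend only on $d$, $\mu$, and $\omega$, which is immediate once $\theta$ has been fixed.
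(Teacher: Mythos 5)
Your argument is correct and follows essentially the same route as the paper: rescale to $r=1$, approximate $u_{\e,\delta}-q$ by the homogenized solution $v$ from Lemma \ref{lemma-4.5}, apply the one-step decay of Lemma \ref{lemma-4.3}, absorb the approximation error with the factor $\e^{1/4}$, and fix $\theta$ with $C_0\theta\le 1/2$; the paper handles the regime where $\e/r$ is not small by the same ``otherwise trivial'' observation you spell out. The extra bookkeeping you do (the volume comparison $|B_\theta\cap\e\omega|\gtrsim\theta^d$ behind the constant $C_\theta$) is exactly what the paper leaves implicit.
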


\begin{proof}
By rescaling we may assume $r=1$.
We may further assume that $\e>0$ is sufficiently small; for otherwise the estimate is trivial.
Given $u_{\e, \delta}$ in $B_2$,
 let $v$ be the solution of $\text{\rm div} (\widehat{A_\delta}\nabla v)=0$ in $B_1$,
given by Lemma \ref{lemma-4.5}.
Note that
$$
\aligned
H(\theta; u_{\e, \delta})
&\le H(\theta; v) + \frac{1}{\theta} \left(\fint_{B_\theta\cap \e\omega} |u_{\e, \delta} -v|^2 \right)^{1/2}\\
&\le C_0 \theta H(1; v) + \frac{1}{\theta} \left(\fint_{B_\theta\cap \e\omega} |u_{\e, \delta} -v|^2 \right)^{1/2}\\
& \le C_0\theta H(1; u_{\e, \delta})
+ C_\theta \left(\fint_{B_1 \cap \e\omega} |u_{\e, \delta} -v|^2 \right)^{1/2}\\
&\le C_0\theta H(1; u_{\e, \delta})
+ C_\theta \e^{1/4}  \left(\fint_{B_2\cap \e\omega} |u_{\e, \delta}|^2 \right)^{1/2},
\endaligned
$$
where we have used Lemma \ref{lemma-4.3} for the second inequality and
Lemma \ref{lemma-4.5} for the last.
Since $u_{\e, \delta}-q$ is also a solution of (\ref{4.1}) for any $q\in \mathbb{R}$, we see that 
$$
H(\theta; u_{\e, \delta})
\le C_0\theta H(1; u_{\e, \delta})
+ C_\theta   \e^{1/4} \inf_{q\in \mathbb{R}}
\left(\fint_{B_2\cap \e\omega} |u_{\e, \delta}-q |^2 \right)^{1/2}.
$$
The proof is complete by choosing $\theta$ so small that $C_0 \theta\le (1/2)$.
\end{proof}

The proof of the following lemma may be found in \cite[pp.157-158]{Shen-book}.

\begin{lemma}\label{g-lemma}
Let $H(r)$ and $h(r)$ be two nonnegative, continuous functions on  $(0, 1]$.
Let $0<\e< (1/4)$.
Suppose that there exists a constant $C_0$ such that
\begin{equation}\label{g-1}
\max_{r\le t, s\le 2r} H(t) \le C_0 H(2r)
\quad \text{ and } \quad
\max_{r\le t, s \le 2r}
|h(t)-h(s)| \le C_0 H(2r)
\end{equation}
for any $r\in [\e, 1/2]$.
Suppose further that
\begin{equation}\label{g-2}
H(\theta r) \le \frac12 H(r) + C _0 \beta (\e/r)
\Big\{ H(2r) + h(2r) \Big\}
\end{equation}
for any $r\in [\e, 1/2]$, where $\theta \in (0, 1/4)$ and $\beta (t)$ is a nonnegative, nondecreasing function on $[0, 1]$
such that $\beta(0)=0$ and
\begin{equation}\label{g-3}
\int_0^1 \frac{\beta (t)}{t} \, dt < \infty.
\end{equation}
Then
\begin{equation}\label{g-4}
\max_{\e\le r\le 1}
\big\{ H(r) + h(r) \big\}
\le C \big\{ H(1) + h(1) \big\},
\end{equation}
where $C$ depends only on $C_0$, $\theta$, and the function $\beta (t)$.
\end{lemma}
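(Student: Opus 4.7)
I would carry out a dyadic iteration of (\ref{g-2}) along the geometric sequence of scales $r_k=\theta^k$, $k=0,1,\dots,N$, where $N$ is the largest integer with $r_N\ge\e$.

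The preparatory step is to replace $H(2r_k)$ and $h(2r_k)$ appearing in (\ref{g-2}) by the comparable values on the geometric sequence. Since $\theta<1/4$, the scale $2r_k$ lies strictly inside $(r_k,r_{k-1})$; iterating the pointwise estimate $H(s)\le C_0 H(2s)$ given by (\ref{g-1}) a fixed number $J\simeq\log_2(1/\theta)$ of times, I obtain $H(2r_k)\le C_1 H(r_{k-1})$ with $C_1=C_1(C_0,\theta)$. Chaining the oscillation bound $|h(s)-h(t)|\le C_0 H(2s)$ over the same dyadic chain gives $|h(2r_k)-h(r_{k-1})|\le C_1 H(r_{k-1})$, and a similar chain between $r_{k+1}$ and $r_k$ yields $|h(r_{k+1})-h(r_k)|\le C_1 H(r_k)$.

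Writing $a_k=H(r_k)$, $b_k=h(r_k)$, and $\beta_k=\beta(\e/r_k)$, inserting these into (\ref{g-2}) produces the paired recursion
\begin{equation*}
a_{k+1}\le \tfrac12\, a_k + C_2\,\beta_k\,(a_{k-1}+b_{k-1}),\qquad b_{k+1}\le b_k+C_2\, a_k,
\end{equation*}
valid for $0\le k\le N-1$. Telescoping the second bound gives $b_k\le b_0+C_2\sum_{j<k}a_j$, so matters reduce to controlling $\sum_k a_k$. Iterating the first inequality and summing, the cumulative error is controlled by $M\sum_k \beta_k$, where $M=\max_{0\le k\le N}(a_k+b_k)$. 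The Dini hypothesis, via the monotonicity of $\beta$ and a standard change of variable, yields
\begin{equation*}
\sum_{k=0}^{N}\beta(\e\theta^{-k})\;\le\;\frac{1}{\log(1/\theta)}\int_0^1\frac{\beta(t)}{t}\,dt+\beta(1),
\end{equation*}
a bound uniform in $\e$. A standard discrete absorbing/Gronwall argument applied to $M$ then yields $M\le C(a_0+b_0)=C(H(1)+h(1))$. Finally, any $r\in[\e,1]$ lies in some interval $[r_{k+1},r_k]$ and can be compared to $r_k$ via one more application of (\ref{g-1}), promoting the discrete bound to the continuous range.

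The main obstacle is the absence of contraction in the $h$-recursion: $h$ is only Lipschitz with respect to $H$ across dyadic scales, so the argument hinges on showing that the $H$-increments are themselves summable. For this the factor $\tfrac12$ in (\ref{g-2}) together with the Dini summability of $\beta$ is indispensable --- without either, the iteration would produce geometric growth in $k$ and fail at scales near $\e$.
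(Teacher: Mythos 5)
The paper does not reproduce a proof of this lemma (it refers to \cite[pp.157-158]{Shen-book}), and your plan follows the same strategy as that source: dyadic iteration of (\ref{g-2}) along $r_k=\theta^k$, comparison of $2r_k$ with the dyadic scales via (\ref{g-1}), telescoping of the $h$-increments, and summation of $\beta(\e\theta^{-k})$ via the Dini condition. All of your preparatory reductions are sound. The one step that does not close as written is the last one. You bound the cumulative error in the $a$-recursion by $M\sum_k\beta_k$ with $M=\max_{0\le k\le N}(a_k+b_k)$ and then propose to ``absorb''. But $\sum_k\beta(\e\theta^{-k})$ is only \emph{bounded} uniformly in $\e$ (by a constant determined by $\int_0^1\beta(t)t^{-1}\,dt$, $\beta(1)$, and $\theta$); it is not small, and no smallness is available: shrinking $\e$ does not help, because the scales $r_k$ comparable to $\e$ contribute terms $\beta_k$ of size up to $\beta(1)$. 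The inequality you arrive at therefore has the form $M\le C(a_0+b_0)+C'BM$ with $C'B$ of order one, and the term $C'BM$ cannot be moved to the left-hand side. This is precisely the crux you identify in your closing paragraph, so it needs to be done correctly.

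The repair is to run the discrete Gronwall multiplicatively, scale by scale, rather than as a single global absorption. Let $M_k=\max_{0\le j\le k}(a_j+b_j)$. Your two recursions give, for $k\ge 1$,
$$
M_{k+1}\le (1+C\beta_k)M_k+Ca_k, \qquad a_{k+1}\le \tfrac12\,a_k+C\beta_k M_k,
$$
since $a_{k-1}+b_{k-1}\le M_k$ and $\tfrac12 a_k+b_k\le M_k$. The combination $S_k=M_k+2Ca_k$ then satisfies $S_{k+1}\le (1+C'\beta_k)S_k$ with $C'=C(1+2C)$, whence
$$
S_N\le \exp\Big(C'\sum_{k}\beta(\e\theta^{-k})\Big)\,S_0\le C''\big(H(1)+h(1)\big),
$$
and here boundedness (not smallness) of $\sum_k\beta_k$ is exactly what is required. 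With this modification, together with your final comparison of a general $r\in[\e,1]$ to the nearest dyadic scale, the argument is complete.
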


We are now in a position to give the proof of Theorem \ref{thm-IL}.

\begin{proof}[\bf Proof of Theorem \ref{thm-IL}]
By rescaling we may assume $R=2$.
We may also assume that $\e>0$ is sufficiently small; for otherwise, the estimate is trivial.
Let $\text{\rm div}(A^\e_\delta\nabla u_{\e, \delta}) =0$ in $B_2$.
We shall apply Lemma \ref{g-lemma} with the function $H(t)= H (t; u_{\e, \delta})$ and
$h(t) = |E_t|$ for $(10d)  \e < t<1$, where
 $H(t; u_{\e, \delta})$ is defined by (\ref{H}) and $E_t$ is a vector in $\mathbb{R}^d$ such that
$$
H(t; u_{\e, \delta})
=\inf_{q\in \mathbb{R}}
\frac{1}{t} \left(\fint_{B_t\cap \e\omega} |u_{\e, \delta}  -E_t \cdot x -q|^2 \right)^{1/2}.
$$
The first inequality in (\ref{g-1}) follows from the observation 
$|B_r \cap \e \omega|\approx  r^d$ if $r\ge (10 d)\e $.
To see the second, we note that estimate (\ref{4.2-2}) gives
$$
|E_t-E_s|
\le  C \inf_{q\in \mathbb{R}}
\frac{1}{r} 
\left(\fint_{B_r \cap \e \omega} |(E_t -E_s)\cdot x -q|^2 \right)^{1/2}
$$
if $r\ge (8 d)\e$.
Furthermore, the condition  (\ref{g-2})  is given by Lemma \ref{lemma-4.6} with $\beta (t) = t^{1/4}$.
Consequently, by Lemma \ref{g-lemma}, we obtain 
$$
\aligned
\inf_{q\in \mathbb{R}} \frac{1}{r}
\left(\fint_{B_r \cap \e\omega} 
|u_{\e, \delta} -q|^2 \right)^{1/2}
&\le H(r) +h(r) \\
&\le C \big\{ H(1) + h(1) \big\}\\
&\le C \left(\fint_{B_1} |u_{\e,\delta}|^2 \right)^{1/2}
\endaligned
$$
for any $r\in ((10d) \e, 1)$. By replacing $u_{\e, \delta}$ with
$u_{\e, \delta} -q$, where $q$ is the average of $u_{\e, \delta}$ over $B_1$,
and using Poincar\'e's inequality, we see that
$$
\inf_{q\in \mathbb{R}} \frac{1}{r}
\left(\fint_{B_r \cap \e\omega} 
|u_{\e, \delta} -q|^2 \right)^{1/2}
\le C  \left(\fint_{B_1} |\nabla u_{\e,\delta}|^2 \right)^{1/2}.
$$
for any $r\in ((10d) \e, 1)$.
This, together with (\ref{4.2-2}), yields
$$
\left(\fint_{B_r} |\nabla u_{\e, \delta}|^2\right)^{1/2}
\le C 
 \left(\fint_{B_1} |\nabla u_{\e,\delta}|^2 \right)^{1/2}
 \le C  \left(\fint_{B_2\cap \e \omega } |\nabla u_{\e,\delta}|^2 \right)^{1/2},
$$
for $\e \le r<1$, where we have used (\ref{4.2-3}) for the last  inequality.
\end{proof}

As a corollary of Theorem \ref{thm-IL}, we obtain the large-scale $L^\infty$ estimate.

\begin{thm}\label{thm-IE}
Suppose $A$ satisfies \eqref{ellipticity} and \eqref{periodicity}.
Let $u=u_{\e, \delta}\in H^1(B_R)$ be a weak solution of
$\text{\rm div} (A_\delta^\e \nabla u )=0$ in $B_R$
for some $R> (10d) \e$.
Then, for $0\le \delta\le 1$ and $\e\le r<R/2$,
\begin{equation}\label{IL-00}
\left(\fint_{B_r} | u |^2 \right)^{1/2}
\le C \left(\fint_{B_R\cap \e \omega}  | u |^2 \right)^{1/2},
\end{equation}
where $C$ depends only on $d$, $\mu$, and $\omega$.
\end{thm}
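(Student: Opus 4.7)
The plan is to mimic the proof of Theorem \ref{thm-IL}, replacing the linear-polynomial-based quantity $H(r;u)$ by the constant-based analogue
\[
K(r;u) := \inf_{c\in\R}\left(\fint_{B_r\cap\e\omega}|u-c|^2\right)^{1/2},
\]
whose minimizer is the mean $q_r := \fint_{B_r\cap\e\omega} u$. Since the triangle inequality gives $(\fint_{B_r\cap\e\omega}|u|^2)^{1/2}\le K(r;u)+|q_r|$, and since a final application of \eqref{4.2-1} on a slightly enlarged ball converts a perforated-average estimate to the full-ball estimate \eqref{IL-00}, it will suffice to show $K(r;u)+|q_r|\le C(\fint_{B_R\cap\e\omega}|u|^2)^{1/2}$ for all $r\in[\e,R]$.

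The core step is an iteration inequality analogous to Lemma \ref{lemma-4.6}:
\[
K(\theta r;u)\le \tfrac12 K(r;u) + C(\e/r)^{1/4}\,K(2r;u),
\]
for some fixed $\theta\in(0,1/4)$. To prove this, I would apply Lemma \ref{lemma-4.5} (rescaled from $B_1\subset B_2$ to $B_r\subset B_{2r}$) to $u-q_{2r}$, which is still a weak solution of \eqref{4.1}, to obtain $v$ with $\text{\rm div}(\widehat{A_\delta}\nabla v)=0$ on $B_r$ and $(\fint_{B_r\cap\e\omega}|u-q_{2r}-v|^2)^{1/2}\le C(\e/r)^{1/4}K(2r;u)$. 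Since $\widehat{A_\delta}$ is uniformly elliptic (by \eqref{ellipticity-1}), the standard $C^{0,\alpha}$ interior estimate for $v$ gives, for any $\alpha\in(0,1)$,
\[
\inf_c\Big(\fint_{B_{\theta r}}|v-c|^2\Big)^{1/2}\le C_0\theta^\alpha\Big(\fint_{B_r}|v|^2\Big)^{1/2}.
\]
Bounding $\fint_{B_r}|v|^2\lesssim K(r;u)^2$ up to the small approximation error, and then choosing $\theta$ so small that $C_0\theta^\alpha\le 1/2$, yields the displayed contraction.

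With this iteration in hand and the elementary comparison $|q_t-q_s|\le 2^{d/2}K(\max(t,s);u)$ (from Cauchy--Schwarz applied to nested perforated balls, using the density of $\e\omega$ for $t,s\ge\e$), Lemma \ref{g-lemma} applies with $H(r)=K(r;u)$, $h(r)=|q_r|$, and $\beta(t)=t^{1/4}$. Its conclusion yields $K(r;u)+|q_r|\le C(K(R;u)+|q_R|)$, and since $K(R;u)\le (\fint_{B_R\cap\e\omega}|u|^2)^{1/2}$ (taking $c=0$) and $|q_R|\le (\fint_{B_R\cap\e\omega}|u|^2)^{1/2}$ (Jensen), this gives the desired perforated-average control. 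Converting via \eqref{4.2-1} completes the proof. The main obstacle is the careful verification of the iteration inequality: in particular, the rescaling of Lemma \ref{lemma-4.5} and the bound $\fint_{B_r}|v|^2\lesssim K(r;u)^2$ up to an $(\e/r)^{1/2}$ error, which together close the contraction while keeping the error term in the form $\beta(\e/r)\{H(2r)+h(2r)\}$ required by Lemma \ref{g-lemma}.
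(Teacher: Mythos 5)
Your argument is correct, but it is not the route the paper takes for this theorem. The paper deduces Theorem \ref{thm-IE} directly from the already-established gradient estimate of Theorem \ref{thm-IL}: it telescopes the averages $\fint_{B_{2^{j}r}}u$ over dyadic balls, bounds each increment by $2^{j}r\big(\fint_{B_{2^{j}r}}|\nabla u|^2\big)^{1/2}\le C2^{j}r\big(\fint_{B_{R/2}}|\nabla u|^2\big)^{1/2}$ via Poincar\'e and Theorem \ref{thm-IL}, sums, and then closes with Poincar\'e and the Caccioppoli-type bounds of Lemma \ref{lemma-4.2}. You instead re-run the entire Campanato-type approximation scheme (Lemmas \ref{lemma-4.5}, \ref{lemma-4.3}, \ref{lemma-4.6}, \ref{g-lemma}) one order down, with constants in place of linear functions; this is a valid and self-contained alternative that does not use Theorem \ref{thm-IL} at all, at the cost of repeating the iteration machinery that the paper only sets up once. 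Two points need care to make your version airtight. First, in the contraction step the quantity you feed back must be $K(r;v)=\inf_c\big(\fint_{B_r\cap\e\omega}|v-c|^2\big)^{1/2}\le K(r;u)+C(\e/r)^{1/4}K(2r;u)$ (infimum over constants on the perforated ball, exactly as in the chain $H(1;v)\le H(1;u)+\dots$ in the proof of Lemma \ref{lemma-4.6}); if you instead bound $\fint_{B_r}|v|^2$ by $K(2r;u)^2$, the leading term becomes $C_0\theta K(2r;u)$, which cannot be absorbed into $\tfrac12 K(r;u)$ and breaks the hypotheses of Lemma \ref{g-lemma}. Second, the density $|B_t\cap\e\omega|\approx t^d$, hence the doubling of $K$ and the comparison $|q_t-q_s|\le CK(2r;u)$ (with $C$ depending on $\omega$, not just $2^{d/2}$), only holds for $t\gtrsim(10d)\e$; so the iteration should run down to $r\approx(10d)\e$ and the remaining range $\e\le r<(10d)\e$ be handled by trivially enlarging $B_r$ to $B_{(10d)\e}$ before applying \eqref{4.2-1}, which is consistent with your final conversion step.
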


\begin{proof}
We may assume $r<R/4$.
By Theorem \ref{thm-IL},
$$
\left(\fint_{B_r} |\nabla u |^2\right)^{1/2}
\le C \left(\fint_{B_{R/2} } |\nabla u |^2\right)^{1/2}
$$
Let $J\ge 1$ be the integer such that $2^{J} r \le R/2 <2^{J+1} r $.
Note that for $1\le j \le J$,
$$
\aligned
\Big| \fint_{B_{2^{j-1} r}} u  -\fint_{B_{2^j r}} u \Big|
& \le C 2^j r \left( \fint_{B_{2^j r}} |\nabla u |^2 \right)^{1/2}\\
& \le C  2^j r \left( \fint_{B_{R/2}} |\nabla u |^2 \right)^{1/2},
\endaligned
$$
where we have used Poincar\'e's inequality for the first inequality 
and Theorem \ref{thm-IL} for the second.
It follows by summation that
$$
\Big| \fint_{B_r} u  -\fint_{B_{2^J r}} u \Big|
\le C R \left( \fint_{B_{R/2}} |\nabla u |^2 \right)^{1/2}.
$$
Hence, by Poincar\'e's inequality and Theorem \ref{thm-IL},
$$
\aligned
\left(\fint_{B_r} |u |^2 \right)^{1/2}
& \le Cr \left(\fint_{B_r} |\nabla u |^2 \right)^{1/2}
+ \Big| \fint_{B_r} u  \Big|\\
&\le CR \left(\fint_{B_{R/2} } |\nabla u |^2 \right)^{1/2}
+ C \left(\fint_{B_{R/2} } |u |^2 \right)^{1/2}\\
&\le C \left(\fint_{B_R\cap \e\omega} |u |^2 \right)^{1/2},
\endaligned
$$
where we have used Lemma \ref{lemma-4.2} for the last inequality.
\end{proof}

\begin{remark}\label{remark-LL}
{\rm 
Let $u$ be a weak solution of $\text{\rm div}(A_\delta^\e \nabla u)=0$ in $B_{2r}$ for some $r>0$.
Then
\begin{equation}\label{LL-1}
\|\Lambda_\delta^\e u\|_{L^\infty(B_{r})}  \le C \left(\fint_{B_{2r} } |\Lambda_\delta^\e u|^2 \right)^{1/2},
\end{equation}
where $C$ depends only on $d$, $\mu$, and $\omega$.
The small-scale case $0<r<100 d\e$ is contained in Theorem \ref{thm-local-2}.
To prove the estimate for the large-scale case $r\ge 100 d\e$, 
let $x\in B_r =B(x_0, r) $. Then
$$
\aligned
|\Lambda_\delta^\e (x) u(x)|  & \le C \left(\fint_{B(x, 10 d\e)} |u|^2 \right)^{1/2}
\le C \left(\fint_{B(x, r)\cap \e \omega} |u|^2\right)^{1/2}\\
& \le C  \left(\fint_{B(x_0, 2r)\cap \e \omega} |u|^2\right)^{1/2}
\le C  \left(\fint_{B(x_0, 2r)\cap \e \omega} |\Lambda_\delta^\e u|^2\right)^{1/2},
\endaligned
$$
where we have used Theorem \ref{thm-local-2} for the first inequality and Theorem \ref{thm-IE} for the second.
By a logarithmic convexity argument, one may deduce from \eqref{LL-1} that 
\begin{equation}\label{LL-2}
\|\Lambda_\delta^\e u\|_{L^\infty(B_{r})}  \le C_p \left(\fint_{B_{2r} } |\Lambda_\delta^\e u|^p \right)^{1/p}
\end{equation}
for any $p>0$, where $C_p$ depends only on $d$, $\mu$, $p$, and $\omega$.
}
\end{remark}


\section{Boundary estimates}

In this section we study the boundary regularity for solutions of \eqref{4.1}.
Let  $\Omega$ be  a bounded Lipschitz domain satisfying \eqref{F-k} 
and  $ \Omega^\e=\Omega\setminus \overline{F^\e}$,
 where $F^\e=\cup_k \e F_k$.
Fix $x_0\in \partial\Omega$.
Let 
\begin{equation}\label{D-D}
D_r=B(x_0, r)\cap \Omega \quad \text{ and  } \quad
\Delta_r = B(x_0, r)\cap \partial\Omega,
\end{equation}
for $0<r<r_0=c_0 \text{\rm diam}(\Omega)$.
We assume $c_0>0$ is sufficiently small so that in a suitable coordinated system, obtained from the 
standard one through  translation and rotation,  $x_0=0$ and
\begin{equation}\label{Lip-g}
\aligned
B(0, (100d) r_0)\cap \Omega & =B(0, (100d) r_0)\cap \big\{ (x^\prime, x_d)\in \mathbb{R}^d: \
x_d >\psi(x^\prime) \big\},\\
B(0, (100d) r_0)\cap \partial\Omega
&= B(0, (100d) r_0) \cap 
\big\{ (x^\prime, x_d)\in \mathbb{R}^d: \
x_d =\psi(x^\prime) \big\},
\endaligned
\end{equation}
where $\psi: \mathbb{R}^{d-1} \to \mathbb{R}$ is a Lipschitz function 
with $\psi (0)=0$ and $\|\nabla \psi\|_\infty \le M_1$. 

Let 
\begin{equation}\label{D-e}
D_r^\e= B(x_0, r) \cap \Omega^\e.
\end{equation}

\begin{lemma}\label{lemma-5.1}
Suppose $A$ satisfies \eqref{ellipticity}-\eqref{periodicity}.
Let $4d \e\le  r< r_0$.
Let $u_{\e, \delta} \in H^1(D_{2r} )$ be a weak solution of \eqref{4.1} in 
$D_{2r} $ with $u_{\e, \delta}=0$ on $\Delta _{2r}$.
Then, for $0\le \delta\le 1$,
\begin{align}
\int_{D_r} |\nabla u_{\e, \delta}|^2\, dx
 & \le C \int_{D_{2r}^\e} |\nabla u_{\e, \delta}|^2\, dx \label{5.1-0},\\
\int_{D_r} |\nabla u_{\e, \delta}|^2 \, dx
 & \le \frac{C}{r^2}
\int_{D_{2r}^\e}
|u_{\e, \delta}|^2\, dx,  \label{5.1-1}\\
\int_{D_r} |u_{\e, \delta}|^2\, dx
&\le C \int_{D_{2r}^\e}
|u_{\e, \delta}|^2\, dx, \label{5.1-2}
\end{align}
where $C$ depends only on $d$, $\mu$, $\omega$, $\kappa$, and $M_1$.
\end{lemma}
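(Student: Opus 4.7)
The plan is to adapt the interior proof of Lemma \ref{lemma-4.2} to the boundary setting. The crucial geometric input is condition \eqref{F-k}: every $\e F_k$ meeting $\Omega$ stays at distance at least $\kappa\e$ from $\partial\Omega$, so its smooth enlargement $\e\widetilde{F}_k$ still sits inside $\Omega$. Combined with $\text{diam}(\widetilde{F}_k)\le d$ and $r\ge 4d\e$, any $\e F_k$ hitting $D_{3r/2}$ has $\e\widetilde{F}_k \subset D_{2r}$, so every manipulation around a hole proceeds exactly as in the interior case. The other structural ingredient is the zero boundary condition $u_{\e,\delta}=0$ on $\Delta_{2r}$, which will make cutoffs that do not vanish on $\partial\Omega$ admissible as test functions.

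For \eqref{5.1-0} I would repeat the argument for \eqref{4.2-3} verbatim: after rescaling to $\e=1$, apply Lemma \ref{lemma-ext-F} on each $\widetilde{F}_k$ meeting $B(x_0,r)$ to convert $\|\nabla u\|_{L^2(F_k)}$ into $\|\nabla u\|_{L^2(\widetilde{F}_k\setminus F_k)}$, and sum using the bounded overlap of the $\widetilde{F}_k$'s together with $\widetilde{F}_k\subset D_{2r}$. For \eqref{5.1-1} I would first establish the boundary analog of the intermediate bound \eqref{4.2-11} by applying Lemma \ref{lemma-4.1} on each $\e\widetilde{F}_k\subset\Omega$ with a cutoff equal to one on $\e F_k$ and supported in $\e\widetilde{F}_k$; since $\nabla\varphi$ is supported in $\e\widetilde{F}_k\setminus\e F_k\subset\e\omega$, the same computation as in \eqref{4.2-00} yields
\[
\delta^2\int_{D_{3r/2}\setminus\e\omega}|u_{\e,\delta}|^2\,dx \le C\int_{D_{2r}^\e}|u_{\e,\delta}|^2\,dx.
\]
Next I would apply Lemma \ref{lemma-4.1} with a boundary cutoff $\eta\in C_0^1(B(x_0,3r/2))$ satisfying $\eta\equiv 1$ on $B(x_0,5r/4)$ and $|\nabla\eta|\le C/r$: although $\eta$ need not vanish on $\partial\Omega$, the product $u_{\e,\delta}\eta^2$ is a valid element of $H_0^1(D_{2r})$ because $u_{\e,\delta}=0$ on $\Delta_{2r}$, so the standard Caccioppoli computation goes through and produces
\[
\int_{D_{5r/4}}|\Lambda_\delta^\e \nabla u_{\e,\delta}|^2\,dx \le \frac{C}{r^2}\int_{D_{3r/2}}|\Lambda_\delta^\e u_{\e,\delta}|^2\,dx.
\]
Splitting the right-hand side over $\e\omega$ and $\Omega\setminus\e\omega$ and absorbing the latter piece via the boundary analog of \eqref{4.2-11} gives $\int_{D_{5r/4}\cap\e\omega}|\nabla u_{\e,\delta}|^2 \le Cr^{-2}\int_{D_{2r}^\e}|u_{\e,\delta}|^2$; one last application of \eqref{5.1-0} (with the triple $r$, $5r/4$ in place of $r$, $2r$) converts this into \eqref{5.1-1}. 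For \eqref{5.1-2} I would again localize to each $F_k$ meeting $D_r$ and use Poincar\'e on $\widetilde{F}_k\setminus F_k$ together with the extension operator from Section \ref{section-2} to write
\[
\int_{F_k}|u_{\e,\delta}|^2 \le C\int_{\widetilde{F}_k\setminus F_k}|u_{\e,\delta}|^2 + C\int_{\widetilde{F}_k}|\nabla u_{\e,\delta}|^2,
\]
then sum and invoke \eqref{5.1-1}; after rescaling to $\e=1$ the factor $r^{-2}$ from \eqref{5.1-1} is harmless because $r\ge 4d$.

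The main obstacle, and the one point where care is needed, is verifying admissibility of the test function $u_{\e,\delta}\eta^2$ in the global Caccioppoli step, since $\eta$ is not required to vanish on $\partial\Omega$. This is exactly where the hypothesis $u_{\e,\delta}=0$ on $\Delta_{2r}$ combines with \eqref{F-k}: the holes live strictly inside $\Omega$ and do not interact with $\partial\Omega$, while the vanishing trace on $\Delta_{2r}$ legitimizes cutoffs that are only compactly supported in $B(x_0,2r)$ rather than in all of $\Omega$, thus reducing the boundary estimates to the same technology used in the interior proofs of Section \ref{section-2}.
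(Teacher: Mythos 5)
Your proposal is correct and follows essentially the same route as the paper, which proves Lemma \ref{lemma-5.1} by adapting the interior argument of Lemma \ref{lemma-4.2} and singles out precisely the two points you emphasize: the Caccioppoli inequality \eqref{5.1-4} remains valid for cutoffs $\varphi\in C_0^1(B_{2r})$ that need not vanish on $\partial\Omega$ because $u_{\e,\delta}=0$ on $\Delta_{2r}$, and condition \eqref{F-k} together with $r\ge 4d\e$ guarantees that any $\e F_k$ meeting $D_r$ has $\e\widetilde{F}_k\subset D_{r+d\e}\subset\Omega$. Your execution of the three estimates (extension lemma for \eqref{5.1-0}, the $\delta^2$-weighted hole estimate plus boundary Caccioppoli for \eqref{5.1-1}, and Poincar\'e on $\widetilde{F}_k\setminus F_k$ combined with \eqref{5.1-1} for \eqref{5.1-2}) matches the details the paper omits.
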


\begin{proof}
The proof is similar to that of Lemma \ref{lemma-4.2}.
We point out  that since $u_{\e, \delta}=0$ on $\Delta_{2r}$,
the Caccioppoli inequality, 
\begin{equation}\label{5.1-4}
\int_{D_{2r}} |\Lambda^\e_{ \delta }  \nabla (u_{\e, \delta}
\varphi)|^2\, dx
\le C \int_{D_{2r}} |\Lambda^\e_{\delta}  u_{\e, \delta} |^2 |\nabla \varphi|^2\, dx,
\end{equation}
holds for any $\varphi \in C_0^1(B_{2r})$.
Also observe  that by \eqref{F-k},   if $D_r\cap \e F_k\neq \emptyset$,
then $\e \widetilde{F}_k\subset D_{r+d\e }$.
We omit the details.
\end{proof}

\begin{remark}\label{remark-Ca}
Since $\Sigma_{\kappa \e} \subset \Omega^\e$, 
it follows from (\ref{5.1-4}) that  (\ref{5.1-1}) also holds for $0< r\le \kappa \e/2$.
If $\kappa \e/2 < r< 4d\e$, we note that 
$$
\aligned
\int_{D_r} |\nabla u_{\e, \delta}|^2\, dx
&\le \int_{D_{4d\e} } |\nabla u_{\e, \delta}|^2\, dx
\le \frac{C}{r^2}
\int_{D^\e_{8d\e}} |u_{\e, \delta}|^2\, dx\\
&\le \frac{C}{r^2}\int_{D^\e_{cr}} |u_{\e, \delta}|^2\, dx,
\endaligned
$$
where $c>1$ depends on $d$ and $\kappa$.
\end{remark}

\begin{lemma}\label{lemma-5.2}
Suppose $A$ satisfies \eqref{ellipticity}-\eqref{periodicity}.
Let $u_{\e, \delta} \in H^1(D_{2r})$ be a weak solution of (\ref{4.1}) in 
$D_{2r}$ with $u_{\e, \delta}=0$ on $\Delta_{2r}$,
where $(10d)\e<r< r_0$.
Then there exists $v\in H^1(D_r)$ such that 
$\text{\rm div} (\widehat{A_\delta} \nabla v)=0$ in $D_r$, 
$v=0$ on $\Delta_r$, and
\begin{equation}\label{5.2-0}
\left(\fint_{D_r^\e } |u_{\e, \delta} -v|^2 \right)^{1/2}
\le C \left(\frac{\e}{r}\right)^{1/4} \left(\fint_{D_{2r}^\e} |u_{\e, \delta}|^2 \right)^{1/2},
\end{equation}
where  $C$ depends only on $d$, $\mu$, $\omega$, $\kappa$, and $M_1$.
\end{lemma}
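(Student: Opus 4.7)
The plan is to mirror the proof of Lemma \ref{lemma-4.5} at the boundary, replacing the interior Caccioppoli-type estimates from Lemma \ref{lemma-4.2} with their boundary counterparts in Lemma \ref{lemma-5.1}, and then applying the $H^1$-convergence rate Theorem \ref{thm-3.3} on a suitably chosen Lipschitz subdomain of $\Omega$ that sits between $D_r$ and $D_{2r}$. By the rescaling property $u_{\e,\delta}(rx) \mapsto \tilde{u}(x)$, which changes $\e$ to $\e/r$, I may assume throughout that $r=1$; then the small-parameter condition becomes $\e < 1/(10d)$, and $R=2$ is our working radius.

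Start from Lemma \ref{lemma-5.1}\,\eqref{5.1-1}, which gives $\int_{D_{7/4}} |\nabla u_{\e,\delta}|^2 \le C \int_{D_2^\e} |u_{\e,\delta}|^2$. By an averaging (Fubini) argument over $t\in(3/2,7/4)$, I can select some $t$ in this interval such that $\partial D_t \cap \Omega = \partial B(x_0,t)\cap \Omega$ satisfies
\begin{equation*}
\int_{\partial B(x_0,t)\cap \Omega} |\nabla u_{\e,\delta}|^2\,d\sigma \le C \int_{D_2^\e} |u_{\e,\delta}|^2\,dx .
\end{equation*}
The domain $D_t$ is a bounded Lipschitz domain with connected boundary and diameter comparable to $1$, with Lipschitz character controlled uniformly in $t\in(3/2,7/4)$ by that of $\Omega$. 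I then let $v\in H^1(D_t)$ solve $\text{\rm div}(\widehat{A_\delta}\nabla v)=0$ in $D_t$ with $v = u_{\e,\delta}$ on $\partial D_t$; since $u_{\e,\delta}=0$ on $\Delta_{2r}\supset \Delta_t$, the restriction $f = u_{\e,\delta}|_{\partial D_t}$ vanishes on $\Delta_t$, so in particular $v=0$ on $\Delta_r\subset \Delta_t$.

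Next, define $w_{\e,\delta}$ as in \eqref{r-1} (with $D_t$ in place of $\Omega$ and $v$ in place of $v_\delta$) and apply Theorem \ref{thm-3.3} to obtain
\begin{equation*}
\|\Lambda_\delta^\e \nabla w_{\e,\delta}\|_{L^2(D_t)} \le C\e^{1/4}\|\nabla_{\tan} f\|_{L^2(\partial D_t)}^{1/2}\bigl\{\|\nabla_{\tan} f\|_{L^2(\partial D_t)}^{1/2}+\|\nabla u_{\e,\delta}\|_{L^2(D_t)}^{1/2}\bigr\}.
\end{equation*}
Because $f\equiv 0$ on $\Delta_t$, the tangential gradient $\|\nabla_{\tan} f\|_{L^2(\partial D_t)}$ is controlled by $\|\nabla u_{\e,\delta}\|_{L^2(\partial B(x_0,t)\cap \Omega)}$, which by the choice of $t$ is bounded by $C\|u_{\e,\delta}\|_{L^2(D_2^\e)}$. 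Combining this with Lemma \ref{lemma-5.1}\,\eqref{5.1-1} to control $\|\nabla u_{\e,\delta}\|_{L^2(D_t)}$, I obtain $\|\Lambda_\delta^\e \nabla w_{\e,\delta}\|_{L^2(D_t)} \le C\e^{1/4}\|u_{\e,\delta}\|_{L^2(D_2^\e)}$. Since $w_{\e,\delta}\in H_0^1(D_t)$, the Poincar\'e-type inequality \eqref{ext-1} (with $D_t$ in place of $\Omega$) upgrades this to an $L^2$ bound $\|w_{\e,\delta}\|_{L^2(D_t\cap\e\omega)} \le C\e^{1/4}\|u_{\e,\delta}\|_{L^2(D_2^\e)}$. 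The conclusion \eqref{5.2-0} then follows from writing $u_{\e,\delta}-v = w_{\e,\delta} + \e\chi_\delta(x/\e)S_\e(\eta_\e \nabla v)$, using the $L^2$ bound on the corrector from Lemma \ref{lemma-2.1} and the mollifier estimate \eqref{S1} to dispose of the second term, together with the a priori bound $\|\nabla v\|_{L^2(D_t)}\le C\|u_{\e,\delta}\|_{L^2(D_2^\e)}$ coming from energy estimates for the homogenized problem with boundary data $f$.

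The main obstacle I anticipate is verifying that Theorem \ref{thm-3.3} applies to $D_t$ with constants independent of the choice of $t\in(3/2,7/4)$ and independent of $r$ after rescaling. The hypotheses are that $D_t$ be a bounded Lipschitz domain of diameter between $1$ and $10$ with connected boundary; all of these are satisfied with uniform Lipschitz character, but one has to argue that the nontangential-maximal-function estimate for the homogenized (constant-coefficient) solution $v$ behind Lemma \ref{lemma-3.2} holds with bounds depending only on the Lipschitz character of $D_t$. This is classical (see \cite{Kenig-book}) once uniformity of the Lipschitz character is established, so no new difficulty arises.
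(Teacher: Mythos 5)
Your proposal is correct and follows essentially the same route as the paper's proof: rescale to $r=1$, select $t\in(3/2,7/4)$ by Fubini so that $\|\nabla u_{\e,\delta}\|_{L^2(\partial B_t\cap\Omega)}$ is controlled, solve the homogenized problem in $D_t$, and apply Theorem \ref{thm-3.3} to the two-scale error $w_{\e,\delta}$. The only (harmless) difference is in the Poincar\'e step for $w_{\e,\delta}\in H_0^1(D_t)$: you reuse the inequality \eqref{ext-1}, whereas the paper re-derives it by harmonically extending $w_{\e,\delta}$ into the holes $\e F_k$ with $\e\widetilde{F}_k\subset D_{3/2}$ and applying Poincar\'e for functions vanishing on $\Delta_1$ — both arguments are valid and rest on the same extension mechanism.
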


\begin{proof}
By rescaling we may assume $r=1$ and $\e>0$ is sufficiently small.
The  argument is similar to that for Lemma \ref{lemma-4.5}, with $D_r$ in the place of $B_r$.
By the proof of Lemma \ref{lemma-5.1},
$$
\int_{D_{7/4}} |\nabla u_{\e, \delta}|^2\, dx
\le C \int_{D_2^\e} |u_{\e, \delta}|^2\, dx.
$$
It follows that there exists some $t\in (3/2, 7/4)$ such that
$$
\int_{\partial D_t} |\nabla u_{\e, \delta}|^2\, d\sigma
\le C 
 \int_{D_2^\e} |u_{\e, \delta}|^2\, dx.
 $$
 Let $v\in H^1(D_t)$ be the weak solution of $\text{\rm div} (\widehat{A_\delta}\nabla v)=0$ in $D_t$
 with $v=u_{\e, \delta}$ on $\partial D_t$. Note that $v=0$ on $\Delta_1$.
 Let $w_{\e, \delta}$ be defined by (\ref{r-1}), with $v$ in the place of $v_\delta$.
 By Theorem \ref{thm-3.3},
 \begin{equation}\label{5.2-1}
 \| \nabla w_{\e, \delta} \|_{L^2(D^\e_{3/2})}
 \le C \e^{1/4} \| u_{\e, \delta} \|_{L^2(D_2^\e)}.
 \end{equation}
 We claim that
 \begin{equation}\label{5.2-2}
 \| w_{\e, \delta} \|_{L^2(D_1^\e)}
 \le C \| \nabla w_{\e, \delta} \|_{L^2(D^\e_{3/2})}.
 \end{equation}
 This, together with (\ref{5.2-1}), gives
 $$
 \aligned
 \| u_{\e, \delta} - v\|_{L^2(D_1^\e)}
 &\le \| w_{\e, \delta} \|_{L^2(D_1^\e)}
 + C \e  \|\nabla v\|_{L^2(D_{t})}\\
 &\le C \e^{1/4} \| u_{\e, \delta} \|_{L^2(D_2^\e)}.
 \endaligned
 $$
 
 Finally, to prove the Poincar\'e-type inequality (\ref{5.2-2}),
 we use an extension argument.
 Observe that if  $\e F_k$ is one of the holes in $F^\e$ such that $\e F_k \cap D_1\neq \emptyset$,
 then $\e\widetilde{F}_k \subset D_{3/2}$.
 We introduce a new function $\widetilde{w}_{\e, \delta}$ by replacing   $w_{\e, \delta}$ in such $\e{F}_k$ by the harmonic 
 extension of $w_{\e, \delta}|_{\partial (\e F_k)}$.
 Note that 
 $$
 \| \nabla \widetilde{w}_{\e, \delta} \|_{L^2(\e F_k)}
 \le C \| \nabla w_{\e, \delta} \|_{L^2(\e\widetilde{F}_k \setminus \e F_k)}.
 $$
 Since $\widetilde{w}_{\e, \delta}=w_{\e, \delta}=0$ on $\Delta_1$, 
 it  follows by Poincar\'e's inequality that 
 $$
 \|\widetilde{w}_{\e, \delta} \|_{L^2(D_1)}
 \le C \| \nabla \widetilde{w}_{\e, \delta} \|_{L^2(D_1)}
 \le C\|\nabla w_{\e, \delta} \|_{L^2(D_{3/2}^\e)},
 $$
Since $\widetilde{w}_{\e, \delta}=w_{\e, \delta}$ on $D_1^\e$, this gives (\ref{5.2-2}).
\end{proof}

With the approximation result in Lemma \ref{lemma-5.2} at our disposal,
we  establish the large-scale H\"older estimate on Lipschitz domains as well as the Lipschitz estimate on 
$C^{1, \alpha}$ domains.

\begin{thm}\label{thm-5.3}
Suppose $A$ satisfies \eqref{ellipticity}-\eqref{periodicity} and $0\le \delta\le 1$.
Let $\Omega$ be a bounded Lipschitz domain satisfying \eqref{F-k}.
Let $u_{\e, \delta}\in H^1(D_R)$ be a weak solution of 
$\text{\rm div} (A^\e_{ \delta} \nabla u_{\e, \delta})=0$ in $D_R$
with $u_{\e, \delta}=0$ on $\Delta_R$
for some $(10d) \e<R<r_0$.
Then, for $\e< r< R/2$,
\begin{equation}\label{5.3-0}
\left(\fint_{D_r} |u_{\e, \delta}|^2 \right)^{1/2}
\le C \left(\frac{r}{R}\right)^\sigma
\left(\fint_{D_R^\e} |u_{\e, \delta} |^2 \right)^{1/2},
\end{equation}
where $\sigma>0$ and $C>0$ depend only on
$d$, $\mu$, $\omega$, $\kappa$, and $M_1$.
\end{thm}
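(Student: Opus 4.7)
The plan is to iterate the approximation Lemma \ref{lemma-5.2} at dyadic scales between $\e$ and $R$, combined with the classical boundary H\"older estimate for the constant-coefficient homogenized equation on Lipschitz domains, and then to handle sub-$\e$ scales by a direct De Giorgi--Nash argument.

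Setting $H(r) := \bigl(\fint_{D_r^\e}|u_{\e,\delta}|^2\bigr)^{1/2}$, I would first establish a one-step improvement: there exist $\theta \in (0,1/4)$ and $\alpha \in (0,1)$, depending only on the structural parameters, such that
\[
H(\theta r) \le C_1 \theta^\alpha H(2r) + C_2 (\e/r)^{1/4} H(2r)
\]
whenever $(10d)\e \le \theta r < r_0/2$. To prove this, apply Lemma \ref{lemma-5.2} to produce $v$ solving $\text{div}(\widehat{A_\delta}\nabla v) = 0$ in $D_r$ with $v = 0$ on $\Delta_r$ and with $\bigl(\fint_{D_r^\e}|u_{\e,\delta}-v|^2\bigr)^{1/2} \le C(\e/r)^{1/4} H(2r)$. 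Since $\theta r \ge (10d)\e$, the volume ratio $|D_{\theta r}|/|D_{\theta r}^\e|$ is bounded, and the classical boundary H\"older estimate for the constant-coefficient equation on the Lipschitz domain gives $\bigl(\fint_{D_{\theta r}}|v|^2\bigr)^{1/2} \le C_0 \theta^\alpha \bigl(\fint_{D_r}|v|^2\bigr)^{1/2}$. To replace the right side by $H(2r)$, I would use that $v$ is the Dirichlet energy minimizer on $D_r$ among functions sharing its trace on $\partial D_r$; since $v - u_{\e,\delta} \in H^1_0(D_r)$, this gives $\|\nabla v\|_{L^2(D_r)} \le C\|\nabla u_{\e,\delta}\|_{L^2(D_r)}$. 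Poincar\'e's inequality (available because $v = 0$ on $\Delta_r$) combined with the boundary Caccioppoli estimate \eqref{5.1-1} then yields $\bigl(\fint_{D_r}|v|^2\bigr)^{1/2} \le CH(2r)$, and triangle inequality on $D_{\theta r}^\e$ closes the one-step bound.

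Next I would choose $\theta$ so small that $C_0 C_1\theta^\alpha \le 1/4$, giving $H(\theta r) \le \tfrac14 H(2r) + C(\e/r)^{1/4} H(2r)$. For $r \ge c\e$ with $c$ sufficiently large the perturbation is absorbed and $H(\theta r) \le \tfrac12 H(2r)$; a dyadic geometric iteration between $c\e$ and $R/2$ produces $H(r) \le C(r/R)^{\sigma} H(R)$ with $\sigma = \log 2/\log(2/\theta)$ on that range, and the residual $\e \le r < c\e$ is absorbed into the constant since the scale ratio is $O(1)$. For $r < \e$, the hypothesis \eqref{F-k} forces $D_r = D_r^\e$, so the equation reduces to $\text{div}(A(x/\e)\nabla u_{\e,\delta}) = 0$ with zero trace on $\Delta_r$, and Lemma \ref{lemma-local-1} (applied with $\delta = 1$ after flattening $\partial \Omega$) supplies the desired sub-$\e$ H\"older decay with constants independent of $\e$; concatenating the two ranges via a bounded gluing factor at scale $r \sim \e$ yields \eqref{5.3-0}.

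The main technical obstacle I anticipate is the control of $\bigl(\fint_{D_r}|v|^2\bigr)^{1/2}$ by $H(2r)$: the naive triangle inequality $\|v\|_{L^2(D_r)} \le \|u-v\|_{L^2(D_r)} + \|u\|_{L^2(D_r)}$ is circular because we only control $\|u-v\|$ on $D_r^\e$, not on $D_r$. It is the energy-minimization property of $v$ combined with the boundary Caccioppoli estimate that breaks this loop. Everything else -- volume-ratio bookkeeping, the choice of $\theta$, and the geometric iteration -- is essentially routine.
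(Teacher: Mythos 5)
Your proposal is correct and follows the same overall strategy as the paper's proof of Theorem \ref{thm-5.3}: approximate $u_{\e,\delta}$ by a solution $v$ of the homogenized equation via Lemma \ref{lemma-5.2}, invoke the classical boundary H\"older decay for $v$ on a Lipschitz domain, absorb the $(\e/r)^{1/4}$ error for $r\ge N\e$, and iterate dyadically down to scale $\e$. The one place you genuinely diverge is the step you flag as the main obstacle, namely bounding $\big(\fint_{D_r}|v|^2\big)^{1/2}$ by $H(2r)$. The paper sidesteps this entirely by stating the H\"older decay for $v$ with $D^\e$-averages on both sides (estimate \eqref{5.3-2}, which combines the classical decay with \eqref{5.1-2} applied to $v$); after that, the triangle inequality over $D_r^\e$ --- where both $\|u_{\e,\delta}-v\|$ and $\|u_{\e,\delta}\|$ are controlled --- is not circular, contrary to your worry. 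Your alternative, via the gradient comparison $\|\nabla v\|_{L^2}\le C\|\nabla u_{\e,\delta}\|_{L^2}$ followed by Poincar\'e and the Caccioppoli estimate \eqref{5.1-1}, also works, but note that the ``energy minimizer'' justification implicitly requires $\widehat{A_\delta}$ to be symmetric, which is not assumed in Theorem \ref{thm-5.3}; you should instead test the equation for $v$ against $v-u_{\e,\delta}\in H^1_0$ and use the ellipticity \eqref{ellipticity-1} of $\widehat{A_\delta}$, which yields the same comparison without symmetry. Two minor bookkeeping points: the conclusion \eqref{5.3-0} is an average over the full $D_r$, so you need one final application of \eqref{5.1-2} to pass from $H(r)$ to $\big(\fint_{D_r}|u_{\e,\delta}|^2\big)^{1/2}$; and your discussion of the range $r<\e$ is unnecessary, since the theorem only asserts the estimate for $r>\e$.
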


\begin{proof}
By rescaling we may assume $R=2$.
For $(10d)\e< r<1$, let 
$$
\Phi(r)= \frac{1}{r^\sigma}  \left(\fint_{D_r^\e} |u_{\e, \delta}|^2\right)^{1/2}.
$$
Let $v\in H^1(D_{r})$ be a solution of $\text{\rm div} (\widehat{A_\delta}\nabla v)=0$ in $D_{r}$
satisfying $v=0$ on $\Delta_r$ and (\ref{5.2-0}).
Since $\Omega$ is a Lipschitz domain, by the boundary H\"older estimate
for second-order elliptic operators with constant coefficients and (\ref{5.1-2}),
\begin{equation}\label{5.3-2}
\left(\fint_{D^\e_{\theta r}} |v|^2\right)^{1/2}
\le C \theta^\lambda \left(\fint_{D_r^\e} |v|^2 \right)^{1/2}
\end{equation}
for any $\theta\in (0, 1/4)$ such that $\theta r\ge d\e$, 
where $\lambda>0$ and $C>0$ depend only on $d$, $\mu$, $\omega$, $\kappa$, and $M_1$.
It follows that
$$
\aligned
\Phi(\theta r)
&\le  \frac{1}{(\theta r)^\sigma} \left(\fint_{D_{\theta r}^\e} |v|^2 \right)^{1/2}
+  \frac{1}{(\theta r)^\sigma} \left(\fint_{D_{\theta r}^\e} |u_{\e, \delta} -v|^2 \right)^{1/2}\\
& \le C \theta^{\lambda-\sigma } \frac{1}{r^\sigma}  \left(\fint_{D_{ r}^\e} |v|^2 \right)^{1/2}
+\frac{C_\theta}{r^\sigma}   \left( \fint_{D_r^\e} |u_{\e, \delta} -v|^2 \right)^{1/2}\\
&\le \left\{ C \theta^{\lambda-\sigma} 
+ C_\theta \left(\frac{\e }{r}\right)^{1/4}\right\} 
 \Phi (2r).
 \endaligned
 $$
 Fix $\sigma \in (0, \lambda)$.
Choose $\theta\in (0, 1/4)$ so small that  $C \theta^{\lambda-\sigma} \le (1/4)$.
With $\theta$ fixed, we see that if $r\ge N \e$ and $N>1$ is large, 
$$
\Phi (\theta r) \le  (1/2)   \Phi (2r).
$$
By an iteration argument this implies that $\Phi(r) \le C \Phi (1)$ for any $r\in (\e, 1)$.
\end{proof}

 \begin{thm}\label{thm-5.55}
 Suppose that $\text{\rm div} (A^\e_{ \delta} \nabla u_{\e, \delta} ) =0$
 in $B(x_0, r) \cap \Omega$ and
 $u_{\e, \delta}=0$ on $B(x_0, r)\cap \partial\Omega$
 for some $x_0\in \Omega $.
 Assume that $r\ge (4d) \e$.
 Then
 \begin{equation}\label{6.3-0}
 |u_{\e, \delta} (x_0)|
 \le C\min \left( \left(\frac{d(x_0)}{r} \right)^\sigma, 1 \right)
  \left(\fint_{B(x_0, r)\cap \Omega^\e } | u_{\e, \delta} |^2\right)^{1/2},
 \end{equation}
 where $d(x_0)=\text{\rm dist} (x_0, \partial \Omega)$,
  and  $C>0, \sigma\in (0, 1)$ depend only on $d$, $\mu$, $\omega$, $\kappa$, and the Lipschitz character of $\Omega$.
 \end{thm}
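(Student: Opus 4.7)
The plan is to split on the ratio of $\rho := d(x_0)$ to $r$, combining the interior $L^\infty$ bounds from the previous section with the large-scale boundary H\"older estimate of Theorem~\ref{thm-5.3}. The hypothesis $r\ge (4d)\e$ matches the thresholds already established, and the delicate regime $\rho<\e$, where Theorem~\ref{thm-5.3} is not directly available at the innermost scale, requires an additional ingredient furnished by the gap condition \eqref{F-k}.

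\emph{Case $\rho\ge r/4$.} Here the factor $\min\{(\rho/r)^\sigma,1\}$ is $\asymp 1$, so the target reduces to $|u_{\e,\delta}(x_0)|\le C(\fint_{B(x_0,r)\cap \Omega^\e}|u_{\e,\delta}|^2)^{1/2}$. When $\rho\gtrsim \e$, Theorem~\ref{thm-local-2} applied at the $\e$-scale at $x_0$ (handling separately whether $x_0\in \e\omega$ or $x_0\in \e F$, with \eqref{F-k} ensuring that points $x_0$ near $\partial\Omega$ lie in $\e\omega$) gives $|u_{\e,\delta}(x_0)|\le C(\fint_{B(x_0,2\e)}|u_{\e,\delta}|^2)^{1/2}$; Theorem~\ref{thm-IE} then promotes this to an average over $B(x_0,\rho)\cap \e\omega$, which is comparable to the target since $\rho\asymp r$. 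The residual subcase $\rho\lesssim \e$ (forcing $r=O(\e)$) follows directly from Theorem~\ref{thm-local-2} applied on $B(x_0,\rho/2)$.

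\emph{Case $\rho<r/4$ with $\rho\ge c_1\e$.} Let $\hat{x}_0\in \partial\Omega$ realize $|\hat{x}_0-x_0|=\rho$, so that $B(\hat{x}_0,r/2)\cap \Omega\subset B(x_0,r)\cap \Omega$. The interior argument above still yields $|u_{\e,\delta}(x_0)|\le C(\fint_{B(x_0,\rho)\cap \e\omega}|u_{\e,\delta}|^2)^{1/2}$; since $B(x_0,\rho)\subset B(\hat{x}_0,2\rho)$, this is majorized by $C(\fint_{D^\e_{2\rho}}|u_{\e,\delta}|^2)^{1/2}$ in the notation at the start of the present section, with center $\hat{x}_0$. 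Theorem~\ref{thm-5.3} applied with outer radius $R=r/2$ and inner radius $2\rho$ then inserts the decay factor $(\rho/r)^\sigma$, and the inclusion $D^\e_{r/2}\subset B(x_0,r)\cap \Omega^\e$ closes the case.

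\emph{Main obstacle: $\rho<c_1\e$.} Here the innermost scale $\rho$ is below the PDE microscale $\e$, so the large-scale theory cannot be applied at $B(\hat{x}_0,2\rho)$. The key observation is that \eqref{F-k} places the strip $\Sigma_{\kappa\e}$ entirely inside $\Omega^\e$, and on it $A^\e_\delta(x)=A(x/\e)$, whose H\"older norm, although $O(\e^{-\sigma})$ before rescaling, is independent of $\e$ after the change of variables $y=x/\e$. That change of variables turns the problem into $\mathrm{div}(A(y)\nabla \tilde u)=0$ on a Lipschitz domain of fixed character with H\"older coefficients of fixed norm and zero Dirichlet data on a boundary patch of unit size; the classical boundary H\"older regularity for elliptic equations (see \cite{Gilbarg-Trudinger}) yields
\[
|u_{\e,\delta}(x_0)|\le C(\rho/\e)^{\alpha_0}\,\|u_{\e,\delta}\|_{L^\infty(B(\hat{x}_0,c\kappa\e)\cap \Omega)}
\]
for some $\alpha_0\in(0,1)$ depending only on the ambient data. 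The $L^\infty$ factor is then bounded by applying the near-boundary case above at a point $x_1\in B(\hat{x}_0,c\kappa\e)\cap \Omega$ with $d(x_1)\asymp \e$ (available by the Lipschitz geometry of $\partial\Omega$), producing an upper bound $C(\e/r)^\sigma (\fint_{B(x_0,r)\cap \Omega^\e}|u_{\e,\delta}|^2)^{1/2}$. Since $\rho<\e$, the product $(\rho/\e)^{\alpha_0}(\e/r)^\sigma$ is at most $(\rho/r)^{\min(\sigma,\alpha_0)}$, so after relabeling $\sigma\leftarrow \min(\sigma,\alpha_0)$ the conclusion follows.
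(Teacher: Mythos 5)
Your proposal is correct and follows essentially the same route as the paper: the same three-way split on $d(x_0)$ versus $r$ and $\e$, with Theorems \ref{thm-local-2} and \ref{thm-IE} handling the interior regime, the large-scale boundary H\"older estimate \eqref{5.3-0} supplying the decay factor from scale $\e$ (or $d(x_0)$) up to $r$, and the small-scale De Giorgi--Nash boundary estimate in the strip $\Sigma_{\kappa\e}\subset\Omega^\e$ covering $d(x_0)\lesssim\e$. The only caveat is that your sub-$\e$ step appeals to ``H\"older coefficients of fixed norm'' after rescaling, whereas Theorem \ref{thm-5.55} does not assume \eqref{smoothness}; this is harmless because for scalar equations the boundary H\"older estimate with zero Dirichlet data on a Lipschitz boundary patch holds for merely bounded measurable coefficients, which is exactly what the paper uses.
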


\begin{proof}
In the case $d(x_0)\ge r$, we use the large-scale  interior estimate in Theorem \ref{thm-IE},
$$
\left(\fint_{B(x_0, 2d\e) } |u_{\e, \delta} |^2 \right)^{12}
\le C 
\left(\fint_{B(x_0, r)\cap \e\omega   } |u_{\e, \delta} |^2 \right)^{12},
$$
and the small-scale estimate ,
$$
|u_{\e, \delta} (x_0)| \le \left(\fint_{B(x_0, 2d\e) } |u_{\e, \delta} |^2 \right)^{12},
$$
which follows from Theorem \ref{thm-local-2}.
Next, suppose  $d(x_0)\le \kappa \e/2$.
Since $\Sigma_{\kappa \e}  \cap F^\e =\emptyset$, it follows from the small-scale boundary H\"older estimate that
$$
\aligned
|u_{\e, \delta} (x_0)|
 & \le C \left(\frac{ d(x_0)}{\e} \right)^\sigma 
\left( \fint_{B(x_0,  \e) \cap \Omega}
|u_{\e, \delta} |^2 \right)^{1/2}\\
&\le C 
\left(\frac{ d(x_0)}{r} \right)^\sigma 
\left( \fint_{B(x_0, r) \cap \Omega}
|u_{\e, \delta} |^2 \right)^{1/2},
\endaligned
$$
where we have used (\ref{5.3-0}) for the last inequality.
Finally, suppose that $\kappa \e/2 < d(x_0)<r$.
Then
$$
\aligned
|u_{\e, \delta} (x_0)|
&\le C \left(\fint_{B(x_0, d(x_0))} |u_{\e, \delta}|^2 \right)^{1/2}\\
&\le C \left(\frac{d(x_0)}{r}\right)^\sigma 
\left(\fint_{B(x_0, r)\cap \Omega } |u_{\e, \delta}|^2\right)^{1/2},
\endaligned
$$
where we have used (\ref{5.3-0}) for the last inequality.
\end{proof}

\begin{remark}
{\rm
If $\Omega$ is a $C^1$ domain, then the estimate (\ref{5.3-2}) holds for any $\lambda\in (0,1)$.
It follows from the proof of Theorem \ref{thm-5.3} that the large-scale boundary H\"older estimate
(\ref{5.3-0}) holds for any $\sigma \in (0, 1)$.
}
\end{remark}

If $\Omega$ is $C^{1, \alpha}$ for some $\alpha>0$,
we obtain the large-scale boundary Lipschitz estimate.

\begin{thm}\label{thm-5.5}
Suppose $A$ satisfies \eqref{ellipticity}-\eqref{periodicity} and $0\le \delta\le 1$.
Let $\Omega$ be a bounded $C^{1, \alpha}$ domain satisfying \eqref{F-k}.
Let $u_{\e, \delta} \in H^1(D_R)$ be a weak solution of
$\text{\rm div}(A^\e_{ \delta} \nabla u_{\e, \delta} ) =0$ in $D_R$
with $u_{\e, \delta}=0$ on $\Delta_R$ for some $(10d) \e< R< r_0$.
Then for $\e< r< R/2$,
\begin{equation}\label{5.5-0}
\left(\fint_{D_r} |\nabla u_{\e, \delta}|^2 \right)^{1/2}
\le C \left(\fint_{D_R^\e} |\nabla u_{\e, \delta} |^2 \right)^{1/2}
\end{equation}
where $C$ depends only on $d$, $\mu$, $\omega$, $\kappa$, and $\Omega$.
\end{thm}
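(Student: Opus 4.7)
The plan is to adapt the Campanato-type iteration used for Theorem \ref{thm-5.3} by comparing $u_{\e,\delta}$ to affine ``boundary-adapted'' functions rather than to constants, exploiting the extra regularity afforded by $\partial\Omega \in C^{1,\alpha}$. After a rigid motion we may assume $x_0 = 0$ and that $\partial\Omega \cap B(0,(100d)r_0)$ is given by $\{x_d = \psi(x')\}$ with $\psi(0) = 0$, $\nabla\psi(0) = 0$, and $\psi \in C^{1,\alpha}$. For $\e \le r \le R/2$ I would introduce the boundary excess
\begin{equation*}
\Phi(r) = \inf_{a \in \mathbb{R}} \frac{1}{r}\left(\fint_{D_r^\e} |u_{\e,\delta} - a\, x_d|^2 \right)^{1/2},
\end{equation*}
where the optimal $a = a_r$ plays the role of a candidate value for the normal derivative of $u_{\e,\delta}$ at the origin; a bound on $\Phi$ and on $|a_r|$ that is uniform in $r \in [\e, R/2]$ is what we will convert to the desired Lipschitz estimate.

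Next, for each such $r$ I would invoke Lemma \ref{lemma-5.2} to obtain a solution $v$ of $\text{\rm div}(\widehat{A_\delta}\nabla v) = 0$ in $D_r$ with $v = u_{\e,\delta}$ on $\partial D_r \cap \Omega$, $v = 0$ on $\Delta_r$, and
\begin{equation*}
\left(\fint_{D_r^\e} |u_{\e,\delta} - v|^2\right)^{1/2} \le C\left(\e/r\right)^{1/4}\left(\fint_{D_{2r}^\e} |u_{\e,\delta}|^2\right)^{1/2}.
\end{equation*}
Since $\widehat{A_\delta}$ is a constant symmetric elliptic matrix and $\Omega$ is $C^{1,\alpha}$, classical boundary $C^{1,\alpha}$ estimates for constant-coefficient elliptic equations produce an affine function $a_* x_d$ such that for every $\theta \in (0,1/4)$,
\begin{equation*}
\frac{1}{\theta r}\left(\fint_{D_{\theta r}^\e}|v - a_* x_d|^2\right)^{1/2} \le C_0 \theta^\alpha\, \Phi(r; v).
\end{equation*}
Combining this one-step improvement for $v$ with the approximation estimate and the observation that $u_{\e,\delta} - a x_d$ is an admissible competitor for $\Phi(\,\cdot\,; u_{\e,\delta})$ for every $a$, one obtains the iteration inequality
\begin{equation*}
\Phi(\theta r; u_{\e,\delta}) \le C_0 \theta^\alpha\, \Phi(2r; u_{\e,\delta}) + C_\theta\left(\e/r\right)^{1/4}\frac{1}{r}\left(\fint_{D_{2r}^\e}|u_{\e,\delta}|^2\right)^{1/2}.
\end{equation*}

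Fixing $\theta$ so small that $C_0 \theta^\alpha \le 1/4$ and setting $h(r) = |a_r|$, I would then apply Lemma \ref{g-lemma} with $\beta(t) = t^{1/4}$ to conclude that $\Phi(r) + |a_r|$ is bounded by $\Phi(R) + |a_R|$ for all $r \in [\e, R/2]$, and hence by $\left(\fint_{D_R^\e}|\nabla u_{\e,\delta}|^2\right)^{1/2}$ via Poincar\'e's inequality in $D_R^\e$. Finally, applying the Caccioppoli inequality in Remark \ref{remark-Ca} to $u_{\e,\delta}$ on $D_r$ converts the $L^2$ bound on $u_{\e,\delta} - a_r x_d$ (together with the bound on $|a_r|$) into the desired bound on $\fint_{D_r}|\nabla u_{\e,\delta}|^2$, completing the argument.

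The main obstacle is the non-flatness of $\partial\Omega$: for a flat boundary $\{x_d = 0\}$, the comparison function $a x_d$ vanishes on $\Delta_r$, so $v - a x_d$ is a genuine solution with zero Dirichlet data and the full boundary $C^{1,\alpha}$ estimate applies. For the $C^{1,\alpha}$ boundary $\{x_d = \psi(x')\}$, however, $a x_d$ restricts to $a\psi(x')$ on $\Delta_r$, a defect of size $O(|a|\,r^{1+\alpha})$ that has to be absorbed into the iteration. This is handled using the $C^{1,\alpha}$ smallness of $\psi$ near $0$; alternatively one may flatten the boundary at the outset via the change of variables $y = (x', x_d - \psi(x'))$, provided one checks that Lemma \ref{lemma-5.2} persists when the coefficient matrix undergoes a $C^{0,\alpha}$-small, $\psi$-dependent perturbation. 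This localization/flattening step is the only place where the full $C^{1,\alpha}$ (rather than merely Lipschitz) character of $\Omega$ is used.
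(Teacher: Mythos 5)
Your proposal is correct and follows essentially the same route as the paper, which proves Theorem \ref{thm-5.5} by exactly this excess-decay scheme: the approximation from Lemma \ref{lemma-5.2}, the boundary $C^{1,\alpha}$ estimate for the constant-coefficient operator $-\mathrm{div}(\widehat{A_\delta}\nabla)$ applied to affine comparison functions $a\,x_d$, and the iteration of Lemma \ref{g-lemma}, as in the cited case $\delta=1$ of Armstrong--Shen. The defect $a\psi(x')$ of size $O(|a|r^{1+\alpha})$ that you flag is handled there exactly as you indicate, by absorbing the resulting $r^{\alpha}$ term into the Dini-type iteration.
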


\begin{proof}
The proof uses the boundary $C^{1, \sigma}$ estimate for second-order
elliptic equations with constant coefficients  in $C^{1, \alpha}$ domains. 
With the approximation result in Lemma \ref{lemma-5.2} at our disposal,
the proof is similar to the case $\delta=1$ in \cite{A-Shen-2016}.
\end{proof}


\section{\bf Estimates of Green's functions}

Let  $\Omega$ be  a bounded Lipschitz domain in $\mathbb{R}^d$, $d\ge 2$, satisfying \eqref{F-k}.
For $0< \delta\le 1$, let $G_{\e, \delta}(x, y)$ denote the Green's function for the
operator $\mathcal{L}_{\e, \delta} =-\text{\rm div} (A^\e_{ \delta} \nabla)$ in $\Omega$, where $A$ satisfies  (\ref{ellipticity})-(\ref{periodicity}).
No smoothness condition on $A$ is needed.
The goal of this section is to prove  the following.

\begin{thm}\label{thm-6.1}
Suppose $d\ge 3$ and $0< \delta\le 1$.
Then,  for $x, y\in \Omega$ and $x\neq y$,
\begin{equation}\label{G-estimate}
|G_{\e, \delta} (x, y)|\le \frac{C}{|x-y|^{d-2}}
\quad \text{ if } \ x, y\in \Omega^\e \text{ or } 
|x-y|\ge 4d\e,
\end{equation}
where 
$C$  depends only on $d$, $\mu$, $\omega$,  $\kappa$, and the Lipschitz 
character of $\Omega$.
\end{thm}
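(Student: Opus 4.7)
The plan is to adapt the classical Gr\"uter--Widman truncation scheme to our degenerate operator $\mathcal{L}_{\e,\delta}$, using the extension operator of Lemma~\ref{lemma-ext} to bypass the lack of a Sobolev inequality adapted to $A^\e_\delta$, and then convert the resulting weak-type bound into a pointwise one via the boundary H\"older estimate (Theorem~\ref{thm-5.55}) and the local $L^\infty$ bound (Remark~\ref{remark-LL}).

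First I would establish a uniform weak-$L^{d/(d-2)}$ estimate for $u:=G_{\e,\delta}(\cdot,y)$ on $\Omega^\e$. Fix $y\in\Omega$ and, for $t>0$, use $\varphi_t:=t^{-1}\min(u,t)\in H_0^1(\Omega)$ as a test function in $\mathcal{L}_{\e,\delta}u=\delta_y$. Since $\varphi_t(y)=1$,
\[
1=\int_\Omega A^\e_\delta\nabla u\cdot\nabla\varphi_t\,dx=t^{-1}\int_{\{u<t\}}A^\e_\delta\nabla u\cdot\nabla u\,dx,
\]
and \eqref{ellipticity} gives $\int_\Omega|\Lambda^\e_\delta\nabla u_t|^2\,dx\le Ct$ with $u_t:=\min(u,t)$. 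Because $\Lambda^\e_\delta\equiv 1$ on $\Omega^\e$, this reads $\|\nabla u_t\|_{L^2(\Omega^\e)}\le C\sqrt t$. Applying Lemma~\ref{lemma-ext} produces an extension $\widetilde u_t\in H_0^1(\Omega)$ of $u_t|_{\Omega^\e}$ with $\|\nabla\widetilde u_t\|_{L^2(\Omega)}\le C\sqrt t$; the extension still vanishes on $\partial\Omega$ since $u_t=0$ there and the modifications from Lemma~\ref{lemma-ext} live inside the cells $\e F_k$, which are at distance $\ge\kappa\e$ from $\partial\Omega$ by \eqref{F-k}. The Sobolev inequality ($d\ge 3$) and Chebyshev then yield
\[
\bigl|\{x\in\Omega^\e:u(x)>s\}\bigr|\le Cs^{-d/(d-2)}\qquad(s>0),
\]
uniformly in $y$, $\e$, and $\delta$.

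Next I would convert this weak bound into a pointwise one. Fix $x\neq y$ and set $r=|x-y|/4$; then $u$ solves $\mathcal{L}_{\e,\delta}u=0$ in $B(x,2r)\cap\Omega$ and vanishes on $B(x,2r)\cap\partial\Omega$. If $x\in\Omega^\e$, I would apply Theorem~\ref{thm-5.55} (augmented, when $d\ge 4$, by a reverse-H\"older step available because $u$ satisfies the uniformly elliptic equation $\text{div}(A(x/\e)\nabla u)=0$ on $\Omega^\e$) to obtain
\[
|u(x)|\le C\left(\fint_{B(x,r)\cap\Omega^\e}u^q\right)^{1/q}
\]
for some $q\in(0,d/(d-2))$. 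A standard layer-cake integration of the weak bound gives $\int_{B(x,r)\cap\Omega^\e}u^q\,dz\le Cr^{d-q(d-2)}$, so $|u(x)|\le Cr^{-(d-2)}=C|x-y|^{-(d-2)}$. When $|x-y|\ge 4d\e$ but $x\in\e F_k$, I would use the small-scale estimate \eqref{LL-F} on $B(x,2d\e)$ together with the maximum principle inside $\e F_k$ (the equation $\text{div}(A(x/\e)\nabla u)=0$ is uniformly elliptic there by Remark~\ref{remark-02}) to reduce to bounding $u$ on $\partial(\e F_k)\subset\Omega^\e\cap B(x,d\e)$; the previous case then applies and closes the estimate.

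The hard part I anticipate is ensuring every step in the Gr\"uter--Widman truncation is uniform in $\delta\in(0,1]$. The classical argument relies on a Sobolev inequality for the operator, but $\mathcal{L}_{\e,\delta}$ degenerates as $\delta\to 0$, so no such inequality is directly available. The key idea is that the truncation $u_t$, rather than $u$ itself (which is not in $L^2$ at all because of the singularity at $y$), has its restriction to $\Omega^\e$ in $H^1$ with the correct $\sqrt t$-scaling; Lemma~\ref{lemma-ext} then provides a genuine $H_0^1(\Omega)$ extension to which classical Sobolev applies, with a constant depending only on $\omega$ and $\kappa$.
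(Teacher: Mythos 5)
Your strategy---a Gr\"uter--Widman truncation on $\Omega^\e$ made uniform in $\delta$ via the extension operator, followed by local boundedness to upgrade the weak-type bound to a pointwise one---is a genuinely different route from the paper's, which proves \eqref{G-estimate} by duality: it tests $G_{\e,\delta}(x_0,\cdot)$ against $L^2$ data supported in $B(y_0,r)\cap\Omega$, runs the energy estimates of Lemmas \ref{lemma-6.0} and \ref{lemma-6.30}, and applies the local boundedness estimates twice (once at $x_0$, once at $y_0$ for the adjoint). The first half of your argument---the uniform bound $\|\Lambda^\e_\delta\nabla u_t\|_{L^2(\Omega)}\le C\sqrt{t}$, the extension of $u_t|_{\Omega^\e}$, and the resulting weak-$L^{d/(d-2)}$ estimate on $\Omega^\e$---is sound and uniform in $\e$ and $\delta$ for exactly the reasons you give.

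The gap is the small-scale case $x,y\in\Omega^\e$ with $|x-y|\lesssim\e$, which the theorem explicitly covers and which is precisely where the paper deploys its second, weighted argument (Lemma \ref{lemma-6.30}, the weighted Sobolev inequality \eqref{Sob}, and Theorem \ref{thm-local-2}). You invoke Theorem \ref{thm-5.55} with $r=|x-y|/4$, but that theorem requires $r\ge 4d\e$. Below that scale the only available local boundedness estimate near an inclusion is Theorem \ref{thm-local-2} (equivalently Lemma \ref{lemma-local-1}), whose right-hand side is the average of $\Lambda^\e_\delta u$ over the \emph{full} ball, including the term $\delta\bigl(\fint_{B^-}|u|^2\bigr)^{1/2}$ on the hole; your weak-type bound, derived only on $\Omega^\e$ after discarding the weighted information in the holes, gives no control there. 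The crude substitute $\delta^2\int_{\e F_k}|u|^2\le C\int_{\e\widetilde{F}_k\setminus\e F_k}|u|^2$ forces you back up to scale $\e$ and produces $r^{-d/2}\e^{\,d/2}\cdot\e^{2-d}\gg r^{2-d}$ even for $d=3$, while for $d\ge4$ the right-hand side is infinite since $G(\cdot,y)\notin L^2$ near $y$. The repair is to keep the full output of your truncation step, feed $\|\Lambda^\e_\delta\nabla u_t\|_{L^2(\Omega)}\le C\sqrt{t}$ into the weighted Sobolev inequality \eqref{Sob} to obtain a weak-type bound for $\Lambda^\e_\delta u$ on all of $\Omega$, and then apply the weighted small-scale estimate \eqref{LL-2}---i.e., rebuild exactly the weighted structure the paper packages into Lemma \ref{lemma-6.30}. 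Two secondary points also need care: the self-improvement of the local boundedness estimates from $L^2$ to $L^q$ averages with $q<d/(d-2)$ must be carried out at the boundary and in the weighted setting (it is needed for every $d\ge4$), and when $y$ itself lies in a hole your maximum-principle reduction must be run on the adjoint Green function as well, with the separation thresholds rechecked after stepping to $\partial(\e F_k)$.
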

 
 We begin with some energy estimates.
 
\begin{lemma}\label{lemma-6.0}
Suppose $d\ge 3$.
Let $u_{\e, \delta} \in H_0^1(\Omega)$ be a weak solution of
$-\text{\rm div} (A^\e _{ \delta} \nabla u_{\e, \delta} ) =f$ in $\Omega$, where $f\in L^2(\Omega)$ with 
support in $ B(y_0, r) \cap \Omega$, where $y_0\in \overline{\Omega}$ 
and $0<r<\text{\rm diam}(\Omega)$.
Then 
\begin{equation}\label{6.0-0}
\|  [ \Lambda^\e_{ \delta} ]^2   \nabla u_{\e, \delta}\|_{L^2(\Omega)}
\le C r  \| f\|_{L^2(\Omega)},
\end{equation}
where  $C$ depends only on $d$, $\mu$, $\omega$, $\kappa$, and the Lipschitz character of $\Omega$.
\end{lemma}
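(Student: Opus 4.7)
The plan is to combine an energy estimate (testing with $u=u_{\e,\delta}$) with the Sobolev embedding on $\Omega$ (valid since $d\ge 3$) and the extension arguments of Section~\ref{section-2} to handle the degenerate region $\e F$.

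First, take $u$ itself as a test function in the weak formulation, obtaining $\int_\Omega A^\e_\delta\nabla u\cdot\nabla u\,dx=\int_\Omega fu\,dx$. Ellipticity together with the pointwise bound $[\Lambda^\e_\delta]^4\le[\Lambda^\e_\delta]^2$ (using $\Lambda^\e_\delta\le 1$) gives $\mu\,\|[\Lambda^\e_\delta]^2\nabla u\|^2_{L^2(\Omega)}\le\int_\Omega fu\,dx$. For the right-hand side, since $\mathrm{supp}\,f\subset B(y_0,r)\cap\Omega$ and $|B(y_0,r)|^{1/d}\lesssim r$, H\"older's inequality with pair $(2,2^*)$ together with the Sobolev embedding $\|u\|_{L^{2^*}(\Omega)}\le C\|\nabla u\|_{L^2(\Omega)}$ for $u\in H_0^1(\Omega)$ yields
\[
\int_\Omega fu\,dx\le\|f\|_{L^2}\|u\|_{L^2(B(y_0,r))}\le Cr\|f\|_{L^2}\|\nabla u\|_{L^2(\Omega)}.
\]
Then I control $\|\nabla u\|_{L^2(\Omega)}$ by decomposing $u=\tilde u+w$ on each hole $\e F_k$: $\tilde u$ is the $A$-harmonic extension of $u|_{\partial\e F_k}$, so by Lemma~\ref{lemma-ext-F} $\|\nabla\tilde u\|_{L^2(\e F_k)}\le C\|\nabla u\|_{L^2(\e\widetilde F_k\setminus\e F_k)}$, while $w=u-\tilde u\in H_0^1(\e F_k)$ satisfies $-\delta^2\mathrm{div}(A\nabla w)=f$ in $\e F_k$, giving $\|\nabla w\|_{L^2(\e F_k)}\le C\e\delta^{-2}\|f\|_{L^2(\e F_k)}$ via energy and Poincar\'e. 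Feeding these back into the energy inequality and applying Young's inequality (with $\|\nabla u\|_{L^2(\Omega^\e)}\le\|[\Lambda^\e_\delta]^2\nabla u\|_{L^2(\Omega)}$) to absorb $\|[\Lambda^\e_\delta]^2\nabla u\|_{L^2}$ into the left yields the conclusion.

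The main obstacle is the apparent $\delta^{-2}$ factor in $\|\nabla w\|_{L^2(\e F_k)}$, which at first glance destroys $\delta$-uniformity. The decisive observation is that $\nabla u$ on $\e F$ contributes to $\|[\Lambda^\e_\delta]^2\nabla u\|_{L^2}^2$ with weight $\delta^4$, which exactly offsets the $\delta^{-4}$ blow-up in $\|\nabla w\|^2_{L^2(\e F)}$. Balancing these terms cleanly via a carefully tuned two-tier Young's inequality---and reducing to the main case $r\ge\e$, the complementary range $r<\e$ being handled by a direct local estimate on a single hole---is the delicate algebraic step that makes the constant $C$ in the bound independent of both $\delta\in(0,1]$ and $\e\in(0,1]$.
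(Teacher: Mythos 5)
Your opening move already loses the game. The inequality $\mu\,\|[\Lambda^\e_\delta]^2\nabla u\|^2_{L^2(\Omega)}\le\int_\Omega fu\,dx$ is true, but the right-hand side is \emph{not} bounded by $Cr^2\|f\|^2$ uniformly in $\delta$: if $f$ charges a hole $\e F_k$, then (in your own notation) $u=\tilde u+w$ there with $-\delta^2\,\mathrm{div}(A\nabla w)=f$, so
$\int_{\e F_k}fw=\delta^2\int_{\e F_k}A\nabla w\cdot\nabla w\approx\delta^{-2}\min(r,\e)^2\|f\|^2_{L^2(\e F_k)}$,
a \emph{positive} quantity that blows up as $\delta\to 0$ while the left-hand side stays bounded. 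This is exactly the term $Cr\e\,\delta^{-2}\|f\|^2$ that surfaces at the end of your computation. It is an additive term not involving the unknown, so no Young-type balancing can absorb it; and the "offset" you invoke between the weight $\delta^4$ and the blow-up $\delta^{-4}$ of $\|\nabla w\|^2$ only concerns the left-hand side $\|[\Lambda^\e_\delta]^2\nabla u\|^2$, not the pairing $\int fw$ on the right, where $w$ enters with no $\delta$-weight at all. In short, any proof that proceeds by bounding $\int fu$ from above cannot yield a $\delta$-uniform estimate, because $\int fu$ itself is of order $\delta^{-2}$.

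The paper circumvents this by never pairing $f$ with the large part of $u$ inside the holes. It tests twice: first with $\varphi=u$, which (keeping only $\mu\delta^2\|\nabla u\|^2_{L^2(\Omega)}$ on the left) gives the preliminary bound $\delta^2\|\nabla u\|_{L^2(\Omega)}\le Cr\|f\|_{L^2(\Omega)}$; and second with $\varphi\in H^1_0(\Omega)$ equal to the extension of $u|_{\Omega^\e}$ from Lemma \ref{lemma-ext}, satisfying $\|\nabla\varphi\|_{L^2(\Omega)}\le C\|\nabla u\|_{L^2(\Omega^\e)}$. In the second test the term over the holes carries a factor $\delta^2\|\nabla u\|_{L^2(\Omega\setminus\Omega^\e)}$, which the first estimate controls by $Cr\|f\|$, and the source term $\int f\varphi$ is now controlled by $Cr\|f\|\,\|\nabla u\|_{L^2(\Omega^\e)}$. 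This yields $\|\nabla u\|_{L^2(\Omega^\e)}\le Cr\|f\|$, and since $\delta^4\|\nabla u\|^2_{L^2(\Omega\setminus\Omega^\e)}\le(\delta^2\|\nabla u\|_{L^2(\Omega)})^2\le(Cr\|f\|)^2$, the conclusion follows. Your decomposition $u=\tilde u+w$ on the holes is a reasonable instinct, but to salvage your argument you would have to apply it \emph{inside the duality pairing} (i.e., replace the test function $u$ by $\tilde u$), at which point you essentially recover the paper's second test function.
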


\begin{proof}
Let $\varphi  \in H^1_0(\Omega)$.
Then
\begin{equation}\label{6.0-10}
\int_\Omega 
[\Lambda_{ \delta} (x/\e )]^2 A(x/\e) \nabla u_{\e, \delta} \cdot  \nabla \varphi \, dx
=\int_{ B(y_0, r)\cap\Omega}
 f \cdot \varphi  \, dx.
\end{equation}
By letting  $\varphi =u_{\e, \delta}$ in (\ref{6.0-10})  and using 
$$
 \| \varphi\|_{L^2(B(y_0, r)\cap\Omega)}
\le C r \|\varphi \|_{L^p(\Omega)}
\le C r  \|\nabla \varphi \|_{L^2(\Omega)},
$$
 where  $p=\frac{2d}{d-2}$,
we obtain 
\begin{equation}\label{6.0-11}
\delta^2 \| \nabla u_{\e, \delta} \|_{L^2(\Omega)}
\le Cr   \| f\|_{L^2(\Omega)}.
\end{equation}
Next, in (\ref{6.0-10}),  we let $\varphi \in H_0^1(\Omega)$ be an extension of $u_{\e, \delta}|_{\Omega^\e}$ such that
\begin{equation}\label{6.0-12}
\|\nabla \varphi  \|_{L^2(\Omega)}
\le C  \|\nabla u_{\e, \delta} \|_{L^2(\Omega^\e)}
\end{equation}
(see Lemma \ref{lemma-ext}).
It follows that
$$
\aligned
\|\nabla u_{\e, \delta}\|_{L^2(\Omega^\e)}^2
 & \le C\delta^2 \|\nabla u_{\e, \delta} \|_{L^2(\Omega\setminus \Omega^\e)} \|\nabla \varphi \|_{L^2(\Omega\setminus \Omega^\e)}
+ Cr  \| f\|_{L^2(\Omega)} \|\nabla \varphi  \|_{L^2(\Omega)}\\
& \le Cr \| f\|_{L^2(\Omega)} \|\nabla u_{\e, \delta} \|_{L^2(\Omega^\e)},
\endaligned
$$
where we have used  (\ref{6.0-11}) and (\ref{6.0-12}) for the last inequality.
This, together with (\ref{6.0-11}), gives (\ref{6.0-0}).
\end{proof}

\begin{lemma}\label{lemma-6.30}
Suppose $d\ge 3$. Let $u_{\e, \delta} \in H_0^1(\Omega)$ be a weak solution of
$-\text{\rm div} (A^\e _{ \delta} \nabla u_{\e, \delta} ) =\Lambda^\e_{ \delta} f$
in $\Omega$, where $f\in L^2( B(y_0, r) \cap \Omega)$.
Then
\begin{equation}\label{6.30-0}
\|\Lambda^\e_{\delta}  \nabla u_{\e, \delta} \|_{L^2(\Omega)}
\le C r \| f\|_{L^2(\Omega)},
\end{equation}
where 
 $C$ depends only on $d$, $\mu$, $\omega$, $\kappa$, and the Lipschitz character of $\Omega$.
\end{lemma}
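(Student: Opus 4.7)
The plan is to test the weak form directly with $\varphi=u_{\e,\delta}$ and reduce matters to a weighted Sobolev--Poincar\'e inequality. Using $\varphi=u_{\e,\delta}\in H^1_0(\Omega)$ in the weak formulation of $-\text{\rm div}(A^\e_{\delta}\nabla u_{\e,\delta})=\Lambda^\e_{\delta}f$, ellipticity yields
\begin{equation*}
\mu\|\Lambda^\e_{\delta}\nabla u_{\e,\delta}\|_{L^2(\Omega)}^2
\le \Big|\int_{B(y_0,r)\cap\Omega}\Lambda^\e_{\delta}f\,u_{\e,\delta}\,dx\Big|
\le \|f\|_{L^2(\Omega)}\,\|\Lambda^\e_{\delta}u_{\e,\delta}\|_{L^2(B(y_0,r)\cap\Omega)}.
\end{equation*}
The proof therefore reduces to establishing, for every $v\in H^1_0(\Omega)$, the weighted Poincar\'e--Sobolev bound
\begin{equation*}
\|\Lambda^\e_{\delta}v\|_{L^2(B(y_0,r)\cap\Omega)}\le C r\,\|\Lambda^\e_{\delta}\nabla v\|_{L^2(\Omega)},
\end{equation*}
with $C$ independent of $\e$ and $\delta$.

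To prove the weighted inequality, I would decompose $v=v_o+v_F$ with the aid of Lemma \ref{lemma-ext}: let $v_o\in H^1_0(\Omega)$ be the extension of $v|_{\Omega^\e}$ satisfying $v_o=v$ on $\Omega^\e$ and $\|\nabla v_o\|_{L^2(\Omega)}\le C\|\nabla v\|_{L^2(\Omega^\e)}$. Then $v_F=v-v_o$ vanishes on $\Omega^\e$, hence lies in $H^1_0(\e F_k)$ on each hole, and extended by zero it is in $H^1_0$ of the union of the holes. On $\Omega^\e$, where $\Lambda^\e_\delta=1$, Sobolev embedding and H\"older's inequality (using $d\ge 3$) give $\|v\|_{L^2(B\cap\Omega^\e)}\le C r\,\|\nabla v_o\|_{L^2(\Omega)}\le Cr\|\nabla v\|_{L^2(\Omega^\e)}$. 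On $F^\e$, where $\Lambda^\e_\delta=\delta$, split
\begin{equation*}
\delta\|v\|_{L^2(B\cap F^\e)}\le \delta\|v_o\|_{L^2(B)}+\delta\|v_F\|_{L^2(B)}.
\end{equation*}
Since $\delta\le 1$, the first term is bounded by $Cr\|\nabla v\|_{L^2(\Omega^\e)}$, again by Sobolev--H\"older on $v_o\in H^1_0(\Omega)$. For the second term, Sobolev--H\"older applied to the zero-extension of $v_F$ gives $\|v_F\|_{L^2(B)}\le Cr\|\nabla v_F\|_{L^2(F^\e)}$, and the triangle inequality combined with the bound on $\nabla v_o$ yields
\begin{equation*}
\delta\|v_F\|_{L^2(B)}\le Cr\big(\delta\|\nabla v\|_{L^2(F^\e)}+\|\nabla v\|_{L^2(\Omega^\e)}\big).
\end{equation*}
Collecting contributions and using $\|\Lambda^\e_{\delta}\nabla v\|_{L^2(\Omega)}^2=\|\nabla v\|_{L^2(\Omega^\e)}^2+\delta^2\|\nabla v\|_{L^2(F^\e)}^2$ delivers the desired inequality.

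Feeding this back into the energy identity gives $\mu\|\Lambda^\e_{\delta}\nabla u_{\e,\delta}\|_{L^2(\Omega)}^2\le Cr\|f\|_{L^2(\Omega)}\|\Lambda^\e_{\delta}\nabla u_{\e,\delta}\|_{L^2(\Omega)}$, which is \eqref{6.30-0}. The main obstacle is the weighted Poincar\'e step. The two-step testing argument of Lemma \ref{lemma-6.0} leaves one only with a bound of the form $Cr\|f\|\|\nabla\varphi\|_{L^2(\Omega)}$ for $\varphi\in H^1_0$, but $\|\nabla u_{\e,\delta}\|_{L^2(\Omega)}$ cannot be controlled by $\|\Lambda^\e_{\delta}\nabla u_{\e,\delta}\|_{L^2(\Omega)}$ when $\delta$ is small; the modified source $\Lambda^\e_{\delta}f$ forces us to dualize against $\Lambda^\e_{\delta}u_{\e,\delta}$ and carry the weight $\delta$ cleanly across the holes, which is exactly what the splitting via Lemma \ref{lemma-ext} accomplishes without losing a factor of $1/\delta$.
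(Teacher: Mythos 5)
Your proof is correct and follows essentially the same route as the paper: test the weak form with $u_{\e,\delta}$, reduce to the weighted Sobolev--Poincar\'e bound $\|\Lambda^\e_{\delta}v\|_{L^2(B(y_0,r)\cap\Omega)}\le Cr\|\Lambda^\e_{\delta}\nabla v\|_{L^2(\Omega)}$, and prove the latter via the extension operator of Lemma \ref{lemma-ext} applied to $v|_{\Omega^\e}$. The only cosmetic difference is that the paper handles the contribution of the holes by bounding $\delta\|\varphi\|_{L^p(\Omega)}$ globally with the ordinary Sobolev inequality and $\delta\le 1$, rather than splitting off $v_F=v-v_o$.
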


\begin{proof}
By using the test function $u_{\e, \delta}$ and Cauchy inequality,  we obtain 
\begin{equation}\label{6.30-1}
\|\Lambda^\e_{\delta} \nabla u_{\e, \delta} \|^2_{L^2(\Omega)}
\le C \| f\|_{L^2(\Omega)}
\|\Lambda^\e_{ \delta} u_{\e, \delta} \|_{L^2( B(y_0, r)\cap \Omega )}.
\end{equation}
Note that
$$
\|\Lambda^\e_{ \delta}  u_{\e, \delta} \|_{L^2( B(y_0, r)\cap \Omega )}
\le C r\|\Lambda^\e_{ \delta} u_{\e, \delta}\|_{L^p(\Omega)}
\le C r \|\Lambda^\e_{ \delta} \nabla u_{\e, \delta} \|_{L^2(\Omega)},
$$
where we have used  H\"older's inequality as well as the Sobolev type  inequality,
\begin{equation}\label{Sob}
\| \Lambda^\e_{\delta} \varphi \|_{L^p (\Omega)}
\le C\|\Lambda^\e_{ \delta} \nabla \varphi \|_{L^2(\Omega)},
\end{equation}
with  $p=\frac{2d}{d-2}$, 
for $\varphi \in H^1_0(\Omega)$.
This, together with (\ref{6.30-1}), gives (\ref{6.30-0}).
Finally, to see (\ref{Sob}, we let $\widetilde{\varphi}\in H^1_0(\Omega)$ be an extension 
of $\varphi|_{\Omega^\e}$ such that 
$\|\nabla \widetilde{\varphi} \|_{L^2(\Omega)}
\le C \|\nabla \varphi\|_{L^2(\Omega^\e)}$.
Then
$$
\aligned
\|\Lambda^\e_{ \delta} \varphi\|_{L^p(\Omega)}
&\le \| \widetilde{\varphi} \|_{L^p(\Omega)}
+ \delta \| \varphi\|_{L^p(\Omega)}\\
&\le C \|\nabla \widetilde{\varphi}\|_{L^2(\Omega)} + C\delta  \|\nabla \varphi\|_{L^2(\Omega)}
\le C \|\Lambda^\e_{ \delta} \nabla \varphi\|_{L^2(\Omega)},
\endaligned
$$
where we have used the usual Sobolev inequality for the second inequality.
\end{proof}

\begin{proof}[\bf Proof of Theorem \ref{thm-6.1}]

Fix $x_0, y_0\in \Omega$ and let $r=|x_0 -y_0|/2$.
We first consider the case $r\ge 2d \e$.
Let $f\in L^2 (B(y_0, r)\cap \Omega)$ and
$$
u_{\e, \delta} (x, y)=\int_\Omega G_{\e, \delta } (x, y) f(y) \, dy.
$$
Then $-\text{\rm div} (A^\e_{\delta} \nabla u_{\e, \delta} ) =f$ in $\Omega$ and $u_{\e, \delta} =0$ on $\partial\Omega$.
It follows from Lemma \ref{lemma-6.0} that
$$
\|\nabla u_{\e, \delta} \|_{L^2(\Omega^\e)}
\le C r \| f\|_{L^2(\Omega)}.
$$
Let $\widetilde{u}_{\e, \delta} \in H_0^1(\Omega)$ be an extension of $u_{\e, \delta}|_{\Omega^\e}$ such that
$\|\nabla \widetilde{u}_{\e, \delta} \|_{L^2(\Omega)}
\le C \|\nabla u_{\e, \delta} \|_{L^2(\Omega^\e)}$.
By (\ref{5.1-2}), 
$$
\aligned
\|u_{\e, \delta} \|_{L^2(\Omega\cap B(x_0, r))}
 & \le C \|u_{\e, \delta} \|_{L^2(\Omega^\e\cap B(x_0, 2r))}
 \le C r  \|\widetilde{u}_{\e, \delta} \|_{L^p(\Omega)}\\
&\le Cr  \|\nabla \widetilde{u}_{\e, \delta} \|_{L^2(\Omega)}
\le C  r^2 \| f\|_{L^2(\Omega)},
\endaligned
$$
where $p=\frac{2d}{d-2}$ and we have used a Sobolev inequality for $\widetilde{u}_{\e, \delta}$.
Since  $\text{\rm div} (A^\e _{ \delta} \nabla u_{\e, \delta} ) =0$ in $B(x_0, r)\cap \Omega$,
$u_{\e, \delta}=0$ on $\partial\Omega$, and $r> 2d\e$, 
it  follows from (\ref{6.3-0})  that
$$
| u_{\e, \delta} (x_0)|
\le C \left(\fint_{B(x_0, r)\cap \Omega} |u_{\e, \delta}|^2 \right)^{1/2}
\le  C   r^{2-\frac{d}{2}}
\| f\|_{L^2(B(y_0, r)\cap \Omega)}.
$$
By duality this implies that
$$
\left(\int_{B(y_0, r)\cap \Omega} |G_{\e, \delta} (x_0, y)|^2\, dy \right)^{1/2}
\le C r^{2-\frac{d}{2}}.
$$
Since $\text{\rm div} (A^{\e*} _{ \delta} \nabla_y G_{\e, \delta} (x_0, \cdot))=0$ in $B(y_0, r)$
and $G_{\e, \delta} (x_0, \cdot) =0$ on $\partial\Omega$, we obtain 
$$
|G_{\e,\delta} (x_0, y_0)|
\le C \left(\fint_{B(y_0, r)\cap \Omega} |G_{\e, \delta} (x_0, y)|^2\, dy \right)^{1/2}
\le C r^{2-d}.
$$

We now consider the case where  $r=(1/2) |x_0 -  y_0|<2d \e$.
Let $g\in L^2( B(y_0, r)\cap \Omega)$ and
$$
w_{\e, \delta} = \int_\Omega G_{\e, \delta} (x, y) \Lambda_{ \delta} (y/\e) g (y)\, dy.
$$
Then $-\text{\rm div} (A^\e_{ \delta} \nabla w_{\e, \delta} )=\Lambda^\e_{ \delta} \, g$ in $\Omega$ and
$w_{\e, \delta}=0$ on $\partial \Omega$.
By Lemma \ref{lemma-6.30},
$$
\|\Lambda^\e_{\delta} \nabla w_{\e, \delta} \|_{L^2(\Omega)}
\le C r\| g\|_{L^2(\Omega)}.
$$
It follows that
$$
\aligned
\|\Lambda^\e_{ \delta}  w_{\e, \delta} \|_{L^2(B(x_0, r)\cap \Omega)}
&\le C r \| \Lambda^\e_{\delta} w_{\e, \delta}  \|_{L^p(\Omega)}\\
&\le C r\| \Lambda^\e_{\delta} \nabla w_{\e, \delta}\|_{L^2(\Omega)}
\le C r^2 \| g\|_{L^2(\Omega)},
\endaligned
$$
where we have used (\ref{Sob}) for the second inequality.
Hence, by Theorem \ref{thm-local-2},
$$
\aligned
\Lambda^\e_{ \delta} (x_0) | w_{\e, \delta} (x_0)|
 & \le C \left(\fint_{B(x_0, r)\cap \Omega} |\Lambda^\e_{ \delta} w_{\e, \delta} |^2\right)^{1/2}\\
&\le C r^{2-\frac{d}{2}}
\| g\|_{L^2(\Omega)}.
\endaligned
$$
By duality we obtain
$$
\left(\fint_{B(y_0, r)\cap \Omega}
|\Lambda^\e_{\delta} (x_0)
G_{\e, \delta} (x_0, y) \Lambda^\e_{ \delta} (y)|^2\, dy \right)^{1/2}
\le C r^{2-d},
$$
which, by Theorem \ref{thm-local-2} again,   leads to
$$
\Lambda^\e_{ \delta} (x_0 )  \Lambda^\e_{\delta} (y_0)
G_{\e, \delta} (x_0, y_0)
\le C r^{2-d},
$$
since $\text{\rm div} (A^{\e*}_{\delta} \nabla G_{\e, \delta} (x_0, \cdot))=0$ in $ \Omega\setminus \{ x_0\} $ and $G_{\e, \delta} (x_0, \cdot)=0$ on $\partial\Omega$.
\end{proof}

\begin{remark}\label{re-6.40}
Suppose $d\ge 3$.
It follows from (\ref{6.3-0}) and (\ref{G-estimate}) that if $x, y\in \Omega$ and  $|x-y|\ge 4d \e$, then
\begin{equation}\label{6.40-0}
|G_{\e, \delta} (x, y)|\le \frac{ C [ d(x) ]^\sigma [d(y)]^\sigma }{|x-y|^{d-2+2\sigma }}
\end{equation}
for some $\sigma\in (0, 1)$,
where $d(x)=\text{\rm dist} (x, \partial\Omega)$.
The estimate (\ref{6.40-0}) also holds if
$|x-y| <  4d \e$ and $x, y\in \Omega^\e$.
\end{remark}

\begin{remark}\label{re-6.41}
In the case $d=2$,  the inequality $\|\varphi\|_{L^\infty(\Omega)} \le C \|\nabla \varphi\|_{L^2(\Omega)}$
fails for $\varphi\in H^1_0(\Omega)$.
However, an inspection of the proof of Lemma \ref{lemma-6.0} as well as the proof of (\ref{5.1-0}) 
shows that
one only needs
$$
\|\varphi\|_{L^2(B(y_0, r)\cap \Omega)}
\le C r\|\nabla \varphi\|_{L^2(\Omega)}
\quad \text{ for } \varphi \in H_0^1(\Omega).
$$
This Poincar\'e-type inequality holds  if $r\ge c_0 d(y_0)$ for $d\ge 2$.
As a result, the estimate (\ref{6.40-0})  continues to hold in the case $d=2$, if $|x-y|\ge 8\e$ and
$|x-y|\ge c_0\max (  d(x), d(y))$.
The estimate also holds if $c_0\max (  d (x), d(y)) < |x-y|< 8\e$ and $x, y\in \Omega^\e$.
\end{remark}

\begin{remark}

In the case where $\omega$,  $\Omega$ are smooth,
estimates of Green functions  may be found \cite{Yeh-2010, Yeh-2016}. 

\end{remark}


\section{\bf Boundary layer  estimates}

Throughout this section we assume that $\Omega$ is a bounded Lipschitz domain satisfying \eqref{F-k},
$A$ satisfies (\ref{ellipticity})-(\ref{periodicity}), and 
$A^\e_{ \delta}$ is given by (\ref{A-e}).
We use the scale-invariant $H^1(\partial\Omega)$ norm,
$$
\| f\|_{H^1(\partial\Omega)}
=\| \nabla_{\tan} f\|_{L^2(\partial\Omega)}
+ [\text{\rm diam} (\partial\Omega) ] ^{-1} \| f\|_{L^2(\partial\Omega)}.
$$

\begin{thm}\label{thm-7.1}
Let $u_{\e, \delta}\in H^1(\Omega)$ be a weak solution of \eqref{DP}
with  $f\in H^1(\partial\Omega)$. Then, for $0<\e\le 1$ and $0\le \delta\le 1$,
\begin{equation}\label{7.1-0}
\|\nabla u_{\e, \delta}\|_{L^2(\Sigma_{\alpha \e})}
\le  C_\alpha  \e^{1/2}  \| f\|_{H^1(\partial\Omega)},
\end{equation}
where $\alpha \ge  d$ and
$C_\alpha $ depends only on $d$, $\mu$, $\omega$, $\kappa$, $\alpha$, and the Lipschitz character of $\Omega$.
\end{thm}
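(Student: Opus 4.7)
The plan is to couple the two-scale expansion of Section~\ref{section-3} with a duality argument that delivers the sharp boundary-layer rate $\e^{1/2}$ and bypasses the global $\e^{1/4}$ loss in Theorem~\ref{thm-3.3}. Fix $\alpha\ge d$ and let $v_\delta\in H^1(\Omega)$ solve the homogenized problem $\text{\rm div}(\widehat{A_\delta}\nabla v_\delta)=0$ in $\Omega$ with $v_\delta=f$ on $\partial\Omega$; Lemma~\ref{lemma-3.2} controls the homogenized piece at rate $\e^{1/2}$ on any strip $\Sigma_{C\e}$. Replace the cutoff in (\ref{cut-off}) by an $\alpha$-adapted one $\tilde\eta_\e\in C_0^\infty(\Omega)$ with $\tilde\eta_\e=0$ on $\Sigma_{(\alpha+2)\e}$ and $\tilde\eta_\e=1$ off $\Sigma_{(\alpha+4)\e}$, and set
$$\tilde w := u_{\e,\delta} - v_\delta - \e\,\chi_\delta(x/\e)\,S_\e(\tilde\eta_\e \nabla v_\delta).$$
The corrector term vanishes in $\Sigma_{(\alpha+1)\e}$, so $\tilde w = u_{\e,\delta}-v_\delta$ there and $\tilde w\in H_0^1(\Omega)$. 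Running the proof of Lemma~\ref{thm-3.1} with $\tilde\eta_\e$ in place of $\eta_\e$ and invoking Lemma~\ref{lemma-3.2} on the resulting right-hand side yields, for every $\psi\in H_0^1(\Omega)$,
$$\Bigl|\int_\Omega A_\delta^\e\nabla\tilde w\cdot\nabla\psi\,dx\Bigr|\le C_\alpha\,\e^{1/2}\,\|f\|_{H^1(\partial\Omega)}\|\nabla\psi\|_{L^2(\Omega)}.$$

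I would then dualize on the connected region. For $g\in L^2(\Omega)^d$ supported in $\Sigma_{\alpha\e}\cap\e\omega$, let $\psi\in H_0^1(\Omega)$ solve the adjoint equation $-\text{\rm div}((A_\delta^\e)^T \nabla\psi) = -\text{\rm div}(g)$. Testing against $\psi$ with Cauchy--Schwarz and ellipticity gives $\|\Lambda_\delta^\e\nabla\psi\|_{L^2(\Omega)}\le C\|g\|_{L^2(\Omega)}$; since $g$ vanishes in each $\e F_k$, the function $\psi$ is $A^T$-harmonic there, so Lemma~\ref{lemma-ext-F} applied hole-by-hole and summed upgrades this to $\|\nabla\psi\|_{L^2(\Omega)}\le C\|g\|_{L^2(\Omega)}$ with $C$ independent of $\delta$. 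Testing the equation for $\psi$ against $\phi=\tilde w\in H_0^1(\Omega)$ and using the identity $A\nabla\tilde w\cdot\nabla\psi=A^T\nabla\psi\cdot\nabla\tilde w$ identifies $\int g\cdot\nabla\tilde w\,dx = \int A_\delta^\e\nabla\tilde w\cdot\nabla\psi\,dx$, and the two displayed bounds combine to $|\int g\cdot\nabla\tilde w|\le C_\alpha\,\e^{1/2}\|f\|_{H^1(\partial\Omega)}\|g\|_{L^2(\Omega)}$. Taking the supremum over admissible $g$, and then applying the triangle inequality together with Lemma~\ref{lemma-3.2} (since $\nabla\tilde w = \nabla u_{\e,\delta} - \nabla v_\delta$ on $\Sigma_{\alpha\e}$), yields
$$\|\nabla u_{\e,\delta}\|_{L^2(\Sigma_{\alpha\e}\cap\e\omega)}\le C_\alpha\,\e^{1/2}\|f\|_{H^1(\partial\Omega)}.$$

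Finally, each $\e F_k$ meeting $\Sigma_{\alpha\e}$ satisfies $\e\widetilde{F}_k\subset\Sigma_{(\alpha+d)\e}$ (since $\text{\rm diam}(\widetilde{F}_k)\le d$), and $u_{\e,\delta}$ is $A$-harmonic on $\e F_k$ (via the harmonic extension \eqref{DP-e} when $\delta=0$). Lemma~\ref{lemma-ext-F} then gives $\|\nabla u_{\e,\delta}\|_{L^2(\Sigma_{\alpha\e}\cap\e F)}\le C\|\nabla u_{\e,\delta}\|_{L^2(\Sigma_{(\alpha+d)\e}\cap\e\omega)}$, which the preceding $\e\omega$-estimate (applied with $\alpha+d$ in place of $\alpha$) controls by $C_\alpha\,\e^{1/2}\|f\|_{H^1(\partial\Omega)}$; adding the two contributions proves \eqref{7.1-0}. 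The main obstacle is the uniform gradient bound $\|\nabla\psi\|_{L^2(\Omega)}\le C\|g\|_{L^2}$ used in the duality step: Lemma~\ref{lemma-ext-F} is essential here, as it lets one recover the full unweighted gradient norm of $\psi$ from its $\Lambda_\delta^\e$-weighted energy, which is precisely what sidesteps the $\e^{1/4}$ loss one would incur by testing Lemma~\ref{thm-3.1} directly against $\tilde w$.
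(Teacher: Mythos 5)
Your proposal is correct for $0<\delta\le 1$ and shares its skeleton with the paper's proof --- the same homogenized solution $v_\delta$ controlled by Lemma \ref{lemma-3.2}, the same $\alpha$-adapted cutoff in \eqref{r-1}, and the same weak bound $|\int_\Omega A_\delta^\e\nabla\tilde w\cdot\nabla\psi|\le C_\alpha\e^{1/2}\|f\|_{H^1(\partial\Omega)}\|\nabla\psi\|_{L^2(\Omega)}$ from Lemma \ref{thm-3.1} --- but it extracts $\|\nabla\tilde w\|_{L^2(\Omega^\e)}\lesssim\e^{1/2}$ by a genuinely different device. The paper first tests with $\psi=\tilde w$ to get $\delta^2\|\nabla\tilde w\|_{L^2(\Omega)}\le C\e^{1/2}\|f\|_{H^1(\partial\Omega)}$, then tests with the extension $\widetilde{w}$ of $\tilde w|_{\Omega^\e}$ furnished by Lemma \ref{lemma-ext}; the coercive part lives on $\Omega^\e$ and the cross term over the holes is absorbed using the first bound, so the full power $\e^{1/2}$ survives with no $\e^{1/4}$ loss. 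You instead dualize: for $g$ supported in $\e\omega$ you solve the adjoint problem, observe that the energy estimate only costs $\|\Lambda_\delta^\e\nabla\psi\|_{L^2}\le C\|g\|_{L^2}$ because $g$ sees only the region where $\Lambda_\delta^\e=1$, and upgrade to $\|\nabla\psi\|_{L^2(\Omega)}\le C\|g\|_{L^2}$ via Lemma \ref{lemma-ext-F} on each hole (whose proof indeed uses only ellipticity, so it applies to $A^T$). Both routes lean on the same two extension lemmas, just applied to different functions ($\tilde w$ versus the adjoint solution $\psi$); yours is conceptually clean but requires introducing and solving an auxiliary adjoint problem, while the paper's is self-contained. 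Your framing that a direct test ``would incur an $\e^{1/4}$ loss'' is a bit misleading --- the loss is avoided either way --- but that does not affect your argument. The treatment of $\Sigma_{\alpha\e}\cap\e F$ at the end coincides with the paper's last line.

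The one genuine gap is the case $\delta=0$, which the theorem includes. There the adjoint operator $-\mathrm{div}((A_0^\e)^T\nabla\cdot)$ is degenerate on the holes, so ``let $\psi\in H_0^1(\Omega)$ solve the adjoint equation'' is not well posed as stated, and the a priori bound $\|\Lambda_0^\e\nabla\psi\|_{L^2}\le C\|g\|_{L^2}$ says nothing about $\nabla\psi$ on $\e F$. This is fixable: either pose the adjoint problem as a mixed problem on $\Omega^\e$ (Dirichlet on $\partial\Omega$, natural condition on $\Gamma^\e$), extend $\psi$ into each $\e F_k$ by the $A^T$-harmonic extension so that Lemma \ref{lemma-ext-F} still yields $\|\nabla\psi\|_{L^2(\Omega)}\le C\|g\|_{L^2}$ and the identity $\int g\cdot\nabla\tilde w=\int_{\Omega^\e}A^T\nabla\psi\cdot\nabla\tilde w=\int_\Omega A_0^\e\nabla\tilde w\cdot\nabla\psi$ persists, or simply run your argument for $\delta>0$ and pass to the limit $\delta\to 0$ as in the paper's proof of Theorem \ref{main-thm-1}. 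Either repair should be stated explicitly.
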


\begin{proof}
Let $v$ be the solution of the homogenized problem:
$\text{\rm div}(\widehat{A_\delta} \nabla v)=0$ in $\Omega$ and 
$v=f$ on $\partial\Omega$.
Recall  that  for $0<t<1$, 
\begin{equation}\label{7.1-2}
\aligned
\|\nabla v\|_{L^2(\Sigma_t)}
+t \|\nabla ^2 v\|_{L^2(\Omega\setminus \Sigma_t)}  & \le
C t^{1/2} \|N(\nabla v) \|_{L^2(\partial\Omega)}\\
& \le C t^{1/2} \| f \|_{H^1(\partial\Omega)},
\endaligned
\end{equation}
where $C$ depends only on $d$, $\mu$, and the Lipschitz character of $\Omega$.
Let $w_{\e, \delta}$ be defined by (\ref{r-1}), with $v$ in the place of $v_\delta$.
The cut-off function $\eta_\e$ in \eqref{r-1} is chosen so that
$\eta_\e   =1$ in $\Omega\setminus \Sigma_{3\alpha \e}$ and
$\eta_\e =0$ in $\Sigma_{2\alpha \e}$, and $|\nabla \eta_\e|\le C_\alpha \e^{-1}$.
It follows from (\ref{3.1-0}) and (\ref{7.1-2})  that
\begin{equation}\label{7.1-3}
\Big|
\int_\Omega A^\e_{ \delta} \nabla w_{\e, \delta} \cdot \nabla \psi\, dx \Big|
\le C \e^{1/2}  \|\nabla \psi\|_{L^2(\Omega)} \| f\|_{H^1(\partial\Omega)}
\end{equation}
for any $\psi \in H^1_0(\Omega)$.
Let $\psi=w_{\e, \delta}$ in (\ref{7.1-3}).
Using the ellipticity condition (\ref{ellipticity}) and $\Lambda_{ \delta} \ge \delta$, we obtain 
\begin{equation}\label{7.1-4}
\delta^2 \|\nabla w_{\e, \delta} \|_{L^2(\Omega)}
\le C \e^{1/2} \| f\|_{H^1(\partial\Omega)}.
\end{equation}

By Lemma \ref{lemma-ext}, there exists  $\widetilde{w}_{\e, \delta}\in H^1_0(\Omega)$ such  that $\widetilde{w}_{\e, \delta}
=w_{\e, \delta}$ in $\Omega^\e$ and 
\begin{equation}\label{7.1-5}
\|\nabla \widetilde{w}_{\e, \delta} \|_{L^2(\Omega)}
\le C \| \nabla w_{\e, \delta} \|_{L^2(\Omega^\e)}.
\end{equation}
Let  $\psi=\widetilde{w}_{\e, \delta}$ in (\ref{7.1-3}). Using (\ref{7.1-4}) and (\ref{7.1-5}), 
we see that
$$
\aligned
\|\nabla w_{\e, \delta} \|_{L^2(\Omega^\e)}^2
&\le C \delta^2 \|\nabla w_{\e, \delta} \|_{L^2(\Omega)}
\| \nabla \widetilde{w}_{\e, \delta} \|_{L^2(\Omega)}
+ C \e^{1/2} \|\nabla \widetilde{w}_{\e, \delta}\|_{L^2(\Omega)} \| f\|_{H^1(\partial\Omega)}\\
&\le C \e^{1/2}  \|\nabla w_{\e, \delta}\|_{L^2(\Omega^\e)} \| f\|_{H^1(\partial\Omega)},
\endaligned
$$
which yields  $\|\nabla w_{\e, \delta} \|_{L^2(\Omega^\e)}\le C \e^{1/2} \| f\|_{H^1(\partial\Omega)}$.
Since  $w_{\e, \delta} =u_{\e, \delta} -v$ on $\Sigma_{2\alpha  \e}$, we obtain 
$$
\|\nabla u_{\e, \delta}\|_{L^2(\Omega^\e\cap \Sigma_{2\alpha \e})}
\le \| \nabla v\|_{L^2(\Sigma_{2\alpha  \e})}
+ \|\nabla w_{\e, \delta}\|_{L^2(\Omega^\e)}
\le C \e^{1/2} \| f\|_{H^1(\partial\Omega)},
$$
where we also used (\ref{7.1-2}) for the last inequality.
Finally, we note that by Lemma \ref{lemma-ext-F}, 
$\| \nabla u_{\e, \delta} \|_{L^2(\Sigma_{\alpha \e})} \le C \| \nabla u_{\e, \delta} \|_{L^2(\Omega^\e\cap \Sigma_{2\alpha \e})}$.
This completes the proof.
\end{proof}

Next we consider the Neumann problem (\ref{NP}).

\begin{thm}\label{thm-7.2}
Assume that $\widehat{A_\delta}$ is symmetric.
Let $u_{\e, \delta}\in H^1(\Omega)$ be a weak solution of \eqref{NP}
with $g\in L^2(\partial\Omega)$ and $\int_{\partial \Omega} g=0$.
Then,  for $0<\e\le 1$ and $0\le \delta\le 1$,
\begin{equation}\label{7.2-0}
\|\nabla u_{\e, \delta}\|_{L^2(\Sigma_{\alpha \e} )}
\le  C_\alpha  \e^{1/2}  \| g\|_{L^2(\partial\Omega)},
\end{equation}
where $\alpha\ge d$ and
$C$ depends only on $d$, $\mu$, $\omega$, $\kappa$, $\alpha$, and the Lipschitz character of $\Omega$.
\end{thm}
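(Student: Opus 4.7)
The plan is to adapt the argument for the Dirichlet case in Theorem \ref{thm-7.1} to the Neumann setting, with the main modification coming from the fact that $w_{\e,\delta}$ will no longer vanish on $\partial\Omega$. First I introduce the homogenized reference solution: let $v \in H^1(\Omega)$ satisfy $\text{\rm div}(\widehat{A_\delta} \nabla v) = 0$ in $\Omega$ and $n \cdot \widehat{A_\delta} \nabla v = g$ on $\partial\Omega$, which is well-posed modulo constants because $\widehat{A_\delta}$ is symmetric and elliptic and $\int_{\partial\Omega} g = 0$. Since $\widehat{A_\delta}$ is a constant symmetric positive-definite matrix, the classical Rellich-based $L^2$ theory for the Neumann problem on Lipschitz domains gives $\|N(\nabla v)\|_{L^2(\partial\Omega)} \le C \|g\|_{L^2(\partial\Omega)}$, and hence, as in (\ref{7.1-2}),
\[
\|\nabla v\|_{L^2(\Sigma_t)} + t \|\nabla^2 v\|_{L^2(\Omega \setminus \Sigma_t)} \le C t^{1/2} \|g\|_{L^2(\partial\Omega)}.
\]

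Next I define $w_{\e,\delta}$ by (\ref{r-1}) with this $v$ in place of $v_\delta$, using a cutoff $\eta_\e$ with $\eta_\e \equiv 1$ on $\Omega \setminus \Sigma_{3\alpha \e}$, $\eta_\e \equiv 0$ on $\Sigma_{2\alpha \e}$, and $|\nabla \eta_\e| \le C_\alpha/\e$. The crucial observation is that Lemma \ref{thm-3.1} remains valid when the test function $\psi$ is allowed to range over $H^1(\Omega)$ rather than only $H_0^1(\Omega)$, provided the underlying identity (\ref{3.1-1}) holds in this larger class. For the Neumann setup it does: both weak formulations yield
\[
\int_\Omega A^\e_{\delta} \nabla u_{\e,\delta} \cdot \nabla \psi \, dx = \int_{\partial\Omega} g \psi \, d\sigma = \int_\Omega \widehat{A_\delta} \nabla v \cdot \nabla \psi \, dx
\]
for every $\psi \in H^1(\Omega)$. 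Moreover, the only step in the proof of Lemma \ref{thm-3.1} that could generate a boundary contribution is the integration by parts in the treatment of $I_2$, and this boundary term vanishes because $S_\e(\eta_\e \nabla v)$ is supported in $\Omega \setminus \Sigma_{\alpha \e}$ (the mollifier $\varphi_\e$ is supported in $B(0,\e/2)$ while $\eta_\e$ vanishes on $\Sigma_{2\alpha\e}$ with $\alpha \ge d$). Combined with the boundary-layer bounds on $v$ above, this gives
\[
\Big| \int_\Omega A^\e_{\delta} \nabla w_{\e,\delta} \cdot \nabla \psi \, dx \Big| \le C_\alpha \e^{1/2} \|\nabla \psi\|_{L^2(\Omega)} \|g\|_{L^2(\partial\Omega)}
\]
for every $\psi \in H^1(\Omega)$.

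The remainder mirrors the proof of Theorem \ref{thm-7.1}. Taking $\psi = w_{\e,\delta}$ and using $A^\e_{\delta} \ge \mu \delta^2 I$ yields $\delta^2 \|\nabla w_{\e,\delta}\|_{L^2(\Omega)} \le C_\alpha \e^{1/2} \|g\|_{L^2(\partial\Omega)}$. Next, let $\widetilde{w}_{\e,\delta} \in H^1(\Omega)$ be an extension of $w_{\e,\delta}|_{\Omega^\e}$ with $\|\nabla \widetilde{w}_{\e,\delta}\|_{L^2(\Omega)} \le C \|\nabla w_{\e,\delta}\|_{L^2(\Omega^\e)}$ supplied by Lemma \ref{lemma-ext}, and substitute $\psi = \widetilde{w}_{\e,\delta}$; splitting the left side into the piece on $\Omega^\e$ (where $A^\e_{\delta} = A(x/\e)$ is uniformly elliptic) and the piece on $\Omega \setminus \Omega^\e$ (where $A^\e_{\delta} = \delta^2 A(x/\e)$ and the previous step controls $\delta \|\nabla w_{\e,\delta}\|_{L^2}$), and using $\delta \le 1$ to absorb, produces $\|\nabla w_{\e,\delta}\|_{L^2(\Omega^\e)} \le C_\alpha \e^{1/2} \|g\|_{L^2(\partial\Omega)}$. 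Since $w_{\e,\delta} = u_{\e,\delta} - v$ on $\Sigma_{2\alpha \e}$, combining with the bound on $\nabla v$ yields $\|\nabla u_{\e,\delta}\|_{L^2(\Omega^\e \cap \Sigma_{2\alpha \e})} \le C_\alpha \e^{1/2} \|g\|_{L^2(\partial\Omega)}$, and Lemma \ref{lemma-ext-F} applied to each inclusion $\e F_k \subset \Omega$ (using $\text{\rm div}(A(x/\e) \nabla u_{\e,\delta}) = 0$ there, together with the harmonic extension prescription of Remark \ref{remark-02} when $\delta = 0$) then upgrades this to the desired bound on all of $\Sigma_{\alpha \e}$.

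The main obstacle is establishing the $H^1(\Omega)$ version of Lemma \ref{thm-3.1}. It rests on two points: that the Neumann data for $u_{\e,\delta}$ and for $v$ coincide as the single distribution $g$ (so that the identity (\ref{3.1-1}) persists when tested against functions with nonzero trace on $\partial\Omega$), and that the compact support of the mollifier together with the support of $\eta_\e$ force $S_\e(\eta_\e \nabla v) \equiv 0$ in a neighborhood of $\partial\Omega$, eliminating the boundary contribution from the flux-corrector integration by parts in the treatment of $I_2$.
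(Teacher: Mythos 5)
Your proposal is correct and follows essentially the same route as the paper: the homogenized Neumann solution $v$ with the Rellich estimate $\|N(\nabla v)\|_{L^2(\partial\Omega)}\le C\|g\|_{L^2(\partial\Omega)}$ (using symmetry of $\widehat{A_\delta}$), the observation that Lemma \ref{thm-3.1} extends to test functions $\psi\in H^1(\Omega)$ because the identity (\ref{3.1-1}) is the only place the boundary condition enters (with the support of $S_\e(\eta_\e\nabla v)$ killing any boundary term from the flux-corrector integration by parts), and then the same extension/absorption scheme as in Theorem \ref{thm-7.1}. No gaps.
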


\begin{proof}
The proof is similar to that of Theorem \ref{thm-7.1}.
Let $v$ be a solution of the homogenized problem:
$\text{\rm div} (\widehat{A_\delta} \nabla v) =0$ in $\Omega$ and
$\frac{\partial v}{\partial \nu}=g$ on $\partial\Omega$,
where $\partial u/\partial \nu = n\cdot \widehat{A_\delta} \nabla v$.
Since $\widehat{A_\delta}$ is symmetric, 
it is known that $\|N(\nabla v)\|_{L^2(\partial\Omega)}
\le C \| g\|_{L^2(\partial\Omega)}$
(see \cite{Kenig-book} for references).
Thus,  for $0<t<1$, 
\begin{equation}\label{7.2-2}
\aligned
\|\nabla v\|_{L^2(\Sigma_t)}
+t \|\nabla ^2 v\|_{L^2(\Omega\setminus \Sigma_t)}  & \le
C t^{1/2} \|N(\nabla v) \|_{L^2(\partial\Omega)}\\
& \le C t^{1/2} \| g \|_{L^2(\partial\Omega)}.
\endaligned
\end{equation}
Let $w_{\e, \delta}$ be defined as in (\ref{r-1}), with $v$ in the place of $v_\delta$.
An inspection of the proof of Theorem \ref{thm-3.1} shows that
the inequality (\ref{3.1-0}) holds for any $\psi \in H^1(\Omega)$.
In fact, the only place that the boundary condition plays a role is in (\ref{3.1-1}).
It follows that (\ref{7.1-3}) holds for any $\psi\in H^1(\Omega)$.
The rest of the proof is exactly the same as that of Theorem \ref{thm-7.1}.
\end{proof}

We end this section with a localized version of Theorem \ref{thm-7.1}.

\begin{thm}\label{thm-7.3}
Let $u_{\e, \delta} \in H^1(D_{2r})$ be a weak solution of
$\text{\rm div} (A^\e_{ \delta} \nabla u_{\e, \delta})=0$ in $D_{2r}$
for some $r> (10d)\e$.
Then
\begin{equation}\label{7.3-0}
\frac{1}{\e} \int_{\Sigma_{\kappa \e, r}} |\nabla u_{\e, \delta}|^2\, dx
\le C \int_{\Delta_{2r}} |\nabla_{\tan} u_{\e, \delta}|^2\, d\sigma
+\frac{C}{r} \int_{D_{2r}} |\nabla u_{\e, \delta}|^2\, dx,
\end{equation}
where $\Sigma_{\kappa \e, r} =\Sigma_{\kappa \e} \cap D_r$ and $C$ depends only on $d$, $\mu$,
$\omega$, $\kappa$, and the Lipschitz character of $\Omega$.
\end{thm}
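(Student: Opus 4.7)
The plan is to adapt the proof of Theorem \ref{thm-7.1} to a carefully chosen Lipschitz subdomain of $D_{2r}$, and then carry out the two-scale expansion of Section \ref{section-3} on that subdomain.

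First, by a standard slicing argument (integrating $|\nabla u_{\e,\delta}|^2$ against the radial variable), pick $s\in(3r/2,7r/4)$ such that
\begin{equation*}
\int_{\partial B(x_0,s)\cap\Omega}|\nabla u_{\e,\delta}|^2\, d\sigma\le\frac{C}{r}\int_{D_{2r}}|\nabla u_{\e,\delta}|^2\, dx,
\end{equation*}
and, simultaneously, $\partial B(x_0,s)$ avoids the $(\kappa\e/4)$-neighborhood of every hole $\e F_k$ meeting $D_{2r}$ (possible since $r\ge 10d\e$ and the holes are $O(\e)$-thick, so the measure of bad radii is a small fraction of $r/4$). Set $\widetilde D=B(x_0,s)\cap\Omega$; this is a bounded Lipschitz domain whose Lipschitz character is controlled purely by that of $\Omega$, uniformly in $r$, and which satisfies an analog of \eqref{F-k} with a possibly smaller constant $\kappa'\in(0,\kappa]$ depending only on $\kappa$.

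Second, let $v_\delta\in H^1(\widetilde D)$ solve $\text{\rm div}(\widehat{A_\delta}\nabla v_\delta)=0$ in $\widetilde D$ with $v_\delta=u_{\e,\delta}$ on $\partial\widetilde D$. The classical nontangential-maximal-function estimate for the constant-coefficient operator $\widehat{A_\delta}$ on Lipschitz domains (used in the same way as in Lemma \ref{lemma-3.2}) gives
\begin{equation*}
\|N(\nabla v_\delta)\|_{L^2(\partial\widetilde D)}^2\le C\|\nabla_{\tan}u_{\e,\delta}\|_{L^2(\partial\widetilde D)}^2\le C\int_{\Delta_{2r}}|\nabla_{\tan}u_{\e,\delta}|^2\, d\sigma+\frac{C}{r}\int_{D_{2r}}|\nabla u_{\e,\delta}|^2\, dx,
\end{equation*}
where I split $\partial\widetilde D=\widetilde\Delta\cup(\partial B(x_0,s)\cap\Omega)$ and use the slice estimate on the second piece. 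The standard boundary layer bound then yields, writing $\widetilde\Sigma_t=\{x\in\widetilde D:\text{dist}(x,\partial\widetilde D)<t\}$,
\begin{equation*}
\|\nabla v_\delta\|_{L^2(\widetilde\Sigma_t)}+t\|\nabla^2 v_\delta\|_{L^2(\widetilde D\setminus\widetilde\Sigma_t)}\le Ct^{1/2}\|N(\nabla v_\delta)\|_{L^2(\partial\widetilde D)}
\end{equation*}
for $0<t<\text{\rm diam}(\widetilde D)$.

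Third, run the two-scale expansion on $\widetilde D$: define $w_{\e,\delta}=u_{\e,\delta}-v_\delta-\e\chi_\delta(x/\e)S_\e(\eta_\e\nabla v_\delta)$, where $\eta_\e\in C_0^\infty(\widetilde D)$ equals $1$ on $\widetilde D\setminus\widetilde\Sigma_{4d\e}$, vanishes on $\widetilde\Sigma_{3d\e}$, and $|\nabla\eta_\e|\le C/\e$. The proof of Lemma \ref{thm-3.1} transfers verbatim to $\widetilde D$ and, together with the boundary layer estimate at $t\sim\e$, yields for every $\psi\in H_0^1(\widetilde D)$,
\begin{equation*}
\Big|\int_{\widetilde D}A_\delta^\e\nabla w_{\e,\delta}\cdot\nabla\psi\, dx\Big|\le C\e^{1/2}\|\nabla\psi\|_{L^2(\widetilde D)}\|N(\nabla v_\delta)\|_{L^2(\partial\widetilde D)}.
\end{equation*}
Since $\eta_\e\equiv 0$ near $\partial\widetilde D$ and $v_\delta=u_{\e,\delta}$ there, $w_{\e,\delta}\in H_0^1(\widetilde D)$. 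Testing with $\psi=w_{\e,\delta}$ and then with the $H_0^1$-extension of $w_{\e,\delta}|_{\widetilde D^\e}$ provided by Lemma \ref{lemma-ext} (applied on $\widetilde D$; the extension preserves the zero boundary values by the choice of $s$), exactly as in the proof of Theorem \ref{thm-7.1}, gives
\begin{equation*}
\|\nabla w_{\e,\delta}\|_{L^2(\widetilde D^\e)}\le C\e^{1/2}\|N(\nabla v_\delta)\|_{L^2(\partial\widetilde D)}.
\end{equation*}

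To conclude, observe that on $\Sigma_{\kappa\e,r}$ the distance to $\partial\widetilde D$ equals the distance to $\partial\Omega$ (because $s>3r/2>r$), so $\Sigma_{\kappa\e,r}\subset\widetilde\Sigma_{\kappa\e}\subset\widetilde\Sigma_{3d\e}$, where $\eta_\e\equiv 0$ and therefore $u_{\e,\delta}=v_\delta+w_{\e,\delta}$. Bounding $\|\nabla v_\delta\|_{L^2(\widetilde\Sigma_{\kappa\e})}\le C\e^{1/2}\|N(\nabla v_\delta)\|_{L^2(\partial\widetilde D)}$ by the boundary-layer estimate, combining with the $w$-bound (using Lemma \ref{lemma-ext-F} to pass from $\widetilde D^\e$ to $\Sigma_{\kappa\e,r}$), squaring, and dividing by $\e$ produces \eqref{7.3-0}. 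The main technical obstacle is verifying that the constants in the extension operator of Lemma \ref{lemma-ext} and in the NTM/boundary-layer estimates for $\widehat{A_\delta}$ on $\widetilde D$ are independent of $r$; this is where the choice of $s$ avoiding the $(\kappa\e/4)$-neighborhood of the holes on $\partial B(x_0,s)$, together with the scale-invariance of these estimates and the uniform Lipschitz character of $\widetilde D$, is crucial.
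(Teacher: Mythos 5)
Your overall architecture is the same as the paper's: slice off a subdomain $B(x_0,t)\cap\Omega$ with a good radius, solve the homogenized problem there with boundary data $u_{\e,\delta}$, run the two-scale expansion of Section \ref{section-3}, and test the resulting identity with $w_{\e,\delta}$ and with an extension of $w_{\e,\delta}$ from the perforated part. The gap is in your selection of the radius $s$. You claim one can choose $s\in(3r/2,7r/4)$ so that $\partial B(x_0,s)$ avoids the $(\kappa\e/4)$-neighborhood of \emph{every} hole meeting $D_{2r}$, "since the holes are $O(\e)$-thick, so the measure of bad radii is a small fraction of $r/4$." This count is off by a factor of order $(r/\e)^{d-1}$: the annulus contains on the order of $(r/\e)^{d}$ holes, each contributing a bad radial interval of length at least $\kappa\e/2$, so the bad intervals have total length of order $r^{d}/\e^{d-1}\gg r$ before accounting for overlaps, and there is no reason the overlaps rescue you. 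Indeed, for a generic center $x_0$ the distances $|c_k-x_0|$ from $x_0$ to the $\sim(r/\e)^d$ hole centers in the annulus are spread over an interval of length $\sim r$ with typical gaps of order $\e^{d}/r^{d-1}\ll\e$, so no interval of length $\kappa\e/2$ is free of them: a sphere of radius $\sim r\gg\e$ unavoidably cuts through on the order of $(r/\e)^{d-1}$ holes of an $\e$-periodic perforation. Consequently your $\widetilde D$ does \emph{not} satisfy the analog of \eqref{F-k}, Lemma \ref{lemma-ext} cannot be invoked on it (the construction \eqref{ext-e} presupposes that each $\e\widetilde F_k$ sits inside the domain), and the claim that the extension of $w_{\e,\delta}|_{\widetilde D^\e}$ "preserves the zero boundary values by the choice of $s$" fails precisely on the holes straddling $\partial B(x_0,s)$.

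The paper circumvents this without any geometric requirement on $\partial B(x_0,t)$: the Fubini selection is used to control both the surface integral $\int_{\Omega\cap\partial B_t}|\nabla u_{\e,\delta}|^2$ \emph{and} the thin-shell integral $\e^{-1}\int_{D_t\setminus D_{t-2d\e}}|\nabla u_{\e,\delta}|^2$ (see \eqref{7.3-1}); then, instead of a global extension operator on $D_t$, one modifies $w_{\e,\delta}$ only inside those holes $\e F_k$ with $\e\widetilde F_k\subset D_t$, by harmonic extension of the trace on $\partial(\e F_k)$, leaving $w_{\e,\delta}$ untouched on the holes that meet the spherical part of the boundary. The resulting $\widetilde w_{\e,\delta}$ is automatically in $H^1_0(D_t)$, satisfies $\|\nabla\widetilde w_{\e,\delta}\|_{L^2(D_{t-d\e})}\le C\|\nabla w_{\e,\delta}\|_{L^2(D_t^\e)}$, and the leftover contribution from the untouched boundary shell, $\|\nabla w_{\e,\delta}\|_{L^2(D_t\setminus D_{t-d\e})}$ in \eqref{7.3-6}, is exactly what the extra thin-shell term in \eqref{7.3-1} absorbs. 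If you replace your construction of $\widetilde D$ by this localized modification, the rest of your argument (which otherwise tracks the paper's proof of Theorem \ref{thm-7.1}) goes through. A minor further remark: in your last step, $\Sigma_{\kappa\e,r}\subset\Omega^\e$ already by \eqref{F-k}, so no appeal to Lemma \ref{lemma-ext-F} is needed to pass from $\widetilde D^\e$ to $\Sigma_{\kappa\e,r}$.
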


\begin{proof}
By rescaling we may assume $r=1$ and $\e>0$ is sufficiently small.
By Fubini's Theorem and the co-area formula we see that 
$$
\int_{3/2}^2
\left\{ \frac{1}{\e}
\int_{D_t\setminus D_{t-(2d)\e}}
|\nabla u_{\e, \delta}|^p\, dx
+\int_{\Omega \cap \partial B_t} |\nabla u_{\e, \delta}|^p\, d\sigma  \right\} dt
\le C \int_{D_2} |\nabla u_{\e, \delta}|^p\, dx,
$$
where $2\le p< \infty$.
It follows that there exists $t\in (3/2, 2)$ such that
\begin{equation}\label{7.3-1}
\frac{1}{\e}
\int_{D_t\setminus D_{t-(2d)\e}}
|\nabla u_{\e, \delta}|^p\, dx
+\int_{\Omega \cap \partial B_t} |\nabla u_{\e, \delta}|^p\, d\sigma 
\le 
C \int_{D_2} |\nabla u_{\e, \delta}|^p\, dx.
\end{equation}
Let $v$ be the solution of $\text{\rm div} (\widehat{A_\delta} \nabla v)=0$ in $D_t$
with $v=u_{\e, \delta}$ on $\partial D_t$.
Note that 
\begin{equation}\label{7.3-3}
\aligned
\|N(\nabla v)\|_{L^2(\partial D_t)}
 & \le C \| \nabla_{\tan} v\|_{L^2(\partial D_t)}\\
& \le C \big\{ \|\nabla_{\tan} u_{\e, \delta} \|_{L^2(\Delta_2)}
+ \| \nabla u_{\e, \delta} \|_{L^2(D_2)} \big\},
\endaligned
\end{equation}
where we used (\ref{7.3-1}) with $p=2$ for the last inequality.
Let $w_{\e, \delta}$ be defined by (\ref{r-1}). It follows from (\ref{3.1-0}) and (\ref{7.3-3}) that
\begin{equation}\label{7.3-2}
\Big|
\int_{D_t} A^\e_{ \delta} \nabla w_{\e, \delta} \cdot \nabla \psi\, dx \Big|
\le C \e^{1/2} \|\nabla \psi\|_{L^2(D_t)} 
\big\{ \|\nabla_{\tan} u_{\e, \delta} \|_{L^2(\Delta_2)}
+ \| \nabla u_{\e, \delta} \|_{L^2(D_2)} \big\}
\end{equation}
for any $\psi \in H^1_0(D_t)$.
By letting $\psi= w_{\e, \delta}$ in (\ref{7.3-2}) we obtain 
\begin{equation}\label{7.3-4}
\delta^2 \|\nabla w_{\e, \delta}\|_{L^2(D_t)}
\le C\e^{1/2} \big\{ \|\nabla_{\tan} u_{\e, \delta} \|_{L^2(\Delta_2)}
+ \| \nabla u_{\e, \delta} \|_{L^2(D_2)} \big\}.
\end{equation}

To proceed, we construct a new function $\widetilde{w}_{\e, \delta}$ in $D_t $ as follows.
For each $\e F_k \subset \Omega$ with $\e \widetilde{F}_k\subset D_t$,
 we replace $w_{\e, \delta}|_{\e F_k}$ by the harmonic extension of $w_{\e, \delta} |_{\partial (\e F_k)}$.
This gives us a function $\widetilde{w}_{\e, \delta}$ in $H_0^1(D_t)$ with 
the property  that
\begin{equation}\label{7.3-5}
\|\nabla \widetilde{w}_{\e, \delta}\|_{L^2(D_{t-d\e})}
  \le C \|\nabla w_{\e, \delta}\|_{L^2(D_t^\e)}.
 \end{equation}
 Also note that
 \begin{equation}\label{7.3-6}
 \aligned
  \|\nabla \widetilde{w}_{\e, \delta}\|_{L^2(D_t)}
 &\le C \big\{ \|\nabla w_{\e, \delta}\|_{L^2(D_t \setminus D_{t-d\e})}
 + \|\nabla w_{\e, \delta} \|_{L^2(D_t^\e)} \big\}.
 \endaligned
 \end{equation}
Let $\psi=\widetilde{w}_{\e, \delta}$ in (\ref{7.3-2}).
We obtain 
$$
\aligned
\|\nabla w_{\e, \delta}\|_{L^2(D_t^\e)}^2
&\le C \delta^2 \|\nabla w_{\e, \delta} \|_{L^2(D_t)}
\| \nabla \widetilde{w}_{\e, \delta}\|_{L^2(D_t)}\\
& \qquad + C \e^{1/2} \|\nabla \widetilde{w}_{\e, \delta} \|_{L^2(D_t)}
\big\{ \|\nabla_{\tan} u_{\e, \delta} \|_{L^2(\Delta_2)}
+ \| \nabla u_{\e, \delta} \|_{L^2(D_2)} \big\}\\
&\le C \e^{1/2} \|\nabla \widetilde{w}_{\e, \delta} \|_{L^2(D_t)}
\big\{ \|\nabla_{\tan} u_{\e, \delta} \|_{L^2(\Delta_2)}
+ \| \nabla u_{\e, \delta} \|_{L^2(D_2)} \big\},
\endaligned
$$
where we have used (\ref{7.3-4}) for the last inequality.
This, together with (\ref{7.3-6}) and (\ref{7.3-1}) for $p=2$, gives 
$$
\|\nabla w_{\e, \delta}\|_{L^2(D_t^\e)}
\le C \e^{1/2}
\big\{ \|\nabla_{\tan} u_{\e, \delta} \|_{L^2(\Delta_2)}
+ \| \nabla u_{\e, \delta} \|_{L^2(D_2)} \big\}.
$$
which, combining with (\ref{7.3-4}), leads to 
\begin{equation}\label{7.3-10}
\| [\Lambda^\e_{ \delta} ]^2  \nabla w_{\e, \delta} \|_{L^2(D_1)}
\le 
C \e^{1/2}
\big\{ \|\nabla_{\tan} u_{\e, \delta} \|_{L^2(\Delta_2)}
+ \| \nabla u_{\e, \delta} \|_{L^2(D_2)} \big\}.
\end{equation}
Observe that $\Sigma_{\kappa \e, 1} \subset D_1^\e$, and
$w_{\e, \delta} =u_{\e, \delta} - v$ on $\Sigma_{\kappa \e, 1}$.
We obtain 
$$
\aligned
\|\nabla u_{\e, \delta}\|_{L^2(\Sigma_{\kappa \e, 1})}
& \le \|\nabla v\|_{L^2(\Sigma_{\kappa \e, 1})}
+ \|\nabla w_{\e, \delta} \|_{L^2(\Sigma_{\kappa \e, 1})}\\
&\le
 C \e^{1/2}
\big\{ \|\nabla_{\tan} u_{ \e, \delta} \|_{L^2(\Delta_2)}
+ \| \nabla u_{\e, \delta} \|_{L^2(D_2)} \big\},
\endaligned
$$
where we also used (\ref{7.3-3}) for the last inequality.
\end{proof}


\section{Reverse H\"older inequalities}

Recall that $\Lambda_\delta^\e (x)=\Lambda_\delta (x/\e)$.
The goal of this section is to prove the following.

\begin{thm}\label{RH-thm}
Assume $A$ satisfies  \eqref{ellipticity} and  \eqref{periodicity}.
Let $u_{\e, \delta} \in H^1(B_{6r})$ be a weak solution of
$ \text{\rm div} (A_\delta^\e \nabla u_{\e, \delta})=0$ in $B_{6r}$.
Then
\begin{equation}\label{RH-1}
\left(\fint_{B_r} |\Lambda_\delta^\e \nabla u_{\e, \delta}|^p\right)^{1/p}
\le C \left(\fint_{B_{2r}} |\Lambda_\delta^\e \nabla u_{\e, \delta}|^2 \right)^{1/2},
\end{equation}
where $p>2$ and $C>1$ depend only on $d$, $\mu$, and $\omega$.
Moreover, if $r\ge 2d \e$, then
\begin{equation}\label{RH-1a}
\left( \fint_{B_r} |\nabla u_{\e, \delta}|^p \right)^{1/p}
\le C \left(\fint_{B_{6r}\cap \e\omega } |\nabla u_{\e, \delta}|^2 \right)^{1/2}.
\end{equation}
\end{thm}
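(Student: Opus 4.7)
The plan is to deduce \eqref{RH-1} from Gehring's self-improving lemma applied to a weighted reverse H\"older inequality at exponents $(2,q)$ for some $q\in(\tfrac{2d}{d+2},2)$; then \eqref{RH-1a} follows from \eqref{RH-1} by a $W^{1,p}$ extension-into-holes argument combined with the large-scale bound of Theorem \ref{thm-IL}.

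The reverse-H\"older step combines the weighted Caccioppoli inequality of Lemma \ref{lemma-4.1},
\[
\int_{B_\rho}|\Lambda_\delta^\e\nabla u_{\e,\delta}|^2\,dx \le \frac{C}{\rho^2}\int_{B_{2\rho}}|\Lambda_\delta^\e(u_{\e,\delta}-c)|^2\,dx,
\]
with a weighted Sobolev--Poincar\'e inequality of the form
\[
\inf_{c\in\R}\Big(\fint_{B_{2\rho}}\Big|\Lambda_\delta^\e\frac{u_{\e,\delta}-c}{\rho}\Big|^2\Big)^{1/2} \le C\Big(\fint_{B_{3\rho}}|\Lambda_\delta^\e\nabla u_{\e,\delta}|^q\Big)^{1/q}
\]
valid for $B_{3\rho}\subset B_{6r}$. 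To prove this inequality I would extend $u_{\e,\delta}|_{\e\omega\cap B_{3\rho}}$ via the operators $E_{\e,k}$ of \eqref{ext-e} to a function $\widetilde{u}\in W^{1,q}(B_{3\rho})$ satisfying $\widetilde{u}=u_{\e,\delta}$ on $\e\omega$ and $\|\nabla\widetilde{u}\|_{L^q(B_{3\rho})}\le C\|\nabla u_{\e,\delta}\|_{L^q(\e\omega\cap B_{3\rho})}$, and choose $c=\fint_{B_{2\rho}}\widetilde{u}$. The $\e\omega$-piece of the LHS is then controlled by the classical Sobolev--Poincar\'e applied to $\widetilde{u}$ with $q=2d/(d+2)$. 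On each hole $\e F_k$ meeting $B_{2\rho}$, the difference $w=u_{\e,\delta}-\widetilde{u}$ lies in $H^1_0(\e F_k)$, and Sobolev--Poincar\'e inside $\e F_k$ combined with Lemma \ref{lemma-ext-F} (which exploits $\text{\rm div}(A(x/\e)\nabla u_{\e,\delta})=0$ in $\e F_k$, and is trivial when $\delta=0$ since $\Lambda_\delta^\e\equiv 0$ there) bounds $\delta\|u_{\e,\delta}-c\|_{L^2(\e F_k)}$ by $C\|\Lambda_\delta^\e\nabla u_{\e,\delta}\|_{L^q(\e\widetilde{F}_k)}$ plus a global error absorbed into the $\widetilde{u}$ bound. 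Gehring's lemma then upgrades the resulting reverse H\"older inequality at $(2,q)$ to \eqref{RH-1} for some $p>2$.

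For \eqref{RH-1a}, the condition $r\ge 2d\e$ ensures $\e\widetilde{F}_k\subset B_{3r/2}$ for every hole $\e F_k$ meeting $B_r$. On each such hole I would decompose $u_{\e,\delta}|_{\e F_k}=E_{\e,k}(u_{\e,\delta})+v_k$ with $v_k\in W^{1,p}_0(\e F_k)$ solving $\text{\rm div}(A\nabla v_k)=-\text{\rm div}(A\nabla E_{\e,k}(u_{\e,\delta}))$; Meyers' classical $W^{1,p}$ estimate (for $p>2$ close to $2$) combined with \eqref{ext-e} yields $\|\nabla u_{\e,\delta}\|_{L^p(\e F_k)}\le C\|\nabla u_{\e,\delta}\|_{L^p(\e\widetilde{F}_k\setminus\e F_k)}$. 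Summing over the disjoint $\e\widetilde{F}_k$'s gives $\int_{B_r}|\nabla u_{\e,\delta}|^p\le C\int_{B_{3r/2}}|\Lambda_\delta^\e\nabla u_{\e,\delta}|^p$. A covering of $B_{3r/2}$ by balls of radius $r/2$ (whose six-fold dilates still lie inside $B_{6r}$) together with \eqref{RH-1} bounds $(\fint_{B_{3r/2}}|\Lambda_\delta^\e\nabla u_{\e,\delta}|^p)^{1/p}$ by $C(\fint_{B_{5r/2}}|\Lambda_\delta^\e\nabla u_{\e,\delta}|^2)^{1/2}$, and Theorem \ref{thm-IL} applied with $R=6r$ (valid since $\e\le 5r/2<R/2$) converts this into $C(\fint_{B_{6r}\cap\e\omega}|\nabla u_{\e,\delta}|^2)^{1/2}$, yielding \eqref{RH-1a}.

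The main obstacle is the weighted Sobolev--Poincar\'e inequality: because $\Lambda_\delta^\e$ degenerates in the holes as $\delta\to 0$, one must carefully coordinate the global extension $\widetilde{u}$ with the local hole-by-hole Poincar\'e estimates so that the resulting constants are genuinely uniform in $\delta\in[0,1]$.
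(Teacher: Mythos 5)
Your overall architecture matches the paper's on both halves: \eqref{RH-1} is obtained from the Caccioppoli inequality plus a weighted Sobolev--Poincar\'e inequality and the self-improving property of weak reverse H\"older inequalities, and \eqref{RH-1a} from a Meyers-type estimate in each hole followed by \eqref{RH-1} and Theorem \ref{thm-IL}. Your treatment of \eqref{RH-1a} is sound and essentially identical to the paper's (which uses \eqref{Meyer-1} together with \eqref{RH-1} and \eqref{4.2-3}), and your large-scale weighted Sobolev--Poincar\'e argument is essentially the paper's Lemma \ref{lemma-Sob-1}.

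There is, however, a genuine gap in the first half: your proof of the weighted Sobolev--Poincar\'e inequality only works at scales $\rho \gtrsim \e$, whereas the Gehring self-improvement step requires the weak reverse H\"older inequality on \emph{all} balls contained in the domain, since it is run through a Calder\'on--Zygmund decomposition down to arbitrarily small scales. Your extension/hole-by-hole argument presumes that every hole $\e F_k$ meeting $B_{2\rho}$ has its enlargement $\e\widetilde{F}_k$ contained in $B_{3\rho}$; this fails once $\rho \lesssim \e$, where $B_{2\rho}$ can straddle $\partial(\e\omega)$ and see only a piece of a hole, so that neither the operator $E_{\e,k}$ of \eqref{ext-e} nor the $H_0^1(\e F_k)$ decomposition of $w=u_{\e,\delta}-\widetilde{u}$ localizes to $B_{3\rho}$, and the choice $c=\fint_{B_{2\rho}}\widetilde{u}$ picks up information from outside the ball. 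The paper handles this regime separately in Lemma \ref{RH-lemma-1}: after rescaling to $\e=1$ and reducing to a ball centered on $\partial\omega$, it proves the interface Sobolev-type inequality \eqref{Sob-1}, which bounds the $L^2$ average of $v$ over $B_{2r}^{-}=B_{2r}\cap F$ by the $L^q$ average of $\nabla v$ over $B_{3r}$ plus the $L^2$ average of $v$ over $B_{3r}^{+}$; this is proved by flattening the Lipschitz interface and a compactness argument, and it is exactly what lets the weight $\delta$ on the left be matched with $\Lambda_\delta$ on the right when only part of a hole is visible. You need to supply this small-scale case (or some substitute for it) before Gehring's lemma can legitimately be invoked to conclude \eqref{RH-1}.
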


By the self-improving property of the (weak) reverse H\"older inequalities, to show \eqref{RH-1},
it suffices to prove that if $\text{\rm div}(A^\e_\delta \nabla u_{\e, \delta})=0$ in $B_{4r}$, then
\begin{equation}\label{RH-2}
\left(\fint_{B_r} |\Lambda_\delta^\e \nabla u_{\e, \delta}|^2\right)^{1/2}
\le C \left(\fint_{B_{4r}} |\Lambda_\delta^\e \nabla u_{\e, \delta}|^q \right)^{1/q},
\end{equation}
for some $q<2$ and $C>1$, depending at most on $d$, $\mu$, and $\omega$.

We first treat the case $0<r<10d\e$.

\begin{lemma}\label{RH-lemma-1}
Let $u_{\e, \delta} \in H^1(B_{4r})$ be a weak solution of
$ \text{\rm div} (A_\delta^\e \nabla u_{\e, \delta})=0$ in $B_{4r}$.
Then \eqref{RH-2} holds  if $0< r< 10 d\e$.
\end{lemma}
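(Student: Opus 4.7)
By dilation I may assume $\varepsilon=1$, so that $0<r<10d$ and $u$ is a weak solution of $\text{\rm div}(A_\delta\nabla u)=0$ in $B_{4r}$. I follow the standard Meyers route: a weighted Caccioppoli inequality combined with a Sobolev-Poincar\'e-type bound implies the weak reverse H\"older \eqref{RH-2}. The only novelty is handling the $\Lambda_\delta$-weight, which is done by \emph{transferring} $L^2$-mass from the low-conductivity region $F$ over to $\omega$, after which the unweighted theory applies on $\omega$ through the extension operator of Lemma \ref{lemma-ext}.

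\emph{Step 1 (weighted Caccioppoli).} For any constant $c\in\mathbb{R}$ the function $u-c$ is still a solution. Applying Lemma \ref{lemma-4.1} with a smooth cutoff equal to $1$ on $B_r$ and supported in $B_{\rho_0}$, $\rho_0\le 2r$, gives
\begin{equation*}
\int_{B_r}|\Lambda_\delta\nabla u|^2\,dx \le \frac{C}{r^2}\int_{B_{\rho_0}}|\Lambda_\delta(u-c)|^2\,dx.
\end{equation*}

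\emph{Step 2 (transfer of $L^2$-mass from $F$ to $\omega$).} For each hole $F_k$ with $F_k\cap B_{\rho_0}\ne\emptyset$ and $\widetilde F_k\subset B_{4r}$, the Caccioppoli-plus-Poincar\'e computation that produced \eqref{4.2-00} in the proof of Lemma \ref{lemma-4.2} (applied to $u-c$) yields
\begin{equation*}
\delta^2\int_{F_k}|u-c|^2\,dx \le C\int_{\widetilde F_k\setminus F_k}|u-c|^2\,dx.
\end{equation*}
Summing over such $k$ and using that $\Lambda_\delta\equiv 1$ on $\omega$,
\begin{equation*}
\int_{B_{\rho_0}}|\Lambda_\delta(u-c)|^2\,dx \le C\int_{B_{\rho_1}\cap\omega}|u-c|^2\,dx,\qquad \rho_1\le\rho_0+d.
\end{equation*}

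\emph{Step 3 (Sobolev-Poincar\'e via extension).} Using Lemma \ref{lemma-ext} I extend $(u-c)|_{B_{\rho_1}\cap\omega}$ to a function $\tilde u\in H^1(B_{\rho_1})$ with $\|\nabla\tilde u\|_{L^q(B_{\rho_1})}\le C\|\nabla u\|_{L^q(B_{\rho_2}\cap\omega)}$, where $\rho_2\le\rho_1+d$. Choose $c$ to be the average of $\tilde u$ on $B_{\rho_1}$; the classical unweighted Sobolev-Poincar\'e inequality on the ball $B_{\rho_1}$ with exponent pair $(q,2)$, $q=2d/(d+2)<2$, together with $|\nabla u|=|\Lambda_\delta\nabla u|$ on $\omega$, yields
\begin{equation*}
\int_{B_{\rho_1}\cap\omega}|u-c|^2\,dx \le \int_{B_{\rho_1}}|\tilde u|^2\,dx \le C\rho_1^{2}\Big(\int_{B_{\rho_2}\cap\omega}|\Lambda_\delta\nabla u|^q\,dx\Big)^{2/q}.
\end{equation*}

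\emph{Step 4 (bookkeeping and small-$r$ sub-cases).} Combining Steps 1--3 and normalizing to averages,
\begin{equation*}
\Big(\fint_{B_r}|\Lambda_\delta\nabla u|^2\Big)^{1/2} \le C\Big(\fint_{B_{\rho_2}}|\Lambda_\delta\nabla u|^q\Big)^{1/q}, \qquad \rho_2\le 2r+2d,
\end{equation*}
which is exactly \eqref{RH-2} as soon as $\rho_2\le 4r$, i.e. $r\gtrsim d$. For the residual range $r\lesssim d$ the argument is closed by a case analysis according to the position of $B_{4r}$ relative to the interfaces $\partial F_k$: either $B_{4r}\subset\omega$ or $B_{4r}\subset F_k$ (where $\Lambda_\delta$ is constant on $B_{4r}$ and \eqref{RH-2} reduces to the classical Meyers reverse H\"older for the uniformly elliptic operator $\text{\rm div}(A\nabla\cdot)$), or $B_{4r}$ straddles the Lipschitz interfaces of finitely many holes, in which case, after locally straightening $\partial F_k$ as in Lemma \ref{lemma-local-1}, the Caccioppoli-plus-Sobolev-Poincar\'e argument of Steps 1--3 applies on the single-interface geometry without the $d$-sized buffer and delivers the inequality with the correct radii.

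The main obstacle is Step 4: because the weight $\Lambda_\delta^{2}$ is not uniformly $A_p$ in $\delta$ on sub-$\varepsilon$ scales, a black-box weighted Meyers inequality is unavailable, and one must close the case $r\lesssim d$ by the transmission/straightening analysis of Lemma \ref{lemma-local-1}, where the Caccioppoli-Poincar\'e transfer of Step 2 is the mechanism that absorbs the $\delta$-dependence in a uniform way.
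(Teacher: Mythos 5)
Your overall architecture matches the paper's for the easy sub-cases (interior of $\omega$, interior of $F$, reduction to a single interface), and your Steps 1--3 are essentially the paper's Lemma \ref{lemma-Sob-1} combined with Caccioppoli, which is indeed how the paper handles the \emph{large}-scale case $r\ge 2d\e$ of \eqref{RH-2}. But the crux of this lemma is precisely the regime you defer to Step 4 --- $B_{4r}$ meeting $\partial\omega$ with $r$ small compared to the hole size --- and there your proposed mechanism does not work. The Step-2 transfer $\delta^2\int_{F_k}|u-c|^2\le C\int_{\widetilde F_k\setminus F_k}|u-c|^2$ is obtained from Caccioppoli with a cutoff $\varphi\in C_0^1(\widetilde F_k)$ equal to $1$ on $F_k$, followed by Poincar\'e for $u\varphi\in H_0^1(\widetilde F_k)$; this requires $\widetilde F_k\subset B_{4r}$. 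In the straddling case with $r\lesssim d$ only a piece of $F_k$ lies in the ball, no such cutoff exists, and any cutoff supported in $B_{3r}$ produces on the right-hand side a term $\delta^2\int_{B_{3r}^-\setminus B_{2r}^-}|u-c|^2$ of the same order as what you are trying to control --- the argument is circular. So the claim that ``the Caccioppoli--Poincar\'e transfer of Step 2 is the mechanism that absorbs the $\delta$-dependence'' in the straightened single-interface geometry is the genuine gap.

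What the paper uses instead is a purely real-variable interface inequality, \eqref{Sob-1}: for \emph{any} $v\in H^1$,
\begin{equation*}
\frac{1}{r}\left(\fint_{B_{2r}^-}|v|^2\right)^{1/2}\le C\left(\fint_{B_{3r}}|\nabla v|^q\right)^{1/q}+\frac{C}{r}\left(\fint_{B_{3r}^+}|v|^2\right)^{1/2},
\end{equation*}
proved by flattening the Lipschitz interface and a compactness argument; no equation is used on the minus side. The $\delta$-dependence is then absorbed not by a Caccioppoli transfer but by the factor $\delta$ already sitting in front of the $B_{2r}^-$ term in \eqref{RH-3}: multiplying \eqref{Sob-1} by $\delta$ converts the unweighted $\|\nabla u\|_{L^q(B_{3r})}$ into $\|\Lambda_\delta^\e\nabla u\|_{L^q(B_{3r})}$, and the remaining $B^+$ term is handled by the ordinary Sobolev--Poincar\'e inequality on the Lipschitz piece $B_{3r}^+$. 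You would need to supply this (or an equivalent trace/extension estimate across the interface) to close Step 4. Two smaller points: your reduction leaves a middle range $c_*\lesssim r\lesssim d$ (with $c_*$ the hole separation from \eqref{dis}) where $B_{4r}$ may meet several interfaces without containing any $\widetilde F_k$; the paper avoids this by first reducing all of $c_0<r<10d$ to $r=c_0$ small via a covering argument. And the factor $\rho_1^2$ in your Step 3 display is a scaling slip --- the Sobolev--Poincar\'e inequality with $q=2d/(d+2)$ is scale-invariant in the non-averaged form --- though this is harmless here since $r$ is bounded.
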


\begin{proof}
By dilation we may assume $\e=1$.
We may also assume $0< r\le  c_0 $ and $c_0>0$ is small.
The case $c_0  < r < 10d $ follows from the case $r=c_0$ by a simple covering argument.
There are three cases: (1) $B_{3r/2}\subset \omega$; (2) $B_{3r/2}\subset F=\R^d\setminus \overline{\omega}$;
and (3) $B_{3r/2}\cap \partial \omega\neq \emptyset$.
Since $ \text{\rm div}(A\nabla u_{1, \delta})=0$ in $B_{4r} \cap \omega$ and $B_{4r}\cap F$,
the first two cases follow directly from the well-know reverse H\"older inequalities for the
elliptic operator $-\text{\rm div}(A\nabla )$.
The third case may be reduced to the case where $B_{4r}=B(x_0, 4r)$ for some $x_0\in \partial\omega$.
By Caccioppoli's inequality \eqref{4.1-0},
\begin{equation}\label{C-0}
\left( \fint_{B_r} |\Lambda_\delta \nabla u_{1, \delta} |^2\, dx\right)^{1/2}
\le \frac{C}{r} \inf_{\beta \in \R} \left(\fint_{B_{2r}} |\Lambda_\delta (u_{1, \delta}-\beta ) |^2\, dx\right)^{1/2}.
\end{equation}
Let $B_{2r}^+ =B_{2r}\cap \omega$ and $B_{2r}^- =B_{2r} \cap F$.
Choose  $\beta$ to be the average of $u_{1, \delta}$ over $B_{3r}^+$.
By Sobolev inequality,
  the right-hand side of \eqref{C-0} is bounded by
\begin{equation}\label{RH-3}
C \left(\fint_{B_{3r}^+} |\nabla u_{1, \delta} |^q \right)^{1/q}
+ \frac{C\delta }{r}\left(\fint_{B_{2r}^-} |u_{1, \delta} -\beta|^2 \right)^{1/2},
\end{equation}
where $q=\frac{2d}{d+2}$ for $d\ge 3$, and $1<q< 2$ for $d=2$.
To  bound the second term in \eqref{RH-3},
we apply  the  inequality,
\begin{equation}\label{Sob-1}
\frac{1}{r}
\left(\fint_{B_{2r}^-} |v|^2 \right)^{1/2}
\le C \left(\fint_{B_{3r}} |\nabla v|^q\right)^{1/q}
+ \frac{C}{r} \left(\fint_{B_{3r}^+} |v|^2 \right)^{1/2}
\end{equation}
to $v= u_{1, \delta} -\beta$.
It follows that \eqref{RH-3} is bounded by the right-hand side of \eqref{RH-2}.

Finally, to see \eqref{Sob-1},
by dilation we may assume $r=1$.
Since $\omega$ is a  Lipschitz domain,
by a change of variables, we may also assume $B_{3}^+=B_{3} \cap \{ x_d>0\}$ and
$B_{3}^-=B_{3}\cap \{ x_d<0\}$.
In this case, \eqref{Sob-1} follows by a compactness argument.
\end{proof}

\begin{lemma}\label{lemma-Sob-1} 
Let $u\in H^1(B_{2r})$ for some $r\ge 2d\e$.
Then
\begin{equation}\label{Sob-1-1}
\inf_{ \beta\in \mathbb{R}}
\frac{1}{r} \left(\fint_{B_r} |\Lambda^\e_{\delta} (u-\beta )|^2 \right)^{1/2}
\le C \left(\fint_{B_{2r}} |\Lambda^\e_{\delta} \nabla u|^q \right)^{1/q},
\end{equation}
where $q=\frac{2d}{d+2}$ for $d\ge 3$, $1<q<2$ for $d=2$, and
$C$ depends only on $d$ and $\omega$.
\end{lemma}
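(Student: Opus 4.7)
The plan is to reduce the weighted Sobolev--Poincar\'e inequality \eqref{Sob-1-1} to the standard unweighted one by extending $u|_{B_{2r}\cap \e\omega}$ across the holes and then carefully reintroducing $u$ on each $\e F_k$. Since $r\ge 2d\e$ and $\text{diam}(\widetilde F_k)\le d$, whenever $\e F_k\cap B_{3r/2}\neq\emptyset$ we have $\e\widetilde F_k\subset B_{2r}$; applying the rescaled extension operator $E_{\e,k}$ of \eqref{ext-e} in each such cell produces $\widetilde u\in W^{1,q}(B_{3r/2})$ satisfying $\widetilde u=u$ on $B_{3r/2}\cap\e\omega$ and
\[
\|\nabla \widetilde u\|_{L^q(B_{3r/2})}\le C\|\nabla u\|_{L^q(B_{2r}\cap\e\omega)}\le C\|\Lambda_\delta^\e\nabla u\|_{L^q(B_{2r})}.
\]
Take $\beta=\fint_{B_r}\widetilde u$.

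Next I split the left-hand side according to the two values of $\Lambda_\delta^\e$. On $B_r\cap\e\omega$, $u=\widetilde u$ and $\Lambda_\delta^\e=1$, so the contribution is controlled by $\int_{B_r}|\widetilde u-\beta|^2$, which the classical Sobolev--Poincar\'e inequality (applied with the choice of $q$ giving $q^*=2$ when $d\ge 3$, or any $q\in(1,2)$ combined with H\"older when $d=2$) bounds by a scale-invariant constant times $\|\nabla\widetilde u\|_{L^q(B_r)}^2$. After normalizing by $|B_r|^{-1}$ and $r^{-1}$, this piece is dominated by the right-hand side of \eqref{Sob-1-1}. On each $\e F_k$ with $\e F_k\cap B_r\neq\emptyset$, I write $u-\beta=(u-\widetilde u)+(\widetilde u-\beta)$; the second summand is absorbed into the $\widetilde u$-estimate above using $\delta\le 1$.

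The key observation for the first summand is that $u,\widetilde u\in H^1(\e\widetilde F_k)$ and $u=\widetilde u$ on $\e\widetilde F_k\setminus\e F_k$, so $(u-\widetilde u)|_{\e F_k}\in H^1_0(\e F_k)$. The Sobolev--Poincar\'e inequality for zero-trace functions on the Lipschitz domain $\e F_k$ (with constant uniform in $k$ by the shift structure) gives $\|u-\widetilde u\|_{L^2(\e F_k)}\le C\|\nabla(u-\widetilde u)\|_{L^q(\e F_k)}$, and combining this with the extension bound controls $\delta\|u-\widetilde u\|_{L^2(\e F_k)}$ by $C\|\Lambda_\delta^\e\nabla u\|_{L^q(\e\widetilde F_k)}$. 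To sum the resulting bounds over $k$ I use that $2/q\ge 1$ and the elementary inequality $\sum_k b_k^{2/q}\le (\sum_k b_k)^{2/q}$ for $b_k\ge 0$; applied to $b_k=\|\Lambda_\delta^\e\nabla u\|_{L^q(\e\widetilde F_k)}^q$ this is the bridge from the cell-wise $L^q$-gradient bounds to a single $L^2$ estimate, using the disjointness of the $\widetilde F_k$'s from \eqref{dist}.

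The main obstacle I expect is precisely this passage from $L^q$ on the gradient to $L^2$ on the function in the $F$-region. One needs simultaneously: (i) a Poincar\'e-type inequality that is effective without a mass term (provided by the zero trace of $u-\widetilde u$), (ii) control of the extension in $L^q$ (provided by the earlier extension lemmas), and (iii) a reverse Jensen-type step for the summation over cells (provided by the pointwise inequality $b_k^{2/q}\le b_k(\sum_j b_j)^{2/q-1}$). Once these three ingredients are in place, dividing by $|B_r|$ and $r$ and tracking the scaling exponents $1+d/2-d/q$ and $d/q-d/2$ (whose sum equals $1$ for the Sobolev pair $(q,2)$) produces the form of \eqref{Sob-1-1}.
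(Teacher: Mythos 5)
Your proof is correct and follows essentially the same route as the paper: extend $u$ from $B_{3r/2}\cap\e\omega$ across the holes, apply the classical Sobolev--Poincar\'e inequality to the extension, handle each $\e F_k$ by a cell-wise Sobolev estimate, and sum using the superadditivity of $t\mapsto t^{2/q}$ over the disjoint cells $\e\widetilde F_k$. The only (immaterial) difference is that on the holes the paper invokes a Poincar\'e--Sobolev inequality on $\widetilde F_k$ with a mass term on the collar $\widetilde F_k\setminus F_k$, whereas you split off $u-\widetilde u\in H_0^1(\e F_k)$ and use the zero-trace Sobolev inequality; these are equivalent.
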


\begin{proof}
We consider the case $d\ge 3$.
The case $d=2$ is similar.
By rescaling we may assume $\e=1$.
Let $\widetilde{u}\in H^q (B_{3r/2})$ be an extension of $u|_{B_{3r/2} \cap \omega}$ such that
$$
\|\nabla \widetilde{u}\|_{L^q(B_{3r/2})} \le C \| \nabla u\|_{L^q(B_{2r }\cap \omega)}.
$$
Note that
\begin{equation}\label{Sob-1-2}
\aligned
\left(\int_{B_{3r/2}  \cap \omega} 
|u-\beta |^2\right)^{1/2}
&\le \left(\int_{B_{3r/2} } |\widetilde{u}-\beta|^2 \right)^{1/2}\\
&\le C \left(\int_{B_{3r/2}} |\nabla \widetilde{u} |^q \right)^{1/q}
\le C \left( \int_{B_{2r}\cap \omega} 
|\nabla u|^q \right)^{1/q},
\endaligned
\end{equation}
where $\beta =\fint_{B_{3r/2} } \widetilde{u}$.
Also observe that if $F_k  \cap B_r \neq \emptyset$, then $\widetilde{F}_k \subset B_{3r/2}$ and 
$$
 \int_{F_k } |u-\beta  |^2\, dx 
\le C \left(\int_{\widetilde{F}_k} |\nabla u |^q\, dx \right)^{2/q}
+ C \int_{\widetilde{F}_k  \setminus F_k} |u-\beta  |^2\, dx.
$$
It follows that
$$
\aligned
\int_{B_r \cap F } |u-\beta |^2
& \le C \sum_k  \left(\int_{\widetilde{F}_k } |\nabla u |^q\, dx \right)^{2/q}
+ C \int_{B_{3r/2} \cap \omega} |u-\beta |^2\, dx \\
&\le C \left(\int_{B_{2r}} |\nabla u|^q\, dx \right)^{2/q},
\endaligned
$$
where we have used the Minkowski inequality  as well as (\ref{Sob-1-2}) for the last inequality.
This, together with (\ref{Sob-1-2}), gives
$$
\left(\int_{B_r} |\Lambda_{ \delta} (u-\beta )|^2\, dx \right)^{1/2}
\le C \left( \int_{B_{2r}} |\Lambda_{\delta} \nabla u |^q\, dx \right)^{1/q},
$$
from which the inequality (\ref{Sob-1-1}) follows, as $\frac{d}{q} =\frac{d}{2} +1$.
\end{proof}

\begin{proof}[\bf Proof of Theorem \ref{RH-thm}]

As we indicated before, to prove \eqref{RH-1}, 
it suffices to show that  \eqref{RH-2}  holds for some $q<2$.
The case $0<r<10d\e$ is treated in Lemma \ref{RH-lemma-1}.
To handle the case $r\ge 10 d \e$, we use \eqref{C-0} and Lemma \ref{lemma-Sob-1}.

To prove \eqref{RH-1a}, we assume $\e=1$ and
 use the fact that there exists some $p_0>2$, depending only on $d$, $\mu$, and $\omega$, such that
 if $u\in W^{1, p} (\widetilde{F}_k)$ for some $2< p< p_0$ and
  $\text{\rm div}(A\nabla u)=0$ in $F_k$, then 
\begin{equation}\label{Meyer-1}
\| \nabla u \|_{L^p(F_k)} \le C \| \nabla u \|_{L^p (\widetilde{F}_k \setminus \overline{F_k})}, 
\end{equation}
where $C>0$ depends  only on $d$, $\mu$, and $\omega$.
It follows that
$$
\aligned
\left(\fint_{B_r} |\nabla u_{1, \delta} |^p \right)^{1/p}
& \le  C
\left(\fint_{B_{3r/2} \cap \omega } |\nabla u_{1, \delta} |^p \right)^{1/p}\\
&\le  C \left(\fint_{B_{3r}} |\nabla u_{1, \delta} |^2 \right)^{1/2}
 \le C \left(\fint_{B_{6r}\cap  \omega } |\nabla u_{1, \delta} |^2 \right)^{1/2},
\endaligned
$$
where we have used \eqref{RH-1} for the second inequality and \eqref{4.2-3} for the last.
To see \eqref{Meyer-1}, let $\widetilde{u}$ be an extension of $u|_{\widetilde{F}_k \setminus \overline{F_k}}$ to $\widetilde{F}_k$ such 
that $\| \nabla \widetilde{u}\|_{L^p(\widetilde{F}_k)} \le C \| \nabla u \|_{L^p(\widetilde{F}_k \setminus \overline{F_k})}$
for $p>2$.
Since $ \text{\rm div} (A\nabla (u-\widetilde{u}))=-\text{\rm div}(A\nabla \widetilde{u})$ in $F_k$ and
$u-\widetilde{u}=0$ on $\partial F_k$,  by Meyers' estimates, there exists some $p_0>2$, depending only on $d$, $\mu$, and
$\omega$, such that for $2< p< p_0$, 
$$
\| \nabla (u-\widetilde{u}) \|_{L^p(F_k)} \le C \| \nabla \widetilde{u} \|_{L^p(F_k)},
$$
from which the inequality \eqref{Meyer-1} follows.
\end{proof}

Recall that $D_r =B(x_0, r) \cap \Omega$ and $\Delta_r =B(x_0, r) \cap \partial\Omega$, where $x_0\in \partial \Omega$.

\begin{thm}\label{RH-thm-b}
Assume that $A$ satisfies \eqref{ellipticity} and \eqref{periodicity}.
Let $\Omega$ be a bounded Lipschitz domain satisfying \eqref{F-k}.
Suppose $u_{\e, \delta}\in H^1(D_{6r})$ is a weak solution of
$\text{\rm div} (A_\delta^\e \nabla u_{\e, \delta})=0$ in $D_{6r}$ 
with $u_{\e, \delta}=0$ on $\Delta_{6r}$.
Then
\begin{equation}\label{RH-b-1}
\left(\fint_{D_r} |\Lambda_\delta^\e \nabla u_{\e, \delta}|^p \right)^{1/p}
\le C \left(\fint_{D_{2r}} |\Lambda_\delta^\e \nabla u_{\e, \delta}|^2 \right)^{1/2},
\end{equation}
where $p>2$ and $C>1$ depend only on $d$, $\mu$, $\kappa$, $\omega$, and the Lipschitz character of $\Omega$.
Moreover, if $r\ge 2d \e$, 
\begin{equation}\label{RH-b-1a}
\left(\fint_{D_r} | \nabla u_{\e, \delta}|^p \right)^{1/p}
\le C \left(\fint_{D_{6r}\cap \e \omega } | \nabla u_{\e, \delta}|^2 \right)^{1/2}.
\end{equation}
\end{thm}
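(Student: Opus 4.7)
The strategy mirrors the proof of Theorem \ref{RH-thm} in the interior case. By Gehring's self-improvement lemma for weak reverse H\"older inequalities, it suffices to establish, for some $q \in (1, 2)$,
$$\left(\fint_{D_r} |\Lambda_\delta^\e \nabla u_{\e, \delta}|^2\right)^{1/2} \le C \left(\fint_{D_{4r}} |\Lambda_\delta^\e \nabla u_{\e, \delta}|^q\right)^{1/q}.$$
The first step is the boundary Caccioppoli inequality \eqref{5.1-4}. Since $u_{\e, \delta}=0$ on $\Delta_{6r}$, we may use a cutoff $\varphi\in C_0^1(B_{2r})$ that does not vanish on $\Delta_{2r}$, and take $\beta=0$, to obtain
$$\left(\fint_{D_r} |\Lambda_\delta^\e \nabla u_{\e, \delta}|^2\right)^{1/2} \le \frac{C}{r}\left(\fint_{D_{2r}} |\Lambda_\delta^\e u_{\e, \delta}|^2\right)^{1/2}.$$

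The second (main) step is a weighted boundary Sobolev--Poincar\'e inequality
$$\frac{1}{r}\left(\fint_{D_{2r}} |\Lambda_\delta^\e u|^2\right)^{1/2} \le C \left(\fint_{D_{4r}} |\Lambda_\delta^\e \nabla u|^q\right)^{1/q}$$
with $q=\frac{2d}{d+2}$ (for $d\ge 3$; any $q<2$ for $d=2$), for $u\in H^1(D_{4r})$ vanishing on $\Delta_{4r}$. I would handle it by splitting the left-hand integral over $\e\omega$ and $\e F$. For the $\e\omega$-part, extend $u|_{D_{4r}\cap\e\omega}$ via the operator in Lemma \ref{lemma-ext} to $\widetilde{u}\in W^{1,q}(D_{4r})$; the condition \eqref{F-k} guarantees $\Delta_{4r}\subset \e\omega$, so $\widetilde{u}=0$ on $\Delta_{4r}$ and a standard boundary Sobolev embedding (for functions vanishing on a portion of a Lipschitz boundary) gives $\|\widetilde{u}\|_{L^2(D_{2r})}\le C\|\nabla\widetilde{u}\|_{L^q(D_{4r})}$ at the correct scale. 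For the $\e F$-part, apply the $\delta^2$-weighted extension/Poincar\'e trick used in \eqref{4.2-00}--\eqref{4.2-11}: for any $\e F_k$ meeting $D_{2r}$ the enlarged hole $\e\widetilde{F}_k$ sits inside $D_{cr}$, and since $u$ solves $\text{\rm div}(A_\delta^\e\nabla u)=0$ the Caccioppoli estimate \eqref{4.1-0} yields
$$\delta^2\int_{D_{2r}\cap\e F}|u|^2\,dx \le C\int_{D_{cr}\cap\e\omega}|u|^2\,dx,$$
reducing this piece to the previous one.

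For the small-scale regime $0<r<10d\e$, the condition $\Sigma_{\kappa\e}\subset\Omega^\e$ implies that either $D_{4r}$ is entirely in $\e\omega$ (and the boundary reverse H\"older for $\text{\rm div}(A(x/\e)\nabla)$ on a Lipschitz domain applies directly after flattening the boundary), or $r$ is comparable to $\e$ and a finite covering reduces matters to the interior argument of Lemma \ref{RH-lemma-1} up to a standard boundary chart change of variables; in either case the boundary analog of \eqref{Sob-1} closes the loop. For $r\ge 10d\e$, Steps 1--2 above apply directly. This proves \eqref{RH-b-1}.

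Finally, \eqref{RH-b-1a} for $r\ge 2d\e$ is deduced from \eqref{RH-b-1} exactly as \eqref{RH-1a} follows from \eqref{RH-1}: use \eqref{Meyer-1} (Meyers' higher integrability in each hole $\e F_k$, where $\text{\rm div}(A(x/\e)\nabla u_{\e,\delta})=0$) to pass from $L^p$ control in $\e\widetilde{F}_k\setminus \e\overline{F_k}$ to $L^p$ control on $\e F_k$, and then invoke \eqref{RH-b-1} together with \eqref{5.1-0} to bound the right-hand side by $\bigl(\fint_{D_{6r}\cap\e\omega}|\nabla u_{\e,\delta}|^2\bigr)^{1/2}$. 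The most delicate part of the proof will be Step 2: arranging the two extension arguments (one across holes, the other from $\e\omega$ to $D_{4r}$) so that the $\Lambda_\delta^\e$ weight lands correctly on both sides with matched scaling in $r$, while respecting the vanishing condition on $\Delta_{4r}$ uniformly in $\e$ and $\delta$.
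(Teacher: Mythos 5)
Your proposal is correct and follows essentially the same route as the paper: reduce \eqref{RH-b-1} via Gehring self-improvement (plus the interior estimate \eqref{RH-2}) to a weak reverse H\"older inequality with exponent $q<2$, obtain that from the boundary Caccioppoli inequality \eqref{5.1-4} and a weighted Sobolev--Poincar\'e inequality proved by extending $u$ from $D\cap\e\omega$ across the holes while preserving the vanishing trace on $\Delta$, handle small scales $r\lesssim\e$ via \eqref{F-k} and Lemma \ref{RH-lemma-1}, and deduce \eqref{RH-b-1a} from \eqref{Meyer-1}, \eqref{RH-b-1}, and \eqref{5.1-0}. The only (harmless) deviation is in the $\e F$-part of the Sobolev step: you recycle the Caccioppoli-based bound \eqref{4.2-00}--\eqref{4.2-11}, which uses the equation, whereas the paper simply notes that $u$ vanishes on $\Delta_r$, applies the unweighted Sobolev inequality $\|u\|_{L^2(D_r)}\le C\|\nabla u\|_{L^q(D_r)}$, and uses $\delta\le\Lambda_\delta$ pointwise, so its inequality \eqref{Sob-b} holds for arbitrary $H^1$ functions vanishing on $\Delta_{2r}$.
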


\begin{proof}
The proof is similar to that of Theorem \ref{RH-thm}. To show \eqref{RH-b-1}, by \eqref{RH-2}, 
it suffices to prove that if $\text{\rm div}(A_\delta^\e \nabla u_{\e, \delta})=0$ in $D_{4r}$ and $u_{\e, \delta} =0$ on $\Delta_{4r}$,
then 
\begin{equation}\label{RH-b-2}
\left(\fint_{D_r} |\Lambda_\delta^\e \nabla u_{\e, \delta}|^2 \right)^{1/2}
\le C \left(\fint_{D_{4r}} |\Lambda_\delta^\e \nabla u_{\e, \delta}|^q \right)^{1/q},
\end{equation}
for some $q<2$ and $C>1$, depending at most on $d$, $\mu$, $\kappa$, $\omega$, and the Lipschitz character of $\Omega$.
By dilation we may assume $\e=1$.
In view of the condition \eqref{F-k} as well as the interior estimate  \eqref{RH-2},
the case $0< r< 10 d$ follows from the reverse H\"older inequalities for solutions of  $\text{\rm div}(A\nabla u )=0$,
where $q=\frac{2d}{d+2}$ for $d\ge 3$ and $1<q<2$ for $d=2$.
Suppose $r\ge 10d$ and $d\ge 3$ (the case $d=2$ is similar). 
Let $\widetilde{u}\in H^q(D_{2r})$ be an extension of $u_{1, \delta} |_{D_r\cap \omega }$ such that $\widetilde{u}=0$ on $\Delta_{2r}$ and 
$$
\|\nabla \widetilde{u}\|_{L^q(D_r)}
\le C \|\nabla u_{1, \delta} \|_{L^q (D_{2r }\cap \omega )},
$$
where $q=\frac{2d}{d+2}$. Then
$$
\aligned
\left( \int_{D_r\cap \omega } |u|^2\, dx \right)^{1/2}
&\le C \left(\int_{D_r} |\widetilde{u}|^2\, dx  \right)^{1/2}\\
&\le C \left(\int_{D_r} |\nabla \widetilde{u}|^q\, dx \right)^{1/q}
\le C \left(\int_{D_{2r}\cap \omega } |\nabla u|^q\, dx \right)^{1/q}.
\endaligned
$$
This, together with the observation, 
$$
\| u\|_{L^2(D_r\cap F )}
\le \| u\|_{L^2(D_r)}
\le C \|\nabla u\|_{L^q(D_r)},
$$
gives the Sobolev inequality,
\begin{equation}\label{Sob-b}
\frac{1}{r} 
\left(\fint_{D_r} |\Lambda_\delta  u|^2 \right)^{1/2}
\le C \left(\fint_{D_{2r}}  |\Lambda_\delta \nabla u|^q \right)^{1/q}.
\end{equation}
The desired estimate (\ref{RH-b-2}) follows from \eqref{Sob-b} and the Caccioppoli inequality \eqref{5.1-4}.

To show \eqref{RH-b-1a}, we assume $\e=1$.
It follows from \eqref{Meyer-1} that
$$
\aligned
\left(\fint_{D_r} |\nabla u_{1, \delta} |^p \right)^{1/p}
& \le  C
\left(\fint_{D_{3r/2} \cap \omega } |\nabla u_{1, \delta} |^p \right)^{1/p}\\
&\le  C \left(\fint_{D_{3r}} |\nabla u_{1, \delta} |^2 \right)^{1/2}
 \le C \left(\fint_{D_{6r}\cap  \omega } |\nabla u_{1, \delta} |^2 \right)^{1/2},
\endaligned
$$
where $2<p<p_0$ and we have used \eqref{RH-b-1} for the second inequality and \eqref{5.1-0} for the last.
\end{proof}


\section{\bf Proof of Theorem \ref{main-thm-1} }

Throughout this section we assume that $A$ satisfies the ellipticity condition (\ref{ellipticity}),
the periodicity condition (\ref{periodicity}) and  the H\"older continuity condition (\ref{smoothness}).
We also assume $A$ is symmetric.

\begin{lemma}\label{lemma-8.1}
Let $u_{\e, \delta} \in H^1(D_{3r})$ be a weak solution of
$-\text{\rm div} (A^\e_{ \delta} \nabla u_{\e, \delta}  ) =\text{\rm div} (h)$ in $D_{3r}$
with $u _{\e, \delta} =0$ on $\Delta_{3r}$.
Assume that $r\ge 4d \e$.
Let
$$
s(x)=\left(\fint_{B(x, d\e)\cap \Omega  } |  [\Lambda^\e_{ \delta } ]^2  \nabla u_{\e, \delta}  |^2 \right)^{1/2}.
$$
Then 
\begin{equation}\label{8.1-0}
\left(\fint_{D_r} | s|^p\right)^{1/p}
\le C \left(\fint_{D_{2r}} |s|^2\right)^{1/2}
+ C \left(\fint_{D_{2r}} |h|^p \right)^{1/p},
\end{equation}
where $p>2$ and $C>0$ depend only on $d$, $\mu$, $\omega$, $\kappa$, and the Lipschitz
character of $\Omega$.
\end{lemma}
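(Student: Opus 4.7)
The plan is to derive \eqref{8.1-0} by establishing a weak reverse H\"older inequality for $s$ at all dyadic scales between $\e$ and $r$, and then applying a Gehring-type self-improving lemma (Giaquinta--Modica). By rescaling, we may reduce to the case $r=1$ with $\e$ sufficiently small.

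The first step is to establish a Caccioppoli-type inequality for the inhomogeneous equation: for every concentric pair of balls $B(y,\rho)\subset B(y,2\rho)\subset B(x_0,3r)$,
\begin{equation*}
\int_{D_\rho(y)} [\Lambda^\e_\delta]^4 |\nabla u_{\e,\delta}|^2\, dx
\le \frac{C}{\rho^2}\int_{D_{2\rho}(y)} [\Lambda^\e_\delta]^2 |u_{\e,\delta}-c|^2 \, dx
+ C\int_{D_{2\rho}(y)} |h|^2\, dx,
\end{equation*}
with $c=0$ when $B(y,2\rho)\cap\partial\Omega\subset\Delta_{3r}$ and otherwise taken as the average of $u_{\e,\delta}$ over the annular region where the cutoff transitions. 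This is obtained by testing the equation against $(u_{\e,\delta}-c)\varphi^2$; a naive application of Young's inequality to the pairing $\int h\cdot\nabla u_{\e,\delta}\varphi^2$ would introduce the factor $[\Lambda^\e_\delta]^{-2}$, which blows up on $\e F$ as $\delta\to 0$. To avoid this, one invokes the extension argument of Lemma \ref{lemma-ext-F}: in each degenerate component $\e F_k$ (where $A^\e_\delta=\delta^2 A$) the function $u_{\e,\delta}$ may be replaced in the energy by its harmonic-type extension from the elliptic region $\e\omega$ without losing $\delta$-uniform control, so that the $|h|^2$ term enters on the right with no inverse power of $\delta$.

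Combining the Caccioppoli inequality with the Sobolev--Poincar\'e estimates \eqref{Sob-1-1} and \eqref{Sob-b} yields a weak reverse H\"older inequality
\begin{equation*}
\Big(\fint_{D_\rho(y)} [\Lambda^\e_\delta]^4 |\nabla u_{\e,\delta}|^2\Big)^{1/2}
\le C\Big(\fint_{D_{C\rho}(y)} [\Lambda^\e_\delta]^{2q} |\nabla u_{\e,\delta}|^q\Big)^{1/q}
+ C\Big(\fint_{D_{C\rho}(y)} |h|^2\Big)^{1/2}
\end{equation*}
for some $q\in(1,2)$. One then transfers this to a weak reverse H\"older inequality for $s$ itself via Fubini's theorem and Jensen's inequality: for $\rho\gtrsim\e$ the averages of $s^2$ and of $|[\Lambda^\e_\delta]^2\nabla u_{\e,\delta}|^2$ over concentric balls agree up to a boundary layer of width $O(\e)$, while Jensen controls the $L^q$ average of $s$ from below by that of $|[\Lambda^\e_\delta]^2\nabla u_{\e,\delta}|$ (since $q\le 2$); for $\rho\lesssim\e$ the function $s$ is essentially constant on $D_\rho(y)$ and the reverse H\"older inequality holds trivially.

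The conclusion then follows from the Gehring--Giaquinta--Modica self-improving lemma applied to $s$, which upgrades the weak reverse H\"older inequality to the $L^p$ estimate \eqref{8.1-0} for some $p>2$ depending only on the stated parameters. The principal technical obstacle, as indicated above, is the uniform-in-$\delta$ extraction of the $|h|^2$ term in the Caccioppoli inequality; once that is in hand, the remaining steps follow the pattern of Theorems \ref{RH-thm} and \ref{RH-thm-b} closely.
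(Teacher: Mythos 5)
Your overall architecture coincides with the paper's: reduce to a weak reverse H\"older inequality for $s$, observe that it is trivial at scales below $\e$ by the very definition of $s$, transfer between $s$ and $|[\Lambda^\e_\delta]^2\nabla u_{\e,\delta}|$ at scales above $\e$ by Fubini and H\"older, prove the reverse H\"older inequality at those scales by a Caccioppoli-type estimate combined with the Sobolev--Poincar\'e inequalities \eqref{Sob-1-1} and \eqref{Sob-b}, and conclude by Gehring self-improvement. You have also correctly isolated the one genuinely delicate point: pairing $h$ with $\nabla u_{\e,\delta}$ over $\e F$ naively costs a factor $\delta^{-2}$.

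Your resolution of that point is where the gap lies. Lemma \ref{lemma-ext-F} applies to functions satisfying the \emph{homogeneous} equation $\text{div}(A(x/\e)\nabla u)=0$ in $\e F_k$; here $u_{\e,\delta}$ satisfies $-\delta^2\,\text{div}(A(x/\e)\nabla u_{\e,\delta})=\text{div}(h)$ in $\e F_k$, an inhomogeneous equation whose source carries exactly the factor $\delta^{-2}$ you are trying to avoid, so that lemma does not apply, and ``replacing $u_{\e,\delta}$ by its extension in the energy'' does not by itself eliminate the bad pairing. What the paper actually does is a two-test-function argument. First it tests with $u_{\e,\delta}\xi^2$; the pairing $\int h\cdot\nabla u_{\e,\delta}\,\xi^2$ does produce $\delta^{-2}\int|h|^2$ after Young's inequality, but multiplying the resulting estimate through by $\delta^2$ gives \eqref{8.1-4}--\eqref{8.1-5}, i.e., control of $\delta^2\|\nabla u_{\e,\delta}\|_{L^2}$ --- equivalently of the bad-set contribution $\delta^4|\nabla u_{\e,\delta}|^2$, which is all that $s$ sees on $\e F$ --- with only $\|h\|_{L^2}^2$ on the right. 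Second, it tests with $\widetilde u\,\xi^2$, where $\widetilde u$ is an extension of the restriction of $u_{\e,\delta}$ to the perforated region satisfying $\|\nabla\widetilde u\|_{L^q}\le C\|\nabla u_{\e,\delta}\|_{L^q(D_{2t}^\e)}$ for $q<2$; now $h$ pairs only with $\nabla\widetilde u$, which is harmless, and the remaining cross terms are absorbed using the first estimate together with a small parameter $\theta$ in the weak reverse H\"older inequality. As written, your claimed Caccioppoli inequality with the term $C\int|h|^2$ and no negative power of $\delta$ is asserted but not proved, and the lemma you cite does not deliver it; you need to supply this two-step argument or an equivalent substitute.
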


\begin{proof}
By the self-improving property of the reverse H\"older inequality, it suffices to prove that 
if $y_0\in D_r$ and $0<t<cr$, then
\begin{equation}\label{8.1-2}
\left(\fint_{\Omega (y_0, t)} | s|^2\right)^{1/2}
\le C \left(\fint_{\Omega (y_0, 2t)  } |s|^q\right)^{1/q}
+ C \left(\fint_{\Omega(y_0, 2t)} |h|^2 \right)^{1/2} 
+\theta \left(\fint_{\Omega(y_0, 2t)} |s|^2\right)^{1/2}
\end{equation}
for some $1<q<2$ and $\theta<1$, where $\Omega(y_0, t)=B(y_0, t) \cap \Omega$.
In view of the definition of $s(x)$, the inequality \eqref{8.1-2} is trivial for $0<t\le 4d\e$.
For the case $4d\e< t<r$, by Fubini's Theorem and H\"older's inequality,  it is enough to show that
\begin{equation}\label{8.1-2-0}
\aligned
\left(\fint_{\Omega (y_0, t)} | [\Lambda^\e_{ \delta}]^2 \nabla u_{\e, \delta}  |^2\right)^{1/2}
 & \le C \left(\fint_{\Omega (y_0, Ct)  } | [\Lambda^\e_{ \delta}]^2 \nabla u_{\e, \delta} |^q\right)^{1/q}
+ C \left(\fint_{\Omega(y_0, Ct)} |h|^2 \right)^{1/2} \\
& \qquad
+\theta \left(\fint_{\Omega(y_0, Ct)} | [\Lambda^\e_{ \delta}]^2  \nabla u_{\e, \delta}  |^2\right)^{1/2},
\endaligned
\end{equation}
for some $1<q<2$ and $\theta\in (0,1)$  sufficiently small.
We will only consider the case $y_0=x_0\in \Delta_{r}$.
The other case  may be reduced either to this case or to the case where $B(y_0, 2t)\subset D_{2r}$,
which may be handled by using (\ref{Sob-1-1}).

To this end, we note that for $\varphi \in H_0^1(D_{3t} )$,
\begin{equation}\label{8.1-3}
\int_{D_{3t}} A^\e_{ \delta} \nabla u_{\e, \delta}  \cdot \nabla \varphi\, dx
=-\int_{D_{3t}} h\cdot \nabla \varphi\, dx.
\end{equation}
By letting $\varphi = u_{\e, \delta}  \xi^2$, where $\xi \in C_0^1(B(x_0, 2t))$, in (\ref{8.1-3}) and
using the Cauchy inequality, we obtain
\begin{equation}\label{8.1-4}
 \delta^4 \int_{D_{2t}} 
 |\nabla u_{\e, \delta}   |^2 \xi^2 \, dx
 \le C\delta^2  \int_{D_{2t}} |\Lambda^\e_{ \delta}  u_{\e, \delta}  |^2 | \nabla \xi |^2 \, dx
 + C \int_{D_{2t}} |h|^2 |\xi|^2 \, dx.
 \end{equation}
 It follows that
 \begin{equation}\label{8.1-5}
 \delta^2 \|\nabla u_{\e, \delta}   \|_{L^2(D_{3t/2})}
 \le C t^{-1}  \| [\Lambda^\e_{ \delta} ]^2 u_{\e, \delta}  \|_{L^2(D_{2t})} 
 + C \| h\|_{L^2(D_{2t})}.
 \end{equation}
 Next, let $\varphi=\widetilde{u}  \xi^2$ in (\ref{8.1-3}),
 where $\widetilde{u} $ is an extension of $u_{\e, \delta} |_{D_{3t/2}^\e}$ such that
 $\widetilde{u}  =0$ on $\Delta_{2t}$ and 
 \begin{equation}\label{8.1-6}
 \|\nabla \widetilde{u} \|_{L^q(D_{3t/2})}
 \le C_q \|\nabla u_{\e, \delta}  \|_{L^q(D_{2t}^\e)}
 \end{equation}
for all $1<q<2$.
This gives
$$
\aligned
& \int_{D_{2t}} \xi^2 A^\e_{ \delta} \nabla u_{\e, \delta}   \cdot \nabla \widetilde{u}  \, dx
+  2\int_{D_{2t}} \xi  ( A^\e_{ \delta} \nabla u_{\e, \delta}  \cdot  \nabla \xi ) \widetilde{u} \, dx\\
& \qquad
=
-\int_{D_{2t}}( h \cdot \nabla \widetilde{u}  ) \xi^2\, dx
- 2\int_{D_{2t}}
(h \cdot \nabla \xi )
\widetilde{u}  \xi \, dx.
\endaligned
$$
It follows that for $\xi \in C_0^1(B(x_0, 3t/2)) $, 
$$
\aligned
\int_{D_{2t}^\e} |\nabla u_{\e, \delta}   |^2 \xi^2\, dx
&\le C \delta^2 \int_{D_{2t} }
|\nabla u_{\e, \delta}  | |\nabla \widetilde{u}  | \xi^2\, dx
+ C \int_{D_{2t}} |\Lambda^\e_{\delta} \nabla u_{\e, \delta}   |
|\Lambda^\e_{ \delta} \widetilde{u}  | |\xi| |\nabla \xi|\, dx
\\
& 
\qquad+ C \int_{D_{2t}} | h| |\nabla \widetilde{u}  |\xi^2\, dx
+ C \int_{D_{2t} } |h| |\widetilde{u}  |  | \xi| |\nabla \xi| \, dx.
\endaligned
$$
Choose $\xi\in C_0^1(B(x_0, 3t/2))$ such that
$\xi=1$ on $B(x_0, t)$ and $|\nabla \xi|\le C t^{-1}$.
By splitting the second term in the right-hand side into integrals over $D_{2t}^\e$ and $D_{2t}\setminus D_{2t}^\e$ and using
the Cauchy inequality on the integral over $D_{2t}^\e$, we see that 
$$
\aligned
\int_{D_t^\e} |\nabla u_{\e, \delta}   |^2\, dx
&\le C \delta^2 \int_{D_{3t/2} }
|\nabla u_{\e, \delta}  | |\nabla \widetilde{u}  |\, dx
+ Ct^{-2}  \int_{D_{3t/2}} |\widetilde{u} |^2 \, dx\\
&\ \ + C \delta^4 \int_{D_{3t/2}} |\nabla u_{\e, \delta}  |^2 \, dx 
+ C \int_{D_{3t/2}} | h| |\nabla \widetilde{u}| \, dx
+ Ct^{-1}  \int_{D_{3t/2}} |h| |\widetilde{u}  |  \, dx\\
&\le C_\theta  \delta^4 \int_{D_{3t/2}} |\nabla u_{\e, \delta} |^2\, dx
+ \theta \int_{D_{3t/2}} |\nabla \widetilde{u}|^2 \, dx\\
&\qquad
+ Ct^{-2}  \int_{D_{3t/2}} |\widetilde{u} |^2 \, dx
+ C_\theta  \int_{D_{3t/2} } |h|^2\, dx.
\endaligned
$$
This, together with (\ref{8.1-5}), (\ref{8.1-6}), and \eqref{Sob-b},   gives
$$
\aligned
\left(\fint_{D_t} |[\Lambda^\e_{ \delta}]^2  \nabla u_{\e, \delta} |^2 \right)^{1/2}
&\le C_\theta \left(\fint_{D_{2t} } | [\Lambda^\e_{ \delta}]^2 \nabla u_{\e, \delta} |^q \right)^{1/q}
 + C\theta 
 \left(\fint_{D_{2t} } | [\Lambda^\e_{ \delta}]^2 \nabla u_{\e, \delta} |^2 \right)^{1/2}\\
& \qquad \qquad
 + C_\theta  \left(\fint_{D_{2t}} |h|^2 \right)^{1/2},
 \endaligned
 $$
for some $q<2$ and any $\theta\in (0, 1)$.
As a result, we have proved (\ref{8.1-2-0}) for the case $y_0\in \Delta_r$.
\end{proof}

\begin{thm}\label{thm-8.10}
Let $u_{\e, \delta}\in H^1(D_{2r}) $ be a weak solution of $\text{\rm div}(A^\e_{ \delta} \nabla u_{\e, \delta})
=0$ in $D_{2r}$ with $u_{\e, \delta} =0$ on $\Delta_{2r}$.
Then 
\begin{equation}\label{8.10-0}
\left( \fint_{\Delta_r} 
|\nabla u_{\e, \delta}|^p\, d\sigma \right)^{1/p} 
\le  C  \left( \fint_{D_{2r}} |\nabla u_{\e, \delta}|^2\, dx\right)^{1/2},
\end{equation}
for some $p>2$ and $C>1$ depending  only on $d$, $ \mu$, $\omega$, $\kappa$,
 $(M, \sigma) $ in \eqref{smoothness},  and
the Lipschitz character of $\Omega$.
\end{thm}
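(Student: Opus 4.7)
My goal is to produce a boundary $L^p$ reverse H\"older inequality for $\nabla u_{\e,\delta}$, which I intend to derive by reducing it to the \emph{interior} reverse H\"older inequality already available in Lemma~\ref{lemma-8.1}. The decisive structural input is condition~\eqref{F-k}, which forces $\Sigma_{\kappa\e}\subset\Omega^\e$, so that $A^\e_\delta(x) = A(x/\e)$ on a full $\e$-neighborhood of $\partial\Omega$; in particular, thanks to~\eqref{smoothness}, the coefficients are H\"older continuous at scale $\e$ right up to $\Delta_{2r}$. The boundary trace $|\nabla u_{\e,\delta}|$ on $\Delta_r$ is understood via nontangential limits, which exist a.e.\ once one knows the interior gradient has the right integrability.

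The first step is to establish the pointwise domination, for each $y\in D_{3r/2}$,
\begin{equation*}
|[\Lambda^\e_\delta(y)]^2 \nabla u_{\e,\delta}(y)| \le C\, s_*(y),
\end{equation*}
where $s_*(y) := \bigl(\fint_{B(y,c\max(d(y),d\e))\cap\Omega} |[\Lambda^\e_\delta]^2 \nabla u_{\e,\delta}|^2\bigr)^{1/2}$. For interior points with $d(y)\ge (10d)\e$ this follows from the large-scale interior reverse H\"older inequality (Theorem~\ref{RH-thm}) together with the large-scale $L^\infty$ estimate in Remark~\ref{remark-LL}; for small-scale interior points this is Theorem~\ref{thm-local-2}; for boundary-layer points, one rescales by $\e$ so that the equation becomes $\text{div}(A(y)\nabla v)=0$ with $A$ H\"older continuous in $\omega$, and applies the small-scale boundary estimate of Lemma~\ref{lemma-local-1} together with the large-scale boundary H\"older estimate of Theorem~\ref{thm-5.3}.

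The second step is to apply Lemma~\ref{lemma-8.1} with $h=0$ to the function $s$: for some $p>2$,
\begin{equation*}
\left(\fint_{D_r} s^p\right)^{1/p}
\le C\left(\fint_{D_{2r}} s^2\right)^{1/2}
\le C\left(\fint_{D_{2r}} |\nabla u_{\e,\delta}|^2\,dx\right)^{1/2}.
\end{equation*}
Combining this with the pointwise bound of Step~1, together with a standard conversion from bulk $L^p$ integrals on $\Sigma_{C\e}$ to surface $L^p$ integrals on $\Delta_r$ (via a Whitney covering of a $\e$-collar of $\Delta_r$ and the a.e.\ existence of nontangential limits, using that on the collar $\Lambda^\e_\delta \equiv 1$), yields the desired estimate~\eqref{8.10-0}. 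The Hardy–Littlewood maximal operator on $\partial\Omega$ enters only as a harmless $L^{p/2}$-bounded intermediary in this transfer.

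The main obstacle will be the pointwise domination of Step~1 with constants depending only on the parameters listed in the theorem. The difficulty is twofold: near $\partial\Omega$ one must transit between the small-scale boundary H\"older regime (where $A$ is H\"older at scale $\e$) and the large-scale regime (Theorem~\ref{thm-5.3}), making sure the same H\"older exponent $\sigma$ can be used uniformly in $\e\in(0,1]$; and the low-ellipticity blocks $\e F_k$ intersecting $D_{3r/2}$ must be handled via Lemma~\ref{lemma-ext-F}, which is precisely why it is $[\Lambda^\e_\delta]^2\nabla u_{\e,\delta}$ (and not $\nabla u_{\e,\delta}$) that admits a clean pointwise bound in the interior.
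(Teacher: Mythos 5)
Your proposal has two genuine gaps, one of which is fatal to the whole strategy. First, the pointwise domination in Step~1, $|[\Lambda^\e_\delta(y)]^2\nabla u_{\e,\delta}(y)|\le C\,s_*(y)$ for all $y\in D_{3r/2}$, is false in the two regimes where it matters most: near the Lipschitz boundary $\Delta_{2r}$ the gradient of a solution vanishing on the boundary is in general unbounded (already for the Laplacian in a Lipschitz domain one only has $N(\nabla u)\in L^p(\partial\Omega)$ for a restricted range of $p>2$, never a pointwise bound), and near the interface $\Gamma^\e$ the coefficients $A^\e_\delta$ jump and no smoothness is assumed for $A$ in $F$, so the only available small-scale estimates (Lemma~\ref{lemma-local-1}, Theorem~\ref{thm-local-2}) control $u$ in $C^{0,\sigma}$, not $\nabla u$ in $L^\infty$. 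The passage from bulk to surface integrals that you call a ``standard conversion'' is in fact exactly the small-scale case $r\le C\e$ of the theorem itself, which requires the symmetry and H\"older continuity of $A$ and the Rellich/layer-potential machinery in Lipschitz domains; it cannot be obtained from a Whitney covering plus a.e.\ nontangential convergence.

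Second, and decisively: applying Lemma~\ref{lemma-8.1} with $h=0$ to $u_{\e,\delta}$ controls $\bigl(\fint_{D_r}s^p\bigr)^{1/p}$, i.e.\ the average of $s^p$ over the \emph{full} ball $D_r$. What the surface estimate requires (after the small-scale reduction, as in \eqref{8.10-1}) is control of $\frac{1}{\e}\int_{\Sigma_{c\e,r}}s^p\,dx$, an average over the thin collar of measure $\approx \e r^{d-1}$. Bounding the collar integral by the full integral and dividing by $\e$ loses a factor $(r/\e)^{1/p}$, which blows up as $\e\to 0$. This is precisely why the paper does \emph{not} apply Lemma~\ref{lemma-8.1} to $u_{\e,\delta}$: it applies it to the two-scale expansion error $w_{\e,\delta}$, whose source term $h$ satisfies $\|h\|_{L^p(D_t)}\le C\e^{1/p}\|\nabla u_{\e,\delta}\|_{L^p(D_2)}$, and combines this with the boundary-layer estimate \eqref{7.3-10}; the resulting factor $\e^{1/p}$ exactly offsets the $\e^{-1/p}$ from the collar normalization. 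The remaining piece $\nabla v$ is handled on the collar by the $L^p$ nontangential-maximal-function estimate for the constant-coefficient homogenized operator, and $\|\nabla u_{\e,\delta}\|_{L^p(D_2)}$ is converted back to an $L^2$ average by the boundary reverse H\"older inequality \eqref{RH-b-1a}. Without this decomposition your argument cannot close.
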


\begin{proof}
Since $A^\e_{ \delta} (x)=A(x/\e)$ for $x\in \Omega$ with dist$(x, \partial \Omega)< \kappa \e$, 
the inequality (\ref{8.10-0}) for  $0<r< C \e$  follows from the case $\e=1$,  by a rescaling argument.
The  H\"older continuity condition and the symmetry condition are used. See \cite{Kenig-book} for references
for the case $\e=1$ and $0<r<C$.
To treat the large-scale case $r\ge C\e$, by a rescaling argument, we may assume $r=1$.
By using the estimate (\ref{8.10-0}) for the case $r=c  \e$ and  a simple covering argument,
we see that
\begin{equation}\label{8.10-1}
\int_{\Delta_1} |\nabla u_{\e, \delta}|^p\, d\sigma
\le \frac{C}{\e}
\int_{\Sigma_{c\e, 5/4}} | T([\Lambda^\e_{ \delta}]^2 \nabla u_{\e,\delta})|^p\, dx,
\end{equation}
where $T(G)(x) $
denotes the $L^2$ average of $|G|$ over $B(x, d\e)\cap \Omega$, and
$$
\Sigma_{c\e, 5/4} =\{ x\in D_{5/4}: \text{dist}(x, \partial\Omega)< c \e \}.
$$
To bound the right-hand side of (\ref{8.10-1}),
we let  $t\in (3/2, 2)$,  $v$ and $w_{\e, \delta}$ be the same as in the proof of Theorem \ref{thm-7.3}.
It follows from the proof of Lemma \ref{thm-3.1} that
$$
-\text{\rm div} (A^\e_{ \delta} \nabla w_{\e, \delta} )
=\text{\rm div} (h) \quad \text{ in } D_t,
$$
and 
$$
\aligned
h & =-  (\widehat{A_\delta}
-A^\e_{ \delta} ) \big(\nabla v -S_\e (\eta_\e (\nabla v)\big)
+\e 
 A^\e_{ \delta} \chi_\delta (x/\e) \nabla S_\e \big(\eta_\e (\nabla v)\big)\\
&
\qquad
-\e  \phi_\e (x/\e) \nabla S_\e \big(\eta_\e (\nabla v)\big).
\endaligned
$$
Observe that
$$
\aligned
\| h\|_{L^p(D_t)}
& \le C \|\nabla v -S_\e (\eta_\e (\nabla v))\|_{L^p(D_t)}
+ C \e \| \nabla (\eta_\e (\nabla v))\|_{L^p(D_t)}\\
&\le  C \|\nabla v \|_{L^p(D_t (4\e))}
+ C \e \|\nabla^2  v \|_{L^p(D_t \setminus D_t (\e))},
\endaligned
$$
where $D_t (s) =\{ x\in D_t: \text{\rm dist}(x, \partial D_t)<s\}$.
By the $L^p$ estimate for the regularity problem for
the operator $-\text{\rm div} (\widehat{A_\delta}\nabla ) $, and (\ref{7.3-1}),
$$
\aligned
 \| h\|_{L^p(D_t)}
&  \le C \e^{1/p} \| N( \nabla v_\delta) \|_{L^p(\partial D_t)}
\le C \e^{1/p} \|\nabla_{\tan} v_\delta \|_{L^p(\partial D_t)}\\
&\le C \e^{1/p}
\|\nabla u_{\e, \delta}\|_{L^p(\partial D_t)}
\le C \e^{1/p} \| \nabla u_{\e, \delta} \|_{L^p(D_{2})},
\endaligned
$$
where $p>2$ depends only on $d$, $\mu$, $\omega$, $\kappa$, and the Lipschitz character of $\Omega$.
We now use Lemma \ref{lemma-8.1} to obtain 
$$
\aligned
\left(\fint_{D_1} |T( [\Lambda^\e_{ \delta} ]^2\nabla w_{\e, \delta}) |^p \right)^{1/p}
& \le
\left(\fint_{D_{5/4}} |T([\Lambda^\e_{ \delta }]^2  \nabla w_{\e, \delta}) |^2 \right)^{1/2}
+
C \left( \fint_{D_{5/4} } |h|^p \right)^{1/p}\\
&\le 
C \left(\fint_{D_{3/2}} | [\Lambda^\e_{ \delta }]^2  \nabla w_{\e, \delta}|^2 \right)^{1/2}
+
C \e^{1/p} \|\nabla u_{\e, \delta} \|_{L^p(D_2)}\\
&\le 
C \e^{1/p} \|\nabla u_{\e, \delta} \|_{L^p(D_2)},
\endaligned
$$
where we have used (\ref{7.3-10}) for the last inequality.

Finally, we note that $\nabla w_{\e, \delta} 
=\nabla u_{\e, \delta} -\nabla v $ on $\Sigma_{2d \e, 3/2}$.
It follows that
$$
\aligned
& \left(\frac{1}{\e} \int_{\Sigma_{c\e, 5/4}} |T([\Lambda^\e_{ \delta} ]^2 \nabla u_{\e, \delta})|^p \right)^{1/p}\\
&
\le \left(\frac{1}{\e} \int_{\Sigma_{c\e, 5/4}} |T([\Lambda^\e_{ \delta} ]^2 \nabla w_{\e, \delta})|^p \right)^{1/p}
+ \left(\frac{1}{\e} \int_{\Sigma_{c\e, 5/4}}  |T(\nabla v )|^p \right)^{1/p}\\
&\le C \|\nabla u_{\e, \delta}\|_{L^p(D_2)} 
+ C \| N(\nabla v) \|_{L^p(\partial D_t)}\\
&\le C \|\nabla u_{\e, \delta}\|_{L^2(D_3)}, 
\endaligned
$$
where we have used \eqref{RH-b-1a} for the last inequality.
This completes the proof.
\end{proof}

\begin{proof}[\bf Proof of Theorem \ref{main-thm-1} for the case $0<\delta\le 1$]

By dilation  we may assume $\text{\rm diam}(\Omega)=1$.
Let $u_{\e, \delta}$ be a weak solution of $\text{\rm div} (A^\e_{ \delta} \nabla u_{\e, \delta}) =0$
in $\Omega$ with $u_{\e, \delta}=f$ on $\partial\Omega$, where $f\in H^1(\partial\Omega)$.
Let $\mathcal{M}_{\partial\Omega}$ denote the Hardy-Littlewood maximal  operator on $\partial\Omega$.
Define
\begin{equation}\label{N-t}
\widetilde{N}
(u) (y) =\sup\Big\{ 
|u(x)|: \ x\in \Omega^\e \text{ and } |x-y|< \widetilde{C}_0\, \text{\rm dist} (x, \partial\Omega) \Big\}
\end{equation}
for $y\in \partial\Omega$.
Using the fact $\Sigma_{\kappa \e} \subset \Omega^\e$ and  the inequality (\ref{4.2-1}),
 we see that 
$$
N(u_{\e, \delta}) \le C\widetilde{N} (u_{\e, \delta})  \quad \text{ on } \partial\Omega.
$$
We will show that 
\begin{equation}\label{8.20-1}
\widetilde{N}(u_{\e, \delta} )
\le C  \left[  \mathcal{M}_{\partial \Omega} (|f|^q)\right]^{1/q},
\end{equation}
where $1<q<2$ and $C>0$ depend only on $d$, $\mu$, $\omega$, $\kappa$,  and the Lipschitz character of $\Omega$.
This would imply that $\| N (u_{\e, \delta}) \|_{L^p(\partial\Omega)}
\le C \| f\|_{L^p(\partial\Omega)}$ for any $q< p\le \infty$.
The Hardy-Littlewood maximal operator $\mathcal{M}_{\partial\Omega}$ in (\ref{8.20-1})  is defined by
$$
\mathcal{M}_{\partial\Omega} (h)
(y)=\sup\left\{
\fint_{B(y, r)\cap \partial\Omega} |h|\, d\sigma: \
0<r< \text{\rm diam}(\partial\Omega) \right\}
$$
for $y\in \partial\Omega$, and it is well known that  $\|\mathcal{M}_{\partial\Omega} (h)\|_{L^p(\partial\Omega)}
\le C_p \| h\|_{L^p(\partial\Omega)}$ for $1<p<\infty$.

To prove  (\ref{8.20-1}), we fix $y_0\in \partial \Omega$ and
$x_0 \in \Omega^\e $ with $r=|x_0-y_0|< \widetilde{C}_0 \, \text{dist} (x_0, \partial\Omega)$. Using 
the Green function representation,
$$
u_{\e, \delta} (x_0)
=-\int_{\partial\Omega}
\frac{\partial }{\partial \nu_\e (y)}
\big\{ G_{\e, \delta} (x_0, y) \big\} f(y)\, d\sigma (y),
$$
it is not hard to see that (\ref{8.20-1}) follows from the estimates,
\begin{align}
\left(\fint_{B(y_0, c r)\cap \partial\Omega} |\nabla_y G_{\e, \delta}(x_0, y) |^p \, d\sigma (y) \right)^{1/p}
 & \le \frac{C}{r^{d-1}}, \label{8.20-2}\\
\left(\fint_{B(z, R) \cap \partial \Omega } |\nabla_y G_{\e, \delta } (x_0, y ) |^p\, d\sigma (y)  \right)^{1/p}
 & \le \frac{C}{R^{d-1}} \left(\frac{r}{R}\right)^\sigma , \label{8.20-3}
\end{align}
for some $p>2$ and $\sigma\in (0, 1)$, where  $z\in \partial\Omega$, $|z-y_0|\ge cr$, and $R\approx |z-y_0|$.
Moreover, since $\text{\rm div} (A^\e_{ \delta} \nabla_y G_{\e, \delta}(x_0, y))=0$ 
in $\Omega \setminus \{ x_0\}$ and $G_{\e, \delta} (x_0, y)=0$ for $y \in \partial\Omega$,
 in view of (\ref{8.10-0}), we only need to show that
\begin{align}
\left(\fint_{B(y_0, c r)\cap \Omega} |\nabla_y G_{\e, \delta}(x_0, y) |^2 \, d y \right)^{1/2}
 & \le \frac{C}{r^{d-1}}, \label{8.20-4}\\
\left(\fint_{B(z, R) \cap \Omega } |\nabla_y G_{\e, \delta } (x_0, y ) |^2\, d y   \right)^{1/2}
 & \le \frac{C}{R^{d-1}} \left(\frac{r}{R}\right)^\sigma. \label{8.20-5}
\end{align}
Furthermore, by Caccioppoli's inequality in Remark \ref{remark-Ca}, we reduce (\ref{8.20-4})-(\ref{8.20-5}) to
\begin{align}
\left(\fint_{B(y_0, c r)\cap \Omega^\e} | G_{\e, \delta}(x_0, y) |^2 \, d y \right)^{1/2}
 & \le \frac{C}{r^{d-2}}, \label{8.20-6}\\
\left(\fint_{B(z, R) \cap \Omega^\e } | G_{\e, \delta } (x_0, y ) |^2\, d y   \right)^{1/2}
 & \le \frac{C}{R^{d-2}} \left(\frac{r}{R}\right)^\sigma. \label{8.20-7}
\end{align}
Finally, we point out that since $x_0\in \Omega^\e$,
estimates (\ref{8.20-6})-(\ref{8.20-7}) follow from
(\ref{6.40-0}) (see Remark \ref{re-6.41} for the case $d=2$).
\end{proof}

\begin{proof}[\bf Proof of Theorem \ref{main-thm-1} for the case $\delta=0$]

Let $f\in H^1(\partial\Omega)$. 
By the classical theory for mixed boundary value problems,  there exists a unique $u_{\e, 0}\in H^1(\Omega^\e)$ satisfying \eqref{DP-0}.
We extend $u_{\e, 0}$ from $\Omega^\e$ to $\Omega$ by solving the Dirichlet problem (\ref{DP-e}) for each $\e F_k \subset \Omega$. 
Let $0<\delta\le 1$ and $u_{\e, \delta}\in H^1(\Omega)$ be the unique solution of (\ref{DP}).
Then
\begin{equation}\label{8.20-8}
\int_\Omega
A^\e_{ \delta} \nabla (u_{\e, \delta} -u_{\e, 0}) \cdot \nabla \psi \, dx
=-\delta^2 \int_{\Omega \cap \e F}  A(x/\e) \nabla u_{\e, 0} \cdot \nabla \psi\, dx
\end{equation}
for any $\psi \in H^1_0(\Omega)$.
By letting $\psi=u_{\e, \delta}- u_{\e, 0}$ in (\ref{8.20-8}) and using the Cauchy inequality we obtain
\begin{equation}\label{8.20-9}
\int_{\Omega^\e}
|\nabla (u_{\e, \delta} -u_{\e, 0} ) |^2\, dx
\le C \delta^2 \int_{\Omega \cap \e F } |\nabla u_{\e, 0}|^2\, dx,
\end{equation}
where $C$ depends only on $\mu$.
Note that
$$
\int_{\Omega \cap \e F } |\nabla u_{\e, 0} |^2\, dx
\le C \int_{\Omega^\e} |\nabla u_{\e, 0}|^2\, dx 
\le C \| f\|^2_{H^{1/2}(\partial\Omega)}.
$$
This, together with (\ref{8.20-9}) and Lemma \ref{lemma-ext-F}, shows that
\begin{equation}\label{8.20-10}
\aligned
\| u_{\e, \delta} -u_{\e, 0}\|_{H^1(\Omega)}
 & \le C \|\nabla (u_{\e, \delta} -u_{\e, 0})  \|_{L^2(\Omega)}\\
& \le C \|\nabla (u_{\e, \delta} -u_{\e, 0})  \|_{L^2(\Omega^\e) }
\to 0 \quad \text{ as } \delta \to 0.
\endaligned
\end{equation}
Using  $\| N(u_{\e, \delta})\|_{L^2(\partial\Omega)} \le C \| f\|_{L^2(\partial\Omega)}$,
by a limiting argument,
 we obtain 
\begin{equation}\label{8.20-11}
\| N(u_{\e, 0}) \|_{L^2(\partial\Omega)}
\le C \| f\|_{L^2(\partial\Omega)}.
\end{equation}
A density  argument gives (\ref{8.20-11}) for solutions with any boundary data $f\in L^2(\partial\Omega)$.
\end{proof}

We end this section with a localized version of Theorem \ref{main-thm-1}.
For $y\in \partial\Omega$,  define
\begin{equation}\label{9.1-0}
N_t ( u) (y)
=\sup 
\left(\fint_{B(x, d(x)/4)} | u |^2 \right)^{1/2},
\end{equation}
where the supremum is taken over all $
 x\in \Omega$ with $ d(x) < t  \text{ and } |x-y |< C_0\,  d (x)$.

\begin{thm}\label{thm-D}
Assume $A$ and $\Omega$ satisfy the same conditions as in Theorem \ref{main-thm-1}.
Let $u=u_{\e, \delta} \in H^1(D_{4r})$ be a weak solution of $\text{\rm div}(A_\delta^\e \nabla u)=0$ in $D_{4r}$,
where $r>4d\e$.
Then
\begin{equation}\label{local-D}
\int_{\Delta_r} |N_{r} (u)|^2\, d\sigma
\le C \int_{\Delta_{4r} } |u|^2\, d\sigma
+\frac{C}{r} \int_{D_{4r}} |u|^2 \, dx,
\end{equation}
where $C$ depends only on $d$, $\mu$, $\kappa$, and the Lipschitz character of $\Omega$.
\end{thm}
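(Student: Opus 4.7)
The plan is to reduce Theorem~\ref{thm-D} to the global nontangential estimate of Theorem~\ref{main-thm-1} combined with the large-scale boundary H\"older estimate of Theorem~\ref{thm-5.55}, via a cutoff decomposition $u=v+w$.

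Let $x_0\in\partial\Omega$ denote the center of the balls $\Delta_r, D_r$. Pick $\alpha\in(1,2)$ (depending only on the cone aperture $C_0$ from the definition of $N_r$) large enough that every boundary ball arising in the H\"older estimate below is contained in $\Delta_{\alpha r}$, and small enough that $2\alpha<4$. Choose a cutoff $\eta\in C_0^\infty(B(x_0,2\alpha r))$ with $\eta\equiv 1$ on $B(x_0,\alpha r)$ and $0\le\eta\le 1$. Since $u\in H^1(D_{4r})$ has a trace $u|_{\Delta_{4r}}\in L^2(\Delta_{4r})$, the function $f:=\eta\,u|_{\partial\Omega}$, extended by zero outside $\Delta_{2\alpha r}$, lies in $L^2(\partial\Omega)$ with $\|f\|_{L^2(\partial\Omega)}\le\|u\|_{L^2(\Delta_{4r})}$. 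Let $v\in H^1(\Omega)$ be the unique solution of $\mathcal{L}_{\e,\delta}v=0$ in $\Omega$ with $v=f$ on $\partial\Omega$ (existence and uniqueness by Remark~\ref{remark-01}), and put $w:=u-v$ on $D_{4r}$; then $\mathcal{L}_{\e,\delta}w=0$ in $D_{4r}$ and $w=0$ on $\Delta_{\alpha r}$.

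By Theorem~\ref{main-thm-1} applied to $v$,
$$\int_{\Delta_r}|N_r(v)|^2\,d\sigma\le\int_{\partial\Omega}|N(v)|^2\,d\sigma\le C\|f\|_{L^2(\partial\Omega)}^2\le C\int_{\Delta_{4r}}|u|^2\,d\sigma.$$
For the remainder $w$, fix $y\in\Delta_r$ and $x$ in the nontangential cone from $y$ with $d(x)<r$. By the choice of $\alpha$, Theorem~\ref{thm-5.55} applies to $w$ at every point $x'\in B(x,d(x)/4)$ with radius $\rho\approx r$, giving the pointwise bound
$$|w(x')|\le C\Big(\frac{d(x)}{r}\Big)^{\sigma}\Big(\fint_{B(x,\rho)\cap\Omega^\e}|w|^2\Big)^{1/2}.$$
Averaging over $x'\in B(x,d(x)/4)$ and using $(d(x)/r)^\sigma\le 1$, then taking the supremum over $x$ in the cone and integrating over $y\in\Delta_r$, gives
$$\int_{\Delta_r}|N_r(w)|^2\,d\sigma\le\frac{C}{r}\|w\|_{L^2(D_{4r})}^2\le\frac{C}{r}\int_{D_{4r}}|u|^2\,dx+\frac{C}{r}\int_{D_{4r}}|v|^2\,dx.$$
The bound $\|v\|_{L^2(D_{4r})}^2\le\|v\|_{L^2(\Sigma_{4r})}^2\le Cr\|N(v)\|_{L^2(\partial\Omega)}^2\le Cr\|u\|_{L^2(\Delta_{4r})}^2$, obtained by integrating the pointwise control $|v(x)|\le CN(v)(y(x))$ across the tubular neighborhood $\Sigma_{4r}$, together with the inequality for $v$, completes the proof of \eqref{local-D}.

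The main obstacle is the coherent choice of the radius $\rho$ and the parameter $\alpha$ so that for every point $x$ in every nontangential cone from $\Delta_r$ at depth $<r$, the boundary portion $B(x,\rho)\cap\partial\Omega$ lies inside $\Delta_{\alpha r}$ (where $w$ vanishes) while $\alpha<2$ keeps the data for $u$ inside $\Delta_{4r}$. This is a routine exercise in the geometry of Lipschitz cones, handled under the standing hypothesis $r>4d\e$ which places us in the large-scale regime where Theorem~\ref{thm-5.55} applies; the narrow range $4d\e<r\le C\e$ (where $\rho\approx r$ may fail to satisfy $\rho\ge 4d\e$) is absorbed into the constants via standard boundary H\"older estimates for the H\"older-continuous coefficients $A^\e_\delta$ near $\partial\Omega$.
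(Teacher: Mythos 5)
Your strategy---cut off the boundary data, let $v$ solve the global Dirichlet problem in $\Omega$ with data $f=\eta\,u|_{\partial\Omega}$, control $N(v)$ by Theorem \ref{main-thm-1}, and control the remainder $w=u-v$ (which vanishes on a boundary patch) by the boundary H\"older estimate of Theorem \ref{thm-5.55}---is genuinely different from the paper's. The paper instead constructs a family $\{\Omega_t\}_{t\in J}$ of sawtooth subdomains with $D_{2r}\subset\Omega_t\subset D_{4r}$, $\Delta_{2r}\subset\partial\Omega_t$ and $\mathrm{dist}(\partial\Omega_t,\e F)\ge c_1\e$, applies Theorem \ref{main-thm-1} in each $\Omega_t$ with data $u|_{\partial\Omega_t}$, and integrates in $t$ so that the contribution of $\partial\Omega_t\setminus\Delta_{4r}$ becomes the solid term $\frac{C}{r}\int_{D_{4r}}|u|^2$. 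That route needs no decomposition of $u$ and no H\"older estimate, only the observation that the $\Omega_t$ inherit the separation condition \eqref{F-k}.

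There is, however, a concrete gap in your geometric setup. For $y\in\Delta_r$ the truncated cone $\{x\in\Omega:\ d(x)<r,\ |x-y|<C_0\,d(x)\}$ contains points $x$ with $|x-x_0|$ as large as $(C_0+1)r$, and $C_0>1$ is ``sufficiently large'' in \eqref{max}. Your argument requires, for every such $x$ and every $x'\in B(x,d(x)/4)$, both that $w=0$ on $B(x',\rho)\cap\partial\Omega$ with $\rho\approx r$ (i.e.\ $B(x',\rho)\cap\partial\Omega\subset\Delta_{\alpha r}$) and that $B(x',\rho)\cap\Omega^\e\subset D_{4r}$ (so that the average of $|w|^2$ over $B(x',\rho)\cap\Omega^\e$ is controlled by $r^{-d}\|w\|_{L^2(D_{4r})}^2$). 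Both containments force $\alpha\gtrsim C_0+2>3$, which is incompatible with your requirement $2\alpha<4$ (needed so that $\mathrm{supp}(\eta)\cap\partial\Omega\subset\Delta_{4r}$ and hence $\|f\|_{L^2(\partial\Omega)}\le\|u\|_{L^2(\Delta_{4r})}$). So no admissible $\alpha\in(1,2)$ exists, and the step ``Theorem \ref{thm-5.55} applies to $w$ at every $x'$ with radius $\rho\approx r$'' fails as written. The argument can be saved either by proving the estimate with $\Delta_{Kr}$, $D_{Kr}$ for some $K=K(C_0)>4$ on the right (a weaker statement, though still sufficient for the application in Section 10), or by first treating a small aperture and invoking the comparability of $L^2$ norms of nontangential maximal functions with different apertures; you do neither. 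A minor secondary point: since Theorem \ref{thm-5.55} is proved for $H^1$ solutions vanishing on the boundary patch in the trace sense, you should note explicitly that $f=\eta\,u|_{\partial\Omega}\in H^{1/2}(\partial\Omega)$, so that $v$ may be taken as the variational solution and $w$ has zero $H^1$ trace on $\Delta_{\alpha r}$, rather than merely zero nontangential limits.
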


\begin{proof}
By dilation we may assume $r=1$ and $\e>0$ is sufficiently small.
We may also assume $0\in \partial\Omega$ and
\begin{equation}\label{Local-D-1}
\aligned
D_{4} &= B(0, 4)\cap \Omega= B(0, 4)\cap \big\{ (x^\prime, x_d): x^\prime \in \R^{d-1} \text{ and } x_d >\psi (x^\prime) \big\},\\
\Delta_{4}  &= B(0, 4) \cap \partial\Omega = B(0, 4) \cap \big\{ (x^\prime, \psi (x^\prime)): x^\prime \in \R^{d-1} \big\}.
\endaligned
\end{equation}
Using conditions \eqref{dis} and \eqref{F-k} we may
 construct a family $\{ \Omega_t\}$, where $t\in J \subset (3, 4)$,  of Lipschitz domains with uniform Lipschitz character and satisfying the following 
conditions:
\begin{itemize}

\item $ D_2\subset  \Omega_t\subset D_4$ and  $\Delta_2 \subset \partial \Omega_t$;

\item  dist$(\partial \Omega_t; \e F ) \ge c_1\e$, 
$$
\int_{t\in J} \int_{\partial\Omega_t \setminus \Delta_4} |v|^2\, d\sigma dt \le C_1 \int_{D_4} |v|^2\, dx
$$
for any $v\in H^1(D_4)$, and 
$|J|\ge c_1$, 
\end{itemize}
where $C_1>0, c_1>0$ depend only on $d$, $\kappa$, $\omega$, and the Lipschitz character of $\Omega$.
Note that
$$
\aligned
\int_{\Delta_1} |N_1 (u)|^2\, d\sigma
&\le \int_{\partial\Omega_t} |N(u)|^2\, d\sigma
 \le C \int_{\partial\Omega_t} |u|^2\, d\sigma\\
& \le C \int_{\Delta_4} |u|^2\, d\sigma 
+ C \int_{\partial\Omega_t\setminus \Delta_4} |u|^2\,  d\sigma,
\endaligned
$$
where we have used Theorem \ref{main-thm-1} for the second inequality.
By integrating the inequalities above in $t$ over $J$, we obtain \eqref{local-D} for $r=1$.
\end{proof}


\section{\bf Proof of Theorem \ref{main-thm-2}}

Let $\Omega$ be a bounded Lipschitz domain in $\mathbb{R}^d$ satisfying \eqref{F-k}.
By dilation  we  assume diam$(\Omega)=1$.
Let $u=u_{\e, \delta}$ be a weak solution of $\text{\rm div}(A^\e_{ \delta} \nabla u) =0$ in $\Omega$
with $u =f$ on $\partial\Omega$,
where $f\in H^1(\partial\Omega)$.
 Since $A^\e_{\delta} (x) =A(x/\e)$ if $d(x)< \kappa \e$, we obtain
 \begin{equation}\label{local-R}
 \int_{B(y, c_1 \e)\cap \partial\Omega} 
 | N_{ c_1\e} (\nabla u )  |^2\, d\sigma
 \le C \int_{B(y,  \e)  \cap \partial\Omega} 
 |\nabla_{\tan} f  |^2\, d\sigma
 + \frac{C}{\e} \int_{B(y, \e)\cap \Omega}
 |\nabla u  |^2\, dx
 \end{equation}
 for any $y\in \partial\Omega$, where $c_1>0$ is sufficiently small.
 We remark that the estimate \eqref{local-R} is known for $\e=1$ under the conditions that $A$ is elliptic, symmetric, and H\"older continuous
 \cite{Kenig-book}. The case $0< \e<1$ follows from the case $\e=1$ by a rescaling argument.
 Note that if $c_1 \e \le  d(x)< \alpha \e$ and $\alpha>1$ is large,
 the $L^2$  average of $|\nabla u_{\e, \delta}|$ over  $B(x, d(x)/4)$ is controlled 
 by its average over $B(y, C\e)\cap \Omega$, where $y\in \partial\Omega$ and  $|y-x|< C_0 d(x)$.
 It follows that
 $$
 \int_{B(y, c_1 \e)\cap \partial\Omega} 
 | N_{ \alpha \e} (\nabla u )  |^2\, d\sigma
 \le C \int_{B(y,  \e)  \cap \partial\Omega} 
 |\nabla_{\tan} f  |^2\, d\sigma
 + \frac{C}{\e} \int_{B(y, C \e)\cap \Omega}
 |\nabla u  |^2\, dx,
$$
where $C$ depends on $\alpha$.
 By covering $\partial\Omega$ with balls with radius $c_1 \e$, we see that
 \begin{equation}\label{9.1-3}
 \aligned
 \int_{ \partial\Omega} 
 | N_{ \alpha \e} (\nabla u  )  |^2\, d\sigma
 &  \le C \int_{\partial\Omega} 
 |\nabla_{\tan}  f |^2\, d\sigma
 + \frac{C}{\e} \int_{\Sigma_{C\e}}
 |\nabla u  |^2\, dx\\
 & \le C \| f\|^2 _{H^1(\partial\Omega)}, 
 \endaligned
 \end{equation}
 where $\Sigma_{C\e}  =\{ x\in \Omega: d(x)<  C \e\}$ and
 we have used the boundary layer estimate (\ref{7.1-0}) for the last inequality.
 
 To treat the case where $d(x)\ge \alpha \e$ and $\alpha>1$ is large, we will use the estimate in Theorem \ref{thm-D}.
 Without loss of generality we assume that $0\in \partial\Omega$ and
$$
B(0, r_0)\cap \Omega
= B(0, r_0) \cap  \{ (x^\prime, x_d)\in \mathbb{R}^d: x_d >\psi (x^\prime ) \}.
$$
We will show that if $y=(y^\prime, \psi (y^\prime)) \in \partial\Omega$ and $|y|\le c r_0$, then
\begin{equation}\label{claim-9}
N(\nabla u ) (y)
  \le 
  C \mathcal{M}_{\partial\Omega}  \Big\{ {M}_{\text{\rm rad}}  (Q_\e (u )) 
 +|\nabla_{\tan} f|
 + N_{\alpha\e} (\nabla  u )  \Big\}  (y),
\end{equation}
where
\begin{equation}\label{Q}
Q_\e (u) (x^\prime, x_d) = \big( u(x^\prime, x_d+\e) -u(x^\prime, x_d)\big)/\e
\end{equation}
is a difference operator, 
and
\begin{equation}\label{rad-M}
{M}_{\text{\rm rad}} (u) (y^\prime, \psi (y^\prime))
=\sup_{(\alpha-1) \e <s< cr_0}\fint_{B((y^\prime, \psi(y^\prime) +s), d\e)}
 |u|
\end{equation}
a radial maximal function.
To see (\ref{claim-9}), we fix $y\in \partial\Omega$ with $|y|\le cr_0$.
Let $z \in \Omega$  with  $d(z)\ge \alpha \e$ and 
$r=|z-y|< C_0\,  d(z) $.
Note that by \eqref{4.2-2} and \eqref{LL-2}
$$
\left(\fint_{B(z, d(z) /4)}
|\nabla u |^2 \right)^{1/2}
\le \frac{C}{r^{d+1} }
\int_{|x_d-z_d|<d(z)/2 }
\int_{|x^\prime-z^\prime|<  d(z)/2 }
|u -E| \, dx^\prime dx_d,
$$
where $E\in \mathbb{R}$, and that for $x=(x^\prime, x_d)$,
$$
\aligned
|u  (x)-E|
 & \le |u  (x^\prime, x_d ) -u (x^\prime, x_d -\e)|
+\cdots 
+ |u  (x^\prime, x_d -k\e  ) -u (x^\prime, \psi (x^\prime))|\\
& \qquad\qquad 
+|u(x^\prime, \psi (x^\prime))  -E|\\
&= \e | Q_\e (u ) (x^\prime, x_d-\e)|  +\cdots +
\e | Q_\e (u ) (x^\prime, x_d -(k-1)\e)| \\
&\qquad \qquad
+ |u (x^\prime, x_d -k\e  ) -u  (x^\prime, \psi (x^\prime))|
+|u  (x^\prime, \psi (x^\prime))  -E|,
\endaligned
$$
where $k=k(x) \approx \e^{-1} r  $ is chosen so that $\psi (x^\prime)\le x_d -k\e  < \psi (x^\prime) +\alpha \e $.
Let
$$
E=\fint_{|x^\prime-z^\prime|< d(z)/2 }  u  (x^\prime, \psi (x^\prime))  \, dx^\prime.
$$
Using
$$
|u  (x^\prime, x_d -k\e  ) -u  (x^\prime, \psi (x^\prime))|
\le  \int_0^{\alpha\e}  |\nabla u (x^\prime, \psi (x^\prime) +s)|\, ds
$$
and Poincar\'e's inequality for the term $|u  (x^\prime, \psi (x^\prime)) -E|$,
we see that
$$
\left(\fint_{B(z, d(z)/4)}
|\nabla u  |^2 \right)^{1/2}
\le C \mathcal{M}_{\partial\Omega}
\Big\{ M_{\text{\rm rad}}  (Q_\e ( u ))   + |\nabla_{\tan} f|
+ N_{\alpha \e} (\nabla u )  \Big\} (y), 
$$
which yields (\ref{claim-9}).
It follows from (\ref{claim-9}) and (\ref{9.1-3})  that
\begin{equation}\label{reg-10}
\aligned
\int_{ B(0, cr_0)\cap \partial\Omega}
|N(\nabla u )|^2\, d\sigma
& \le C \int_{ B(0, 2cr_0) \cap \partial \Omega}
|M_{\text{\rm rad}}
(Q_\e (u ))|^2\, d\sigma
+ C \| f\|_{H^1(\partial\Omega)}^2.
\endaligned
\end{equation}

Finally, to handle  $M_{\text{\rm rad}} (Q_\e (u ))$, we note that
$$
\text{\rm div} (A^\e_{\delta} \nabla Q_\e ( u )) =0
$$
in $ B(0, 100  cr_0)\cap \Omega$. By applying Theorem \ref{thm-D} to $Q_\e(u )$, we obtain 
$$
\aligned
 \int_{ B(0, 2cr_0) \cap \partial \Omega}
|M_{\text{\rm rad}}
(Q_\e (u ))|^2\, d\sigma
&\le C \int_{B(0, 3cr_0)\cap \partial\Omega} | N_{3cr_0} (Q_\e (u))|^2\, d\sigma\\
& \le C \int_{B(0, 12 cr_0)\cap \partial\Omega} 
|Q_\e (u) |^2\, d\sigma
+ C \int_{B(0, 12 cr_0)\cap \Omega} |Q_\e (u)|^2\, dx\\
& \le \frac{C}{\e}
\int_{\Sigma_{C\e}} | \nabla u|^2\, dx
+ C \int_\Omega |\nabla u|^2\, dx,
\endaligned
$$
where, for the last step,  we have used 
 the inequality,
$$
|Q_\e (u ) (x^\prime, x_d)|
\le \left( \frac{1}{\e}
\int_{x_d } ^{x_d+\e } |\nabla u (x^\prime, s)|^2 \, ds\right)^{1/2}.
$$
This, together with  (\ref{7.1-0}), (\ref{reg-10}), \eqref{energy-D},   and a simple covering argument, gives 
(\ref{R-estimate}).


\section{Proof of Theorem \ref{main-thm-3}}

By dilation  we may assume diam$(\Omega)=1$.
Let $u=u_{\e, \delta}\in H^1(\Omega)$ be a weak solution of the Neumann problem (\ref{NP}) with $g\in L^2(\partial\Omega)$.
It follows from the proof of Theorem \ref{main-thm-2} that
\begin{equation}\label{NP-60}
\int_{\partial\Omega}
|N(\nabla u ) |^2\, d\sigma
\le C \int_{\partial\Omega} 
|\nabla  u |^2\, d\sigma
+ C \int_{\Omega} |\nabla u |^2\, dx.
\end{equation}
Since $A^\e_{ \delta}(x)=A(x/\e)$ for $d(x)<\kappa \e$, we obtain 
\begin{equation}\label{local-N}
\int_{B(y, c\e)\cap \partial\Omega} |\nabla u |^2\, d\sigma
\le C \int_{B(y, \kappa \e)\cap \partial \Omega} |g|^2\, d\sigma
+\frac{C}{\e} \int_{B(y, \kappa \e)\cap \Omega} |\nabla u |^2\, dx
\end{equation}
for any $y\in \partial\Omega$, where $c>0$ is sufficiently small.
We point out that the estimate \eqref{local-N} is known for $\e=1$, under the conditions that 
$A$ is elliptic, symmetric, and H\"older continuous \cite{Kenig-book}.
The case $0< \e<1$ follows from the case $\e=1$ by a rescaling argument.
By a covering argument it follows from \eqref{local-N} that
$$
\aligned
\int_{ \partial\Omega} |\nabla u |^2\, d\sigma
& \le C \int_{ \partial \Omega} |g|^2\, d\sigma
+\frac{C}{\e} \int_{\Sigma_{\kappa \e} } |\nabla u |^2\, dx\\
&\le C \| g\|_{L^2(\partial\Omega)}^2,
\endaligned
$$
where we have used (\ref{7.2-0}) for the last inequality.
This, together with (\ref{NP-60}) and the energy estimate \eqref{energy-N}, gives (\ref{N-estimate}).

 \bibliographystyle{amsplain}
 
\bibliography{nt-perforated.bbl}

\bigskip

\begin{flushleft}

Zhongwei Shen,
Department of Mathematics,
University of Kentucky,
Lexington, Kentucky 40506,
USA.

E-mail: zshen2@uky.edu
\end{flushleft}

\bigskip

\end{document}